
\documentclass[12pt]{article}
\usepackage{amsmath,amssymb,amsthm,amscd,latexsym,}
 
\usepackage{graphicx}

\makeatletter
\renewcommand{\mod}[1]{\allowbreak \if@display \mkern 8mu \else
\mkern 5mu\fi {\operator@font mod}\,\,#1}
\makeatother

\setlength{\oddsidemargin}{0in} \setlength{\evensidemargin}{0in}
\setlength{\textwidth}{6.5in} \setlength{\topmargin}{0in}

\newcommand{\bc}{\mathbb C}
\newcommand{\bn}{\mathbb N}
 \newcommand{\bq}{\mathbb Q}
\newcommand{\br}{\mathbb R}
\newcommand{\bz}{\mathbb Z}
\newcommand{\bff}{\mathbb F}

\newcommand{\aaa}{\mathbb A}
\newcommand{\ddd}{\mathbb D}
\newcommand{\eee}{\mathbb E}

\DeclareMathOperator{\Aut}{{\rm Aut}\,}

\DeclareMathOperator{\rk}{rk}

\newtheorem{theorem}{Theorem}
\newtheorem{proposition}[theorem]{Proposition}

\newtheorem{remark}[theorem]{Remark}



\newcommand\Hh{\mathcal H}
\newcommand\N{\mathcal N}
\newcommand\La{\mathcal L}

\newcommand\M{\mathcal M}

\newcommand\SSS{\mathfrak S}
\newcommand\AAA{\mathfrak A}
\newcommand\DDD{\mathfrak D}


\begin{document}
\title{K\"ahlerian K3 surfaces and Niemeier lattices}
\date{}
\author{Viacheslav V. Nikulin}
\maketitle

\begin{abstract} Using results (especially see
Remark 1.14.7) of our paper \cite{Nik1}, 1979,
we clarify relation between K\"ahlerian K3 surfaces and Niemeier
lattices. We want to emphasize that all twenty four Niemeier
lattices are important for K3 surfaces, not only the one which is
related to the Mathieu group.
\end{abstract}


\section{Introduction}
\label{introduction}
Studying of finite symplectic automorphism groups of K\"ahlerian K3
surfaces started in our papers \cite{Nik-1/2}
(announcement, 1976) and \cite{Nik0} (1979).
Some general theory of such groups was developed,
and Abelian such groups were classified (14 non-trivial Abelian
groups). Further, in \cite[Remark 1.14.7]{Nik1} (1979) we showed,
in particular,
that all finite symplectic automorphism groups of K3 surfaces
can be obtained from negative definite even unimodular lattices and
their automorphism groups using primitive embeddings of even negative
definite  lattices into such unimodular lattices (see Sect.
\ref{sec:sublattNiem} here for a review).
By our results about existence of primitive embeddings of even
lattices into
even unimodular lattices in \cite{Nik1}, for K3 surfaces it is enough
to use negative definite even unimodular lattices of the rank $24$.
They are Niemeier lattices.

Later, all finite symplectic automorphism groups of K\"ahlerian
K3 surface were classified as abstract groups by Mukai \cite{Muk}
(1988), (see also Xiao \cite{Xiao}, 1996). Kond\=o in \cite{Kon}
(1998)
showed that this classification can be obtained by using primitive
embeddings of lattices into Niemeier lattices
(see also the important appendix to this paper by Mukai).
This is similar to our considerations in \cite[Remark 1.14.7]{Nik1}.
Recently, Hashimoto \cite{Hash} applied similar ideas of using
Niemeier lattices to classify finite symplectic automorphism groups
of K\"alerian K3 surfaces similarly to our results about Abelian
such groups in \cite{Nik0}.

Thus, now it is clear that the methods of using negative
definite even unimodular lattices and Niemeier lattices
are very powerful.

In this paper, we use ideas and results of \cite[Remark 1.14.7]{Nik1}
to show that all twenty four Niemeier lattices are important for
K\"ahlerian K3 surfaces, their geometry and their symplectic
automorphism groups. (Usually, the Niemeier lattice  with the root
system $24A_1$  related to Mathieu group $M_{24}$ is used.)
From our point of view, all twenty four Niemeier lattices
are important for K3 surfaces.

We introduce and use a {\it marking} of a K\"ahlerian
K3 surface $X$ by a Niemeier lattice. Using this marking,
one can study,
in particular, finite symplectic automorphism groups
and non-singular rational curves on $X$.
In \cite{Nik0}, we demonstrated that to study finite symplectic
automorphism groups of K3 surfaces, it is important to work not with
algebraic K3 surfaces but with arbitrary K\"ahlerian K3 surfaces.
General K\"ahlerian K3 surfaces have negative definite Picard lattices
$S_X$ of the rank $\rk S_X\le 19$. By our results in \cite{Nik1}, there
exists a primitive embedding of $S_X$  into one of 24 Niemeier lattices.
One can study some arithmetic and geometry of $S_X$ and of $X$ using
such primitive embedding. It is called a {\it marking of $X$.}
All 24 Niemeier lattices are important for that.
For K\"ahlerian K3 surfaces with semi-negative definite and
hyperbolic Picard lattice $S_X$ we use some
modification of these markings.

In Sect. \ref{sec:primembbunim}, we remind to a reader our
results from \cite{Nik1} about primitive embeddings of lattices
into even unimodular lattices.

In Sect. \ref{sec:sublattNiem}, we remind to a reader classification
of Niemeier lattices and results of \cite[Remark 1.14.7]{Nik1} about
using of them (or any other even negative definite unimodular
lattices)
for studying of automorphism groups which act trivially on the
discriminant group for negative definite even
lattices by applying their primitive embeddings into even
unimodular lattices.

In Sect. \ref{sec:KalerNiemmark}, we remind to a reader definitions
and basic results related to K\"ahlerian K3 surfaces $X$. Using
these results, we introduce markings $S\subset N_i$ of $X$
by Niemeier lattices $N_i$.

In Sect. \ref{sec:applNiemmark}, we consider applications of
markings of K3 surfaces $X$ by Niemeier lattices to study finite
symplectic automorphism groups and non-singular rational
curves of $X$.

In Sect. \ref{sec:examples}, we consider examples of
markings of K\"ahlerian K3 surfaces by concrete Niemeier lattices
$N_i$, $1\le i\le 23$, and their applications.
In particular, we show that
for each of Niemeier lattices $N_i$,
$i=1,\,2,\,3$, $5$---$9$, $11$---$23$,
there exists a K\"ahlerian K3 surface $X$ such that $X$ can be marked
by the Niemeier lattice $N_i$ only. See Theorem  \ref{onlymarkings}.
We believe that similar results are valid for remaining
Niemeier lattices $N_4$, $N_{10}$, $N_{24}$. {\it All Niemeier lattices are important
for marking of K\"ahlerian K3 surfaces.}

\section{Existence of a primitive embedding of an even lattice
into even unimodular lattices, according to \cite{Nik1}}
\label{sec:primembbunim}

In this paper, we use notations, definitions and results of
\cite{Nik1} about lattices
(that is non-degenerate integral (over $\bz$) symmetric bilinear
forms). In
particular, $\oplus$ denotes the orthogonal sum of lattices,
quadratic forms.
For a prime $p$, we denote by $\bz_p$ the ring of $p$-adic integers,
and by $\bz_p^\ast$ its group of invertible elements.

Let $S$ be a lattice. Let $A_S=S^\ast/S$ be its discriminant group,
and $q_S$ its discriminant
quadratic form on $A_S$ where we assume that the lattice $S$ is
even: that is
$x^2$ is even for any $x\in S$. We denote by $l(A_S)$ the minimal
number
of generators of the finite Abelian group $A_S$, and by $|A_S|$
its order.
For a prime $p$, we denote by ${q_{S}}_p=q_{S\otimes \bz_p}$ the
$p$-component of $q_S$ (equivalently,
the discriminant quadratic form of the $p$-adic lattice
$S\otimes \bz_p$). A quadratic form on a group of order $2$
is denoted by  $q_\theta^{(2)}(2)$. A $p$-adic lattice $K({q_{S}}_p)$
or the rank $l({A_S}_p)$ with the discriminant quadratic form
${q_{S}}_p$ is denoted by $K({q_{S}}_p)$. It is unique, up to
isomorphisms,
for $p\not=2$, and for $p=2$, if
${q_{S}}_2\not\cong q_\theta^{(2)}(2)\oplus
q^\prime$. We have the following result where an embedding
$S\subset L$
of lattices is called {\it primitive} if $L/S$ has no torsion.

\begin{theorem} (Theorem 1.12.2 in \cite{Nik1}).

Let $S$ be an even lattice of the signature $(t_{(+)},t_{(-)})$, and
$l_{(+)}$, $l_{(-)}$ are integers.

Then, there exists a primitive embedding of $S$ into one of
even unimodular lattices
of the signature $(l_{(+)},\,l_{(-)})$ if and only if the following
conditions satisfy:

(1) $l_{(+)}-l_{(-)}\equiv 0\mod 8$;

(2) $l_{(+)}-t_{(+)}\ge 0$, $l_{(-)}-t_{(-)}\ge 0$,
$l_{(+)}+l_{(-)}-t_{(+)}-t_{(-)}\ge l(A_S)$;

(3) $(-1)^{l_{(+)}-t_{(+)}}|A_S|\equiv \det {K(q_{S_p})}\mod ({\bz_p}^\ast)^2$
for each odd prime $p$ such that $l_{(+)}+l_{(-)}-t_{(+)}-t_{(-)}=l(A_{S_p})$;

(4) $|A_S|\equiv \pm \det {K(q_{S_2})}\mod ({\bz_2}^\ast)^2$, if
$l_{(+)}+l_{(-)}-t_{(+)}-t_{(-)}= l(A_{S_2})$ and
${q_{S}}_2\not\cong q_\theta^{(2)}(2)\oplus q^\prime$.
\label{th:primembb1}
\end{theorem}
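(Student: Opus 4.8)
The plan is to pass from primitive embeddings into unimodular lattices to the language of discriminant forms, via the following standard duality. If $S \subset L$ is a primitive embedding with $L$ even unimodular, and $T = S^\perp$ is its orthogonal complement, then because $L$ is unimodular one has a canonical anti-isometry $q_S \cong -q_T$ of discriminant quadratic forms; conversely, any even lattice $T$ with $q_T \cong -q_S$ can be glued to $S$ along the graph of such an anti-isometry inside $(S \oplus T)^\ast$ to produce an even unimodular overlattice $L$ in which $S$ sits primitively. Under this correspondence, producing a primitive embedding of $S$ into an even unimodular lattice of signature $(l_+, l_-)$ is \emph{equivalent} to producing an even lattice $T$ of signature $(l_+ - t_+,\, l_- - t_-)$ whose discriminant quadratic form is $-q_S$.

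So first I would reduce the theorem to the existence criterion for an even lattice of prescribed signature and prescribed discriminant form. The invariants of the sought $T$ are completely determined: $\operatorname{sign} T = (l_+ - t_+,\, l_- - t_-)$ and $q_T = -q_S$, whence $A_T \cong A_S$, $l(A_T) = l(A_S)$, and each $\det K(q_{T_p})$ is read off from $\det K(q_{S_p})$. I would then invoke the global existence theorem for lattices: an even lattice with signature $(\tau_+, \tau_-)$ and discriminant form $q$ exists if and only if $\tau_\pm \geq 0$, $\tau_+ + \tau_- \geq l(A_q)$, the Milgram signature congruence $\tau_+ - \tau_- \equiv \operatorname{sign} q \pmod 8$ holds, and at each prime $p$ there exists a $p$-adic lattice of the allowed rank realizing $q_p$.

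Second, translate each requirement into the stated form. The existence of the unimodular target itself forces $l_+ - l_- \equiv 0 \pmod 8$, which is (1). The inequalities $l_\pm - t_\pm \geq 0$ and $l_+ + l_- - t_+ - t_- \geq l(A_S)$ are exactly (2), asserting that $T$ has room to exist with discriminant group generated by at least $l(A_S)$ elements. The signature congruence for $T$ combines $\operatorname{sign}(-q_S) \equiv -(t_+ - t_-) \pmod 8$ with $(l_+ - t_+) - (l_- - t_-)$; using (1) this is automatically consistent and needs no separate hypothesis. What remains are the local realizability conditions, which only bite in the \emph{tight} case where the rank of $T$ equals the number of generators $l(A_{S_p})$ at a prime $p$: for odd $p$ this yields the determinant congruence (3) relating $(-1)^{l_+ - t_+}|A_S|$ to $\det K(q_{S_p})$ modulo squares in $\bz_p^\ast$, and for $p = 2$ the analogous but sign-ambiguous congruence (4), subject to the exception when $q_{S_2}$ contains a $q_\theta^{(2)}(2)$ summand.

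The delicate part I expect to be condition (4) and the $2$-adic analysis generally. For odd $p$ the $p$-adic lattice realizing a given discriminant form in minimal rank is unique, so the determinant condition is clean and the congruence is forced. At $p = 2$, however, $2$-adic lattices with a prescribed discriminant form need not be unique, and the genus can fail to detect the sign of the determinant; this is precisely why (4) carries a $\pm$ and why the summand $q_\theta^{(2)}(2)$ must be excluded. Classifying which minimal-rank $2$-adic forms occur and matching their determinants to $|A_S|$ up to square class and the sign ambiguity is the technical heart of the argument. Once local existence at every $p$ is secured, the global lattice $T$ is assembled by the genus existence theorem, and the gluing construction of the first step completes the primitive embedding.
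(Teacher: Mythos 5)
The paper does not prove this theorem; it is quoted verbatim from Theorem 1.12.2 of \cite{Nik1}, and your sketch reproduces exactly the strategy of that original proof: the correspondence between primitive embeddings $S\subset L$ with $L$ even unimodular and even lattices $T$ of signature $(l_{(+)}-t_{(+)},\,l_{(-)}-t_{(-)})$ with $q_T\cong -q_S$, followed by the existence theorem for even lattices with prescribed signature and discriminant form, whose local conditions at tight primes yield (3) and (4). Your approach is correct and essentially the same as the cited source's.
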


Remark that if the last inequality in (2) is strict then one does not need the
conditions (3) and (4). If
${q_{S}}_2\cong q_\theta^{(2)}(2)\oplus q^\prime$, then one does not need
the condition (4).

\section{Niemeier lattices and their primitive sublattices}
\label{sec:sublattNiem}

Further, negative definite even unimodular lattices $N$
of the rank $24$ are called {\it Niemeier lattices.}
They were classified by
Niemeier \cite{Nie}. See also \cite[Ch. 16, 18]{CS}. All elements with
square $(-2)$ of these lattices define root systems $\Delta(N)$
and generate root sublattices $[\Delta(N)]=N^{(2)}\subset N$
which are orthogonal sums of the root lattices
$A_n$, $n\ge 1$, $D_m$, $m\ge 4$, or $E_k$, $k=6,7,8$ with the
corresponding root systems $\aaa_n$, $\ddd_m$, $\eee_k$. We
denote by the same letters their Dynkin diagrams.
We fix standard bases by simple roots
of these root systems and lattices according to \cite{Bou}
(see Figure \ref{fig:dyndiagr}). If there are several
components, we put an additional second index which numerates
components.

By Niemeier \cite{Nie}, there are $24$ Niemeier lattices $N$,
up to isomorphisms, and they are characterized by there root sublattices.
Here is the list where $\oplus$ denotes the orthogonal sum of lattices:
$$
N^{(2)}=[\Delta(N)]=
$$
$$
(1)\ D_{24},\ (2)\ D_{16}\oplus E_8,\ (3)\ 3E_8,\ (4)\ A_{24},\
(5)\ 2D_{12},\ (6)\ A_{17}\oplus E_7, \ (7)\ D_{10}\oplus 2E_7,
$$
$$
(8)\ A_{15}\oplus D_9,\
(9)\ 3D_8,\
(10)\ 2A_{12},\ (11)\ A_{11}\oplus D_7\oplus E_6,\ (12)\ 4E_6,\
(13)\ 2A_9\oplus D_6,\
$$
$$
(14)\ 4D_6,\
(15)\ 3A_8,\ (16)\ 2A_7\oplus 2D_5,\ (17)\ 4A_6,\ (18)\ 4A_5\oplus D_4,\
(19)\ 6D_4,
$$
$$
(20)\ 6A_4,\
(21)\ 8A_3,\ (22)\ 12A_2,\ (23)\ 24A_1
$$
give $23$ Niemeier lattices $N_i$, where the number $i$ is shown in
brackets above. The last one, the {\it Leech lattice}\ $(24)$
with $N^{(2)}=\{0\}$ has no roots. Further, we also denote by $N(R)$
the Niemeier lattice with the root system $R$.

We recall that a basis $P(N)$ of the root lattice $[\Delta(N)]$
by simple roots is defined up to the reflection group $W(N)$ which is
generated by reflections $s_\delta:x\to x+(x\cdot \delta)\delta$, $x\in N$,
in roots $\delta\in \Delta (N)$. We denote by
$\Gamma(P(N))$ its Dynkin diagram.
Let $A(N)\subset O(N)$ be the group of symmetries
of the fixed basis $P(N)$. Thus, $g\in A(N)$ if
and only if $g(P(N))=P(N)$. Then we have the semi-direct product
$$
O(N)=A(N)\ltimes W(N).
$$
for the automorphism group of $N$. Equivalently, $P(N)$ is
equivalent to a choice of the fundamental chamber for $W(N)$, and
$A(N)$ is the group of symmetries of the fundamental chamber.

We {\it denote by $\N$ the disjoint union of all $24$ Niemeier
lattices $N_i$, $i=1,2,\dots, 24$ with their zeros identified.} Thus,
naturally, $\N_i=N_i\subset \N$ denotes the corresponding Niemeier
sublattice of this set which is called a {\it Niemeier component}
of $\N$. If $K$ is a lattice, then an embedding $K\subset \N$ means an
embedding of $K$ to one of $\N_i\subset \N$ as a sublattice. If $K\not=\{0\}$,
then $\N_i$ is defined uniquely, and it is called the {\it component} of
the embedding $K\subset \N$. The
embedding is called {\it primitive} if $\N_i/K$ has no torsion.

Further, we fix bases $P(\N_i)$ for $\Delta(\N_i)$ for all $24$
components of $\N$. Naturally, the direct product
$$
A(\N)=\prod_{i=1}^{i=24}A(\N_i)
$$
acts on $\N$. By definition, $A(\N_i)$ acts  as $A(N_i)$ on
the component $\N_i=N_i$, and $A(\N_i)$ is identity on all other components
$\N_j$, $j\not=i$.

Since Niemeier lattices
have the signature $(0,24)$, from Theorem \ref{th:primembb1} we obtain

\begin{theorem} (Corollary of Theorem 1.12.2 in \cite{Nik1}).
An even lattice $S$ has a primitive embedding into $\N$ (equivalently,
to one of Niemeier lattices) if and only if the following
conditions satisfy:

(2) $S$ is negative definite and
$\rk S+l(A_S)\le 24$;

(3) $|A_S|\equiv \det {K(q_{S_p})}\mod ({\bz_p}^\ast)^2$
for each odd prime $p$ such that \newline
$\rk S+l(A_{S_p})=24$;

(4) $|A_S|\equiv \pm \det {K(q_{S_2})}\mod ({\bz_2}^\ast)^2$, if
$\rk S+l(A_{S_2})=24$ and \newline
${q_{S}}_2\not\cong q_\theta^{(2)}(2)\oplus q^\prime$.
\label{th:primembb2}
\end{theorem}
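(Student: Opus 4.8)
The plan is to derive Theorem \ref{th:primembb2} as a direct specialization of Theorem \ref{th:primembb1}, where the target even unimodular lattice has signature $(0,24)$. The point is that primitively embedding $S$ into one of the Niemeier lattices is exactly the same as primitively embedding $S$ into an even unimodular lattice of signature $(l_{(+)},l_{(-)})=(0,24)$, since the Niemeier lattices are precisely the negative definite even unimodular lattices of rank $24$. So I would begin by substituting $l_{(+)}=0$ and $l_{(-)}=24$ into the four conditions of Theorem \ref{th:primembb1} and simplify.

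First I would observe that condition (1), $l_{(+)}-l_{(-)}\equiv 0\mod 8$, becomes $-24\equiv 0\mod 8$, which holds automatically; this is why the renumbered conditions in Theorem \ref{th:primembb2} start at (2). Next, for condition (2) I would note that $l_{(+)}-t_{(+)}\ge 0$ forces $t_{(+)}=0$, so $S$ is negative definite; then $t_{(-)}=\rk S$, the inequality $l_{(-)}-t_{(-)}\ge 0$ reads $24-\rk S\ge 0$, and the last inequality $l_{(+)}+l_{(-)}-t_{(+)}-t_{(-)}\ge l(A_S)$ becomes $24-\rk S\ge l(A_S)$, i.e. $\rk S+l(A_S)\le 24$. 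Combining these, condition (2) collapses exactly to the stated form.

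Then I would handle the sign factor in condition (3). With $l_{(+)}=0$ and $t_{(+)}=0$ we have $(-1)^{l_{(+)}-t_{(+)}}=(-1)^0=1$, so the congruence $(-1)^{l_{(+)}-t_{(+)}}|A_S|\equiv\det K(q_{S_p})$ simplifies to $|A_S|\equiv\det K(q_{S_p})\mod({\bz_p}^\ast)^2$, and the side condition $l_{(+)}+l_{(-)}-t_{(+)}-t_{(-)}=l(A_{S_p})$ becomes $\rk S+l(A_{S_p})=24$, matching the statement. Condition (4) already carries the $\pm$ sign in Theorem \ref{th:primembb1}, so the substitution of the triggering equality $l_{(+)}+l_{(-)}-t_{(+)}-t_{(-)}=l(A_{S_2})$ by $\rk S+l(A_{S_2})=24$ completes the translation verbatim.

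I do not expect any genuine obstacle here, since the argument is a purely mechanical specialization of an already-stated theorem; the only thing requiring a sentence of justification is the identification of ``even unimodular lattices of signature $(0,24)$'' with ``Niemeier lattices,'' which is immediate from the definition recalled at the start of this section. The mildest care is needed to confirm that $l_{(+)}-t_{(+)}\ge 0$ together with $l_{(+)}=0$ genuinely forces negative definiteness (i.e. $t_{(+)}=0$), rather than merely bounding the rank; this is what justifies incorporating ``$S$ is negative definite'' into condition (2) rather than listing it separately.
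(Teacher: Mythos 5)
Your proposal is correct and is exactly the paper's (implicit) argument: the paper derives Theorem \ref{th:primembb2} in one line by noting that Niemeier lattices are the even unimodular lattices of signature $(0,24)$ and substituting $l_{(+)}=0$, $l_{(-)}=24$ into Theorem \ref{th:primembb1}. Your spelled-out verification of each condition, including the observation that $l_{(+)}-t_{(+)}\ge 0$ forces $t_{(+)}=0$ and that the sign $(-1)^{l_{(+)}-t_{(+)}}$ becomes $+1$, fills in the routine details the paper omits.
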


Theorem \ref{th:primembb2} gives simple conditions on $S$
when it can be considered as a primitive sublattice of one
of Niemeier lattices (equivalently, of $\N$). We can use it
to get an important information about $S$ and
all primitive sublattices of Niemeier lattices. Later, we shall
apply these results to Picard lattices of K\"ahlerian K3
surfaces.

For a negative definite lattice $S$, we denote by $H(S)$
the kernel of the natural homomorphism
$$
\pi: O(S)\to O(q_S).
$$
Equivalently, an automorphism $\phi\in O(S)$ of the lattice $S$
belongs to $H(S)$ if and only if $\phi$ gives identity on the
discriminant group $S^\ast/S$.

It was observed in \cite[Remark 1.14.7]{Nik1} that
$\pi$ has a non-trivial kernel if and only if
$S$ contains some exceptional sublattices. These exceptional
sublattices can be found from negative definite
even unimodular lattices and their
automorphism groups. Remark that for an unimodular lattice
$L$, the group $H(L)=O(L)$.
To study $H(S)$ for negative definite even lattices
$S$, it was suggested in  \cite[Remark 1.14.7]{Nik1}
to use primitive embeddings of
$S$ into negative definite even unimodular lattices.
For example, for lattices $S$ satisfying
Theorem \ref{th:primembb2}, one can use primitive embeddings
into Niemeier lattices (equivalently, into $\N$). Below,
we summarize these results of \cite[Remark 1.14.7]{Nik1}.

Let $S$ be an even negative definite lattice.
Let $\Delta (S)$ be the set of roots
$\delta \in S$ with square $\delta^2=-2$. It
is easy to see that the reflection $s_\delta$ belongs to
$H(S)$. Thus, $[\delta]=\langle -2 \rangle$ is
the first example of an exceptional sublattice.
The Weyl group $W(S)$ generated by
reflections in all elements of
$\Delta (S)$  is the normal subgroup of $H(S)$. Let us choose
a basis $P(S)$ of the root system $\Delta(S)$ (equivalently, a
fundamental chamber of the Weyl group $W(S)$). Let
$$
A(S)=\{\phi \in H(S)\ |\ \phi(P(S))=P(S)\}
$$
(equivalently, $A(S)\subset H(S)$ is the symmetry subgroup of
the fundamental chamber).
Then $H(S)=A(S)\ltimes W(S)$ is the semi-direct product.
Let
$$
\La (S)=S_{A(S)}=(S^{A(S)})^\perp_S
$$
be the orthogonal complement to the fixed part $S^{A(S)}$ of the action of
$A(S)$ in $S$. Obviously, the sublattice $\La(S)=S_{A(S)}\subset S$
is defined uniquely up to the action of $W(S)$.
It is called {\it the coinvariant
sublattice of $S$ for $A(S)$.}

It was shown in Proposition 1.14.8 of \cite[Remark 1.14.7]{Nik1}
that the primitive sublattice $\La(S)$ satisfies the following
properties: $\La(S)^{A(S)}=\{0\}$ (this is obvious),
$\La (S)$ has no roots with square $(-2)$, $\La(\La(S))=\La(S)$, and
$A(\La(S))=A(S)|\La(S)$. Thus, $H(\La(S))=A(\La(S))$, $A(\La(S))$
defines $A(S)$:  one should continue $A(\La(S))$ identically to the
orthogonal complement of $\La(S)$ in $S$.

The sublattice $\La(S)=S_{A(S)}$ was called in
\cite[Remark 1.14.7]{Nik1} as
{\it the Leech type sublattice of $S$}. Thus, the group
$H(S)$ and its natural subgroups $W(S)$ and $A(S)$ are
completely defined by the basis $P(S)$ and the Leech type sublattice
$\La(S)$. Really, $W(S)$ is generated by reflections in $P(S)$, and
$A(S)=A(\La(S))$ if one continues $A(\La(S))$ identically to the
orthogonal complement $\La(S)^\perp_S$.

Now let us assume that $S\subset N_i$ is a primitive sublattice
of one of Niemeier lattices (or of any other
even unimodular lattice). Then, replacing
$S$ by $w(S)$ where $w\in W(N_i)$, one can assume that $P(S)$ is a subset
of $P(N_i)$. Later, we always assume that the primitive embedding
$S\subset N_i$ satisfies this condition:
\begin{equation}
P(S)=S\cap P(N_i)\ is\  a\  basis\  of\  \Delta(S).
\label{P(S)1}
\end{equation}
Then, continuing $A(S)$ identically to ${S^\perp}_{N_i}$ (it is possible
because $A(S)$ gives identity on $S^\ast/S$), one
obtains a subgroup of $A(N_i)$. It follows that
\begin{equation}
A(S)=\{\phi|S\ |\ \phi\in A(N_i)\ and\ \phi|S^\perp_{N_i}\ is\  identity\}.
\label{A(S)1}
\end{equation}
It follows that $A(S)\subset A(N_i)$ and $P(S)\subset P(N_i)$ satisfy
the obvious important condition:
$$
P(S)\ \ is\  invariant\  with\  respect\  to\  A(S).
$$
Since for Niemeier lattices $N_i$ the sets $P(N_i)$ and their
graphs $\Gamma(P(N_i))$, and
the groups $A(N_i)$ are known, we obtain an effective tool to
construct primitive sublattices $S\subset N_i$ with different graphs
$\Gamma(P(S))$ and groups $A(S)$.

\medskip

{\bf Example.} The Niemeier lattices $N=N_1=N(D_{24}),
\ N_2=N(D_{16}\oplus E_8)$
have the trivial group $A(N)$
(see \cite[Ch. 16]{CS}).
It follows that any primitive sublattice $S\subset N$ of such lattices has
the trivial group $A(S)$ and the trivial Leech type sublattice $\La(S)$.

Let us assume that graph $\Gamma(S^{(2)})$ has a subgraph $\ddd_n$ where
$n\ge 13$. By the classification of Niemeier lattices, then
$S$ can have a primitive embedding only to the lattices $N(D_{24})$
and $N(D_{16}\oplus E_8)$,  and the group $A(S)$ is trivial again
if $S$ satisfies Theorem \ref{th:primembb2}.

\medskip

We shall consider other examples in section \ref{sec:applNiemmark}.

\section{Additional markings of K\"ahlerian K3 surfaces\\
by Niemeier lattices}
\label{sec:KalerNiemmark}

We consider K\"ahlerian K3 surfaces. We recall that they are
K\"ahlerian compact complex surfaces $X$ such that $X$ is simply-connected
and $X$ has a holomorphic 2-dimensional differential form
$\omega_X\in H^{2,0}(X)$ such that $\omega_X$ has no zeros.
One can consider $\omega_X$ as a complex volume form.
See \cite[Chapter 9]{S} and \cite{BR} about
K\"ahlerian K3 surfaces.

It is known that the cohomology lattice $H^2(X,\bz)$ with the
intersection pairing is an even unimodular lattice of the signature
$(3,19)$. It has no torsion. All even unimodular lattices of the
signature $(3,19)$ are isomorphic (the same is valid for all indefinite
even unimodular lattices of the same signature).
$X$ has the Hodge decomposition
$$
H^2(X,\bc)=H^2(X,\bz)\otimes \bc=H^{2,0}(X)+H^{1,1}(X)+H^{0,2}(X)
$$
where $H^{2,0}(X)=\bc \omega_X$ is $1$-dimensional,
$H^{0,2}(X)=\overline{H^{2,0}(X)}$ and
$H^{1,1}(X)=\overline {H^{1,1}(X)}$ is 20-dimensional.
It follows that the {\it Picard lattice}
$$
S_X=H^2(X,\bz)\cap H^{1,1}(X)
$$
of $X$ is a sublattice of the hyperbolic
real subspace $H^{1,1}_\br (X)\subset H^{1,1}(X)$
of the signature $(1,19)$. It follows that
the Picard lattice $S_X$ is a primitive sublattice
of $H^2(X,\bz)$, and it satisfies to one of the following conditions:

\medskip

(a) $S_X$ is negative definite of the rank $0\le \rk S_X\le 19$;

(b) $S_X$ is semi-negative definite with $1$-dimensional kernel,
and $1\le \rk S_X\le 19$;

(c) $S_X$ is hyperbolic (that is it has the signature $(1,\rk S_X-1)$),
and $1\le \rk S_X\le 20$ (this is the case when $X$ is algebraic).

\medskip

Further, we denote by
$\rho(X)=\rk S_X$ the Picard  number of $X$.

Let $L_{K3}$ be an abstract even unimodular lattice of the
signature $(3,19)$. Thus, the  Picard lattice $S_X$ of a
K\"ahlerian K3 surface $X$ has a primitive
embedding $S_X\subset L_{K3}$, and it satisfies to one of conditions
(a), (b) or (c). By the epimorphicity
of the period  map for K3 surfaces \cite{Kul}, \cite{Siu}, \cite{Tod},
Picard lattices $S_X$ of K3 surfaces are characterized by these
properties. An even lattice which is either negative definite, or
semi-negative definite with one-dimensional kernel, or hyperbolic is
isomorphic to the Picard lattice of one of K\"ahlerian K3 surfaces
if and only if it has a primitive embedding into the lattice $L_{K3}$.

By Theorem \ref{th:primembb1} (from \cite{Nik1}), we obtain the
complete description of Picard lattices of K3 surfaces.

\begin{theorem} (Corollary of Theorem 1.12.2 in \cite{Nik1}).

An even negative definite lattice $M$ is isomorphic to the Picard lattice
$S_X$ of one of K\"ahlerian K3 surfaces $X$
(equivalently, $M$ has a primitive embedding into $L_{K3}$)
if and only if

(2) $\rk M\le 19$ and $\rk M+l(A_M)\le 22$;

(3)$-|A_M|\equiv \det {K(q_{M_p})}\mod ({\bz_p}^\ast)^2$
for each odd prime $p$ such that \newline
$\rk M+l(A_{M_p})=22$;

(4) $|A_M|\equiv \pm \det {K(q_{M_2})}\mod ({\bz_2}^\ast)^2$, if
$\rk M+l(A_{M_2})=22$ and
\newline
${q_{M_2}}\not\cong q_\theta^{(2)}(2)\oplus q^\prime$.
\label{th:primembb3}
\end{theorem}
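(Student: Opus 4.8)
The plan is to obtain this as a direct specialization of Theorem \ref{th:primembb1} to the fixed signature of $L_{K3}$. The excerpt has already reduced the geometric assertion to a lattice-theoretic one: by epimorphicity of the period map, an even negative definite lattice $M$ is isomorphic to the Picard lattice $S_X$ of some K\"ahlerian K3 surface (case (a) above) if and only if $M$ admits a primitive embedding into $L_{K3}$, which is the unique even unimodular lattice of signature $(3,19)$. Thus it suffices to transcribe the conditions of Theorem \ref{th:primembb1} for a primitive embedding of $M$ into an even unimodular lattice of signature $(3,19)$.

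I would apply Theorem \ref{th:primembb1} with $(t_{(+)},t_{(-)})=(0,\rk M)$, since $M$ is negative definite, and $(l_{(+)},l_{(-)})=(3,19)$. Condition (1) reads $3-19=-16\equiv 0\mod 8$ and so holds automatically; this is why the corollary begins its numbering at (2). For condition (2) I would unwind $l_{(+)}-t_{(+)}=3\ge 0$ (automatic), $l_{(-)}-t_{(-)}=19-\rk M\ge 0$ (giving $\rk M\le 19$), and $l_{(+)}+l_{(-)}-t_{(+)}-t_{(-)}=22-\rk M\ge l(A_M)$ (giving $\rk M+l(A_M)\le 22$).

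For conditions (3) and (4) the only bookkeeping occurs. The index hypothesis $l_{(+)}+l_{(-)}-t_{(+)}-t_{(-)}=l(A_{S_p})$ becomes $22-\rk M=l(A_{M_p})$, that is $\rk M+l(A_{M_p})=22$, and likewise for $p=2$. The point requiring attention is the sign exponent $l_{(+)}-t_{(+)}=3-0=3$, which is odd, so $(-1)^{l_{(+)}-t_{(+)}}=-1$; hence the odd-prime congruence $(-1)^{l_{(+)}-t_{(+)}}|A_S|\equiv \det K(q_{S_p})\mod ({\bz_p}^\ast)^2$ becomes $-|A_M|\equiv \det K(q_{M_p})\mod ({\bz_p}^\ast)^2$, which is exactly (3). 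For $p=2$, condition (4) already carries a $\pm$ sign, so this sign is absorbed and the congruence together with its exception $q_{M_2}\not\cong q_\theta^{(2)}(2)\oplus q^\prime$ is transcribed verbatim.

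I expect no genuine obstacle: the statement is simply Theorem \ref{th:primembb1} read off in the single signature $(3,19)$. The only step demanding care is tracking the parity of $l_{(+)}-t_{(+)}=3$, which is what produces the minus sign in the odd-prime condition (3) and distinguishes it from the $\pm$ of the dyadic condition (4).
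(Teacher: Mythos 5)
Your proposal is correct and is exactly the argument the paper intends: the geometric statement is reduced to the existence of a primitive embedding $M\subset L_{K3}$ via the epimorphicity of the period map, and the conditions are then read off from Theorem \ref{th:primembb1} with $(t_{(+)},t_{(-)})=(0,\rk M)$ and $(l_{(+)},l_{(-)})=(3,19)$. Your bookkeeping, in particular the sign $(-1)^{3}=-1$ producing the $-|A_M|$ in condition (3), matches the paper's statement.
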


\begin{theorem} (Corollary of Theorem 1.12.2 in \cite{Nik1}).

An even semi-negative definite lattice $M$ with one-dimensional
kernel $Ker\ M$ is isomorphic to the Picard lattice
$S_X$ of one of K\"ahlerian K3 surfaces $X$
(equivalently, $M$ has a primitive embedding into $L_{K3}$),
if and only if for $\widetilde{M}=M/{Ker\ M}$ one has

(2) $\rk \widetilde{M}\le 18$ and
$\rk \widetilde{M}+l(A_{\widetilde{M}})\le 20$;

(3)$|A_{\widetilde{M}}|\equiv \det {K(q_{\widetilde{M}_p})}
\mod ({\bz_p}^\ast)^2$
for each odd prime $p$ such that
\newline
$\rk \widetilde{M}+l(A_{\widetilde{M}_p})=20$;

(4) $|A_{\widetilde{M}}|\equiv \pm \det {K(q_{\widetilde{M}_2})}
\mod ({\bz_2}^\ast)^2$,
if $\rk \widetilde{M}+l(A_{\widetilde{M}_2})=20$ and
\newline
${q_{\widetilde{M}_2}}\not\cong q_\theta^{(2)}(2)\oplus q^\prime$.
\label{th:primembb4}
\end{theorem}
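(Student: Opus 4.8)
The plan is to reduce Theorem~\ref{th:primembb4} about semi-negative definite lattices to the already-established Theorem~\ref{th:primembb3} about negative definite lattices, by separating out the one-dimensional kernel. The key geometric input is that a K\"ahlerian K3 surface has Picard lattice $S_X$ with a one-dimensional kernel precisely in the non-algebraic boundary case (b), and the epimorphicity of the period map (already cited via \cite{Kul}, \cite{Siu}, \cite{Tod}) tells us that $M$ is realized as some $S_X$ if and only if $M$ admits a primitive embedding into $L_{K3}$. So the entire content is the lattice-theoretic statement: an even semi-negative definite $M$ with $\dim\ker M=1$ embeds primitively into the even unimodular lattice $L_{K3}$ of signature $(3,19)$ if and only if its nondegenerate quotient $\widetilde M=M/\operatorname{Ker}M$ satisfies conditions (2)--(4) as listed.

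First I would establish that a primitive embedding of $M$ into $L_{K3}$ is equivalent to a primitive embedding of $\widetilde M$ into a unimodular lattice one step smaller, namely into an even unimodular lattice of signature $(2,19)$. The mechanism is standard: if $M\hookrightarrow L_{K3}$ is primitive, then the isotropic kernel $\operatorname{Ker}M$ spans a primitive isotropic line in $L_{K3}$, and passing to the quotient $u^\perp/u$ for a primitive isotropic vector $u$ produces an even unimodular lattice of signature $(2,18)$; conversely one reconstructs the embedding by a hyperbolic-plane splitting argument. More precisely, $\widetilde M$ is negative definite of signature $(0,\rk\widetilde M)$, and I would show that $M$ embeds primitively in $L_{K3}$ of signature $(3,19)$ iff $\widetilde M$ embeds primitively into an even unimodular lattice of signature $(2,18)$ (the extra $(1,1)$ absorbing the rank-one kernel together with an added hyperbolic summand). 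I would make this precise by invoking Theorem~\ref{th:primembb1} directly for $\widetilde M$ with target signature $(l_{(+)},l_{(-)})=(2,18)$.

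Next I would simply specialize Theorem~\ref{th:primembb1} to $S=\widetilde M$, which is even negative definite with $(t_{(+)},t_{(-)})=(0,\rk\widetilde M)$, and $(l_{(+)},l_{(-)})=(2,18)$. Then condition (1), $l_{(+)}-l_{(-)}=2-18=-16\equiv0\bmod 8$, holds automatically. Condition (2) becomes $2\ge0$, $18-\rk\widetilde M\ge0$, and $20-\rk\widetilde M\ge l(A_{\widetilde M})$, i.e.\ $\rk\widetilde M\le 18$ and $\rk\widetilde M+l(A_{\widetilde M})\le 20$, matching the stated (2). The sign factor in condition (3), $(-1)^{l_{(+)}-t_{(+)}}=(-1)^{2-0}=+1$, explains why the semi-definite case has $|A_{\widetilde M}|\equiv\det K(q_{\widetilde M_p})$ with a plus sign, in contrast to the $-|A_M|$ appearing in Theorem~\ref{th:primembb3} (where $(-1)^{l_{(+)}-t_{(+)}}=(-1)^{3}=-1$). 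The equality conditions $\rk\widetilde M+l(A_{\widetilde M_p})=20$ in (3) and (4) are exactly $l_{(+)}+l_{(-)}-t_{(+)}-t_{(-)}=l(A_{S_p})$ specialized to this signature, and (4) transcribes verbatim.

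The main obstacle I expect is the first step: justifying cleanly that a primitive embedding of the \emph{degenerate} lattice $M$ into $L_{K3}$ corresponds to a primitive embedding of $\widetilde M$ into a smaller unimodular lattice of signature $(2,18)$, and in particular that the isotropic-line quotient and its inverse are both primitivity-preserving and account correctly for the $\pm$ sign and the drop in both $l(A)$ and rank bounds. This requires care with the discriminant forms: one must verify that $q_{\widetilde M}$ is genuinely the relevant form (the kernel contributes nothing to the discriminant since it is split off by a hyperbolic plane) and that no parity subtlety in the $2$-adic condition (4) is lost in the quotient. Once this correspondence is nailed down, the remainder is the purely bookkeeping specialization of Theorem~\ref{th:primembb1} described above, so the substantive work is entirely in the reduction to the nondegenerate quotient.
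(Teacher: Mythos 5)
Your proposal is correct and is exactly the argument the paper intends: the theorem is stated as an immediate corollary of Theorem \ref{th:primembb1}, and the implicit proof is precisely your reduction of the degenerate lattice $M\cong \mathrm{Ker}\,M\oplus\widetilde M$ to a primitive embedding of $\widetilde M$ into the even unimodular lattice $c^{\perp}/\bz c$ of signature $(2,18)$, followed by the bookkeeping specialization of Theorem \ref{th:primembb1} with $(l_{(+)},l_{(-)})=(2,18)$ and $(t_{(+)},t_{(-)})=(0,\rk\widetilde M)$, which in particular accounts for the sign $(-1)^{l_{(+)}-t_{(+)}}=+1$ in condition (3). (The single occurrence of ``signature $(2,19)$'' in your second paragraph is a slip; all of your actual computations correctly use $(2,18)$.)
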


\begin{theorem} (Corollary of Theorem 1.12.2 in \cite{Nik1}).

An even hyperbolic lattice $M$ (that is $M$ has the signature
$(1,\rk M-1)$) is isomorphic to the Picard lattice
$S_X$ of one of algebraic K3 surfaces $X$ over $\bc$
(equivalently, $M$ has a primitive embedding into $L_{K3}$)
if and only if

(2) $\rk M\le 20$ and $\rk M+l(A_M)\le 22$;

(3)$|A_M|\equiv \det {K(q_{M_p})}\mod ({\bz_p}^\ast)^2$
for each odd prime $p$ such that \newline
$\rk M+l(A_{M_p})=22$;

(4) $|A_M|\equiv \pm \det {K(q_{M_2})}\mod ({\bz_2}^\ast)^2$, if
$\rk M+l(A_{M_2})=22$ and
\newline
${q_{M_2}}\not\cong q_\theta^{(2)}(2)\oplus q^\prime$.
\label{th:primembb5}
\end{theorem}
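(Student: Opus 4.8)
The plan is to obtain this as a direct specialization of the general criterion, Theorem \ref{th:primembb1}, combined with the surjectivity of the period map recalled just above. By the epimorphicity of the period map, an even hyperbolic lattice $M$ is isomorphic to the Picard lattice $S_X$ of some K\"ahlerian K3 surface $X$ if and only if $M$ admits a primitive embedding into $L_{K3}$; and by condition (c) in the discussion of $S_X$, a K\"ahlerian K3 surface whose Picard lattice is hyperbolic is automatically algebraic, so ``Picard lattice of an algebraic K3 surface over $\bc$'' is here the same as ``hyperbolic Picard lattice of a K\"ahlerian K3 surface''. Since all indefinite even unimodular lattices of a fixed signature are isomorphic, a primitive embedding into some even unimodular lattice of signature $(3,19)$ is the same as a primitive embedding into $L_{K3}$. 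Thus it remains only to transcribe Theorem \ref{th:primembb1} for the data $l_{(+)}=3$, $l_{(-)}=19$ and, since $M$ is hyperbolic, $t_{(+)}=1$, $t_{(-)}=\rk M-1$.

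First I would dispose of condition (1): here $l_{(+)}-l_{(-)}=3-19=-16\equiv 0 \mod 8$, so it holds automatically and imposes no constraint on $M$. Next, in condition (2) of Theorem \ref{th:primembb1} the inequality $l_{(+)}-t_{(+)}=3-1=2\ge 0$ is automatic, while $l_{(-)}-t_{(-)}=19-(\rk M-1)=20-\rk M\ge 0$ becomes $\rk M\le 20$, and $l_{(+)}+l_{(-)}-t_{(+)}-t_{(-)}=22-\rk M\ge l(A_M)$ becomes $\rk M+l(A_M)\le 22$. This reproduces condition (2) of the statement. The threshold $l_{(+)}+l_{(-)}-t_{(+)}-t_{(-)}=l(A_{M_p})$ governing conditions (3) and (4) likewise becomes $\rk M+l(A_{M_p})=22$, exactly as required.

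The one point that wants care is the sign in condition (3). In Theorem \ref{th:primembb1} the congruence carries the factor $(-1)^{l_{(+)}-t_{(+)}}$; in the present hyperbolic case $l_{(+)}-t_{(+)}=3-1=2$ is even, so the factor equals $+1$ and condition (3) reads $|A_M|\equiv \det K(q_{M_p})\mod (\bz_p^\ast)^2$ with no sign, as stated. This is precisely the parity that distinguishes the hyperbolic case from the negative definite one: in Theorem \ref{th:primembb3} one has $t_{(+)}=0$, the exponent $l_{(+)}-t_{(+)}=3$ is odd, and the corresponding minus sign appears there. Condition (4) involves no such exponent, keeps its $\pm$, and transcribes verbatim once the threshold is rewritten as $\rk M+l(A_{M_2})=22$. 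Assembling these specializations yields exactly the biconditional of the theorem, so I expect no substantial obstacle beyond correctly tracking this parity of $l_{(+)}-t_{(+)}$.
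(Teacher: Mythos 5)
Your proposal is correct and is exactly the derivation the paper intends: the theorem is stated as a corollary of Theorem \ref{th:primembb1} via the epimorphicity of the period map, specialized to $l_{(+)}=3$, $l_{(-)}=19$, $t_{(+)}=1$, $t_{(-)}=\rk M-1$. Your sign check in condition (3) — the exponent $l_{(+)}-t_{(+)}=2$ being even here versus $3$ in the negative definite case of Theorem \ref{th:primembb3} — is precisely the point that distinguishes the two statements, and you have it right.
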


\medskip

We recall the definition of the period domain $\widetilde{\Omega}$
for K\"ahlerian K3 surfaces (see \cite{BR}).
We fix an even unimodular lattice $L_{K3}$ of the signature $(3,19)$.
The $\widetilde{\Omega}$ consists of all triplets
\begin{equation}
(H^{2,0},\, V^+,\,P)
\label{pointofperiods}
\end{equation}
which we describe below.

Here $H^{2,0}\subset L_{K3}\otimes \bc$ is a one-dimensional
complex linear subspace satisfying the condition
\begin{equation}
\omega\cdot \omega=0,\ \ \omega\cdot \overline{\omega}>0
\label{condomega}
\end{equation}
for any $0\not=\omega \in H^{2,0}$. (We extend the symmetric
bilinear form of $L_{K3}$ to the symmetric $\bc$-bilinear form
on $L_{K3}\otimes \bc$.)
Such $H^{2,0}\subset L_{K3}\otimes \bc$ define a $20$-dimensional
complex homogeneous manifold which is denotes by $\Omega$.

For a fixed $H^{2,0}\subset L_{K3}\otimes \bc$
from $\Omega$, we denote
by $H^{1,1}_\br$ the orthogonal complement to $H^{2,0}$
in $L_{K3}\otimes \br$ which is a hyperbolic form of the signature
$(1,19)$. It contains the light cone
\begin{equation}
V=\{x\in H^{1,1}_\br\ |\ x^2>0\}.
\label{lightconeV}
\end{equation}
The $V^+$ denotes one of two halfs (that is connected components)
of this cone $V$. It defines the hyperbolic space ${\Hh}^+=V^+/\br^+$
which is the projectivization of the half-cone $V^+$.

Let $H^{1,1}_\bz=H^{1,1}_{\br}\cap L_{K3}$. The $H^{1,1}_\bz$ is a lattice
which is either negative definite, or semi-negative definite with
one-dimensional kernel, or hyperbolic. Let $\Delta(H^{1,1}_\bz)$ be the
set of all elements with square $(-2)$ of this lattice. The group
$W(H^{1,1}_\bz)$ generated by reflections in all elements from
$\Delta(H^{1,1}_\bz)$ is the discrete reflection group in
$V^+$ and $\Hh^+$. The set $P$ is the set of perpendicular vectors
from $\Delta(H^{1,1}_\bz)$ to one of fundamental chambers $\M$ of
this reflection group which are directed outwards of this chambers.
Thus
\begin{equation}
\M=\{x\in V^+\ |\ x\cdot P \ge 0\},
\label{setP}
\end{equation}
and each codimension one face of $\M$ is
perpendicular to exactly one $\delta\in P$, and vice a versa each
$\delta\in P$ is perpendicular to a codimension one face of $\M$.
Shortly, $P=P(\M)$.

The set of the triplets \eqref{pointofperiods} is
denoted by $\widetilde{\Omega}$. It is a non-Hausdorff
$20$-dimensional complex manifold which gives an
\'etale covering of $\Omega$.

\medskip

We recall that to each K\"ahlerian K3 surface $X$ one can correspond
the canonical triplet
\begin{equation}
(H^{2,0}(X),V^+(X), P(X)).
\label{periodsofX}
\end{equation} See \cite{BR} for
details.

Here $H^{2,0}(X)
\subset H^2(X,\bz)\otimes \bc$ was introduced and considered above.

Here $V^+(X)$
is the half cone containing the K\"ahler class of the light cone
\begin{equation}
V(X)=\{x\in H^{1,1}_\br (X)\ |\ x^2>0\}.
\label{lightconeX}
\end{equation}

Here
\begin{equation}
P(X)\subset S_X
\label{PX}
\end{equation}
is the set of classes of all non-singular rational curves on $X$.
All of them have the square $(-2)$. Any exceptional curve on $X$
(that is an irreducible complex curve $C\subset X$ with $C^2<0$)
is one of them. They define the nef cone
$$
NEF(X)=\{x\in \overline{V^+(X)}\ |\ x\cdot P(X)\ge 0\}
$$
which is the fundamental chamber for the reflection
group $W(S_X)$, and $P(X)$ is the set of perpendicular vectors to
codimension one faces of $NEF(X)$.

We recall that a marking of a K\"ahlerian K3 surface $X$ is
an isomorphism
\begin{equation}
\alpha:H^2(X,\bz)\cong L_{K3}
\label{markX1}
\end{equation}
of lattices where the lattice $L_{K3}$ was introduced above.
A pair $(X,\alpha)$ is called a {\it marked K\"ahlerian K3 surface.}

By taking
$$
\alpha (H^{2,0}(X),V^+(X),P(X))=
$$
\begin{equation}
(\,(\alpha \otimes \bc) (H^{2,0}(X)),\ (\alpha\otimes \br)(V^+(X)),\
\alpha(P(X))\,)\in
\widetilde{\Omega}
\label{periodmapforK3}
\end{equation}
we obtain the {\it period map $\alpha$}
from the moduli space of marked K\"ahlerian K3 surfaces to the
period domain $\widetilde{\Omega}$ of marked K\"ahlerian K3 surface.

By the {\it Global Torelli Theorem for K\"ahlerian K3 surfaces}
(see \cite{PS} (for algebraic case) and \cite{BR}), and
{\it the epimorphicity of the period map for K\"ahlerian K3 surfaces}
(see \cite{Kul} (for algebraic case), and \cite{Siu}, \cite{Tod}),
the period map $\alpha$ is the isomorphism of the complex spaces.

\medskip

We want to introduce an additional marking of K\"ahlerian K3 surfaces
by Niemeier lattices $N_i$ (or by $\N$).

We introduce the corresponding period domain $\widetilde{\Omega}_\N$
which consists of all quadruples
\begin{equation}
(H^{2,0},\, V^+,\,P,\,\tau:S\subset N_i)
\label{pointofperiodsN}
\end{equation}
where the triplet $(H^{2,0},\, V^+,\,P)\in \widetilde{\Omega}$
is as above.

Now, we shall describe $\tau:S\subset N_i$. Here
{\it $S\subset H^{1,1}_\bz$ is the maximal negative definite
sublattice of $H^{1,1}_\bz$.} Thus, $S=H^{1,1}_\bz$ if $H^{1,1}_\bz$
is negative definite; $H^{1,1}_\bz=S\oplus \bz c$ if $H^{1,1}_\bz$
has a one-dimensional kernel $\bz c$;
and $S\subset H^{1,1}_\bz$ is a primitive
negative definite sublattice of $H^{1,1}_\bz$ such that
$(S)^\perp_{H^{1,1}_\bz}=\bz h$ where $h^2>0$, if $H^{1,1}_\bz$
is hyperbolic. For the parabolic case, when $S$
has a one-dimensional kernel $\bz c$,
we additionally require that $P\cap S=P(S)$. For the hyperbolic case,
we additionally require that {\it $h$ is nef, that is $h\in V^+$
and $h\cdot P\ge 0$.} Equivalently, $h$ belongs to the fundamental
chamber for $W(H^{1,1}_\br)$ defined by the $V^+$ and $P$.
{\it The $\tau:S\subset N_i$ is a primitive embedding of $S$ into one of
Niemeier lattices $N_i$ (equivalently, to $\N$) such that
$\tau(P\cap S)\subset P(N_i)$.} Thus, $\tau$ can be considered
as an additional marking by Niemeier lattices. Since $S$ is negative
definite and has a primitive embedding to $L_{K3}$, it satisfies
the Theorem \ref{th:primembb3}. In particular, $\rk S\le 19$ and
$\rk S+ l(A_S)\le 22$. Then $\rk S+l(A_S)<24$. Thus, $S$
satisfies Theorem \ref{th:primembb2} and $S$ has a primitive
embedding to one of $24$ Niemeier lattices $N_i$. The same proof shows
that $S\oplus A_1$ satisfies Theorem \ref{th:primembb2}. It follows that $S$
has a primitive embedding into one of $23$ Niemeier lattices with
non-empty set or roots. {\it This is the important trick due to
Kond\=o \cite{Kon} which permits to avoid the difficult Leech lattice.}

It follows that
$$
\tau:\widetilde{\Omega}_\N\to \widetilde{\Omega}
$$
is a natural ``covering'' of $\widetilde{\Omega}$ which is onto.
Unfortunately, we cannot claim that $\widetilde{\Omega}_\N$ is
a manifold. We can only claim that it is a topological space with
open subsets which are pre-images of open subsets from $\widetilde{\Omega}$.
We can only claim that $\tau$ is one to one
over general points of $\widetilde{\Omega}$ where $H^{1,1}_\bz=\{0\}$ since
we  identify in $\N$ zeros of all Niemeier lattices
$N_i$, $i=1,2,\dots,24$.
This points of $\widetilde{\Omega}$ give a complement to infinite number of
divisors of $\widetilde{\Omega}$. Thus, we can only claim that
$\widetilde{\Omega}_\N$ is similar to a manifold
over these points of $\widetilde{\Omega}$. Moreover,
$\tau$ has finite fibres over points of $\widetilde{\Omega}$ where
$H^{1,1}_\bz$
is negative definite. Thus, $\widetilde{\Omega}_\N$ is very similar to
a manifold over these points. Over other points of $\widetilde{\Omega}$ the
map $\tau$ has countable fibres, in general.

\medskip

Similarly, we introduce additional marking for
K\"ahlerian K3 surfaces $X$. It is
\begin{equation}
\tau:S\subset \N.
\label{NiemarkX}
\end{equation}
 Here {\it $S\subset S_X$ is a maximal negative
definite sublattice.} That is $S=S_X$ if $S_X$ is negative
definite. $S_X=S\oplus \bz c$ if $S_X$ is
semi-negative definite with one-dimensional kernel
generated by the class $c$ of an elliptic curve $C$ on $X$,
and $S\cap P(X)=P(S)$.
If $X$ is algebraic, then
$S=h^\perp_{S_X}$ where $h\in S_X$ is primitive, $h^2>0$, and
{\it $h$ is nef,} that is
$h\cdot D\ge 0$ for every effective divisor $D$ on $X$.
Here {\it $\tau:S\subset \N$ is
a primitive embedding of $S$ into one of $24$ Niemeier
lattices $N_i$ such that $P(X)\cap S=P(S)\subset P(N_i)$.}

\medskip

The standard marking $\alpha :H^2(X,\bz)\cong L_{K3}$ of
a K\"ahlerian K3 surface $X$ with additional
marking $\tau:S\subset \N$ by Niemeier lattices gives its periods
$\alpha(X,\tau)\in \widetilde{\Omega}_\M$. They are
$$
\alpha (H^{2,0}(X),V^+(X),P(X),\tau:S\subset \N)=
$$
\begin{equation}
(\,(\alpha \otimes \bc) (H^{2,0}(X)),\ (\alpha\otimes \br)(V^+(X)),\
\alpha(P(X)),\ \tau\alpha^{-1}:\alpha(S)\subset \N)\in
\widetilde{\Omega}_\N\ .
\label{periodmapforK3N}
\end{equation}

By the Global Torelli Theorem and the Epimorphicity of Torelli map for
K\"ahlerian K3 surfaces (see references above), we obtain isomorphism of
moduli spaces of marked K\"ahlerian K3 surfaces $X$
with additional marking by Niemeier lattices and their
periods space $\widetilde{\Omega}_\N$.

\medskip

\begin{remark}
{\rm
By these definitions and considerations, to construct K\"ahlerian K3
surfaces $X$ with a marking $\tau:S\subset N_i$ by a Niemeier lattice
$N_i$, one has to follow the following procedure.

(a) Check that $S$ satisfies Theorem \ref{th:primembb3}
(equivalently, there exists a primitive embedding $S\subset L_{K3}$).

(b) Choose a primitive embedding $S\subset L_{K3}$.

(c1) To construct $X$ with negative definite $S_X$, choose
$H^{2,0}\subset (S)^\perp_{L_{K3}}\otimes \bc$
which is general enough to have $H^{1,1}_\bz=S$. Choose
$V^+\subset H^{1,1}_\br$, and take $P=P(S)=P(N_i)\cap S$. Then
$(H^{2,0},\,V^+,\,P,\,\tau:S\subset N_i)$ gives periods of a marked
K\"ahlerian K3 surface $(X,\alpha)$ with $S_X=\alpha^{-1}(S)$, and
marking $\tau\alpha:S_X=\alpha^{-1}(S)\subset N_i$ by the Niemeier
lattice $N_i$. The set $P(X)=\alpha^{-1}(P(N_i)\cap S)$.

(c2) To construct $X$ with semi-negative definite $S_X$,
check that $S\subset L_{K3}$ satisfies Theorem \ref{th:primembb4}.
Then there exists a primitive non-zero isotropic
$c\in (S)^\perp_{L_{K3}}$ such that $S\oplus \bz c\subset L_{K3}$
is a primitive
sublattice with the kernel $\bz c$.
Choose
$H^{2,0}\subset (S\oplus \bz c)^\perp_{L_{K3}}\otimes \bc$
which is general enough to have $H^{1,1}_\bz=S\oplus \bz c$. Choose
$V^+\subset H^{1,1}_\br$ such that $c\in \overline{V^+}$.
Take
$P=P(S)=P(N_i)\cap S$, and for each connected component
$P_i$, $i=1,2,\dots, k$,  of the Dynkin diagram of $P$,
take the maximal root $\delta_i$, and denote $p_i=c-\delta_i$. Then
$$
\widetilde{P}=P\cup \{p_1,\,p_2,\dots,\,p_k\}=P(S\oplus \bz c),
$$
and
$(H^{2,0},\,V^+,\,\widetilde{P},\,\tau:S\subset N_i)$
gives periods of a marked
K\"ahlerian K3 surface $(X,\alpha)$ with $S_X=\alpha^{-1}(S\oplus \bz c)$
and with marking $\tau\alpha:\alpha^{-1}(S)\subset N_i$ by the Niemeier
lattice $N_i$. The set $P(X)=\alpha^{-1}(\widetilde{P})$.

(c3) To construct $X$ with hyperbolic $S_X$ (thus, $X$ is algebraic),
take a primitive $h\in (S)^\perp_{L_{K3}}$ such that $h^2>0$ (they
exist obviously). Then the primitive sublattice
$\widetilde{S}=[S\oplus \bz h]_{pr}$ of $L_{K3}$ generated by $S$ and $h$
is hyperbolic. Choose
$H^{2,0}\subset (\widetilde{S})^\perp_{L_{K3}}\otimes \bc$
which is general enough to have $H^{1,1}_\bz=\widetilde{S}$.
Choose $V^+\subset H^{1,1}_\br$ such that $h\in V^+$.
Take
$P=P(S)=P(N_i)\cap S$, and take the fundamental chamber $\M$
for $W(\widetilde{S})$ such that $h\in \M$ and $P\subset P(\M)$
where $P(\M)$ is the set of perpendicular vectors with square $(-2)$
to codimension one faces of $\M$ directed outwards of $\M$.
Then $(H^{2,0},\,V^+,\,{P(\M)},\,\tau:S\subset N_i)$
gives periods of a marked algebraic
 K3 surface $(X,\alpha)$ with $S_X=\alpha^{-1}(\widetilde{S})$,
{\it nef} element $\alpha^{-1}(h)$, and $P(X)=\alpha^{-1}(P(\M))$,
and with the marking $\tau\alpha:\alpha^{-1}(S)\subset N_i$ by the
Niemeier lattice $N_i$. We have $P(X)=\alpha^{-1}(P(\M ))$,
and the linear system $|nh|$, $n>0$, contracts non-singular
rational curves which have
classes from $\alpha^{-1}(S\cap P(N_i))$, and only these curves.
}
\label{rem:Niemmarkconst}
\end{remark}

\section{Applications of marking by Niemeier lattices}
\label{sec:applNiemmark}

Let $X$ be a K\"ahlerian K3 surface $X$ with a
marking $\tau:S\subset N_i$ by a Niemeier lattice $N_i$. By
the conditions on $\tau$ and
our considerations in Sec. \ref{sec:sublattNiem} using
\cite[Remark 1.14.7]{Nik1}, we obtain the following result.

\begin{theorem} Let $X$ be a K\"ahlerian K3 surface with
marking $\tau:S\subset N_i$ by a Niemeier lattice $N_i$.

Then $P(X)\cap S=P(S)$ is the
basis for the root system $\Delta (S)$ and $P(S)=P(X)\cap S$
is a subset of $P(N_i)$. Thus, the Dynkin diagram $\Gamma (P(X)\cap S)$
gives the Dynkin diagram of $\Gamma(P(S))$, and it
is a subdiagram of the Dynkin diagram $\Gamma (P(N_i))$.

In particular, if $S_X$ is negative definite, then $S=S_X$,
and $P(X)=P(N_i)\cap S$
gives the set of classes of all non-singular rational curves on $X$.

If $X$ is algebraic, then $P(X)\cap S=P(S)=P(N_i)\cap S$ gives
the set of classes of all non-singular rational curves on $X$
which are contracted by the linear system $|nh|$ for $n>0$ where
$h$ is the primitive nef element in $S_X$ which generates
the orthogonal complement to $S$ in $S_X$.
\label{th:P(S)X}
\end{theorem}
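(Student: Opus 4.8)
The plan is to read off all three assertions from the definition of the marking $\tau:S\subset N_i$ given in Section~\ref{sec:KalerNiemmark}, combined with the standing convention \eqref{P(S)1} and the structural description of $W(S)$, $A(S)$ and $P(S)$ recalled in Section~\ref{sec:sublattNiem}. The only genuinely geometric input is the translation, for algebraic $X$, between contraction by $|nh|$ and orthogonality to $h$; everything else is a formal consequence of how $\tau$ was set up.

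First I would establish the basis and subdiagram statement. By the very definition of the additional marking \eqref{NiemarkX}, the embedding $\tau:S\subset N_i$ is required to satisfy $P(X)\cap S=P(S)\subset P(N_i)$, where $P(S)$ is a basis of the root system $\Delta(S)$ (a fundamental chamber for $W(S)$). Invoking the standing convention \eqref{P(S)1}, which we may always arrange after replacing $S$ by $w(S)$ for a suitable $w\in W(N_i)$, we also have $P(S)=S\cap P(N_i)$. Combining the two gives the chain
\[
P(X)\cap S=P(S)=S\cap P(N_i),
\]
so $P(X)\cap S$ is a basis of $\Delta(S)$ and is contained in $P(N_i)$. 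Since the Dynkin diagram attached to a subset of $P(N_i)$ is the full subgraph of $\Gamma(P(N_i))$ induced on that subset, $\Gamma(P(X)\cap S)=\Gamma(P(S))$ is a subdiagram of $\Gamma(P(N_i))$, as claimed.

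Next I would treat the two special cases. If $S_X$ is negative definite, then by the definition $S=S_X$, so every non-singular rational curve class, being an element of $S_X$, already lies in $S$; hence $P(X)=P(X)\cap S$, and the first part gives $P(X)=S\cap P(N_i)=P(N_i)\cap S$. As $P(X)$ is by definition the set of classes of \emph{all} non-singular rational curves on $X$, this yields the second assertion. For algebraic $X$ with $S=h^\perp_{S_X}$ and $h$ primitive, nef, $h^2>0$, the key geometric observation is that for $n>0$ the morphism associated with $|nh|$ contracts a non-singular rational curve $C$ precisely when $h\cdot[C]=0$, i.e.\ precisely when $[C]\in h^\perp_{S_X}=S$. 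The contracted non-singular rational curves are therefore exactly those with class in $P(X)\cap S$, and by the first part this set equals $P(N_i)\cap S$, giving the third assertion.

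The step that carries the real content---and the one I expect to require the most care---is verifying that $P(X)\cap S$ is literally a \emph{basis} of $\Delta(S)$ and that it coincides with the combinatorially defined set $S\cap P(N_i)$. The subtlety is that $P(X)$ is produced geometrically, as the set of walls of the nef chamber $NEF(X)$ of $W(S_X)$ (resp.\ of $W(H^{1,1}_\bz)$ in the parabolic case), whereas $S\cap P(N_i)$ is produced combinatorially from the Niemeier diagram. To reconcile them I would use the standard fact that a face of a fundamental chamber for a reflection group is a fundamental chamber for the associated sub-reflection-group: here the roots of the primitive sublattice $S$ cutting out $NEF(X)$ form a basis of $\Delta(S)$, namely $P(X)\cap S$. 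Since any two bases of $\Delta(S)$ differ by an element of $W(S)$, the convention \eqref{P(S)1}, imposed in the construction of $\tau$ in Remark~\ref{rem:Niemmarkconst}, is exactly what forces the geometric basis $P(X)\cap S$ and the combinatorial basis $S\cap P(N_i)$ to agree; both then equal $P(S)$. Once this identification is in place, the remaining assertions are immediate.
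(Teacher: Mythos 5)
Your proof is correct and follows essentially the same route as the paper, which states the theorem as an immediate consequence of the defining conditions on the marking $\tau$ (namely $P(X)\cap S=P(S)\subset P(N_i)$ together with the convention \eqref{P(S)1}) and the standard facts about $W(S)$, $P(S)$ and contraction by $|nh|$; the paper supplies no separate argument beyond this. Your expansion, including the remark that the geometric and combinatorial bases of $\Delta(S)$ are forced to agree by the normalization built into the definition of $\tau$, is a faithful unpacking of what the paper leaves implicit.
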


This shows that for K\"ahlerian K3 surfaces $X$,
marking by Niemeier lattices describes the sets of
non-singular rational curves on $X$
such that their classes are contained in $S\subset S_X$.

\medskip

We recall that an automorphism $\phi$ of a K\"ahlerian K3 surface $X$
is called {\it symplectic} if $\phi$ preserves the holomorphic form
$\omega_X$, that is $\phi^\ast (\omega_X)=\omega_X$. This is equivalent
(see \cite{Nik0}) that
$\phi$ gives the identity on the transcendental part
\begin{equation}
H^2(X,\bz)/S_X.
\label{transcpart}
\end{equation}
It is known (see \cite{PS} and \cite{BR}; formally,
it follows from the Global Torelli Theorem for K3 surfaces) that the
kernel of the action of $\Aut X$ in $H^2(X,\bz)$ is trivial. A subgroup
of $\Aut X$ is called {\it symplectic} if all its elements are symplectic.
We denote by $(\Aut X)_0$ the group of all symplectic automorphisms of $X$.

For a marking $\tau:S\subset N_i$ of $X$ by a Niemeier lattice $N_i$,
we shall
consider the group of automorphisms
\begin{equation}
\Aut(X,S)_0=\{ f \in (\Aut X)_0\ | \ f(S)=S\ and\
f|S^\perp_{H^2(X,\bz)}\ is\
identity \}.
\label{defsympl}
\end{equation}
If $S_X$ is negative definite or hyperbolic, then, in \eqref{defsympl},
the additional condition that $f|S^\perp_{H^2(X,\bz)}$
is identity follows from other
conditions. In all cases, the definition of $\Aut(X,S)_0$ is equivalent to
\begin{equation}
\Aut(X,S)_0=\{ f \in \Aut X\ | \ f(S)=S\ and\
f|S^\perp_{H^2(X,\bz)}\ is\ identity \}.
\label{defsymp2}
\end{equation}

By \cite[Proposition 1.5.1]{Nik1}, $\phi \in O(S)$
can be extended to
$O(H^2(X,\bz))$ identically on $S^\perp_{H^2(X,\bz)}$
if and only if $\phi$
gives identity on $A_S=S^\ast/S$. By the Global Torelli
Theorem for K3 surfaces
(see \cite{PS} and \cite{BR}), then $\phi=f^\ast$ for some $f\in \Aut X$.
By \eqref{transcpart},
$f\in (\Aut X)_0$. Further we identify $\Aut(X,S)_0$ with
its action in $S$. Then,
by our considerations in Sect. \ref{sec:sublattNiem} which
follow from
\cite[Remark 1.14.7]{Nik1}, we obtain the following result.

\begin{theorem} Let $X$ be a K\"ahlerian K3 surface and
$S\subset N_i$ is its marking by a Niemeier lattice
$N_i$, $i=1,2,\dots,24$.

Let $\Aut (X,S)_0$ be the symplectic automorphism group of $X$
which consists of all automorphisms of $X$ which give
identity on $S^\perp_{H^2(X,\bz)}$ (all of them are symplectic).
Then the action of
$\Aut (X,S)_0$ in $S$
identifies it with the subgroup $A(S)\subset A(N_i)$
which is :
\begin{equation}
A(S)=\{\phi\in A(N_i)\ |\
\phi|S^\perp_{N_i}\ is\  identity\}.
\label{A(S)2}
\end{equation}
We have
$$
\Aut (X,S)_0=A(S)|S\,.
$$
The Leech type sublattice $\La(S)\subset S$
(or the coinvariant sublattice) is
\begin{equation}
\La(S)=S_{A(S)}=(S^{A(S)})^\perp_S=((N_i)^{A(S)})^\perp_{N_i}.
\label{Leech(S)2}
\end{equation}

Let $P(X)\cap S$ be the set of all classes of non-singular
rational curves of $X$ which are contained in $S$. Then
$P(X)\cap S$ is invariant with respect to $A(S)$.
\label{th:A(S)X}
\end{theorem}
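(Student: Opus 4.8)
The plan is to translate the geometric group $\Aut(X,S)_0$ into a lattice-theoretic group, then invoke the results of Section~\ref{sec:sublattNiem} to identify it with $A(S)\subset A(N_i)$. First I would recall that by the discussion preceding the theorem, using \cite[Proposition 1.5.1]{Nik1} and the Global Torelli Theorem, there is a natural identification between $\Aut(X,S)_0$ and its faithful action on $S$: an element $f\in\Aut(X,S)_0$ restricts to an isometry $f|S\in O(S)$ giving identity on both $S^\perp_{H^2(X,\bz)}$ and (hence) on the discriminant group $A_S=S^\ast/S$, and conversely every $\phi\in O(S)$ that is identity on $A_S$ extends to $H^2(X,\bz)$ and is realized by some $f\in(\Aut X)_0$. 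The content of being a symplectic automorphism is exactly that the transcendental lattice is fixed, which by \eqref{transcpart} is automatic here. Thus as an abstract group $\Aut(X,S)_0$ is identified with the subgroup of $H(S)=\ker(O(S)\to O(q_S))$ consisting of those isometries of $S$ that, in addition, preserve the set $P(X)\cap S=P(S)$ of classes of nef-contracted rational curves.

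The key step is to observe that $f|S$ must preserve $P(S)$. This is where geometry enters: $P(X)$ is the set of classes of nonsingular rational curves, which is canonically attached to $X$ and therefore preserved by any automorphism; intersecting with the $f$-invariant sublattice $S$, the set $P(X)\cap S=P(S)$ is preserved by $f|S$. By Theorem~\ref{th:P(S)X}, $P(S)$ is precisely the basis of the root system $\Delta(S)$ that sits inside $P(N_i)$, i.e.\ condition \eqref{P(S)1} holds. Hence $\Aut(X,S)_0$, viewed in $O(S)$, consists exactly of isometries in $H(S)$ that fix $P(S)$, which by the definition of $A(S)$ in Section~\ref{sec:sublattNiem} is the group $A(S)=\{\phi\in H(S)\mid \phi(P(S))=P(S)\}$. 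Conversely, every element of $A(S)$ is identity on $A_S$ (since $A(S)\subset H(S)$) and so extends and is realized by a symplectic automorphism preserving $S$; this gives the reverse inclusion and the equality $\Aut(X,S)_0=A(S)|S$.

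Next I would invoke the bridge to the Niemeier lattice. Because the marking satisfies \eqref{P(S)1}, the analysis in Section~\ref{sec:sublattNiem} applies verbatim: continuing any $\phi\in A(S)$ identically to $S^\perp_{N_i}$ (possible since $\phi$ is identity on $A_S$) yields an element of $A(N_i)$, giving the identification \eqref{A(S)1}, which is exactly the claimed formula \eqref{A(S)2}, namely $A(S)=\{\phi\in A(N_i)\mid \phi|S^\perp_{N_i}\text{ is identity}\}$. The formula \eqref{Leech(S)2} for the Leech type sublattice then follows by combining two identities already established in Section~\ref{sec:sublattNiem}: on one hand $\La(S)=S_{A(S)}=(S^{A(S)})^\perp_S$ by definition, and on the other hand, once $A(S)$ is realized inside $A(N_i)$ acting on all of $N_i$, the fixed lattice satisfies $S^{A(S)}=(N_i)^{A(S)}\cap S$, so that taking orthogonal complements inside $N_i$ gives $((N_i)^{A(S)})^\perp_{N_i}=\La(S)$; here one uses that $A(S)$ acts trivially on $S^\perp_{N_i}$, so the coinvariant part computed in $N_i$ lands inside $S$.

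The main obstacle I expect is the careful verification of the two directions of the identification $\Aut(X,S)_0\cong A(S)$, and in particular the claim that preserving $P(X)\cap S$ is equivalent to preserving the fundamental chamber, i.e.\ that an $f$-invariant isometry of $S$ lying in $H(S)$ and fixing the nef element $h$ (in the hyperbolic case) must send $P(S)$ to $P(S)$ rather than merely permuting roots within $W(S)\cdot P(S)$. This requires using that $f$ fixes the nef class $h$ generating $S^\perp_{S_X}$ (or fixes the data determining the chamber in the negative definite and parabolic cases), so that $f|S$ preserves the specific fundamental chamber $NEF(X)\cap\langle S\rangle$ rather than an arbitrary $W(S)$-translate; this pins down $A(S)$ as the chamber-symmetry group and excludes spurious Weyl-group elements. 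The remaining steps are then formal applications of the already-quoted results.
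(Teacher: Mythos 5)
Your proposal is correct and follows essentially the same route as the paper: the paper's justification is precisely the combination of \cite[Proposition 1.5.1]{Nik1}, the Global Torelli Theorem, the triviality of the action on the transcendental part \eqref{transcpart}, and the identification \eqref{A(S)1} from Section \ref{sec:sublattNiem}, which you have simply spelled out in more detail. Your closing remark about why chamber-preservation (rather than mere membership in $H(S)$) is forced — geometrically by invariance of $P(X)$, and in the converse direction by the effectivity/Kähler-cone hypothesis in Global Torelli — is exactly the point the paper leaves implicit.
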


\begin{remark} {\rm By these theorems, to construct
K\"ahlerian K3 surfaces
with all possible markings $S\subset N_i$
(further, we shall skip $\tau$) and with all possible
$P(X)\cap S$ and $\Aut (X,S)_0$, one can follow the following steps.

(a) For a Niemeier lattice $N_i$ take a subgroup $A\subset A(N_i)$
such that $\La =(N_i)_A=((N_i)^{A})^\perp_{N_i}$
satisfies Theorem \ref{th:primembb3}
(equivalently, there exists a primitive embedding $\La\subset L_{K3}$).
Further, we call such subgroups {\it KahK3 subgroups (that is K\"ahlerian
K3 surfaces subgroups)}. Let $Clos(A)\subset A(N_i)$ (we use notation
from \cite{Hash})  is the maximal
subgroup of $A(N_i)$ with the same coinvariant lattice
$\La =(N_i)_A=((N_i)^{A})^\perp_{N_i}$.
These and only these subgroups $Clos(A)$ correspond to the full
symplectic automorphism groups $\Aut(X,S)_0$ of K\"ahlerian K3 surfaces
which can be marked by $N_i$. The subgroup $A$ can be only a
proper subgroup $A\subset Clos(A)=\Aut(X,S)_0$.

We note that the list of all possible abstract such groups
$A$ for all Niemeier lattices together is known
(see \cite{Nik0} for Abelian groups $A$
and Mukai \cite{Muk}, Xiao \cite{Xiao}
and Kond\=o \cite{Kon} for arbitrary groups $A$).
The corresponding to $A$
Leech type lattices $\La$ (for all Niemeier lattices together)
are also known
(see \cite{Nik0} for Abelian groups
$A$ and Hashimoto \cite{Hash} for arbitrary
groups $A$). Almost for all abstract groups $A$, the Leech type
lattices $\La$ with action of $A$ on $\La$ are uniquely defined up to
isomorphisms.

(b) Choose a subset $P\subset P(N_i)$ which is invariant with
respect to $A$ and such that the primitive sublattice
$S_0=[\La,\ P]_{pr}\subset N_i$ generated by $\La$ and $P$
satisfies Theorem \ref{th:primembb3}
(equivalently, there exists a primitive embedding
$S_0=[\La,\ P]_{pr}\subset L_{K3}$).
Then $P\subset P(S_0)=S_0\cap P(N_i)$ and
$A\subset A(S_0)$.

(c) Extend the primitive sublattice above to a primitive sublattice
$S_0 \subset S\subset N_i$ such that $S\cap P(N_i)=P$,
$$
\{\phi \in A(N_i)\ |\ \phi(S)=S\  and\ \phi|S^\perp_{N_i}\ is\
identity\}=Clos(A)\,,
$$
and $S$ satisfies Theorem \ref{th:primembb3}.
Then $A(S)=Clos(A)$, $\La(S)=\La$, $P(S)=P$,
and there exists a primitive embedding $S\subset L_{K3}$.

(d) Follow Remark \ref{rem:Niemmarkconst}
to construct a K\"ahlerian K3 surface
$X$ with the marking $S\subset N_i$ by the Niemeier
lattice $N_i$. Then
$$
\Aut(X,S)_0=Clos(A),\ \ P(X)\cap S=P(S)=P.
$$
}
\label{rem:constrPAX}
\end{remark}

\section{Markings of K3 surfaces by concrete Niemeier
lattices $N_1$ --- $N_{23}$}
\label{sec:examples}

\begin{figure}
\begin{center}
\includegraphics[width=10cm]{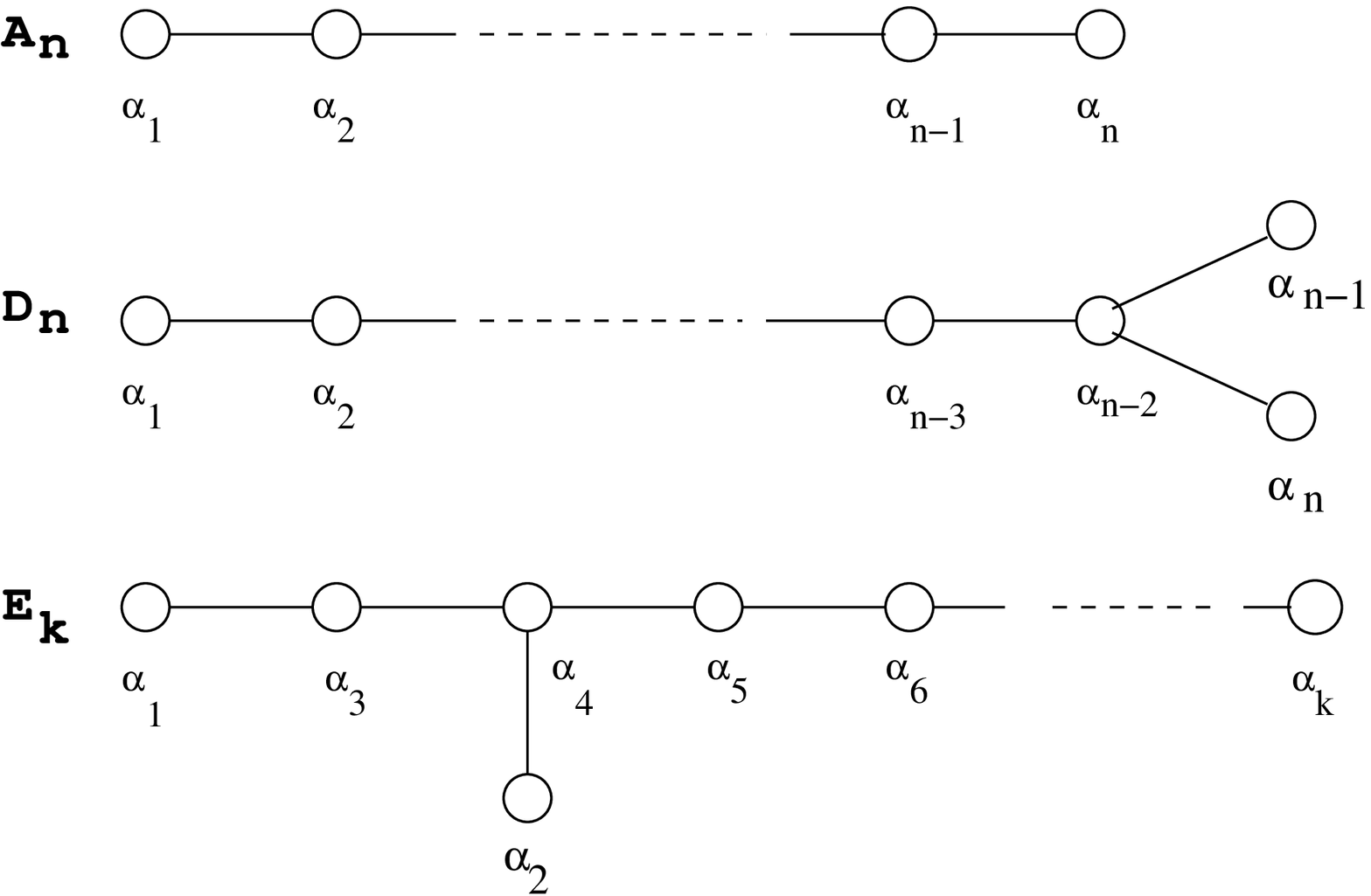}
\end{center}
\caption{Bases of Dynkin diagrams $\aaa_n$, $\ddd_n$, $\eee_k$. }
\label{fig:dyndiagr}
\end{figure}

Below, we use the basis of a root lattice $A_n$, $D_n$ or $E_k$,
$k=6,7,8$,
which is shown on Figure \ref{fig:dyndiagr}.

For $A_n$, $n\ge 1$, we denote
$\varepsilon_1=(\alpha_1+2\alpha_2+\cdots +n\alpha_n)/(n+1)$.
It gives the generator of the discriminant group
$A_n^\ast/A_n\cong \bz/(n+1)\bz$.

For $D_n$, $n\ge 4$ and $n\equiv 0\mod 2$, we denote
$\varepsilon_1=(\alpha_1+\alpha_3+\cdots +\alpha_{n-3}+\alpha_{n-1})/2$,
$\varepsilon_2=(\alpha_{n-1}+\alpha_n)/2$,
$\varepsilon_3=(\alpha_1+\alpha_3+\cdots +\alpha_{n-3}+\alpha_n)/2$. They
give all non-zero elements of the discriminant group
$D_n^\ast/D_n\cong (\bz/2\bz)^2$.

For $D_n$, $n\ge 4$ and $n\equiv 1\mod 2$, we denote
$\varepsilon_1=(\alpha_1+\alpha_3+\cdots +\alpha_{n-2})/2+
\alpha_{n-1}/4-\alpha_n/4$,
$\varepsilon_2=(\alpha_{n-1}+\alpha_n)/2$,
$\varepsilon_3=(\alpha_1+\alpha_3+\cdots +\alpha_{n-2})/2-\alpha_{n-1}/4+
\alpha_n/4$. They give all non-zero elements of
$D_n^\ast/D_n\cong \bz/4\bz$.

For $E_6$, we denote $\varepsilon_1=(\alpha_1-\alpha_3+\alpha_5-\alpha_6)/3$,
$\varepsilon_2=(-\alpha_1+\alpha_3-\alpha_5+\alpha_6)/3$.
They give all non-zero elements of
$E_6^\ast/E_6\cong \bz/3\bz$.

For $E_7$, we denote $\varepsilon_1=(\alpha_2+\alpha_5+\alpha_7)/2$.
It gives the non-zero element of $E_7^\ast/E_7\cong \bz/2\bz$.

If the Dynkin diagram of a root lattice has several connected
components, the second index of a basis numerates
the corresponding connected component.

\medskip

Below we apply the results above to concrete cases of Niemeier
lattices $N_i$. We denote by $X$ a K\"ahlerian K3 surface.

\medskip

{\bf Case 0.} Let us consider marking of $X$
by negative definite even unimodular lattices $K$
of the rank $16$. There are $2$ such lattices: $K_1=\Gamma_{16}$ with
the root sublattice $D_{16}$ and $K_2=2E_8$ with the root sublattice
$2E_8$. For this case, any
primitive sublattice $S\subset K_i$, $i=1,\,2$, satisfies
Theorem \ref{th:primembb3} because $\rk S+l(A_S)\le 16<22$
by Theorem \ref{th:primembb1}. It gives marking of some $X$ if
$P(S)\subset P(K_i)$.
It is why this case is easy.
(Basically, this case was considered in \cite[Remark 1.14.7]{Nik1}.)

In this case, $A(K_1)$ is trivial and $A(K_2)=C_2$ is the group of
order $2$ which permutes two components $E_8$.

Let $X$ be marked by a primitive sublattice $S\subset K_1=\Gamma_{16}$.
We have $\Aut(X,S)_0=A(K_1)$ is trivial. We have $P(X)\cap S=P(S)$ and
$\Gamma(P(S))\subset \Gamma(P(K_1))=\ddd_{16}$. Any such subgraph is possible.

Let $A=A(K_2)$ is the group of order $2$.
Then the coinvariant sublattice (or Leech type sublattice) is
$$
\La_A=(K_2)_A=E_8(2)=[\{\alpha_{i1}-\alpha_{i2}\ |\ 1\le i\le 8\}]
\subset K_2=2E_8.
$$
It is isomorphic to $E_8$ with the form multiplied by $2$.

Let $X$ be marked by $S\subset K_2$. Then $P(X)\cap S=P(S)$.
If $E_8(2)\not\subset S$, then the symplectic group $A(X,S)_0$ is trivial and
$\Gamma (P(X)\cap S)\subset 2\eee_8$.
If $E_8(2)\subset S$, then $A(X,S)_0=A(K_2)$ is the group of order $2$. The
graph $\Gamma(P(X)\cap S)$ is a subgraph of $2\eee_8$ which is invariant
with respect to the permutation of two components of $2\eee_8$. All such
subgraphs are possible.

\medskip

Further, firstly, we consider Niemeier lattices $N_i$ such that $A(N_i)$ has no
non-trivial K\"ahlerian K3 surfaces subgroups
(equivalently, of KahK3 subgroups). Thus, if
$X$ has marking $S\subset N_i$ by such Niemeier lattice $N_i$, then
$\Aut(X,S)_0$ is trivial.

\medskip

{\bf Case 1.} The Niemeier lattice $N_1=N(D_{24})=[D_{24},\varepsilon_1]$
has the trivial group $A(N_1)$ (see \cite[Ch. 16]{CS}).

Let $X$ be marked by a primitive sublattice
$S\subset N_{1}$. Then $S$ must satisfy Theorem \ref{th:primembb3} and
$\Gamma(P(S))\subset \Gamma(P(N_{1}))=\ddd_{24}$. Any such $S$
gives marking of some $X$ and $P(X)\cap S=P(S)$.
The group $\Aut(X,S)_0=A(N_1)$ is trivial.

Let $S=D_{n}$, $17\le n\le 19$. It satisfies
Theorem \ref{th:primembb3}. Thus, there exists $X$ with
marking $S=D_{n}\subset N_1$. By classification
of Niemeier lattices, only $N_1=N(D_{24})$ is possible for
marking of $X$ with such lattice $S\subset S_X$.

\medskip

{\bf Case 2.} The Niemeier lattice $N_2=N(D_{16}\oplus E_8)=
[D_{16},\varepsilon_{11}]\oplus E_8$
has the trivial group $A(N_2)$ (see \cite[Ch. 16]{CS}).

Let $X$ be marked by a primitive sublattice
$S\subset N_{2}$. Then $S$ must satisfy Theorem \ref{th:primembb3} and
$\Gamma(P(S))\subset \Gamma(P(N_{2}))=\ddd_{16}\eee_8$. Any such $S$
gives marking of some $X$ and $P(X)\cap S=P(S)$.
The group $\Aut(X,S)_0=A(N_2)$ is trivial.

Let $S=D_n\oplus E_8$, $8\le n\le 11$. It satisfies
Theorem \ref{th:primembb3}. Thus, there exists $X$ with
marking $S\subset N_2$. By classification
of Niemeier lattices, only $N_2=N(D_{16}\oplus E_8)$
is possible for marking of $X$ with such $S\subset S_X$.

\medskip

{\bf Case 4.} The Niemeier lattice
$N=N_4=N(A_{24})=[A_{24},5\varepsilon_1]$ has the
group $A(N)$ of the order $2$ (see \cite[Ch. 16]{CS}).
It gives the non-trivial involution of the diagram $P(N_4)=\aaa_{24}$. It is
easy to see that $\La=N_{A(N)}=(N^{A(N)})^\perp_N$ does
not satisfy Theorem \ref{th:primembb3}.
Thus, only a trivial subgroup $A\subset A(N)$ is KahK3 subgroup.
(It also follows from results of \cite{Nik0} where
it is shown that $\rk \La=8$ for KahK3 subgroup of order $2$.)

Let $X$ be marked by a primitive sublattice
$S\subset N_{4}$. Then $S$ must satisfy Theorem \ref{th:primembb3} and
$\Gamma(P(S))\subset \Gamma(P(N_{4}))=\aaa_{24}$. Any such $S$
gives marking of some $X$ and $P(X)\cap S=P(S)$.
The group $\Aut(X,S)_0$ is trivial.

Let $S=A_{n}$, $18\le n\le 19$. It satisfies
Theorem \ref{th:primembb3}. Thus, there exists $X$ with
marking $S\subset N_4=N(A_{24})$. By classification
of Niemeier lattices, only $N_1=N(D_{24})$ and $N_4=N(A_{24})$
are possible for marking of $X$ with such $S\subset S_X$.

\medskip

{\bf Case 5.} The Niemeier lattice $N=N_5=N(2D_{12})=
[2D_{12}, \varepsilon_{11}+\varepsilon_{22},
\varepsilon_{21}+\varepsilon_{12}]$
has $A(N)$ of order $2$ which permutes two components $D_{12}$.
It is easy to see that $\La=N_{A(N)}=(N^{A(N)})^\perp_N$ does
not satisfy Theorem \ref{th:primembb3}. Thus, only a trivial
subgroup $A\subset A(N)$ is KahK3 subgroup.

Let $X$ be marked by a primitive sublattice
$S\subset N_{5}$. Then $S$ must satisfy Theorem \ref{th:primembb3} and
$\Gamma(P(S))\subset \Gamma(P(N_{5}))=2\ddd_{12}$. Any such $S$
gives marking of some $X$ and $P(X)\cap S=P(S)$.
The group $\Aut(X,S)_0$ is trivial.

Let $S=D_{10}\oplus D_9$. It satisfies
Theorem \ref{th:primembb3}. Thus, there exists $X$ with
marking $S\subset N_5$. By classification
of Niemeier lattices, only $N_5=N(2D_{12})$ is possible for marking
of $X$ with such $S\subset S_X$.

\medskip

{\bf Case 10.} The Niemeier lattice $N=N_{10}=N(2A_{12})=
[2A_{12}, \varepsilon_{11}+5\varepsilon_{12}]$
has $A(N)$  which is a cyclic group of order $4$.
It is easy to see that $\La=N_{A}=(N^{A})^\perp_N$ does
not satisfy Theorem \ref{th:primembb3} for both its
non-trivial subgroups $A\subset A(N)$.
It is enough to check this for its subgroup of the order $2$ which gives
non-trivial involutions on the graph $\aaa_{12}$ of
each of two components $A_{12}$. Thus, only a trivial
subgroup $A\subset A(N)$ is KahK3 subgroup.

Let $X$ be marked by a primitive sublattice
$S\subset N_{10}$. Then $S$ must satisfy Theorem \ref{th:primembb3} and
$\Gamma(P(S))\subset \Gamma(P(N_{10}))=2\aaa_{12}$. Any such $S$
gives marking of some $X$ and $P(X)\cap S=P(S)$.
The group $\Aut(X,S)_0$ is trivial.

Let $S=A_{10}\oplus A_9$. It satisfies
Theorem \ref{th:primembb3}. Thus, there exists $X$ with
marking $S\subset N_{10}$. By classification
of Niemeier lattices, only $N_1=N(D_{24})$,
$N_4=N(A_{24})$, $N_5=N(2D_{12})$
and $N_{10}=N(2A_{12})$ are possible for marking
of $X$ with such $S\subset S_X$.

\medskip

For next cases, the group $A(N_i)$ has K\"ahlerian K3 surfaces subgroups
(that is KahK3 subgroups) $A$ only of the order $1$ or $2$.
Thus, if $X$ is marked by one of these lattices then the group
$\Aut(X,S)_0$ is either trivial, or it has the order $2$.

\medskip

{\bf Case 3.} For the Niemeier lattice $N=N_3=N(3E_8)$,
the group $A(N)$
is ${\mathfrak S}_3$ (obviously) which acts by permutations on
the three components $E_8$ which we denote by
$(E_8)_j$, $j=1,\,2,\,3$. Simple calculations show that
for $A\subset A(N)={\mathfrak S}_3$ the lattice $\La = N_A=((N^A)^\perp_N)$
does not satisfy Theorem \ref{th:primembb3} if $A$ is ${\mathfrak S}_3$
or the alternating group ${\mathfrak A}_3$.
It is valid only if $A$ is either generated by a transposition,
or $A$ is trivial. They give all KahK3 subgroups $A\subset A(N)$.

For $A=[(kl)]\subset A(N)$ generated by a
transposition $(kl)$, $1\le k<l\le 3$,
the Leech type (or the coinvariant) sublattice is
$$
E_8(2)_{kl}=N_A=[\{\alpha_{ik}-\alpha_{il}\ |\ 1\le i\le 8\}].
$$

Let $X$ be marked by a primitive sublattice
$S\subset N_3=N(3E_8)$. Then $S$ must satisfy
Theorem \ref{th:primembb3} and $\Gamma(P(S))\subset \Gamma(P(N_3))=3\eee_8$.
Any such $S$ gives marking of some $X$ and $P(X)\cap S=P(S)$.
If $S$ does not contain each of sublattices $E_8(2)_{kl}$,
$1\le k<l\le3$,  then $\Aut(X,S)_0$ is trivial.
If $E_8(2)_{kl}\subset S$ for some $1\le k<l\le 3$,
then $\Aut(X,S)_0=[(kl)]$ is the group of order $2$.

Let $S=2E_8$. Then $S$ satisfies Theorem \ref{th:primembb3} and gives
marking $S\subset N_3=N(3E_8)$ of some $X$.
By classification of Niemeier lattices, only
$N_3=N(3E_8)$ is possible for marking of $X$ with such $S\subset S_X$.

\medskip

{\bf Case 6.} For the Niemeier lattice
$N=N_6=N(A_{17}\oplus E_7)=[A_{17}\oplus E_7,
3\varepsilon_{11}+\varepsilon_{12}]$,
the group $A(N_6)$ has order $2$ (see \cite[Ch. 16]{CS}).
Its generator is trivial
on $E_7$ and gives the non-trivial involution of the diagram $\aaa_{17}$.

The coinvariant sublattice for $A(N)$ is equal to
$$
E_8(2)=N_{A(N)}=[\{\alpha_{i1}-\alpha_{(18-i)1}\ |\ 1\le i\le 8\},\
\frac{1}{3}\sum_{i=1}^8{i(\alpha_{i1}-\alpha_{(18-i)1})}]\subset N_{6}.
$$

Let $X$ be marked by a primitive sublattice
$S\subset N_6=N(A_{17}\oplus E_7)$. Then $S$ must satisfy
Theorem \ref{th:primembb3} and
$\Gamma(P(S))\subset \Gamma(P(N_6))=\aaa_{17}\eee_7$. Any such $S$
gives marking of some $X$ and $P(X)\cap S=P(S)$.
If $E_8(2)\not\subset S$, then $\Aut(X,S)_0$ is trivial.
If $E_8(2)\subset S$, then $\Aut(X,S)_0=A(N_6)$ has order two.

In particular, $S=[A_{17},6\varepsilon_{11}] \subset N_6$
satisfies Theorem $\ref{th:primembb3}$ and gives marking of some $X$.
Moreover, $\Aut(X,S)_0$ has order $2$.
By classification of Niemeier lattices and our calculationss above,
$X$ can be marked by $N_6=N(A_{17}\oplus E_7)$ only for such $S\subset S_X$.

\medskip

{\bf Case 7.} For the Niemeier lattice
$N=N_7=N(D_{10}\oplus 2E_7)=[D_{10}\oplus 2E_7,\
\varepsilon_{11}+\varepsilon_{12},\ \varepsilon_{31}+\varepsilon_{13}]$,
the group $A(N_7)$ has order $2$ (see \cite[Ch. 16]{CS}).
Its generator permutes two diagrams $\eee_7$ and
gives a non-trivial involution on the diagram $\ddd_{10}$.

For $A(N)$, the coinvariant
sublattice is equal to
$$
E_8(2)=N_{A(N)}=[\alpha_{91}-\alpha_{10,1},
\{\alpha_{i2}-\alpha_{i3}\ |\ 1\le i\le 7\},\ \ \
$$
$$
(\alpha_{91}-\alpha_{10,1}+\alpha_{22}-\alpha_{23}+
\alpha_{52}-\alpha_{53}+\alpha_{72}-\alpha_{73})/2\,]\subset N_{7}.
$$

Let $X$ be marked by a primitive sublattice
$S\subset N_7=N(D_{10}\oplus 2E_7)$. Then $S$ must satisfy
Theorem \ref{th:primembb3} and
$\Gamma(P(S))\subset \Gamma(P(N_7))=\ddd_{10}2\eee_7$. Any such $S$
gives marking of some $X$ and $P(X)\cap S=P(S)$.
If $E_8(2)\not\subset S$, then $\Aut(X,S)_0$ is trivial.
If $E_8(2)\subset S$, then $\Aut(X,S)_0=A(N_7)$ has order two.

In particular, $S=[\alpha_{91},\alpha_{10,1}, 2E_7]_{pr} \subset N_7$
satisfies Theorem $\ref{th:primembb3}$ and gives marking of some $X$.
Moreover, $\Aut(X,S)_0$ has order two. By classification of Niemeier
lattices and our considerations above, then $X$ can be marked by
$N_7=N(D_{10}\oplus 2E_7)$ only for such $S\subset S_X$.

\medskip

{\bf Case 8.} For the Niemeier lattice
$N=N_8=N(A_{15}\oplus D_9)=[A_{15}\oplus D_9,\
2\varepsilon_{11}+\varepsilon_{12}]$,
the group $A(N_8)$ has order $2$ (see \cite[Ch. 16]{CS}).
Its generator gives a non-trivial involution
on the diagrams $\aaa_{15}$ and $\ddd_9$.

For $A(N)$, the coinvariant
sublattice is equal to
$$
E_8(2)=N_{A}=[\,\{\alpha_{i1}-
\alpha_{(16-i)1}\ |\ 1\le i\le 7\},\ \alpha_{82}-\alpha_{92},
$$
$$
\frac{1}{4}\sum_{i=1}^7{(\alpha_{i1}-\alpha_{(16-i)1})}+
\frac{1}{2}(\alpha_{82}-\alpha_{92})\,]\subset N_{8}.
$$

Let $X$ be marked by a primitive sublattice
$S\subset N_8=N(A_{15}\oplus D_9)$. Then $S$ must satisfy
Theorem \ref{th:primembb3} and
$\Gamma(P(S))\subset \Gamma(P(N_8))=\aaa_{15}\ddd_9$. Any such $S$
gives marking of some $X$ and $P(X)\cap S=P(S)$.
If $E_8(2)\not\subset S$, then $\Aut(X,S)_0$ is trivial.
If $E_8(2)\subset S$, then $\Aut(X,S)_0=A(N_8)$ has order two.

In particular, $S=[A_{15},\alpha_{82},\alpha_{92}]_{pr}
\subset N_8$
satisfies Theorem $\ref{th:primembb3}$ and
gives marking of some $X$. Moreover, $\Aut(X,S)_0$ has order $2$.
By classification of Niemeier lattices and our considerations above,
$X$ can be marked by $N_8=N(A_{15}\oplus D_9)$
only for such $S\subset S_X$.

\medskip

{\bf Case 9.} For the Niemeier lattice
$$
N=N_9=N(3D_8)=
[3D_8,\
\varepsilon_{11}+\varepsilon_{22}+\varepsilon_{23},\
\varepsilon_{21}+\varepsilon_{12}+\varepsilon_{23},\
\varepsilon_{21}+\varepsilon_{22}+\varepsilon_{13}]
$$
the group $A(N)$  is ${\mathfrak S}_3$ which acts by permutations on
the three components $D_8$ and by permutations of
$\alpha_{71}$, $\alpha_{72}$ and $\alpha_{73}$.
Simple calculations show that
for $A\subset A(N)={\mathfrak S}_3$ the lattice
$\La = N_A=((N^A)^\perp_N)$ does not satisfy Theorem
\ref{th:primembb3} if $A$ is ${\mathfrak S}_3$ or
${\mathfrak A}_3$.
It is valid only if $A$ is either generated by a transposition,
or $A$ is trivial. They give all KahK3 subgroups.

Let $A=[(kl)]$, $1\le k<l\le 3$, is a transposition on
$(D_8)_j$, $j=1,2,3$. Then the coinvariant sublattice is
$$
N_A=E_8(2)_{kl}=
$$
$$
[\{\alpha_{ik}-\alpha_{il}\ |\ 1\le i\le 8\},\
(\alpha_{1k}-\alpha_{1l}+\alpha_{3k}-\alpha_{3l}+\alpha_{5k}-\alpha_{5l}+
\alpha_{8k}-\alpha_{8l})/2].
$$

Let $X$ be marked by a primitive sublattice
$S\subset N_9=N(3D_8)$. Then $S$
must satisfy Theorem \ref{th:primembb3} and
$\Gamma(P(S))\subset \Gamma(P(N_9))=3\ddd_8$.
Any such $S$ gives marking of some $X$ and $P(S)=P(X)\cap S$.
If $S$ does not contain each of sublattices $E_8(2)_{kl}$,
$1\le k<l\le3$,  then $\Aut(X,S)_0$ is trivial.
If $E_8(2)_{kl}\subset S$ for some $1\le k<l\le 3$,
then $\Aut(X,S)_0=[(kl)]$ is the group of order $2$.

Let $S=[2D_8]_{pr}\subset N_9$. Then $S$ satisfies
Theorem \ref{th:primembb3} and gives marking of some $X$.
Moreover,  $\Aut(X,S)_0$ has order $2$. By classification
of Niemeier lattices and our considerations above,
$X$ can be marked by the Niemeier lattice $N_9=N(3D_8)$ only
for such $S\subset S_X$.

\medskip

{\bf Case 11.} For the Niemeier lattice
$N=N_{11}=N(A_{11}\oplus D_7\oplus E_6)=[A_{11}\oplus D_{7}\oplus E_6,\
\varepsilon_{11}+\varepsilon_{12}+\varepsilon_{13}]$,
the group $A(N_{11})$ has order $2$ (see \cite[Ch. 16]{CS}).
Its generator gives a non-trivial involution
on the subdiagrams $\aaa_{11}$, $\ddd_7$ and $\eee_6$.
For the group $A=A(N_{11})$, the coinvariant
sublattice is equal to
$$
E_8(2)=N_{A}=[\,\{\alpha_{i1}-
\alpha_{(12-i)1}\ |\ 1\le i\le 5\},\ \alpha_{62}-\alpha_{72},\
\alpha_{13}-\alpha_{63},\ \alpha_{33}-\alpha_{53},
$$
$$
\frac{1}{6}\sum_{i=1}^5{i(\alpha_{i1}-\alpha_{(12-i)1})}+
\frac{1}{2}(\alpha_{62}-\alpha_{72})+
\frac{1}{3}(-\alpha_{13}+\alpha_{63}+\alpha_{33}-\alpha_{53})\,]
\subset N_{11}.
$$

Let $X$ be marked by a primitive sublattice
$S\subset N_{11}=N(A_{11}\oplus D_7\oplus E_6)$. Then $S$
must satisfy Theorem \ref{th:primembb3} and
$\Gamma(P(S))\subset \Gamma(P(N_{11}))=\aaa_{11}\ddd_7\eee_6$.
Any such $S$ gives marking of some $X$ and $P(S)=P(X)\cap S$.
If $S$ does not contain $E_8(2)$, then $\Aut(X,S)_0$ is trivial.
If $E_8(2)\subset S$, then $\Aut(X,S)_0=A(N_{11})$ is the group of order $2$.

In particular, $S=[A_{11}\oplus E_6,\alpha_{62},\alpha_{72}]_{pr}
\subset N_{11}$ satisfies Theorem \ref{th:primembb3} and
gives marking of some $X$. Moreover, $\Aut(X,S)_0$ has order $2$.
By classification of Niemeier lattices and our considerations above,
$X$ can be marked by $N_{11}=N(A_{11}\oplus D_7\oplus E_6)$ only
for such $S\subset S_X$.

\medskip

{\bf Case 15.} For the Niemeier lattice
$$
N=N_{15}=N(3A_8)=
[3A_8,\
4\varepsilon_{11}+\varepsilon_{12}+\varepsilon_{13},\
\varepsilon_{11}+4\varepsilon_{12}+\varepsilon_{13},\
\varepsilon_{11}+\varepsilon_{12}+4\varepsilon_{13}]
$$
the group $A(N)$ has the order 12, and it
is the direct product of the
group of order 2 which gives
non-trivial involutions on all three components $\aaa_8$,
and the group ${\mathfrak S}_3$ which acts by permutations of
the three components $\aaa_8$ and of
$\alpha_{11}$, $\alpha_{12}$ and $\alpha_{13}$
(see \cite[Ch. 16]{CS}).

Simple calculations show that
for $A\subset A(N)$ the coinvariant sublattice
$\La = N_A=((N^A)^\perp_N)$ satisfies Theorem
\ref{th:primembb3} only if $A$ is either trivial or $A=[(kl)]$,
$1\le k<l\le 3$, is generated by the transposition $(kl)$ of
two components $(\aaa_8)_k$ and $(\aaa_8)_l$ and $\alpha_{1k}$,
\and $\alpha_{1l}$, and it is identity on the remaining
component $(\aaa_8)_j$. They give all KahK3 subgroups.
For $A=[(kl)]$, the coinvariant sublattice is
$$
E_8(2)_{kl}=N_A=
[\{\alpha_{ik}-\alpha_{il}\ |\ 1\le i\le 8\},\
\frac{1}{3}\sum_{i=1}^8{i(\alpha_{ik}-\alpha_{il})}]\subset N_{15}\, .
$$

Let $X$ be marked by a primitive sublattice
$S\subset N_{15}=N(3A_8)$. Then $S$
must satisfy Theorem \ref{th:primembb3} and
$\Gamma(P(S))\subset \Gamma(P(N_{15}))=3\aaa_8$.
Any such $S$ gives marking of some $X$ and $P(X)\cap S=P(S)$.
If $S$ does not contain each of sublattices $E_8(2)_{kl}$,
$1\le k<l\le3$,  then $\Aut(X,S)_0$ is trivial.
If $E_8(2)_{kl}\subset S$ for some $1\le k<l\le 3$,
then $\Aut(X,S)_0=[(kl)]$ is
the group of order $2$.

Let $S=[2A_8]_{pr}\subset N_{15}$. Then $S$ satisfies
Theorem \ref{th:primembb3} and gives marking of some $X$.
Moreover, $\Aut(X,S)_0$ has order $2$.
By classification  of Niemeier lattices and our considerations above, $X$
can be marked by the Niemeier lattice $N_{15}=N(3A_8)$ only for
such $S\subset S_X$.

\medskip

{\bf Some general remarks.}
Cases below are more complicated. We use the following simple general
statements and computer Programs (see Appendix, Sect. \ref{sec:programs}).
(Of course, one can also use them for all cases which we considered above
and skipped calculations because they were easy.)

\begin{proposition} Let $N$ be a Niemeier
(or any other even negative definite unimodular lattice),
$P(N)$ a basis of the root system
of $N$ and
$$
A(N)=\{\phi\in O(N)\ |\ \phi(P(N))=P(N)\} .
$$

If $A\subset A(N)$ is a KahK3 subgroup with the
Leech type (or coinvariant) sublattice
$N_A=(N^A)^\perp_N$
(equivalently, there exists a primitive embedding $N_A\subset L_{K3}$),
then its conjugate $A^g=gAg^{-1}$, $g\in A(N)$, is also KahK3 subgroup
with the Leech type (or coinvariant sublattice)
$N_{A^g}=g(N_A)$. They define KahK3 conjugacy classes of $A(N)$.

Thus, to describe KahK3 subgroups $A\subset A(N)$ and their coinvariant
sublattices, it is enough to describe representatives of all
$KahK3$ conjugacy classes  in $A(N)$ and their coinvariant sublattices.
\label{prop:congKahK3}
\end{proposition}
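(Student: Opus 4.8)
The plan is to show that the coinvariant-sublattice construction $A\mapsto N_A=(N^A)^\perp_N$ is equivariant with respect to the conjugation action of $A(N)$ on its own subgroups, and then to observe that the KahK3 property depends only on the isometry class of $N_A$. Since $A(N)$ is a group, $A^g=gAg^{-1}\subset A(N)$ is again a subgroup, so conjugation is a well-defined action on the set of subgroups, and it suffices to track how the two ingredients of the definition (the coinvariant lattice and its primitive embeddability into $L_{K3}$) behave under it.

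First I would compute the fixed sublattice of a conjugate. For $g\in A(N)\subset O(N)$ and $A\subset A(N)$, an element $x\in N$ is fixed by every element of $A^g$ if and only if $g\phi g^{-1}(x)=x$ for all $\phi\in A$, i.e. $\phi(g^{-1}x)=g^{-1}x$ for all $\phi\in A$, i.e. $g^{-1}x\in N^A$. Hence $N^{A^g}=g(N^A)$. Because $g$ is an isometry of $N$, it carries orthogonal complements to orthogonal complements: for any sublattice $M\subset N$ one has $g(M^\perp_N)=(g(M))^\perp_N$, since $y\perp M$ is equivalent to $g(y)\perp g(M)$. Applying this with $M=N^A$ yields
\[
N_{A^g}=(N^{A^g})^\perp_N=(g(N^A))^\perp_N=g\big((N^A)^\perp_N\big)=g(N_A),
\]
which is exactly the asserted formula $N_{A^g}=g(N_A)$.

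Next I would note that $g$ restricts to an isometry of $N_A$ onto $N_{A^g}=g(N_A)$, so the two coinvariant lattices are isomorphic. Being a KahK3 subgroup means, by definition, that $N_A$ admits a primitive embedding into $L_{K3}$, equivalently that $N_A$ satisfies the numerical conditions of Theorem \ref{th:primembb3}; those conditions are phrased entirely through the rank, the order $|A_{N_A}|$ and minimal number of generators $l(A_{N_A})$ of the discriminant group, and the discriminant quadratic form $q_{N_A}$, all of which are invariants of the isometry class. Thus $N_A\cong N_{A^g}$ forces $A^g$ to be KahK3 whenever $A$ is. Since conjugation by elements of $A(N)$ is an equivalence relation preserving both the KahK3 property and the isometry class of the coinvariant lattice, the KahK3 subgroups partition into conjugacy classes, and one representative per class determines (up to the isometries $g|_{N_A}$) the coinvariant lattices of all members of that class; this justifies restricting attention to representatives of the KahK3 conjugacy classes. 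The argument is essentially formal: the only point needing care is the isometry-invariance of the KahK3 criterion, and this is immediate from the shape of Theorem \ref{th:primembb3}, so I expect no substantive obstacle beyond this bookkeeping.
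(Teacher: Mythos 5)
Your proof is correct and follows essentially the same route as the paper: establish $N_{A^g}=g(N_A)$ via equivariance of the fixed and coinvariant sublattices under the isometry $g$, and conclude that primitive embeddability into $L_{K3}$ (hence the KahK3 property) transfers because it depends only on the isometry class of the coinvariant lattice. The paper's own proof is just a condensed version of this argument, so no further comparison is needed.
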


\begin{proof} Indeed, $N_{A^g}$ is isomorphic to $N_A$.
Therefore, if $N_A$ has a primitive embedding $N_A\subset L_{K3}$,
then $N_{A^g}=g(N_A)\subset N$ also has a primitive embedding
$N_{A^g}\subset L_{K3}$.
\end{proof}

To calculate the coinvariant sublattices, we can use

\begin{proposition} Let $N$ be a Niemeier
(or any other even negative definite unimodular lattice), $P(N)$ a
basis of the root
system of $N$ and
$$
A(N)=\{\phi\in O(N)\ |\ \phi(P(N))=P(N)\}.
$$

Let $A_1\subset A(N)$ and $A_2\subset A(N)$ are two subgroups,
and $A=\langle A_1,\,A_2\rangle\subset A(N)$ is generated by
$A_1$ and $A_2$. Then the coinvariant sublattice
$N_{A}$ is the primitive sublattice
$N_{A}=[N_{A_1},\,N_{A_2}]_{pr}\subset N$ of $N$
generated by the coinvariant sublattices
$N_{A_1}$ and $N_{A_2}$ of $A_1$ and $A_2$. Therefore,
$A=\langle A_1,\,A_2\rangle$ is KahK3 subgroup if and
only  $[N_{A_1},\,N_{A_2}]_{pr}$ has a primitive embedding into
$L_{K3}$.

In particular, if $A=\langle g_1,\,\dots\,g_n\rangle$ is generated
by $g_1,\,\dots\,g_n\in A$, then \newline
$N_A=[N_{\langle g_1 \rangle},\,\dots ,
N_{\langle g_n\rangle}]_{pr}\subset N$. Moreover,
$A$ is a KahK3 subgroup if and only if  the sublattice
$[N_{\langle g_1 \rangle},\dots N_{\langle g_n\rangle}]_{pr}\subset N$
has a primitive embedding into $L_{K3}$.
\label{prop:coinvgensubA1A2}
\end{proposition}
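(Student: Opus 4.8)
The plan is to reduce the statement to elementary rational linear algebra of orthogonal complements, exploiting that a coinvariant sublattice is by construction an orthogonal complement and hence automatically a primitive sublattice. The single nontrivial input is the behaviour of fixed sublattices under taking the group generated by two subgroups, and everything else is formal.

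First I would establish the key identity on fixed sublattices. Since $A=\langle A_1,A_2\rangle$ is generated by $A_1\cup A_2$, an element $x\in N$ is fixed by all of $A$ if and only if it is fixed by every element of $A_1$ and of $A_2$ (any word in the generators fixes $x$ once the generators do). Hence
$$
N^A=N^{A_1}\cap N^{A_2}.
$$
Passing to orthogonal complements and using that $N$ is negative definite, so that its bilinear form is nondegenerate over $\bq$, the standard identity $(V_1\cap V_2)^\perp=V_1^\perp+V_2^\perp$ applied to $V_j=N^{A_j}\otimes\bq$ gives
$$
N_A\otimes\bq=(N_{A_1}\otimes\bq)+(N_{A_2}\otimes\bq).
$$

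Next I would record the inclusions $N_{A_1}\subset N_A$ and $N_{A_2}\subset N_A$: because $A\supset A_j$ we have $N^A\subset N^{A_j}$, and taking orthogonal complements reverses the inclusion, so $N_{A_j}=(N^{A_j})^\perp_N\subset (N^A)^\perp_N=N_A$. Thus the integral sublattice $[N_{A_1},N_{A_2}]$ generated by $N_{A_1}$ and $N_{A_2}$ lies in $N_A$. Now $N_A=(N^A)^\perp_N$ is an orthogonal complement, hence primitive in $N$, so $N_A=(N_A\otimes\bq)\cap N$. Combining this with the rational equality from the previous step yields
$$
N_A=(N_A\otimes\bq)\cap N=\bigl([N_{A_1},N_{A_2}]\otimes\bq\bigr)\cap N=[N_{A_1},N_{A_2}]_{pr},
$$
which is the asserted formula. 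The many-generator statement then follows by induction on $n$, applying the two-subgroup case to $A_1=\langle g_1,\dots,g_{n-1}\rangle$ and $A_2=\langle g_n\rangle$ and using that primitive closure is associative (passing to a primitive closure does not change the rational span, so iterated and simultaneous primitive closures of a finite family agree). Finally, the KahK3 characterization is immediate: by definition a subgroup is a KahK3 subgroup exactly when its coinvariant lattice $\La=N_A$ admits a primitive embedding into $L_{K3}$ (Theorem \ref{th:primembb3}), and substituting the formula $N_A=[N_{A_1},N_{A_2}]_{pr}$ just proved gives the stated equivalence.

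The only genuinely delicate point I anticipate is the bookkeeping between the integral sublattice $[N_{A_1},N_{A_2}]$ actually generated and its primitive closure $[N_{A_1},N_{A_2}]_{pr}$: the generated sublattice need not itself be primitive, so one must pass to rational spans to identify it with $N_A$ and then invoke the primitivity of the orthogonal complement to descend back to $N$. Everything else is formal linear algebra over $\bq$.
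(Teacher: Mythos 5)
Your proof is correct and rests on the same key observation as the paper's own argument, namely that $A=\langle A_1,A_2\rangle$ fixes a vector exactly when both $A_1$ and $A_2$ do. The paper packages this as a double inclusion (the nontrivial direction being that $A$ acts as the identity on $M^\perp_N$ for $M=[N_{A_1},N_{A_2}]_{pr}$, so $N_A\subset M$), while you pass to rational spans and invoke $(V_1\cap V_2)^\perp=V_1^\perp+V_2^\perp$ together with the primitivity of $N_A$; the two arguments are essentially equivalent.
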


\begin{proof} Indeed, since $A_1,\, A_2\subset A$, then
$[N_{A_1},N_{A_2}]_{pr}\subset N_A$.
Let $M=[N_{A_1},N_{A_2}]_{pr}\subset N$.
Since $N_{A_1}\subset M$ and $N_{A_2}\subset M$, it follows that
$A_1\subset A(M)$, $A_2\subset A(M)$, and
$A=\langle A_1,\,A_2\rangle\subset A(M)$. It follows that
$A=\langle A_1,\,A_2\rangle$ gives identity on $M^\perp_N$,
and $M\subset N_{\langle A_1,\,A_2\rangle}$. Therefore,
$M=N_A$.
\end{proof}

To calculate the coinvariant sublattice $N_A$ for a subgroup
$A\subset A(N)$,
we also use the following important and simple statement.

\begin{proposition} Let $N$ be a Niemeier
(or any other even unimodular lattice), $P(N)$ a basis of the root
system of $N$ and
$$
A(N)=\{\phi\in O(N)\ |\ \phi(P(N))=P(N)\}.
$$
Suppose that $P(N)$ generates $N$ over $\bq$ (otherwise, use
an $A$-invariant basis of $N\otimes \bq$ instead of $P(N)$).
Let $P(N)=\{e_1,e_2,\dots,e_n\}$ and $\{e_1^\ast,e_2^\ast,\dots,e_n^\ast\}
\subset N\otimes \bq$ are dual elements, that is
$e_i^\ast\cdot e_j=\delta_{ij}$
where $\delta_{ij}$ is the Kronecker symbol.

Let $A\subset A(N)$ be a subgroup and $N_A\subset N$ its
coinvariant sublattice $N_A=(N^A)^\perp_N$.
Let
$$
\{e_{i_{11}},\dots, e_{i_{1l_1}}\},\ ... \ \{e_{i_{t1}},\dots, e_{i_{tl_t}}\}
$$
be all orbits of $A$ in $P(N)$, and $t$ the number of orbits.

Then
$$
\{e_{i_{11}}^\ast-e_{i_{12}}^\ast,\dots,\
e_{i_{1(l_1-1)}}^\ast-e_{i_{1l_1}}^\ast\},\ ... ,\
\{e_{i_{t1}}^\ast-e_{i_{t2}}^\ast,\dots,\
e_{i_{t(l_t-1)}}^\ast-e_{i_{tl_t}}^\ast\}
$$
give basis of $N_A\otimes \bq$. In particular, $\rk N_A=\rk N-t$.
\label{prop:coinvbasis}
\end{proposition}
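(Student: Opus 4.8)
The plan is to work throughout over $\bq$ in the space $V=N\otimes\bq$, in which $\{e_1,\dots,e_n\}=P(N)$ and the dual vectors $\{e_1^\ast,\dots,e_n^\ast\}$ are both $\bq$-bases; this is exactly where the hypothesis that $P(N)$ generates $N$ over $\bq$ enters (otherwise one replaces $P(N)$ by an $A$-invariant basis of $N\otimes\bq$, as the statement allows). Since the form is nondegenerate, passing to $\bq$-coefficients gives $N^A\otimes\bq=V^A$ and hence $N_A\otimes\bq=(N^A\otimes\bq)^\perp=(V^A)^\perp$, so it suffices to exhibit a $\bq$-basis of the orthogonal complement of the fixed space $V^A$. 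The first point I would record is that $A$ permutes the dual basis by the \emph{same} permutation as it permutes $P(N)$: if $\phi\in A$ sends $e_j$ to $e_{\sigma(j)}$, then for every $j$ one computes $\phi(e_i^\ast)\cdot e_j=e_i^\ast\cdot\phi^{-1}(e_j)=e_i^\ast\cdot e_{\sigma^{-1}(j)}=\delta_{\sigma(i),j}$, using that $\phi$ is an isometry; since $\{e_j\}$ is a basis this forces $\phi(e_i^\ast)=e_{\sigma(i)}^\ast$.

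Next I would compute the fixed space. Writing a vector in the dual basis as $v=\sum_i c_i e_i^\ast$, invariance under all of $A$ forces the coefficients $c_i$ to be constant on each $A$-orbit in $P(N)$. Thus $V^A$ has dimension exactly $t$, with basis the orbit sums $f_r=\sum_{s=1}^{l_r}e_{i_{rs}}^\ast$, $r=1,\dots,t$. In particular $\dim(V^A)^\perp=n-t$, which already yields the rank formula $\rk N_A=\rk N-t$ once the proposed basis is verified.

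The step that looks hardest, and the one I would handle with a device rather than by brute force, is showing that the listed difference vectors actually lie in $(V^A)^\perp$. Computing $e_{i_{rs}}^\ast\cdot e_{i_{rs'}}^\ast$ directly would require the inverse Gram matrix of $P(N)$, which is unpleasant. Instead I would use the averaging operator $\pi_A=\frac{1}{|A|}\sum_{\phi\in A}\phi$. Because $A$ acts by isometries, each $\phi$ has adjoint $\phi^{-1}$, so $\pi_A$ is a self-adjoint idempotent, hence the orthogonal projection of $V$ onto $V^A$, and therefore $(V^A)^\perp=\ker\pi_A$. Applying $\pi_A$ to a dual basis vector and using the orbit--stabilizer count, each element of the orbit of $e_{i_{rs}}^\ast$ is hit exactly $|A|/l_r$ times, giving $\pi_A(e_{i_{rs}}^\ast)=\tfrac{1}{l_r}f_r$, \emph{independently of} $s$. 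Hence $\pi_A(e_{i_{rs}}^\ast-e_{i_{r(s+1)}}^\ast)=0$, so every listed difference lies in $(V^A)^\perp$.

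Finally I would check independence and count. Within orbit $r$ the $l_r-1$ consecutive differences $e_{i_{rs}}^\ast-e_{i_{r(s+1)}}^\ast$ span the coefficient-sum-zero subspace of $\operatorname{span}(e_{i_{r1}}^\ast,\dots,e_{i_{rl_r}}^\ast)$ and are linearly independent; since distinct orbits involve disjoint dual basis vectors, the full collection of $\sum_r(l_r-1)=n-t$ differences is linearly independent. These $n-t$ independent vectors sit inside the $(n-t)$-dimensional space $(V^A)^\perp$, so they form a $\bq$-basis of $N_A\otimes\bq$, and $\rk N_A=n-t=\rk N-t$, as claimed.
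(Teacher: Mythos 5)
Your proof is correct and complete. For the record, the paper itself offers no proof of Proposition \ref{prop:coinvbasis}: it is introduced as an ``important and simple statement'' and the text proceeds immediately to its computational use, so there is nothing to compare against line by line; your write-up fills a gap the author left to the reader. The structure you chose is sound: reducing to $V=N\otimes\bq$, observing that an isometry permuting $P(N)$ permutes the dual basis by the same permutation, identifying $V^A$ as the span of the $t$ orbit sums, and then using the self-adjoint idempotent $\pi_A=\frac{1}{|A|}\sum_{\phi\in A}\phi$ to see that the consecutive differences lie in $(V^A)^\perp=\ker\pi_A$, with the dimension count $n-t$ finishing the argument. One small simplification worth noting: the averaging operator is not really needed for the orthogonality step. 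Since $A$ also permutes the basis $P(N)$ itself, $V^A$ is spanned by the orbit sums $g_r=\sum_{s}e_{i_{rs}}$ of the \emph{roots}, and then
$$
(e_{i_{rs}}^\ast-e_{i_{rs'}}^\ast)\cdot g_{r'}=\sum_{s''}\bigl(\delta_{(r,s),(r',s'')}-\delta_{(r,s'),(r',s'')}\bigr)=0
$$
directly from $e_i^\ast\cdot e_j=\delta_{ij}$, with no inverse Gram matrix and no projection operator. This is presumably the one-line verification the author had in mind when calling the statement simple; your route via $\pi_A$ buys the same conclusion with a slightly heavier but equally rigorous tool.
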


A primitive sublattice $K_{pr}\subset N$ of a lattice $N$ is defined
by its rational basis
for the vector space $K\otimes \bq$. Really, $K_{pr}=K\otimes
\bq\cap N\subset N\otimes \bq$.
Thus, Proposition \ref{prop:coinvbasis} gives an efficient method for
calculation of $N_A$ and its invariants
(for Theorem \ref{th:primembb3}) to find out if $N_A$ has a primitive
embedding into $L_{K3}$ and $A$ is KahK3 subgroup.

In Appendix (Sect. \ref{sec:programs}) we give a Program 0
(which uses Programs 1 -- 4)
which calculates a normal (or Smith) basis of the
primitive sublattice $K_{pr}\subset N$ for any sublattice $K$ of
$N$ which is given by generators of $K\otimes \bq$.
We use the root basis $P(N)$ of a Niemeier lattice $N$
(corresponding to basic columns $(0,\dots , 0,1,0, \dots ,0)^t$
of the length $24$),
the integer matrix $r$ of $N$ in this basis (which is equivalent to
Dynkin diagram),
and additional cording data of $N$. The sublattice $K$ is given
by rational columns in this basis (matrix SUBL of size $(24\times\  \cdot\ )$
in Program 0. The Program 0 calculates the normal (elementary divisors or
Smith) basis of
$K_{pr}$ (denoted by SUBLpr for the Program 0) for the embedding
$K_{pr}\subset K_{pr}^\ast$.
It calculates elementary divisors (or Smith) invariants of this
embedding (denoted by
DSUBLpr for the Program 0).  In particular, it calculates $\rk K_{pr}$,
the discriminant group
$A_{K_{pr}}=K_{pr}^\ast/K_{pr}$, gives the number $l(A_{K_{pr}})$
of its minimal generators. Moreover, the normal basis can be used to
calculate the Jordan form of $K_{pr}\otimes \bz_p$ over the rings
of $p$-adic integers,
and the discriminant form $q_{K_{pr}}$.  The last vectors of
this basis give
the unimodular part of $K_{pr}$ over $\bz_p$. Thus, Program 0
(Appendix, Sect. \ref{sec:programs})
gives all necessary invariants  for Theorem \ref{th:primembb3} to
find out
if $K_{pr}$ has a primitive embedding into$L_{K3}$. The program is
very fast for all cases which we consider below.

\medskip

{\bf Case 13.} For the Niemeier lattice
$$
N=N_{13}=N(2A_9\oplus D_6)
=[2A_9\oplus D_6,\
2\varepsilon_{11}+4\varepsilon_{12},\,
5\varepsilon_{11}+\varepsilon_{13},\,
5\varepsilon_{12}+\varepsilon_{33}],
$$
the group $A(N)$ is a cyclic group $C_4$ of
the order $4$ (see \cite[Ch. 16]{CS}).
Its elements are defined by permutations
of the terminals of Dynkin diagrams $\aaa_9$
and $\ddd_6$. Its generator $\varphi$
gives the permutation
$$
\varphi=(\alpha_{11}\alpha_{12}\alpha_{91}\alpha_{92})(\alpha_{53}\alpha_{63}).
$$
By Proposition \ref{prop:coinvbasis},
the coinvariant sublattice $N_{[\varphi]}$ is
$$
N_{[\varphi]}=[\alpha_{11}^\ast-\alpha_{12}^\ast,\,
\alpha_{12}^\ast-\alpha_{91}^\ast,\,\alpha_{91}^\ast-\alpha_{92}^\ast,\,
\alpha_{21}^\ast-\alpha_{22}^\ast,\,\alpha_{22}^\ast-\alpha_{81}^\ast,\,
\alpha_{81}^\ast-\alpha_{82}^\ast,\,
\alpha_{31}^\ast-\alpha_{32}^\ast,\,\alpha_{32}^\ast-\alpha_{71}^\ast,\,
$$
$$
\alpha_{71}^\ast-\alpha_{72}^\ast,\,
\alpha_{41}^\ast-\alpha_{42}^\ast,\,\alpha_{42}^\ast-\alpha_{61}^\ast,\,
\alpha_{61}^\ast-\alpha_{62}^\ast,\,
\alpha_{51}^\ast-\alpha_{52}^\ast,\,
\alpha_{53}^\ast-\alpha_{63}^\ast]_{pr}\subset N.\hfill
$$
Using Program 0 (see Appendix, Sect. \ref{sec:programs}), we obtain
that $\rk N_{[\varphi]}=14$, and $N_{[\varphi]}^\ast/N_{[\varphi]}\cong
(\bz/4\bz)^4\times(\bz/2\bz)^2$,
and $l(N_{[\varphi]}^\ast/N_{[\varphi]})=6$.
By Theorem \ref{th:primembb3}, the lattice $N_{[\varphi]}$ has a primitive
embedding into
$L_{K3}$, and $[\varphi]=A(N)$ is KahK3 group.

Any subgroup of $A(N)$ is also KahK3 subgroup.
The only non-trivial subgroup of $A(N)$ is $[\varphi^2]$
of order $2$. Similar calculations
show that $N_{[\varphi^2]}\cong E_8(2)$.

Let $X$ be marked by a primitive sublattice
$S\subset N_{13}=N(2A_9\oplus D_6)$. Then $S$
must satisfy Theorem \ref{th:primembb3} and
$\Gamma(P(S))\subset \Gamma(P(N_{13}))=2\aaa_9\,\ddd_6$.
Any such $S$ gives marking of some $X$ and $P(X)\cap S=P(S)$.

If $N_{[\varphi]}\subset S$, then
$\Aut(X,S)_0=[\varphi]\cong C_4$ is cyclic group of the order $4$.
Otherwise, if  only $N_{[\varphi^2]}\subset S$,
then $\Aut(X,S)_0=[\varphi^2]\cong C_2$. Otherwise, if
$N_{[\varphi^2]}\not\subset S$, then $\Aut(X,S)_0$ is trivial.

Let $S=[2A_9,N_{[\varphi]}]_{pr}=[\alpha_{11},\,\alpha_{21},\dots ,\,
\alpha_{81},\, N_{[\varphi]}]_{pr}\subset N_{13}$. Then (using
Program 0 in Appendix, Sect. \ref{sec:programs}) we have
$\rk S=19$ and $S^\ast/S\cong \bz/4\bz$. Thus
$S$ satisfies Theorem \ref{th:primembb3} and gives marking of some $X$.
We have  $\Aut(X,S)_0\cong C_4$ and
$2\aaa_9\subset \Gamma(P(X)\cap S)=P(S)$.
By classification  of Niemeier lattices and our
calculations above, $X$
can be marked by the Niemeier lattice $N_{13}=N(2A_9\oplus D_6)$
only for such $S\subset S_X$.

\medskip

{\bf Case 16.} For the Niemeier lattice
$$
N=N_{16}=N(2A_7\oplus 2D_5)
=[2A_7\oplus 2D_5,\
\varepsilon_{11}+\varepsilon_{12}+\varepsilon_{13}+\varepsilon_{24},\
\varepsilon_{11}+7\varepsilon_{12}+
\varepsilon_{23}+\varepsilon_{14}],
$$
the group $A(N)$ is the dihedral group ${\mathfrak D}_8$ of
the order $8$ (see \cite[Ch. 16]{CS}).
We identify it with the group of symmetries of
a quadrat.
Its elements are defined by permutations
of the terminals of Dynkin diagrams $\aaa_7$
and $\ddd_5$. The central symmetry $\varphi_0$
gives the involution
$$
\varphi_0=
(\alpha_{11}\alpha_{71})(\alpha_{12}\alpha_{72})
(\alpha_{43}\alpha_{53})(\alpha_{44}\alpha_{54}).
$$
Two generating symmetries $\varphi_1$ and $\varphi_2$
give involutions
$$
\varphi_1=(\alpha_{11}\alpha_{12})(\alpha_{71}\alpha_{72})(\alpha_{44}\alpha_{54}),\ \
\varphi_2=(\alpha_{12}\alpha_{72})(\alpha_{43}\alpha_{44})(\alpha_{53}\alpha_{54})
$$
where $\varphi_2\varphi_1$ gives a rotation by $90^o$, and has order $4$.

For the cyclic subgroup $H=[\varphi_2\varphi_1]$ of order $4$,
the coinvariant sublattice $N_H$
has $\rk N_H=16$ and $l(N_H^\ast/N_H)=8$ (we use Proposition
\ref{prop:coinvbasis}
and Program 0 of Appendix, Sect. \ref{sec:programs}). Thus, $N_H$ has no
primitive embeddings into $L_{K3}$,
and $H$ is not KahK3 subgroup by Theorem \ref{th:primembb3}. It follows
that $A(N)$ is not KahK3 subgroup either.

For the conjugate in $A(N)$ subgroups $H=[\varphi_0,\, \varphi_1]$ and
$H=[\varphi_0,\,\varphi_2]$ isomorphic
to $C_2\times C_2$, we have $\rk N_H=12$ and
$N_H^\ast/N_H\cong (\bz/4\bz)^2\times (\bz/2\bz)^6$
(we use Proposition \ref{prop:coinvbasis} and Program 0 of Appendix,
Sect. \ref{sec:programs}). Thus, $N_H$ has a primitive
embedding into $L_{K3}$ by Theorem \ref{th:primembb3},  and the
$H\cong C_2\times C_2$ are KahK3 subgroups.
\footnote{These calculations show that for a symplectic group
$G=C_2\times C_2$ on a
K\"ahlerian K3 surface, the group
$S_{(G)}^\ast/S_{(G)}=S_{(2,2)}^\ast /S_{(2,2)}\cong
(\bz/4\bz)^2\times (\bz/2\bz)^6$.
We must correct our calculation of this group in
\cite[Prop. 10.1]{Nik0}.}

All other non-trivial subgroups of $A(N)$ are cyclic subgroups of order $2$
which are $[\varphi_0]$,  and conjugate subgroups
$[\varphi_1]$, $[\varphi_1\varphi_0]$,  $[\varphi_2]$, $[\varphi_2\varphi_0]$
of order $2$. They are KahK3 subgroups since they are subgroups of
$H$ above.
The coninvariant sublattices
$N_{[\varphi_0]}$, $N_{[\varphi_1]}$, $N_{[\varphi_1\varphi_0]}$,
$N_{[\varphi_2]}$, $N_{[\varphi_2\varphi_0]}$ are isomorphic to $E_8(2)$.

Let $X$ be marked by a primitive sublattice
$S\subset N_{16}=N(2A_7\oplus 2D_5)$. Then $S$
must satisfy Theorem \ref{th:primembb3} and
$\Gamma(P(S))\subset \Gamma(P(N_{16}))=2\aaa_7\,2\ddd_5$.
Any such $S$ gives marking of some $X$ and $P(X)\cap S=P(S)$.

If $N_{[\varphi_0,\varphi_1]}\subset S$, then
$\Aut(X,S)_0=[\varphi_0,\,\varphi_1]\cong C_2\times C_2$.
If $N_{[\varphi_0,\varphi_2]}\subset S$, then
$\Aut(X,S)_0=[\varphi_0,\,\varphi_2]\cong C_2\times C_2$.
Otherwise, if  only $N_{[\varphi]}\subset S$ for one of
$\varphi\in \{\varphi_0,\,\varphi_1,\,\varphi_1\varphi_0,\,\varphi_2,\,
\varphi_2\varphi_0\}$,
then $\Aut(X,S)_0=[\varphi]\cong C_2$. Otherwise,
if $N_{[\varphi]}\not\subset S$ for
each such $\varphi$, then $\Aut(X,S)_0$ is trivial.

Let $S=[2A_7,N_{[\varphi_0,\varphi_1]}]_{pr}\subset N_{16}$. Then (using
Program 0 in Appendix, Sect. \ref{sec:programs}) we have
$\rk S=16$ and $S^\ast/S\cong (\bz/2\bz)^4$. Thus
$S$ satisfies Theorem \ref{th:primembb3} and gives marking of some $X$.
We have  $\Aut(X,S)_0\cong C_2\times C_2$ and
$2\aaa_7\subset \Gamma(P(X)\cap S)=P(S)$.
By classification  of Niemeier lattices and our calculations
above, $X$
can be marked by the Niemeier lattice $N_{16}=N(2A_7\oplus 2D_5)$ only for
such $S\subset S_X$.

\medskip

{\it Below, in our calculations, we don't mention using of
Proposition \ref{prop:coinvbasis} and Program 0 of Appendix,
Sec. \ref{sec:programs}. We use them all the time.}

\medskip

{\bf Case 17.} For the Niemeier lattice
$$
N=N_{17}=N(4A_6)=
$$
$$
=[4A_6,\
\varepsilon_{11}+2\varepsilon_{12}+\varepsilon_{13}+6\varepsilon_{14},\
\varepsilon_{11}+6\varepsilon_{12}+2\varepsilon_{13}+\varepsilon_{14},\,
\varepsilon_{11}+\varepsilon_{12}+6\varepsilon_{13}+2\varepsilon_{14}],
$$
the group $A(N)$ has the center $[\varphi_0]$ of order $2$
which preserves components $4\aaa_6$, and \newline
$A(N)/[\varphi_0]={\mathfrak{A}}_4$ is the alternating group
on components of $4\aaa_6$ (see \cite[Ch. 16]{CS}).
Its elements are defined by permutations
of the terminals of the Dynkin diagrams $\aaa_6$.
It is generated by the involution
$$
\varphi_0=(\alpha_{11}\alpha_{61})(\alpha_{12}\alpha_{62})(\alpha_{13}\alpha_{63})(\alpha_{14}\alpha_{64})
$$
and by elements $\varphi_1$ and $\varphi_4$ of order $3$,
$$
\varphi_1=(\alpha_{12}\alpha_{13}\alpha_{14})(\alpha_{62}\alpha_{63}\alpha_{64}),\ \
\varphi_2=(\alpha_{11}\alpha_{12}\alpha_{63})(\alpha_{61}\alpha_{62}\alpha_{13})
$$
where $\varphi_4\varphi_1$ has order $4$ and
$(\varphi_4\varphi_1)^2=\varphi_0$.
All elements of $A(N)$ are conjugate to $\varphi_0$, $\varphi_1$,
$\varphi_0\varphi_1$ (of order $6$) and $\varphi_4\varphi_1$.

The coninvariant sublattice $N_{[\varphi_0]}$ has
$\rk N_{[\varphi_0]}=12$ and
$N_{[\varphi_0]}^\ast /N_{[\varphi_0]}\cong (\bz/2\bz)^{12}$.
By Theorem \ref{th:primembb3}, $N_{[\varphi_0]}$ has no
primitive embeddings into $L_{K3}$, and $[\varphi_0]$ is
not KahK3 subgroup.

For the cyclic group $[\varphi_1]\cong C_3$, we have
$\rk N_{[\varphi_1]}=12$ and
$N_{[\varphi_1]}^\ast/N_{[\varphi_1]}\cong (\bz/3\bz)^6$.
By Theorem \ref{th:primembb3}, $N_{[\varphi_1]}$ has a primitive
embedding into $L_{K3}$, and $[\varphi_1]$
is KahK3 subgroup. Its conjugate give all cyclic subgroups of $A(N)$
of order $3$. They are
$[\varphi_1]$, $[\varphi_4]$ and $[\varphi_2]$, $[\varphi_3]$ where
$\varphi_2=\varphi_1\varphi_4\varphi_1^{-1}$, $\varphi_3=\varphi_1^2\varphi_4\varphi_1^{-2}$.
From the structure of $A(N)$, it follows that {\it conjugate subgroups
$$
H=[\varphi_i]\cong C_3,\ i=1,2,3,4,
$$
are all non-trivial KahK3 subgroups of $A(N_{17})$. We have
$\rk N_H=12$, $N_H^\ast/N_H\cong (\bz/3\bz)^6$.}

Let $X$ be marked by a primitive sublattice
$S\subset N_{17}=N(4A_6)$. Then $S$
must satisfy Theorem \ref{th:primembb3} and
$\Gamma(P(S))\subset \Gamma(P(N_{17}))=4\aaa_6$.
Any such $S$ gives marking of some $X$ and $P(X)\cap S=P(S)$.

If $N_{[\varphi_i]}\subset S$, for one of $i=1,\,2,\,3,\,4$,
then $\Aut(X,S)_0=[\varphi_i]\cong C_3$.
Otherwise, if $N_{[\varphi_i]}\not\subset S$ for all
$i=1,\,2,\,3,\,4$, then $\Aut(X,S)_0$ is trivial.

Let $S=[(A_6)_2=[\alpha_{12},\dots,\,\alpha_{62}],\
N_{[\varphi_1]}]_{pr}\subset N_{17}$.
We have $\rk S=18$, $S^\ast/S\cong \bz/7\bz$.
Thus, $S$ satisfies Theorem \ref{th:primembb3} and gives marking
of some $X$.
We have  $\Aut(X,S)_0\cong C_3$ and $3\aaa_6\subset
\Gamma(P(X)\cap S)=P(S)$.
By classification  of Niemeier lattices and our calculations
above, $X$
can be marked by the Niemeier lattice $N_{17}=N(4A_6)$ only
for such $S\subset S_X$.

\medskip

{\bf Case 14.} For the Niemeier lattice
$$
N=N_{14}=N(4D_6)=
[4D_6,\ even\ permutations\ of\
0_1+\varepsilon_{12}+\varepsilon_{23}+\varepsilon_{34}]=
[4D_6,\ \varepsilon_{12}+\varepsilon_{23}+\varepsilon_{34},\
$$
$$
\varepsilon_{32}+\varepsilon_{13}+\varepsilon_{24},\
\varepsilon_{22}+\varepsilon_{33}+\varepsilon_{14};\
\varepsilon_{11}+\varepsilon_{33}+\varepsilon_{24},\
\varepsilon_{21}+\varepsilon_{13}+\varepsilon_{34},\
\varepsilon_{31}+\varepsilon_{23}+\varepsilon_{14};\
\varepsilon_{21}+\varepsilon_{32}+\varepsilon_{14},\
$$
$$
\varepsilon_{11}+\varepsilon_{22}+\varepsilon_{34},\
\varepsilon_{31}+\varepsilon_{12}+\varepsilon_{24};\
\varepsilon_{31}+\varepsilon_{22}+\varepsilon_{13},\
\varepsilon_{11}+\varepsilon_{32}+\varepsilon_{23},\
\varepsilon_{21}+\varepsilon_{12}+\varepsilon_{33}]
$$
the group $A(N)$ can be identified with ${\mathfrak{S}}_4$ by
its action on components of $4\ddd_6$ (see \cite[Ch. 16]{CS}).
Its elements are defined by permutations
of the terminals of the connected components of
Dynkin diagrams $4\ddd_6$. We numerate these components
by $1,\,2,\,3,\,4$.
Even permutations give the corresponding permutations of
$(\alpha_{51},\,\alpha_{52},\,\alpha_{53},\,\alpha_{54})$ and
$(\alpha_{61},\,\alpha_{62},\,\alpha_{63},\,\alpha_{64})$.
Odd permutations change $\alpha_{5i}$ and $\alpha_{6i}$ places.
For example, the transposition $(12)$ gives the permutation
$$
(12)=(\alpha_{51}\alpha_{62})(\alpha_{61}\alpha_{52})(\alpha_{53}\alpha_{63})(\alpha_{54}\alpha_{64}).
$$

For $(12)(34)\in {\mathfrak S}_4$ of order $2$,  we have
$\rk N_{[(12)(34)]}=12$ and \linebreak
$N_{[(12)(34)]}^\ast/N_{[(12)(34)]}\cong (\bz/2\bz)^{12}$.
By Theorem \ref{th:primembb3}, $N_{[(12)(34)]}$ has no
primitive embeddings into $L_{K3}$, and $[(12)(34)]$ and its
conjugate are
not KahK3 subgroups.

Let $({\mathfrak D}_6)_4={\mathfrak S}_3(1,2,3)$ consists of
all elements of ${\mathfrak S}_4$ which preserve $4$. Its conjugate we
similarly denote as
$({\mathfrak D}_6)_k$, $k=1,2,3,4$. They are isomorphic to
the dihedral group $\mathfrak{D}_6$ of order $6$.
We have
$$
\rk N_{({\mathfrak D}_6)_4}=14,\ \ \
N_{({\mathfrak D}_6)_4}^\ast /N_{({\mathfrak D}_6)_4}
\cong (\bz/6\bz)^2\times (\bz/3\bz)^3.
$$
By Theorem \ref{th:primembb3}, $N_{({\mathfrak D}_6)_4}$ has a
primitive embedding into $L_{K3}$,
and $({\mathfrak D}_6)_4$ is KahK3 subgroup.

Subgroups of $({\mathfrak D}_6)_4$ are also KahK3 subgroups.
Non-trivial ones are conjugate to
$[(12)]\cong C_2$ and $(C_3)_4=[(123)]\cong C_3$.
We have
$N_{[(12)]}\cong E_8(2)$, and
$$
\rk N_{[(123)]}=12,\ \ \
N_{[(123)]}^\ast/N_{[(123)]}\cong (\bz/3\bz)^6.
$$

From the structure of ${\mathfrak S}_4$, we obtain that {\it all
non-trivial
KahK3 subgroups of $A(N_{14})$ are conjugate subgroups
$({\mathfrak D}_6)_k$, $k=1,2,3,4$, isomorphic to ${\mathfrak D}_6$;
conjugate
subgroups $[(ij)]$, $1\le i<j\le 4$, isomorphic to $C_2$; conjugate
subgroups $(C_3)_k\subset ({\mathfrak D}_6)_k$, $k=1,2,3,4$,
isomorphic to $C_3$.}

Let $X$ be marked by a primitive sublattice
$S\subset N_{14}=N(4D_6)$. Then $S$
must satisfy Theorem \ref{th:primembb3} and
$\Gamma(P(S))\subset \Gamma(P(N_{14}))=4\ddd_6$.
Any such $S$ gives marking of some $X$ and $P(X)\cap S=P(S)$.

If $N_{({\mathfrak D}_6)_k}\subset S$ for one of $k=1,\,2,\,3,\,4$,
then $\Aut(X,S)_0=({\mathfrak D}_6)_k\cong {\mathfrak D}_6$.
Otherwise, if $N_{(C_3)_k}\subset S$ for one of $k=1,\,2,\,3,\,4$,
only, then $\Aut(X,S)_0=(C_3)_k\cong C_3$.
Otherwise, if $N_{[(ij)]}\subset S$ for $1\le i<j\le 4$,
only, then $\Aut(X,S)_0=[(ij)]\cong C_2$.
Otherwise, $\Aut(X,S)_0$ is trivial.

Let $S=[(D_6)_1=[\alpha_{11},\dots,\,\alpha_{61}],\
N_{({\mathfrak D}_6)_4}]_{pr}\subset N_{14}$.
We have $\rk S=19$, $S^\ast/S\cong \bz/4\bz$.
Thus, $S$ satisfies Theorem \ref{th:primembb3} and gives
marking of some $X$.
We have  $\Aut(X,S)_0\cong {\mathfrak D}_6$ and $3\ddd_6\subset
\Gamma(P(X)\cap S)=P(S)$.
By classification  of Niemeier lattices and our calculations above,
$X$ can be marked by the Niemeier lattice $N_{14}=N(4D_6)$ only for
such $S\subset S_X$.

\medskip

{\bf Case 12.} For the Niemeier lattice
$$
N=N_{12}=N(4E_6)=
[4E_6,\
\varepsilon_{11}+\varepsilon_{13}+\varepsilon_{24},\
\varepsilon_{11}+\varepsilon_{22}+\varepsilon_{14},\,
\varepsilon_{11}+\varepsilon_{12}+\varepsilon_{23}],
$$
the group $A(N)$ has the center $[\varphi_0]$ of order $2$
which preserves components $4\eee_6$, and \newline
$A(N)/[\varphi_0]={\mathfrak{S}}_4$ is the symmetric group
on components of $4\eee_6$ (see \cite[Ch. 16]{CS}).
Its elements are defined by permutations
of the terminals of the Dynkin diagrams $\eee_6$.
It is generated by the involutions
$$
\varphi_0=(\alpha_{11}\alpha_{61})(\alpha_{12}\alpha_{62})
(\alpha_{13}\alpha_{63})(\alpha_{14}\alpha_{64}),\ \ \
\widetilde{(12)}=(\alpha_{11}\alpha_{12})(\alpha_{61}\alpha_{62})
(\alpha_{14}\alpha_{64}),
$$
$$
\widetilde{(23)}=(\alpha_{11}\alpha_{61})(\alpha_{12}\alpha_{13})(\alpha_{62}\alpha_{63}),\ \ \
\widetilde{(34)}=(\alpha_{11}\alpha_{61})(\alpha_{13}\alpha_{14})(\alpha_{63}\alpha_{64}).
$$
Similarly we define involutions $\widetilde{(ij)}$, $1\le i<j\le 4$,
which act as transpositions of components $(\eee_6)_i$, $(\eee_6)_j$
and elements $\alpha_{1i}$, $\alpha_{1j}$.

The element $\widetilde{(34)}\widetilde{(12)}$ has order $4$, and
$(\widetilde{(34)}\widetilde{(12)})^2=\varphi_0$.
The coinvariant sublattice $N_{[\widetilde{(34)}\widetilde{(12)}]}$ has
$\rk N_{[\widetilde{(34)}\widetilde{(12)}]}=16$, and
$N_{[\widetilde{(34)}\widetilde{(12)}]}^\ast/
N_{[\widetilde{(34)}\widetilde{(12)}]}=
(\bz/4\bz)^4\times (\bz/2\bz)^4$.
By Theorem \ref{th:primembb3}, $N_{[\widetilde{(34)}\widetilde{(12)}]}$
has no
primitive embeddings into $L_{K3}$, and
${[\widetilde{(34)}\widetilde{(12)}]}$ is not KahK3 subgroup.

For $k=1,2,3,4$, let $({\mathfrak D}_{12})_k$ consists
of all elements of $A(N_{12})$
which preserve the component $(\eee_6)_k$.
The conjugate subgroups
$({\mathfrak D}_{12})_k$ are isomorphic to the dihedral
group ${\mathfrak D}_{12}$ of order $12$.
We have
$$
\rk N_{({\mathfrak D}_{12})_k}=16, \ \
N_{({\mathfrak D}_{12})_k}^\ast/N_{({\mathfrak D}_{12})_k}
\cong (\bz/6\bz)^4.
$$
By Theorem \ref{th:primembb3},
$N_{({\mathfrak D}_{12})_k}$ have primitive embeddings into
$L_{K3}$, and $({\mathfrak D}_{12})_k$ are KahK3 subgroups.

From the structure of $A(N)$, it follows that
{\it conjugate subgroups
$$
({\mathfrak D}_{12})_k\cong {\mathfrak D}_{12},\ k=1,2,3,4,
$$
and all their subgroups give all KahK3 subgroups of $A(N_{12})$.}

These non-trivial subgroups
are (where $k$ marks subgroups of $({\mathfrak D}_{12})_k$):

\medskip

\noindent
isomorphic to $C_6$ conjugate to
$(C_6)_4=[\widetilde{(12)}\widetilde{(23)}]$ subgroups
$$
(C_6)_{k},\ k=1,2,3,4,
$$
with $N_{(C_6)_k}=N_{({\mathfrak D}_{12})_k}$
(therefore, $Clos(C_6)_k=({\mathfrak D}_{12})_k$);
isomorphic to ${\mathfrak D}_{6}$ subgroups
$$
({\mathfrak D}_{6})_{ki},\ k=1,2,3,4,\ i=1,2,
$$
with $\rk N_{({\mathfrak D}_{6})_{ki}}=14$, and
$N_{({\mathfrak D}_{6})_{ki}}^\ast/N_{({\mathfrak D}_{6})_{ki}}\cong
(\bz/6\bz)^2\times (\bz/3\bz)^3$;
isomorphic to $C_3$ conjugate subgroups
$$
(C_3)_{k},\ k=1,2,3,4,
$$
with $\rk N_{(C_3)_{k}}=12$ and $N_{(C_3)_{k}}^\ast/N_{(C_3)_{k}}\cong
(\bz/3\bz)^6$;
isomorphic to $C_2\times C_2$ conjugate subgroups
$$
[\widetilde{(ij)},\varphi_0],\ 1\le i<j\le 4,
$$
with $N_{[\widetilde{(ij)},\varphi_0]}=12$, and
$N_{[\widetilde{(ij)},\varphi_0]}^\ast /
N_{[\widetilde{(ij)},\varphi_0]}\cong
(\bz/4\bz)^2\times
(\bz/2\bz)^6$;
isomorphic to $C_2$ subgroups
$$
[\varphi_0],\ \ [\widetilde{(ij)}],\
[\widetilde{(ij)}\varphi_0],\ \ 1\le i<j\le 4
$$
with coinvariant lattices isomorphic to $E_8(2)$.

\medskip

Let $X$ be marked by a primitive sublattice
$S\subset N_{12}=N(4E_6)$. Then $S$
must satisfy Theorem \ref{th:primembb3} and
$\Gamma(P(S))\subset \Gamma(P(N_{17}))=4\eee_6$.
Any such $S$ gives marking of some $X$ and $P(X)\cap S=P(S)$.

If $N_{({\mathfrak D}_{12})_k}\subset S$ for one of
$k=1,\,2,\,3,\,4$,
then $\Aut(X,S)_0={({\mathfrak D}_{12})_k}\cong {\mathfrak D}_{12}$.
Otherwise, if $N_{({\mathfrak D}_{6})_{ki}}\subset S$ only
for one of $k=1,\,2,\,3,\,4$, $i=1,\,2$, then
$\Aut(X,S)_0={({\mathfrak D}_{6})_{ki}}\cong {\mathfrak D}_{6}$.
Otherwise, if $N_{(C_3)_{k}}\subset S$ only for one of
$k=1,\,2,\,3,\,4$, then
$\Aut(X,S)_0={(C_3)_{k}}\cong C_3$.
Otherwise, if $N_{[\widetilde{(ij)},\varphi_0]}\subset S$ only
for one of $1\le i<j\le 4$, then
$\Aut(X,S)_0=[\widetilde{(ij)},\varphi_0]\cong C_2\times C_2$.
Otherwise, if $N_H\subset S$ only
for one of
$H\in \{[\varphi_0]\}\cup \{[\widetilde{(ij)}],\
[\widetilde{(ij)}\varphi_0],\ |\  1\le i<j\le 4\}$, then
$\Aut(X,S)_0=H\cong C_2$. Otherwise,
$\Aut(X,S)_0$ is trivial.

Let $S=[(E_6)_1=[\alpha_{11},\dots,\,
\alpha_{61}],N_{(C_3)_{4}}]_{pr}\subset N_{12}$.
We have $\rk S=18$, $S^\ast/S\cong \bz/3\bz$.
Thus, $S$ satisfies Theorem \ref{th:primembb3} and
gives marking of some $X$.
We have  $\Aut(X,S)_0\cong C_3$ and $3\eee_6\subset
\Gamma(P(X)\cap S)=P(S)$.
By classification  of Niemeier lattices and our calculations
above, $X$ can be marked by the Niemeier lattice $N_{12}=N(4E_6)$
only for such $S\subset S_X$.


\vskip0.5cm

{\bf Case 18.} For the Niemeier lattice
$$
N=N_{18}=N(4A_5\oplus D_4)=
[4A_5\oplus D_4,\
2\varepsilon_{11}+2\varepsilon_{13}+4\varepsilon_{14},\
2\varepsilon_{11}+4\varepsilon_{12}+2\varepsilon_{14},\,
$$
$$
2\varepsilon_{11}+2\varepsilon_{12}+4\varepsilon_{13},\,
3\varepsilon_{11}+3\varepsilon_{12}+\varepsilon_{15},\
3\varepsilon_{11}+3\varepsilon_{13}+\varepsilon_{25},\,
3\varepsilon_{11}+3\varepsilon_{14}+\varepsilon_{35}]
$$
the group $A=A(N)$ has the center $[\varphi_0]$ of order $2$
which preserves components $4\aaa_5$ and $\ddd_4$.
The group $A(N)/[\varphi_0]={\mathfrak{S}}_4$
is the symmetric group on components of $4\aaa_5$
(see \cite[Ch. 16]{CS}).
Elements of $A$ are defined by their actions on
the terminals of the Dynkin diagrams $4\aaa_5\ddd_4$.
It is generated by the involutions
$$
\varphi_0=(\alpha_{11}\alpha_{51})(\alpha_{12}\alpha_{52})(\alpha_{13}\alpha_{53})(\alpha_{14}\alpha_{54}),\ \ \
\widetilde{(12)}=(\alpha_{11}\alpha_{12})(\alpha_{51}\alpha_{52})(\alpha_{14}\alpha_{54})(\alpha_{15}\alpha_{35}),
$$
$$
\widetilde{(23)}=
(\alpha_{11}\alpha_{51})(\alpha_{12}\alpha_{13})(\alpha_{52}\alpha_{53})(\alpha_{15}\alpha_{45}),\ \ \
\widetilde{(34)}=
(\alpha_{11}\alpha_{51})(\alpha_{13}\alpha_{14})(\alpha_{53}\alpha_{54})(\alpha_{15}\alpha_{35})\,.
$$
Similarly we define involutions $\widetilde{(ij)}$, $1\le i<j\le 4$,
which act as transpositions of components $(\aaa_5)_i$, $(\aaa_5)_j$
and elements $\alpha_{1i}$, $\alpha_{1j}$.

The group $A=A(N)$ is isomorphic to the group
$T_{48}=Q_8\rtimes {\mathfrak S}_3$
(standard notation, see \cite{Muk}).
The coinvariant sublattice $N_A$ has $\rk N_A=19$ and
${N_A}^\ast/N_A\cong \bz/24\bz\times \bz/8\bz\times \bz/2\bz$.

By Theorem \ref{th:primembb3}, $N_A$
has a primitive embedding into $L_{K3}$, and $A=A(N)$
is a KahK3 subgroup. Therefore, any its subgroup
$H\subset A$ is also a KahK3 subgroup. It seems, these
facts were first observed in \cite{Hash}.

The group $A=A(N)$ has, up to conjugation,  the following
and only the following non-trivial subgroups $H\subset A$
and invariants of their coinvariant sublattices $N_H$.

(1) A subgroup $H$ isomorphic to $T_{24}$ (the binary tetrachedral group)
consists of
elements of $N$ which give even permutations of four components
of $4\aaa_5$. The coinvariant sublattice $N_H=N_A$. Therefore, $Clos(H)=A$
is isomorphic to $T_{48}$,
and $\rk N_H=19$, $N_H^\ast/N_H\cong \bz/24\bz\times \bz/8\bz\times \bz/2\bz$.

(2) Subgroups $H$ isomorphic to $SD_{16}$ ($2$-Sylow subgroups of $A$)
are conjugate to \newline
$[\widetilde{(34)}\widetilde{(23)}\widetilde{(12)},\,
\widetilde{(13)},\, \widetilde{(24)}]$
with $\rk N_H=18$ and $N_H^\ast/N_H\cong (\bz/8\bz)^2\times \bz/4\bz\times
\bz/2\bz$.

(3) Subgroups $H$ isomorphic to $C_8$ are conjugate to $[\widetilde{(34)}
\widetilde{(23)}\widetilde{(12)}]$,
with \newline
$Clos([\widetilde{(34)}\widetilde{(23)}\widetilde{(12)}])=
[\widetilde{(34)}\widetilde{(23)}\widetilde{(12)},\,\widetilde{(13)},\,
\widetilde{(24)}]$ isomorphic to $SD_{16}$.
Thus,
$
N_{[\widetilde{(34)}\widetilde{(23)}\widetilde{(12)}]}=
N_{[\widetilde{(34)}\widetilde{(23)}\widetilde{(12)},\,
\widetilde{(13)},\,\widetilde{(24)}]}
$
and
$\rk N_H=18$, $N_H^\ast/N_H\cong (\bz/8\bz)^2\times \bz/4\bz\times \bz/2\bz$.

(4) A subgroup $H$ isomorphic to $Q_8$ is
$H=[\widetilde{(34)}\widetilde{(12)},\, \widetilde{(24)}\widetilde{(13)}]$
with $\rk N_H=17$ and $N_H^\ast/N_H\cong (\bz/8\bz)^2\times (\bz/2\bz)^3$.

(5) Subgroups $H$ isomorphic to ${\mathfrak D}_{12}$ are conjugate to
$[\widetilde{(12)},\, \widetilde{(23)}]$ with
$\rk N_H=16$, $N_H^\ast/N_H\cong (\bz/6\bz)^4$.

(6) Subgroups $H$ isomorphic to $C_6$ are conjugate to
$[\widetilde{(23)}\widetilde{(12)}]$ with \linebreak
$Clos\,([\widetilde{(23)}\widetilde{(12)}])=[\widetilde{(12)},\
\widetilde{(23)}]$ isomorphic to
${\mathfrak D}_{12}$. Thus,
$N_{[\widetilde{(23)}\widetilde{(12)}]}=N_{[\widetilde{(12)},\
\widetilde{(23)}]}$ and
$\rk N_H=16$ and $N_H^\ast/N_H\cong (\bz/6\bz)^4$.

(7) Subgroups $H$ isomorphic to ${\mathfrak D}_{8}$ are conjugate to
$[\widetilde{(12)},\, \widetilde{(34)}]$ with
$\rk N_H=15$, $N_H^\ast/N_H\cong (\bz/4\bz)^5$.

(8) Subgroups $H$ isomorphic to ${\mathfrak D}_6$ are conjugate to
$[\varphi_0\widetilde{(23)},\, \widetilde{(12)}]$ or
$[\varphi_0\widetilde{(13)},\ \widetilde{(23)}]$
with $\rk N_H=14$ and $N_H^\ast/N_H\cong (\bz/6\bz)^2\times (\bz/3\bz)^3$.

(9) Subgroups $H$ isomorphic to $C_4$ are conjugate to
$[\widetilde{(34)}\widetilde{(12)}]$,
with $\rk N_H=14$ and $N_H^\ast/N_H\cong (\bz/4\bz)^4\times (\bz/2\bz)^2$.

(10) Subgroups $H$ isomorphic to $C_2\times C_2$ are
$[\widetilde{(ij)},\ \varphi_0]$,
$1\le i<j\le 4$, with $\rk N_H=12$ and
$N_H^\ast/N_H\cong (\bz/4\bz)^2\times (\bz/2\bz)^6$.

(11) Subgroups $H$ isomorphic to $C_3$ are conjugate to
$[\varphi_0\widetilde{(23)}\widetilde{(12)}]$
with $\rk N_H=12$ and $N_H^\ast/N_H\cong (\bz/3\bz)^6$.

(12) Subgroups $H$ isomorphic to $C_2$ are $[\varphi_0]$,
$[\widetilde{(ij)}]$
and $[\widetilde{(ij)}\varphi_0]$, $1\le i<j\le 4$. The lattice
$N_H\cong E_8(2)$.

Let $X$ be marked by a primitive sublattice
$S\subset N=N_{18}=N(4A_5\oplus D_4)$. Then $S$
must satisfy Theorem \ref{th:primembb3} and
$\Gamma(P(S))\subset \Gamma(P(N_{18}))=4\aaa_5\ddd_4$.
Any such $S$ gives marking of some $X$ and $P(X)\cap S=P(S)$.

If $S=N_A$,
then $\Aut(X,S)_0=A\cong T_{48}$.
Otherwise, if only $N_H\subset S$, $H\cong SD_{16}$ or
$H\cong {\mathfrak D}_{12}$,
then $\Aut(X,S)_0=H$.
Otherwise, if only $N_H\subset S$, $H\cong Q_8,\ {\mathfrak D}_8$ or
${\mathfrak D}_6$, then
$\Aut(X,S)_0=H$.
Otherwise, if only $N_H\subset S$, $H\cong C_4$ or $H\cong C_3$,
then
$\Aut(X,S)_0=H$.
Otherwise, if only $N_H\subset S$, $H\cong C_2$, then
$\Aut(X,S)_0=H\cong C_2$. Otherwise,
$\Aut(X,S)_0$ is trivial.

Let $H=[\widetilde{(12)},\, \widetilde{(34)}]\cong {\mathfrak D}_8$.
Let $S=[(A_5)_1=[\alpha_{11},\dots,\,
\alpha_{51}],\ N_H]_{pr}\subset N_{18}$.
We have $\rk S=18$, $S^\ast/S\cong (\bz/4\bz)^4\times (\bz/2\bz)^2$ and
$q_{S_2}=q_\theta^{(2)}(2)\oplus q^\prime$.
Thus, $S$ satisfies Theorem \ref{th:primembb3} and
gives marking of some $X$.
We have  $H\subset \Aut(X,S)_0$, $H\cong {\mathfrak D}_8$,
and $2\aaa_5\subset\Gamma(P(X)\cap S)=P(S)$.
By classification  of Niemeier lattices and our calculations
above, $X$ can be marked by the Niemeier lattice $N_{18}=N(4A_5\oplus D_4)$
only for such $S\subset S_X$.

\vskip0.5cm

{\bf Case 19.} For the Niemeier lattice
$$
N=N_{19}=N(6D_4)=
$$
$$
[6D_4,\ \varepsilon_{13}+\varepsilon_{14}+\varepsilon_{15}+\varepsilon_{16},\,
\varepsilon_{12}+\varepsilon_{14}+\varepsilon_{25}+\varepsilon_{36},\,
\varepsilon_{11}+\varepsilon_{14}+\varepsilon_{35}+\varepsilon_{26},\,
$$
$$
\varepsilon_{23}+\varepsilon_{24}+\varepsilon_{25}+\varepsilon_{26},\,
\varepsilon_{22}+\varepsilon_{24}+\varepsilon_{35}+\varepsilon_{16},\,
\varepsilon_{21}+\varepsilon_{24}+\varepsilon_{15}+\varepsilon_{36}]
$$
the group $A=A(N)$ consists of the cyclic group
$[\varphi ]$ of order $3$ which preserves connected
components of $6\ddd_4$ and $A/[\varphi]={\mathfrak S}_6$
gives the symmetric group on the $6$ components of $6\ddd_4$.
(see \cite[Chs. 16, 18]{CS}).
Its elements are defined by their actions on terminals of
components of $6\ddd_4$. This group is generated by
$$
\varphi =
(\alpha_{11}\alpha_{31}\alpha_{41})
(\alpha_{12}\alpha_{32}\alpha_{42})
(\alpha_{13}\alpha_{33}\alpha_{43})
(\alpha_{14}\alpha_{34}\alpha_{44})
(\alpha_{15}\alpha_{35}\alpha_{45})
(\alpha_{16}\alpha_{36}\alpha_{46}),
$$
and by the involutions
$$
\widetilde{(12)}=
(\alpha_{41}\alpha_{42})(\alpha_{11}\alpha_{32})
(\alpha_{31}\alpha_{12})(\alpha_{13}\alpha_{33})
(\alpha_{14}\alpha_{34})(\alpha_{15}\alpha_{35})
(\alpha_{16}\alpha_{36}),
$$
$$
\widetilde{(23)}=
(\alpha_{11}\alpha_{31})(\alpha_{12}\alpha_{33})
(\alpha_{32}\alpha_{13})(\alpha_{42}\alpha_{43})
(\alpha_{14}\alpha_{34})(\alpha_{15}\alpha_{45})
(\alpha_{36}\alpha_{46}),
$$
$$
\widetilde{(34)}=
(\alpha_{11}\alpha_{31})(\alpha_{12}\alpha_{32})
(\alpha_{43}\alpha_{44})(\alpha_{13}\alpha_{34})
(\alpha_{33}\alpha_{14})(\alpha_{15}\alpha_{35})
(\alpha_{16}\alpha_{36}),
$$
$$
\widetilde{(45)}=
(\alpha_{11}\alpha_{41})(\alpha_{32}\alpha_{42})
(\alpha_{13}\alpha_{33})(\alpha_{44}\alpha_{45})
(\alpha_{14}\alpha_{35})(\alpha_{34}\alpha_{15})
(\alpha_{16}\alpha_{36}),
$$
$$
\widetilde{(56)}=
(\alpha_{11}\alpha_{31})(\alpha_{12}\alpha_{32})
(\alpha_{13}\alpha_{33})(\alpha_{14}\alpha_{34})
(\alpha_{15}\alpha_{36})(\alpha_{35}\alpha_{16})
(\alpha_{45}\alpha_{46}).
$$
Here and further, for the canonical homomorphism
$\pi:A \to {\mathfrak S}_6$, we denote by $\widetilde{x}$
an element in $\pi^{-1}(x)$ for $x\in {\mathfrak S}_6$.
The same for a subgroup $G\subset {\mathfrak S}_6$.
Thus, $\widetilde{G}\subset A$ is a subgroup of $A$ such that
$\pi$ gives an isomorphism of $\widetilde{G}$ and $G$.
By $\widetilde{\widetilde{G}}$
we denote $\pi^{-1}(G)$ for $G\subset {\mathfrak S}_6$.

\medskip

Let us describe KahK3 subgroups $H$ of $A$.
They can be of two types.

\medskip

{\it Type I:} A KahK3 subgroup $H\subset A=A(N_{19})$ contains
the subgroup $[\varphi]$. Then it is
$H=\widetilde{\widetilde{G}}=\pi^{-1}(G)$
for $G\subset {\mathfrak S}_6$. It is sufficient to describe
all possible conjugation classes of $G$ in ${\mathfrak S}_6$.

Since $\rk N_H\le 19$ for KahK3 subgroup $H$, the group $G$ must
have more than two orbits in $\{1,\,2,\,\dots,\,6\}$.
Thus, $G\subset {\mathfrak S}_{1,1,4}$,
$G\subset {\mathfrak S}_{1,2,3}$
or $G\subset {\mathfrak S}_{2,2,2}$. We use the standard notation
from \cite{Muk}.
For example $\SSS_{1,2,3}$ consists of all permutations of
$\SSS_6$ which preserve subsets $\{1\}$, $\{2,\,3\}$ and $\{4,\,5,\,6\}$.
Its alternating subgroup is denoted by $\AAA_{1,2,3}$.

If $H=\widetilde{\widetilde{\SSS}}_{1,1,4}=[\varphi,\, \widetilde{(34)},\,
\widetilde{(45)},\,\widetilde{(56)}]\cong \AAA_{3,4}$, then
$\rk N_H=18$ and $(N_H)^\ast/N_H\cong (\bz/12\bz)^2\times \bz/3\bz$.
By Theorem \ref{th:primembb3},
$N_H$ has a primitive embedding
into $L_{K3}$ and $H$ is KahK3 subgroup.

If $H=\widetilde{\widetilde{\SSS}}_{1,2,3}=[\varphi,\,
\widetilde{(23)},\,
\widetilde{(45)},\,\widetilde{(56)}]\cong \SSS_{3,3}$, then
$\rk N_H=18$ and $(N_H)^\ast/N_H\cong \bz/18\bz\times
\bz/6\bz \times (\bz/3\bz)^2$.
For the 3-component $q_{{N_H}_3}$ of the discriminant
form $q_{N_H}$ of $N_H$
we have
$$
\det K(q_{{N_H}_3})=\det
\left(\begin{array}{cccc}
-6948 & -5022 & 108 & 144 \\
-5022 & -3684 & 78  & 108\\
108   &    78 & -18 & 0   \\
144   &  108  &   0 & -12
\end{array}\right)
$$
over $\bz_3$ and $\det K(q_{{N_H}_3})\equiv 2^5\cdot 3^5\cdot 61\cdot 109
\equiv -3^5\mod (\bz_3^\ast)^2$.
By Theorem \ref{th:primembb3},
$N_H$ has a primitive embedding
into $L_{K3}$ and $H$ is KahK3 subgroup.

If $H=\widetilde{\widetilde{\SSS}}_{2,2,2}=[\varphi,\, \widetilde{(12)},\,
\widetilde{(34)},\,\widetilde{(56)}]$ where
$\SSS_{2,2,2}\cong (C_2)^3$, then
$\rk N_H=18$ and $(N_H)^\ast/N_H\cong (\bz/6\bz)^3\times (\bz/2\bz)^3$.
By Theorem \ref{th:primembb3},
$N_H$ has no a primitive embedding
into $L_{K3}$ and $H$ is not a KahK3 subgroup. The same coinvariant
sublattice $N_H$ have subgroups
$H\subset \widetilde{\widetilde{\SSS}}_{2,2,2}$ which are
conjugate to
$[\varphi, \widetilde{(12)}\widetilde{(34)},\widetilde{(56)}]$,
$[\varphi, \widetilde{(12)}\widetilde{(34)},\,\widetilde{(34)}
\widetilde{(56)}]$
or  $[\varphi, \widetilde{(12)}\widetilde{(34)}\widetilde{(56)}]$.

It follows that KahK3 subgroups of the type I are exactly
subgroups of the type I which are conjugate to subgroups of
$\widetilde{\widetilde{\SSS}}_{1,1,4}=[\varphi,\, \widetilde{(34)},\,
\widetilde{(45)},\,\widetilde{(56)}]\cong \AAA_{3,4}$ and
$\widetilde{\widetilde{\SSS}}_{1,2,3}=[\varphi,\, \widetilde{(23)},\,
\widetilde{(45)},\,\widetilde{(56)}]\cong \SSS_{3,3}$.
It is easy to enumerate such subgroups.

Those of them which are conjugate to type I subgroups of
$\widetilde{\widetilde{\SSS}}_{1,1,4}=[\varphi,\, \widetilde{(34)},\,
\widetilde{(45)},\,\widetilde{(56)}]\cong \AAA_{3,4}$ are conjugate
to one of subgroups (I.1) --- (I.11) below.

(I.1) $H=[\varphi,\, \widetilde{(34)},\,\widetilde{(45)},\,\widetilde{(56)}]=
\widetilde{\widetilde{\SSS}}_{1,1,4}\cong
\AAA_{3,4}$ with
$\rk N_H=18$ and $(N_H)^\ast/N_H\cong (\bz/12\bz)^2\times \bz/3\bz$.

(I.2)
$H=[\varphi,\, \widetilde{(34)}\widetilde{(45)},\,
\widetilde{(34)}\widetilde{(56)}]=
\widetilde{\widetilde{\AAA}}_{1,1,4}\cong
C_3\times \AAA_{4}$ with $Clos(H)=[\varphi,\,
\widetilde{(34)},\,\widetilde{(45)},\,\widetilde{(56)}]$ \newline
$=\widetilde{\widetilde{\SSS}}_{1,1,4}\cong
\AAA_{3,4}$ above and
$\rk N_H=18$, $(N_H)^\ast/N_H\cong (\bz/12\bz)^2\times \bz/3\bz$.

(I.3) $H=[\varphi,\, \widetilde{(34)}\widetilde{(45)}\widetilde{(56)},
\widetilde{(35)}]=
\widetilde{\widetilde{D_8}}\cong D_8\ltimes  C_3$ with
$Clos(H)=[\varphi,\, \widetilde{(34)},\,\widetilde{(45)},\,\widetilde{(56)}]=
\widetilde{\widetilde{\SSS}}_{1,1,4}\cong
\AAA_{3,4}$ above and
$\rk N_H=18$, $(N_H)^\ast/N_H\cong (\bz/12\bz)^2\times \bz/3\bz$.

(I.4) $H=[\varphi,\, \widetilde{(34)}\widetilde{(56)},\,
\widetilde{(34)}\widetilde{(45)}\widetilde{(34)}\widetilde{(56)}
\widetilde{(45)}\widetilde{(56)}]=
\widetilde{\widetilde{K_4}}\cong C_2\times  C_6$ with
$Clos(H)=[\varphi,\, \widetilde{(34)},\,\widetilde{(45)},\,\widetilde{(56)}]=
\widetilde{\widetilde{\SSS}}_{1,1,4}\cong
\AAA_{3,4}$ above and
$\rk N_H=18$, $(N_H)^\ast/N_H\cong (\bz/12\bz)^2\times \bz/3\bz$.
Here
$K_4\cong C_2\times C_2$ is the normal Klein subgroup of
$\SSS_{1,1,4}\cong \SSS_4$.
\footnote{These calculations show that for a symplectic group
$G=C_2\times C_6$ on a
K\"ahlerian K3 surface, the group
$S_{(G)}^\ast/S_{(G)}=S_{(2,6)}^\ast /S_{(2,6)}\cong
(\bz/12\bz)^2\times \bz/3\bz$.
We must correct our calculation of this group in
\cite[Prop. 10.1]{Nik0}.}

(I.5) $H=[\varphi,\, \widetilde{(34)}\widetilde{(45)}\widetilde{(56)}]=
\widetilde{\widetilde{C_4}}\cong Q_{12}$ with
$Clos(H)=[\varphi,\, \widetilde{(34)},\,\widetilde{(45)},\,\widetilde{(56)}]=
\widetilde{\widetilde{\SSS}}_{1,1,4}\cong
\AAA_{3,4}$ above and
$\rk N_H=18$, $(N_H)^\ast/N_H\cong (\bz/12\bz)^2\times \bz/3\bz$.
Here $C_4\subset \SSS_{1,1,4}\cong \SSS_4$ is a cyclic subgroup of
order $4$, and $Q_{12}$ is the binary dihedral group of order $12$.

(I.6) $H=[\varphi,\, \widetilde{(45)},\,\widetilde{(56)}]=
\widetilde{\widetilde{[(45),(56)]}}\cong \AAA_{3,3}$ with
$\rk N_H=16$, $(N_H)^\ast/N_H\cong \bz/9\bz\times (\bz/3\bz)^4$.

(I.7)  $H=[\varphi,\, \widetilde{(45)}\widetilde{(56)}]=
\widetilde{\widetilde{[(45)(56)]}}\cong (C_3)^2$ with
$Clos(H)=[\varphi,\, \widetilde{(45)},\,\widetilde{(56)}]=
\widetilde{\widetilde{[(45),(56)]}}$ $\cong \AAA_{3,3}$ above, and
$\rk N_H=16$, $(N_H)^\ast/N_H\cong \bz/9\bz\times (\bz/3\bz)^4$.
\footnote{These calculations show that for a symplectic group
$G=(C_3)^2$ on a
K\"ahlerian K3 surface, the group
$S_{(G)}^\ast/S_{(G)}=S_{(3,3)}^\ast /S_{(3,3)}\cong
\bz/9\bz\times (\bz/3\bz)^4$.
We must correct our calculation of this group in
\cite[Prop. 10.1]{Nik0}.}

(I.8) $H=[\varphi,\, \widetilde{(34)},\,\widetilde{(56)}]=
\widetilde{\widetilde{[(34),(56)]}}\cong \DDD_{12}$ with
$\rk N_H=16$, $(N_H)^\ast/N_H\cong (\bz/6\bz)^4$.

(I.9) $H=[\varphi,\, \widetilde{(34)}\widetilde{(56)}]=
\widetilde{\widetilde{[(34)(56)]}}\cong C_6$ with
$Clos(H)=[\varphi,\, \widetilde{(34)},\,\widetilde{(56)}]=
\widetilde{\widetilde{[(34),(56)]}}\cong \DDD_{12}$ above and
$\rk N_H=16$, $(N_H)^\ast/N_H\cong (\bz/6\bz)^4$.

(I.10) $H=[\varphi,\,\widetilde{(56)}]=
\widetilde{\widetilde{[(56)]}}\cong \DDD_{6}$ with
$\rk N_H=14$, $(N_H)^\ast/N_H\cong (\bz/6\bz)^2\times (\bz/3\bz)^3$.

(I.11) $H=[\varphi]=
\widetilde{\widetilde{\{e\}}}\cong C_{3}$ with
$\rk N_H=12$, $(N_H)^\ast/N_H\cong (\bz/3\bz)^6$.

Those of them which are conjugate to type I subgroups of
$\widetilde{\widetilde{\SSS}}_{1,2,3}=[\varphi,\, \widetilde{(23)},\,
\widetilde{(45)},\,\widetilde{(56)}]\cong \SSS_{3,3}$ and
different from subgroups (I.1) --- (I.11) above are
conjugate to one of subgroups (I$^\prime$.1) --- (I$^\prime$.2) below.

(I$^\prime$.1) $H=[\varphi,\, \widetilde{(23)},\,
\widetilde{(45)},\,\widetilde{(56)}]=
\widetilde{\widetilde{\SSS}}_{1,2,3}\cong \SSS_{3,3}$, and
$\rk N_H=18$, $(N_H)^\ast/N_H\cong \bz/18\bz\times
\bz/6\bz \times (\bz/3\bz)^2$.

(I$^\prime$.2) $H=[\varphi,\, \widetilde{(23)}\widetilde{(45)}
\widetilde{(56)}]=
\widetilde{\widetilde{[(23)(45)(56)]}}\cong C_3\times \DDD_6$ has \newline
$Clos(H)=[\varphi,\, \widetilde{(23)},\,\widetilde{(45)},\,\widetilde{(56)}]=
\widetilde{\widetilde{\SSS}}_{1,2,3}\cong \SSS_{3,3}$, and
$\rk N_H=18$, $(N_H)^\ast/N_H\cong \bz/18\bz\times \bz/6\bz
\times (\bz/3\bz)^2$.

\medskip

{\it Type II:} A KahK3 subgroup $H\subset A=A(N_{19})$ does
not contain the
subgroup $[\varphi]$. Then $\pi$ gives an isomorphism
$\pi: H\to \pi(H)=G\subset {\mathfrak S}_6$.
It is sufficient to describe all possible conjugation classes of
$G$ in ${\mathfrak S}_6$
and their possible lifts to $H=\widetilde{G}\subset A$.

Since $\rk N_H\le 19$, it follows that $G$ must have at least two orbits
in $\{1,\dots, 6\}$. Thus, $H=\widetilde{G}$ where either
$G\subset \SSS_{1,5}$, or
$G\subset \SSS_{2,4}$, or $G\subset \SSS_{3,3}$ (up to conjugation).


Let us consider cases when $G\subset \SSS_{1,5}$. It is easy to see that
for $G=\SSS_{1,5}$, we obtain (up to conjugation in $A$)

(II.1) $H=[\widetilde{(23)},\,\widetilde{(34)},\,
\widetilde{(45)}\varphi,\,\widetilde{(56)}]=
\widetilde{\SSS }_{1,5}\cong \SSS_5$ with $\rk N_H=19$ and
$(N_H)^\ast/N_H\cong \bz/60\bz\times \bz/5\bz$.
By Theorem \ref{th:primembb3}, $N_{H}$ has a primitive embedding
into $L_{K3}$ and $H$ is a KahK3 subgroup.

In particular, for any
$G\subset \SSS_{1,5}$, we obtain a KahK3 subgroup
$$
\widetilde{G}=\pi^{-1}(G)\cap [\widetilde{(23)},\,
\widetilde{(34)},\,\widetilde{(45)}\varphi,\,\widetilde{(56)}]\cong G.
$$
For each subgroup $G\subset \SSS_{1,5}$, one can
additionally check that it is the only lift of $G$ to
a subgroup $\widetilde{G}\subset A$ up to conjugation in $A$ if $G$
is different from $\SSS_4$ and $\AAA_4$. The groups
$\SSS_4$ and $\AAA_4$ have two lifts each.
Below we list subgroups $H=\widetilde{G}\subset
[\widetilde{(23)},\widetilde{(34)},\,
\widetilde{(45)}\varphi,\,\widetilde{(56)}]$
for all 17 non-trivial conjugacy classes of subgroups $G\subset \SSS_5$
and additional conjugacy classes for $G\cong \SSS_4,\,\AAA_4$.

(II.2) $H=[\widetilde{(23)}\widetilde{(34)}\widetilde{(45)}\varphi
\widetilde{(56)},\,\widetilde{(23)}\widetilde{(34)}]\cong
[(23456),(234)]\cong \AAA_5$ with
$\rk N_H=18$ and $(N_H)^\ast/N_H\cong \bz/30\bz\times \bz/10\bz$.

(II.3) $H=[\widetilde{(23)}\widetilde{(34)}\widetilde{(45)}
\varphi\widetilde{(56)},\,
\widetilde{(34)}\widetilde{(56)}\varphi\widetilde{(45)}]=
\widetilde{[(23456),(3465)]}\cong Hol(C_5)=C_5\rtimes Aut(C_5)$ with
$\rk N_H=18$ and $(N_H)^\ast/N_H\cong (\bz/10\bz)^2\times \bz/5\bz$.

(II.4) $H=[\widetilde{(23)}\widetilde{(34)}\widetilde{(45)}\varphi
\widetilde{(56)},\,\widetilde{(56)}\widetilde{(34)}
\widetilde{(45)}\varphi \widetilde{(34)}\widetilde{(56)}
\widetilde{(45)}\varphi ]=\widetilde{[(23456), (36)(45)]}\cong \DDD_{10}$
with $\rk N_H=16$ and $(N_H)^\ast/N_H\cong (\bz/5\bz)^4$.

(II.5) $H=[\widetilde{(23)}\widetilde{(34)}\widetilde{(45)}\varphi
\widetilde{(56)}]=\widetilde{[(23)(34)(45)(56)]}\cong C_5$ with
\newline $Clos(H)=[\widetilde{(23)}\widetilde{(34)}\widetilde{(45)}
\varphi\widetilde{(56)},\,\widetilde{(56)}\widetilde{(34)}
\widetilde{(45)}\varphi \widetilde{(34)}
\widetilde{(56)}\widetilde{(45)}\varphi ]=
\widetilde{[(23456), (36)(45)]}\cong \DDD_{10}$ above and
$\rk N_H=16$, $(N_H)^\ast/N_H\cong (\bz/5\bz)^4$.

(II.6) $H=[\widetilde{(34)},\,\widetilde{(45)}\varphi,\,\widetilde{(56)}]=
\widetilde{[(34),(45),(56)]}\cong \SSS_4$ with
$\rk N_H=17$ and $(N_H)^\ast/N_H\cong (\bz/12\bz)^2\times \bz/4\bz$.

(II.6$^\prime$) $H=[\widetilde{(34)},\,\widetilde{(45)},\,\widetilde{(56)}]=
\widetilde{[(34),(45),(56)]}\cong \SSS_4$ with
$\rk N_H=17$ and $(N_H)^\ast/N_H\cong (\bz/12\bz)^2\times \bz/4\bz$.

(II.7) $H=[\widetilde{(34)}\widetilde{(56)},\,
\widetilde{(45)}\varphi\widetilde{(56)}]=
\widetilde{[(34)(56),(456)]}\cong \AAA_4$
with $\rk N_H=16$ and \newline
$(N_H)^\ast/N_H\cong (\bz/12\bz)^2\times (\bz/2\bz)^2$.

(II.7$^\prime$) $H=[\widetilde{(34)}\widetilde{(56)},\,
\widetilde{(45)}\widetilde{(56)}]=
\widetilde{[(34)(56),(456)]}\cong \AAA_4$
with $\rk N_H=16$ and \newline
$(N_H)^\ast/N_H\cong (\bz/12\bz)^2\times (\bz/2\bz)^2$.

(II.8) $H=[\widetilde{(23)},\,\widetilde{(45)}\varphi,\,\widetilde{(56)}]=
\widetilde{[(23),\,(45),\,(56)]}\cong \DDD_{12}$
with $\rk N_H=16$ and \newline
$(N_H)^\ast/N_H\cong (\bz/6\bz)^4$.

(II.9) $H=[\widetilde{(23)}\widetilde{(45)}\varphi\widetilde{(56)}]=
\widetilde{[(23)(45)(56)]}\cong C_6$ with $Clos(H)=[\widetilde{(23)},\,
\widetilde{(45)}\varphi,\,\widetilde{(56)}]
\cong \DDD_{12}$ above and $\rk N_H=16$, $(N_H)^\ast/N_H\cong (\bz/6\bz)^4$.

(II.10) $H=[\widetilde{(23)}\widetilde{(45)}\varphi,
\,\widetilde{(23)}\widetilde{(56)}]=
[(23)(45),\,(23)(56)]\cong \DDD_6$ with
$\rk N_H=14$ and $(N_H)^\ast/N_H\cong (\bz/6\bz)^2\times (\bz/3\bz)^3$.

(II.11) $H=[\widetilde{(45)}\varphi,\,\widetilde{(56)}]=
\widetilde{[(45),\,(56)]}\cong \DDD_6$ with
$\rk N_H=14$ and $(N_H)^\ast/N_H\cong (\bz/6\bz)^2\times (\bz/3\bz)^3$.

(II.12) $H=[\widetilde{(34)}\widetilde{(45)}\varphi\widetilde{(56)},\,
\widetilde{(34)}\widetilde{(45)}\varphi \widetilde{(34)}]=
\widetilde{[(34)(45)(56),\,(35)]}\cong \DDD_8$ with
$\rk N_H=15$ and $(N_H)^\ast/N_H\cong (\bz/4\bz)^5$.

(II.13) $H=[\widetilde{(34)}\widetilde{(45)}\varphi\widetilde{(56)}]=
\widetilde{[(34)(45)(56)]}\cong C_4$
with $\rk N_H=14$ and $(N_H)^\ast/N_H\cong (\bz/4\bz)^4\times (\bz/2\bz)^2$.

(II.14) $H=[\widetilde{(45)}\varphi\widetilde{(56)}]=
\widetilde{[(45)(56)]}\cong C_3$
with $\rk N_H=12$ and $(N_H)^\ast/N_H\cong (\bz/3\bz)^6$.

(II.15) $H=[\widetilde{(34)}\widetilde{(56)},\widetilde{(45)}\varphi
\widetilde{(34)}\widetilde{(56)}\widetilde{(45)}\varphi ]
=\widetilde{[(34)(56),(35)(46)]}\cong (C_2)^2$ with
$\rk N_H=12$ and $(N_H)^\ast/N_H\cong (\bz/4\bz)^2\times (\bz/2\bz)^6$.

(II.16) $H=[\widetilde{(34)},\,\widetilde{(56)}]=
\widetilde{[(34),(56)]}\cong (C_2)^2$
with $\rk N_H=12$ and $(N_H)^\ast/N_H\cong (\bz/4\bz)^2\times (\bz/2\bz)^6$.

(II.17) $H=[\widetilde{(34)}\widetilde{(56)}]=\widetilde{[(34)(56)]}\cong C_2$ with
$\rk N_H=8$ and $(N_H)^\ast/N_H\cong (\bz/2\bz)^8$.

(II.18) $H=[\widetilde{(56)}]=\widetilde{[(56)]}\cong C_2$ with
$\rk N_H=8$ and $(N_H)^\ast/N_H\cong (\bz/2\bz)^8$.

\vskip0.5cm


It is easy to see that if  $G\subset \SSS_{3,3}$ and
$G$ does not have a one-element orbit (equivalently, $G$ is not conjugate to
a subgroup of $\SSS_{1,5}$), then $G$ contains
an element of the cycle type $[3,3]$. Then $H=\widetilde{G}$
contains one of subgroups
$H_1=[\widetilde{(12)}\widetilde{(23)}\widetilde{(45)}
\widetilde{(56)}]$,
$[\widetilde{(12)}\widetilde{(23)}\widetilde{(45)}
\widetilde{(56)\varphi}]$ or
$[\widetilde{(12)}\widetilde{(23)}\widetilde{(45)}
\widetilde{(56)}\varphi^2]$
(up to conjugation) with $\rk N_{H_1}=16$ and
$N_{H_1}^\ast/N_{H_1}\cong (\bz/3\bz)^8$.
By Theorem \ref{th:primembb3}, neither of
these $N_{H_1}$ has a primitive embedding
into $L_{K3}$ and neither of these $H_1$ is a KahK3 subgroup.
Then $H=\widetilde{G}$ is not KahK3 subgroup too.

It is easy to see that if  $G\subset \SSS_{2,4}$ and
$G$ does not have a one-element orbit, then either $G$ contains
an element of the cycle type $[2,2,2]$, or $[2,4]$, or
$G$ is conjugate to the subgroup $[(12)(34), (12)(56)]$
isomorphic to $(C_2)^2$. For the first case, a subgroup
$H=\widetilde{G}$ contains a subgroup
$H_1=[\widetilde{(12)}\widetilde{(34)}\widetilde{(56)}]$
(up to conjugation) with $\rk N_{H_1}=12$ and
$N_{H_1}^\ast/N_{H_1}\cong (\bz/2\bz)^{12}$.
For the second case, a subgroup $H=\widetilde{G}$
contains a subgroups
$H_1=[\widetilde{(12)}\widetilde{(34)}\widetilde{(45)}\widetilde{(56)}]$
(up to conjugation) with $\rk N_{H_1}=16$ and
$N_{H_1}^\ast/N_{H_1}\cong (\bz/4\bz)^4\times (\bz/2\bz)^{4}$.
By Theorem \ref{th:primembb3}, neither of
these $N_{H_1}$ has a primitive embedding
into $L_{K3}$, and neither of these $H_1$ is a KahK3 subgroup.
Then $H=\widetilde{G}$ is not a KahK3 subgroup too.
The last case, gives a subgroup

(II$^\prime$,1) $H=[\widetilde{(12)}\widetilde{(34)},\,
\widetilde{(12)}\widetilde{(56)}]\cong (C_2)^2$
(up to conjugation) with $\rk N_H=12$ and \newline
$(N_H)^\ast/N_H\cong (\bz/4\bz)^2\times (\bz/2\bz)^6$.
By Theorem \ref{th:primembb3}, $N_{H}$ has a primitive embedding
into $L_{K3}$ and $H$ is a KahK3 subgroup. This case is interesting
because the invariant lattice $N^H$ does not have roots.

\medskip

Thus, finally we obtained the list of all KahK3 conjugacy
classes of subgroups of $A=A(N_{19})$
which are subgroups (I.1) --- (I.11), (I$^\prime$.1),
(I$^\prime$.2), (II.1) --- (II.18),
(II.6$^\prime$), (II.7$^\prime$), (II$^\prime$.1) above.

\medskip

Let $X$ be marked by a primitive sublattice
$S\subset N=N_{19}=N(6D_4)$. Then $S$
must satisfy Theorem \ref{th:primembb3} and
$\Gamma(P(S))\subset \Gamma(P(N_{19}))=6\ddd_4$.
Any such $S$ gives marking of some $X$ and $P(X)\cap S=P(S)$.

If $N_H\subset S$ where $H$ has type (I.1) ($H\cong \AAA_{3,4}$),
(I$^\prime$.1) ($H\cong \SSS_{3,3}$), (II.1) ($H\cong \SSS_5$) or
(II$^\prime$.1) ($H\cong (C_2)^2$)
then $\Aut(X,S)_0=H$.
Otherwise, if only $N_H\subset S$ where $H$ has type
(I.6) ($H\cong \AAA_{3,3}$), (I.8) ($H\cong \DDD_{12}$),
(II.2) ($H\cong \AAA_5$), (II.3) ($H\cong Hol(C_5)$),
(II.6) ($H\cong \SSS_4$), (II.8) ($H\cong \DDD_{12}$)
or (II.7$^\prime$) ($H\cong \AAA_4$) then $\Aut(X,S)_0=H$.
Otherwise, if only $N_H\subset S$ where $H$ has type
(I.10) ($H\cong \DDD_6$), (II.4) ($H\cong \DDD_{10}$),
(II.7) ($H\cong \AAA_4$), (II.10) ($H\cong \DDD_6$),
(II.11) ($H\cong \DDD_6$) or (II.12) ($H\cong \DDD_8$)
then $\Aut(X,S)_0\cong H$. Otherwise, if only
$N_H\subset S$ where $H$ has type (II.13) ($H\cong C_4$),
(II.14) ($H\cong C_3$), (II.15) ($H\cong (C_2)^2$) or
(II.16) ($H\cong (C_2)^2$) then
$\Aut (X,S)_0=H$. Otherwise, if only
$N_H\subset S$ where $H$ has type (II.17)
($H\cong C_2$) or (II.18) ($H\cong C_2$)
then $\Aut(X,S)_0\cong H$. Otherwise,
$\Aut(X,S)_0$ is trivial.

Let $H=[\widetilde{(34)},\,\widetilde{(45)}\varphi,\,\widetilde{(56)}]=
\widetilde{[(34),(45),(56)]}\cong \SSS_4$ has the type (II.6).
Let $S=[(D_4)_6=[\alpha_{16},\dots,\,
\alpha_{46}],\ N_H]_{pr}\subset N_{19}$.
We have $\rk S=19$ and $S^\ast/S\cong \bz/12\bz$.
Thus, $S$ satisfies Theorem \ref{th:primembb3} and
gives marking of some $X$.
We have  $H\subset \Aut(X,S)_0$ where $H\cong \SSS_4$, and
$4\ddd_4\subset\Gamma(P(X)\cap S)=P(S)$.
By classification  of Niemeier lattices and our calculations
above, $X$ can be marked by the Niemeier lattice $N_{19}=N(6D_4)$
only for such $S\subset S_X$.


{\bf Case 20.} For the Niemeier lattice
$$
N=N_{20}=N(6A_4)=[6A_4,[1(01441)]]=
$$
$$
[6A_4,\ \varepsilon_{11}+\varepsilon_{13}-\varepsilon_{14}-
\varepsilon_{15}+\varepsilon_{16},\,
\varepsilon_{11}+\varepsilon_{12}+\varepsilon_{14}-
\varepsilon_{15}-\varepsilon_{16},\,
\varepsilon_{11}-\varepsilon_{12}+\varepsilon_{13}+
\varepsilon_{15}-\varepsilon_{16},\,
$$
$$
\varepsilon_{11}-\varepsilon_{12}-\varepsilon_{13}+
\varepsilon_{14}+\varepsilon_{16},\,
\varepsilon_{11}+\varepsilon_{12}-\varepsilon_{13}-
\varepsilon_{14}+\varepsilon_{15}]
$$
the group $A=A(N_{20})$ consists of the cyclic group
$[\varphi ]$ of order $2$ which preserves connected
components of $6\aaa_4$ and $A/[\varphi]=\SSS_5$
which acts faithfully on $6$ connected components of $6\aaa_4$.
(see \cite[Chs. 16, 18]{CS}). The $\SSS_5$ acts on
five triplets of classes of elements of $N_{20}/{6A_4}$ below:
$$
\tilde{1}=\{2\varepsilon_{11}-2\varepsilon_{12}+
\varepsilon_{14}+\varepsilon_{15},\
\varepsilon_{13}+2\varepsilon_{14}-2\varepsilon_{15}-
\varepsilon_{16},\
2\varepsilon_{11}+2\varepsilon_{12}-\varepsilon_{13}-\varepsilon_{16}\},
$$
$$
\tilde{2}=\{2\varepsilon_{11}-\varepsilon_{12}+2\varepsilon_{13}-
\varepsilon_{14},\
\varepsilon_{12}-\varepsilon_{14}-2\varepsilon_{15}+2\varepsilon_{16},\
2\varepsilon_{11}-2\varepsilon_{13}+\varepsilon_{15}+\varepsilon_{16}\},
$$
$$
\tilde{3}=\{2\varepsilon_{11}-\varepsilon_{13}+2\varepsilon_{14}-
\varepsilon_{15},\
\varepsilon_{12}-2\varepsilon_{13}+2\varepsilon_{15}-\varepsilon_{16},\
-2\varepsilon_{11}-\varepsilon_{12}+2\varepsilon_{14}-\varepsilon_{16}\},
$$
$$
\tilde{4}=\{-2\varepsilon_{11}-\varepsilon_{12}-\varepsilon_{13}+
2\varepsilon_{15},\
\varepsilon_{12}-\varepsilon_{13}+2\varepsilon_{14}-2\varepsilon_{16},\
2\varepsilon_{11}-\varepsilon_{14}+2\varepsilon_{15}-\varepsilon_{16}\},
$$
$$
\tilde{5}=\{2\varepsilon_{11}+\varepsilon_{13}+
\varepsilon_{14}-2\varepsilon_{16},\
\varepsilon_{12}+2\varepsilon_{13}-2\varepsilon_{14}-\varepsilon_{15},\
-\varepsilon_{11}-2\varepsilon_{12}-2\varepsilon_{15}-\varepsilon_{16}\}.
$$
One can give exact definitions easily.

The group $A=A(N_{20})$ is defined by its actions on terminals of
components of $6\aaa_4$. It is generated by the involution
$$
\varphi=(\alpha_{11}\alpha_{41})
(\alpha_{12}\alpha_{42})(\alpha_{13}\alpha_{43})
(\alpha_{14}\alpha_{44})(\alpha_{15}\alpha_{45})(\alpha_{16}\alpha_{46}),
$$
above and the corresponding transpositions of $\tilde{1},
\dots,\ \tilde{5}$ above:
$$
\widetilde{(12)}=(\alpha_{11}\alpha_{15}\alpha_{41}\alpha_{45})
(\alpha_{12}\alpha_{46}\alpha_{42}\alpha_{16})
(\alpha_{13}\alpha_{44}\alpha_{43}\alpha_{14}),
$$
$$
\widetilde{(23)}=(\alpha_{11}\alpha_{16}\alpha_{41}\alpha_{46})
(\alpha_{12}\alpha_{13}\alpha_{42}\alpha_{43})
(\alpha_{14}\alpha_{45}\alpha_{44}\alpha_{15}),
$$
$$
\widetilde{(34)}=(\alpha_{11}\alpha_{12}\alpha_{41}\alpha_{42})
(\alpha_{13}\alpha_{14}\alpha_{43}\alpha_{44})
(\alpha_{15}\alpha_{46}\alpha_{45}\alpha_{16}),
$$
$$
\widetilde{(45)}=(\alpha_{11}\alpha_{13}\alpha_{41}\alpha_{43})
(\alpha_{12}\alpha_{16}\alpha_{42}\alpha_{46})
(\alpha_{14}\alpha_{15}\alpha_{44}\alpha_{45})
$$
have the order $4$ in $A$.

For $H=[\varphi]$ we have $\rk N_H=12$ and
$(N_H)^\ast/N_H\cong (\bz/2\bz)^{12}$.
By Theorem \ref{th:primembb3}, $N_{H}$ does not have a primitive embedding
into $L_{K3}$ and $H=[\varphi]$ is not a KahK3 subgroup.

It follows that the canonical projection $\pi:A\to \SSS_5$ gives an
isomorphism $\pi|H:H\to \pi (H)=G\subset \SSS_5$ for any
KahK3 subgroup $H\subset A$.
Then we denote $H=\widetilde{G}$. Let us consider possible $G$ and $H$
for KahK3 subgroups $H\subset A$ up to conjugation (in $\SSS_5$ and $A$).

We have $\widetilde{(12)}^2=(\widetilde{(12)}\widetilde{(23)})^3 =
(\widetilde{(12)}\widetilde{(23)}\widetilde{(34)}\widetilde{(45)}\varphi)^5=
(\widetilde{(12)}\widetilde{(34)}\widetilde{(45)})^6=\varphi$.
It follows that $[\widetilde{(12)}]$,  $[\widetilde{(12)}\varphi]$,
$[\widetilde{(12)}\widetilde{(23)}]=\widetilde{[(123)]}$,
$[\widetilde{(12)}\widetilde{(23)}\widetilde{(34)}\widetilde{(45)}\varphi]=
\widetilde{[(12345)]}$, $[\widetilde{(12)}\widetilde{(34)}\widetilde{(45)}]$,
$[\widetilde{(12)}\widetilde{(34)}\widetilde{(45)}\varphi]=
\widetilde{[(12)(345)]}$
are not KahK3 subgroups.

We have $H=[\widetilde{(12)}\widetilde{(34)}\varphi]\cong C_2$ has
$\rk N_H=12$ and $(N_H)^\ast/N_H\cong (\bz/2\bz)^{12}$. By Theorem
\ref{th:primembb3}, it is not a KahK3 subgroup.

From these calculations and the known list of conjugation classes of
subgroups in $\SSS_5$, we obtain the list of KahK3 subgroups of $A=A(6A_4)$
up to conjugation. They are

(1) $H=[\widetilde{(12)}\widetilde{(23)}\widetilde{(34)}
\widetilde{(45)},\
\widetilde{(45)}\widetilde{(23)}\widetilde{(34)}]=
\widetilde{[(12345), (2354)]}\cong Hol(C_5)$ and
$H=[\widetilde{(12)}\widetilde{(23)}\widetilde{(34)}
\widetilde{(45)},\
\widetilde{(45)}\widetilde{(23)}\widetilde{(34)}\varphi]=
\widetilde{[(12345), (2354)]}\cong Hol(C_5)$
with $\rk N_H=18$, \newline
$(N_H)^\ast/N_H\cong (\bz/10\bz)^2 \times \bz/5\bz$.

\medskip

(2) $H=[\widetilde{(12)}\widetilde{(23)}\widetilde{(34)}
\widetilde{(45)},\
\widetilde{(45)}\widetilde{(23)}\widetilde{(34)}
\widetilde{(23)}\widetilde{(45)}\widetilde{(34)}]=
\widetilde{[(12345), (25)(34)]}\cong \DDD_{10}$
with $\rk N_H=16$ and $(N_H)^\ast/N_H\cong (\bz/5\bz)^4$.

\medskip

(3) $H=[\widetilde{(12)}\widetilde{(23)}\widetilde{(34)}
\widetilde{(45)}]=\widetilde{[(12345)]}\cong C_5$
with $Clos(H)=$ \newline
$[\widetilde{(12)}\widetilde{(23)}\widetilde{(34)}\widetilde{(45)},\
\widetilde{(45)}\widetilde{(23)}\widetilde{(34)}
\widetilde{(23)}\widetilde{(45)}\widetilde{(34)}]\cong \DDD_{10}$
above and $\rk N_H=16$, $(N_H)^\ast/N_H\cong (\bz/5\bz)^4$.

\medskip

(4) $H=[\widetilde{(12)}\widetilde{(23)}\widetilde{(34)}]=
\widetilde{[(1234)]}\cong C_4$
and $H=[\widetilde{(12)}\widetilde{(23)}\widetilde{(34)}\varphi]=
\widetilde{[(1234)]}\cong C_4$
with $\rk N_H=14$ and $(N_H)^\ast/N_H\cong (\bz/4\bz)^4\times (\bz/2\bz)^2$.

\medskip

(5) $H=[\widetilde{(12)}\widetilde{(34)}]=\widetilde{[(12)(34)]}\cong C_2$
with $\rk N_H=8$ and $(N_H)^\ast/N_H\cong (\bz/2\bz)^8$.

Let $X$ be marked by a primitive sublattice
$S\subset N=N_{20}=N(6A_4)$. Then $S$
must satisfy Theorem \ref{th:primembb3} and
$\Gamma(P(S))\subset \Gamma(P(N_{20}))=6\aaa_4$.
Any such $S$ gives marking of some $X$ and $P(X)\cap S=P(S)$.

If $N_H\subset S$ where $H$ has type (1) ($H\cong Hol(C_5)$),
(2) ($H\cong \DDD_{10}$) or (4) ($H\cong C_4$)
then $\Aut(X,S)_0=H$.
Otherwise, if only $N_H\subset S$ where $H$ has type
(5) ($H\cong C_2$) then $\Aut(X,S)_0=H\cong C_2$.
Otherwise, $\Aut(X,S)_0$ is trivial.

Let $H=[\widetilde{(12)}\widetilde{(23)}\widetilde{(34)}]\cong C_4$
has the type (4). Let $S=[(A_4)_1=[\alpha_{11},\dots,\,
\alpha_{41}],\ A_2=[\alpha_{22},\alpha_{32}],\ N_H]_{pr}\subset N_{20}$.
We have $\rk S=19$ and $S^\ast/S\cong \bz/10\bz$.
Thus, $S$ satisfies Theorem \ref{th:primembb3} and
gives marking of some $X$.
We have  $H\subset \Aut(X,S)_0$ where $H\cong C_4$, and
$4\aaa_4\aaa_2\subset\Gamma(P(X)\cap S)=P(S)$ where $H\cong C_4$ acts transitively
on $4$ components of $4\aaa_4$ and permutes elements $\alpha_{22}$,
$\alpha_{32}$ of the component $\aaa_2$.
By classification  of Niemeier lattices and our calculations
above, $X$ can be marked by the Niemeier lattices $N_{20}=N(6A_4)$
only for such $S\subset S_X$.

\medskip


{\bf Case 21.} For the Niemeier lattice
$$
N=N_{21}=N(8A_3)=[8A_3,[3(2001011)]]=
$$
$$
[8A_3,\ -\varepsilon_{11}+2\varepsilon_{12}+
\varepsilon_{15}+\varepsilon_{17}+\varepsilon_{18},\
-\varepsilon_{11}+\varepsilon_{12}+2\varepsilon_{13}+
\varepsilon_{16}+\varepsilon_{18},\,
$$
$$
-\varepsilon_{11}+\varepsilon_{12}+\varepsilon_{13}+
2\varepsilon_{14}+\varepsilon_{17},\
-\varepsilon_{11}+\varepsilon_{13}+\varepsilon_{14}+
2\varepsilon_{15}+\varepsilon_{18},\
-\varepsilon_{11}+\varepsilon_{12}+\varepsilon_{14}+
\varepsilon_{15}+2\varepsilon_{16},
$$
$$
-\varepsilon_{11}+\varepsilon_{13}+\varepsilon_{15}+
\varepsilon_{16}+2\varepsilon_{17},\
-\varepsilon_{11}+\varepsilon_{14}+\varepsilon_{16}+
\varepsilon_{17}+2\varepsilon_{18}]
$$
the group $A=A(N_{21})$ consists of the cyclic group
$[\varphi ]$ of order $2$ which preserves connected
components of $8\aaa_3$ and $A/[\varphi]=\text{Aff}(3,\bff_2)$
is the affine group of 3-dimensional affine space over
$\bff_2$ defined by $8$ components $(\aaa_3)_i$, $1\le i\le 8$,
of $8\aaa_3$. Here a 4-element subset $\{i,j,k,l\}\subset \{1,2,\dots,8\}$
is a plane of the affine space if and only if
$2(\varepsilon_{1i}+\varepsilon_{1j}+\varepsilon_{1k}+
\varepsilon_{1l})\in N/8A_3$.
For example, from definition above, $\{1,5,7,8\}$,
$\{1,2,6,8\}$, $\{1,2,3,7\}$,
$\{1,3,4,8\}$, $\{1,2,4,5\}$, $\{1,3,5,6\}$ and $\{1,4,6,7\}$ are
all planes containing $1$.
Thus, one can take affine coordinates where $1=(000)$,
$2=(100)$, $3=(010)$, $4=(001)$, $5=(101)$, $6=(111)$,
$7=(110)$, $8=(111)$ (where elements of the triplets are taken
$\mod 2$). Elements of $A$ are defined on terminals of $8\aaa_3$.
The involution $\varphi$ is
$$
\varphi=(\alpha_{11}\alpha_{31})(\alpha_{12}\alpha_{32})
(\alpha_{13}\alpha_{33})
(\alpha_{14}\alpha_{34})(\alpha_{15}\alpha_{35})
(\alpha_{16}\alpha_{36})(\alpha_{17}\alpha_{37})(\alpha_{18}\alpha_{38}).
$$
The translation part of $A$ is generated by the translations
$$
\widetilde{T}_{12}=
(\alpha_{11}\alpha_{12})(\alpha_{31}\alpha_{32})
(\alpha_{13}\alpha_{17})(\alpha_{33}\alpha_{37})
(\alpha_{14}\alpha_{15})(\alpha_{34}\alpha_{35})
(\alpha_{16}\alpha_{18})(\alpha_{36}\alpha_{38}),
$$
$$
\widetilde{T}_{13}=
(\alpha_{11}\alpha_{13})(\alpha_{31}\alpha_{33})
(\alpha_{12}\alpha_{17})(\alpha_{32}\alpha_{37})
(\alpha_{14}\alpha_{18})(\alpha_{34}\alpha_{38})
(\alpha_{15}\alpha_{16})(\alpha_{35}\alpha_{36}),
$$
$$
\widetilde{T}_{14}=
(\alpha_{11}\alpha_{14})(\alpha_{31}\alpha_{34})
(\alpha_{12}\alpha_{15})(\alpha_{32}\alpha_{35})
(\alpha_{13}\alpha_{18})(\alpha_{33}\alpha_{38})
(\alpha_{16}\alpha_{17})(\alpha_{36}\alpha_{37}).
$$
Here $T_{ij}(i)=j$ and $\pi(\widetilde{T}_{ij})=T_{ij}$
for the canonical homomorphism $\pi:A\to \text{Aff}(3,\bff_2)$.

The subgroup $H=[\varphi]\cong C_2$ has $\rk N_H=8$ and
$(N_H)^\ast/N_H\cong (\bz/2\bz)^8$. Thus, $N_H$ satisfies
Theorem \ref{th:primembb3} and $H=[\varphi]\cong C_2$ is a KahK3
subgroup.

The subgroup $H=[\widetilde{T}_{12}]\cong C_2$ has
$\rk N_H=12$ and $(N_H)^\ast/N_H\cong (\bz/2\bz)^{12}$.
Thus, $N_H$ does not satisfy Theorem \ref{th:primembb3}
and $H=[\widetilde{T}_{12}]\cong C_2$ is not a KahK3
subgroup. The same is valid for $H=[\varphi \widetilde{T}_{12}]\cong C_2$.

It follows that a KahK3 subgroup $H\subset A$ has a trivial
translation part in $\text{Aff}(3,\bff_2)$:

(a) {\it for the canonical
homomorphism $\overline{\pi}:A\to \text{Gl}(3,\bff_2)$, the kernel
of  $\overline{\pi}|H:H\to \text{Gl}(3,\bff_2)$ is
contained in $[\varphi]\cong C_2$.}

Let us describe possible KahK3 subgroups $H\subset A$ and their images
$G=\pi(H)$ for the canonical homomoprhism $\pi:A\to \text{Aff}(3,\bff_2)$.

Let $H\subset A$ be a KahK3 subgroup that is $N_H$
satisfies Theorem \ref{th:primembb3}. Then $\rk N_H\le 19$. It follows that
$G=\pi(H)$ must have at least two orbits in $\{1,2,\dots,8\}$. We want to show
that $G$ has a 1-element orbit.

Let $G$ has a 2-elements orbit $\{1,2\}$ and it does not have a 1-element
orbit. Then $G$ preserves the vector $12=(100)$, and its linear part
$\overline{G}\subset \SSS_4$ where
$$
\SSS_4=\{f\in \text{Aff}(3,\bff_2)\ |\ f(1)=1,\ f(2)=2\}
$$
consists of matrices with the zero $1$ and the eigen vector
$\{1,2\}=(100)$.
For the zero $1$ and the basis $\{1,2\}=(100)$, $\{1,3\}=(010)$,
$\{1,4\}=(001)$,
this is the symmetric group $\SSS_4$ by its action by conjugations
on four cyclic subgroups of order $3$ of $\SSS_4$ which are
$$
C1_3=\left[\left(\begin{array}{ccc}
1&0&0\\
0&0&1\\
0&1&1
\end{array}
\right)\right],\
C2_3=\left[\left(\begin{array}{ccc}
1&1&0\\
0&0&1\\
0&1&1
\end{array}
\right)\right],\
$$
$$
C3_3=\left[\left(\begin{array}{ccc}
1&0&1\\
0&0&1\\
0&1&1
\end{array}
\right)\right],\
C4_3=\left[\left(\begin{array}{ccc}
1&1&1\\
0&0&1\\
0&1&1
\end{array}
\right)\right].
$$
Thus, to construct $G$ and finally $H$, we should
take a subgroup $\overline{G}\subset \SSS_4$
and change some of its generators $g$ by $T_{12}g$.
The resulting group $G$ must be
isomorphic to its linear part $\overline{G}$,
and it must not have one-element orbits
in $\{1,\dots, 8\}$.

Let $\overline{G}=[F1_3]=C1_3\cong C_3$ where
$F1_3=\left(\begin{array}{ccc}
1&0&0\\
0&0&1\\
0&1&1
\end{array}
\right)
$. The transformation $T_{12}F1_3$ gives the permutation
$(12)(358746)$ which has the order $6$ instead of $3$.
Thus we don't get a KahK3 subgroup because of the property (a) above.

Let $\overline{G}=[F_4]\cong C_4$ where
$F_4=\left(\begin{array}{ccc}
1&0&1\\
0&0&1\\
0&1&0
\end{array}
\right)
$. The transformation $T_{12}F_4$ gives the permutation
$(12)(3574)$ which has the same order $4$. But, it has 1-element
orbits $\{6\}$ and $\{8\}$. Thus, we don't get a KahK3 subgroup
we are looking for.

Let $\overline{G}=[F1_2]\cong C_2$ where
$F1_2=\left(\begin{array}{ccc}
1&0&0\\
0&1&1\\
0&0&1
\end{array}
\right)
$ is an odd element of $\SSS_4$.
The transformation $T_{12}F1_2$ gives the
permutation $(12)(37)(46)(58)$. It gives
the translations $T_{12}$ in the plane $\{1,2,3,7\}$ and
a different translation $T_{46}$
in the parallel plane $\{4,5,6,8\}$. It is lifted to an
element
$$
\widetilde{T_{12}F1_2}=
(\alpha_{11}\alpha_{12}\alpha_{31}\alpha_{32})
(\alpha_{13}\alpha_{17}\alpha_{33}\alpha_{37})
(\alpha_{14}\alpha_{36}\alpha_{34}\alpha_{16})
(\alpha_{15}\alpha_{18}\alpha_{35}\alpha_{38})
$$
of $A$ such that $\widetilde{T_{12}F1_2}^2=\varphi$.
The subgroup $H=[\widetilde{T_{12}F1_2}]\subset A$
has $\rk N_H=16$ and $(N_H)^\ast/N_H\cong
(\bz/4\bz)^4\times (\bz/2\bz)^4$. The $N_H$
does not satisfy Theorem  \ref{th:primembb3},
and $H$ is not a KahK3 subgroup.

Let $\overline{G}=[F2_2]\cong C_2$ where
$F2_2=\left(\begin{array}{ccc}
1&1&0\\
0&1&0\\
0&0&1
\end{array}
\right)
$ is an even element of $\SSS_4$.
The transformation $T_{12}F2_2$ gives the
permutation $(12)(45)$. It gives
the translations $T_{12}$ in the plane $\{1,2,4,5\}$ and
the identity in the parallel plane $\{3,6,7,8\}$.
Thus, it has one-element orbits and does not give
KahK3 subgroups $H$ we are looking for.

Let $\overline{G}=[F1_2,\,F1^\prime_2]\subset \SSS_4$
is a non-normal Klein subgroup where $F1_2$ and
$F1^\prime_2$ are commuting odd transpositions.
For the group $G$, one of $F1_2$, $F1^\prime_2$
must be replaced by $T_{12}F1_2$, $T_{12}F1^\prime_2$ which gives
a non KahK3 subgroup as we have seen above.

Let $\overline{G}=[F2_2,\,F2^\prime_2]\subset \SSS_4$
is a normal Klein subgroup where $F2_2$ and
$F2^\prime_2$ are different even elements of order $2$.
For the group $G$, some of $F2_2$, $F2^\prime_2$
must be replaced by $T_{12}F2_2$, $T_{12}F2^\prime_2$
but the group $G$ will have one-element orbits
which follows from our considerations above. Thus,
we don't obtain a KahK3 subgroup we are looking for.

Let $\overline{G}=[F_4,\,F1^\prime_2]\cong \ddd_8$ where
$F_4$ is the matrix above and
$F1^\prime_2=
\left(\begin{array}{ccc}
1&0&0\\
0&0&1\\
0&1&0
\end{array}
\right)
$ is an odd element of $\SSS_4$ of order $2$.
The group
$G$ must be $G=[T_{12}F_4,\,F1^\prime_2]$ as we have seen above.
Here $T_{12}F_4$ gives the permutation $(12)(3574)$ and
$F1^\prime_2$ gives the permutation $(34)(57)$. The group $G$
has one-element orbits $\{6\}$ and $\{8\}$. Thus, we
don't obtain a KahK3 subgroup we are looking for.

Let $\overline{G}=[F1_3,\,F1^\prime_2]\cong \SSS_3$ where
$F1_3$ is the matrix above and
$F1^\prime_2=
\left(\begin{array}{ccc}
1&0&0\\
0&0&1\\
0&1&0
\end{array}
\right)
$ is an odd element of $\SSS_4$ of order $2$ such that
$F1^\prime_2 F1_3 F1^\prime_2=(F1_3)^2$.
As we have seen above, we cannot replace
$F1_3$ and $F1^\prime_2$ by
$T_{12}F1_3$ and $T_{12}F1^\prime_2$ respectively
because we then obtain a not KahK3 subgroup. Thus, from this
case, we don't obtain a KahK3 subgroup we are looking for.

Let $\overline{G}=[F1_3,\,F1^\prime_3]\cong \AAA_4$ where
$F1_3$ is the matrix above and $F1^\prime_3$ is another
matrix of order $3$.
As we have seen above, then we cannot replace
$F1_3$ and $F1^\prime_3$ by
$T_{12}F1_3$ and $T_{12}F1^\prime_3$ respectively
because we then obtain a not KahK3 subgroup. Thus,
we don't obtain a KahK3 subgroup we are looking for.

Let $\overline{G}=[F1_2,\,F1^\prime_2,\,F1^{\prime\prime}_2]=\SSS_4$
where $F1_2$, $F1^\prime_2$, $F1^{\prime\prime}_2$
are odd elements of order $2$.
As we have seen, we cannot replace
$F1_2$, $F1^\prime_2$, $F1^{\prime\prime}_2$
by
$T_{12}F1_2$, $T_{12}F1^\prime_2$, $T_{12}F1^{\prime\prime}_2$
respectively because we then obtain a not KahK3 subgroup. Thus,
we don't obtain a KahK3 subgroup we are looking for.

Thus, finally we proved that $G$ must have a 1-elements orbit if
it has a 2-elements orbit.

Now, assume that $G$ does not have a 1-elements orbit and a 2-elements
orbit, but it has a 3-elements orbit. This three elements generate
a plane with 4 elements which is invariant with respect to $G$.
The remaining element of the plane then gives a 1-elements orbit.
We obtain a contradiction.

Now, assume that $G$ has two 4-elements orbits.

At first, let us assume that these 4-elements orbits are in
general position, they don't define planes. We can assume that
one of orbits is $\{1,2,3,4\}$. Then $G$ must contain
an even permutation of the order $2$ of $\{1,2,3,4\}$.
We can assume that it is $(13)(24)$. On all elements it is then
$(13)(24)(78)(65)$, and it is lifted to the automorphism
$$
g=
(\alpha_{11}\alpha_{13}\alpha_{31}\alpha_{33})
(\alpha_{12}\alpha_{14}\alpha_{32}\alpha_{34})
(\alpha_{17}\alpha_{38}\alpha_{37}\alpha_{18})
(\alpha_{16}\alpha_{15}\alpha_{36}\alpha_{35})
$$
from $A$ such that $g^2=\varphi$. Then $H=[g]$
must be a KahK3 subgroup. We have
$\rk N_H=16$ and $(N_H)^\ast/N_H\cong (\bz/4\bz)^4\times
(\bz/2\bz)^4$. By Theorem \ref{th:primembb3}, $H$
 cannot be a KahK3 subgroup, and we get a contradiction.

Secondly, let us assume that these 4-elements orbits
define parallel planes. We can assume that these planes are
$\{1,3,5,6\}$ and $\{2,4,7,8\}$. Then $G$ must contain an
even permutation $g$ of order $2$ of $\{1,3,5,6\}$. We can
assume that it is $(13)(65)$. If $g$ is not identity
in $\{2,4,7,8\}$ and, for example $g(2)=4$, then
$g=(13)(24)(78)(65)$. Like above, we obtain a contradiction.
Thus, $g$ must be identity in $\{2,4,7,8\}$ and
$g=(13)(56)$.
Considering another plane $\{2,4,7,8\}$,
similarly, we can construct an even permutation $g^\prime\in G$
of order $2$ of $\{2,4,7,8\}$ which is identity
on $\{1,3,5,6\}$. For example, we can assume that
$g^\prime=(24)(78)$. Then $gg^\prime=(13)(24)(78)(65)$,
and we obtain a contradiction like above.

\medskip

Further we denote
$$
\text{Gl}(3,\bff_2)=\{f\in \text{Aff}(3,\bff_2),\ f(1)=1\}.
$$
of order $2^3\cdot 3\cdot 7=168$.
Thus, we identify zero with $1=(000)$. We use
the basis $2=(100)$, $3=(010)$ and $4=(001)$.
Let us consider possible subgroups
$G\subset \text{Gl}(3,\bff_2)$ and their lifts to
KahK3 subgroups $H\subset A$ up to conjugation.
As we have seen, they give all KahK3 conjugation
classes of subroups in $A$.

For $F\in \text{Gl}(3,\bff_2)$, we denote by $\widetilde{F}$
its lift to $\widetilde{F}\subset A$ such that
$\widetilde{F}(\alpha_{11})=\alpha_{11}$.
For a subgroup $G\subset \text{Gl}(3,\bff_2)$,
$$
\widetilde{G}=\{\widetilde{F}\ |\ F\in G \}.
$$
Obviously, $\pi|\widetilde{G}:\widetilde{G}\cong G$
is an isomorphism.

We consider the following elements from $\text{Gl}(3,\bff_2)$ and $A$:
$$
F1_7=
\left(\begin{array}{ccc}
0&0&1\\
1&0&0\\
0&1&1
\end{array}
\right)=(2345678),\
\widetilde{F1}_7=(\alpha_{12}\alpha_{13}\alpha_{14}\alpha_{15}
\alpha_{16}\alpha_{17}\alpha_{18})
(\alpha_{32}\alpha_{33}\alpha_{34}\alpha_{35}\alpha_{36}
\alpha_{37}\alpha_{38})
$$
or order $7$;
$$
F1_4=
\left(\begin{array}{ccc}
1&0&1\\
0&0&1\\
0&1&0
\end{array}
\right)=(3475)(68),\ \
\widetilde{F1}_4=(\alpha_{13}\alpha_{14}\alpha_{37}\alpha_{15})
(\alpha_{33}\alpha_{34}\alpha_{17}\alpha_{35})
(\alpha_{16}\alpha_{18}\alpha_{36}\alpha_{38});
$$
$$
F2_4=
\left(\begin{array}{ccc}
0&0&1\\
1&0&1\\
0&1&1
\end{array}
\right)=(2346)(78),
$$
$$
\widetilde{F2}_4=(\alpha_{12}\alpha_{13}\alpha_{34}\alpha_{16})
(\alpha_{32}\alpha_{33}\alpha_{14}\alpha_{36})
(\alpha_{15}\alpha_{35})(\alpha_{17}\alpha_{38}
\alpha_{37}\alpha_{18})
$$
of order $4$;
$$
F1_3=
\left(\begin{array}{ccc}
1&0&1\\
0&0&1\\
0&1&1
\end{array}
\right)=(346)(587),\ \
\widetilde{F1}_3=(\alpha_{13}\alpha_{14}\alpha_{16})
(\alpha_{33}\alpha_{34}\alpha_{36})(\alpha_{15}\alpha_{18}\alpha_{17})
(\alpha_{35}\alpha_{38}\alpha_{37})
$$
such that $F1_3F1_7(F1_3)^{-1}=(F1_7)^2$,
$$
F2_3=
\left(\begin{array}{ccc}
0&0&1\\
1&0&0\\
0&1&0
\end{array}
\right)=(234)(578),\ \
\widetilde{F2}_3=(\alpha_{12}\alpha_{33}\alpha_{34})
(\alpha_{32}\alpha_{13}\alpha_{14})(\alpha_{15}\alpha_{17}\alpha_{38})
(\alpha_{35}\alpha_{37}\alpha_{18})
$$
of order $3$;
$$
F1_2=\left(\begin{array}{ccc}
1&0&0\\
0&1&1\\
0&0&1
\end{array}
\right)=(48)(56),\ \ \widetilde{F1}_2=
(\alpha_{12}\alpha_{32})(\alpha_{14}\alpha_{38})
(\alpha_{34}\alpha_{18})(\alpha_{15}\alpha_{16})
(\alpha_{35}\alpha_{36})(\alpha_{17}\alpha_{37})
$$
of order $2$.

By considering possible subgroups $G\subset \text{Gl}(3,\bff_2)$
and their lifts to subgroups of $A$, we obtain the following
KahK3 subgroups of $A$ up to conjugation.

Cases (II.1) --- (II.3) below give all $H$ with 7-elements
orbit $\{2,3,4,5,6,7,8\}$,
equivalently order of $H$ is divisible by $7$.

(II.1) $H=[\widetilde{F1}_7,\widetilde{F1}_4]=
\widetilde{\text{Gl}(3,\bff_2)}\cong L_2(7)=PSL(2,\bff_7)$
with $\rk N_H=19$  and \newline
$(N_H)^\ast/N_H\cong \bz/28\bz\times \bz/7\bz$.

(II.2) $H=[\widetilde{F1}_7,\widetilde{F1}_3]=[\widetilde{F1}_7]
\rtimes [\widetilde{F1}_3]\cong C_7\rtimes C_3$
with $\rk N_H=18$ and $(N_H)^\ast/N_H\cong (\bz/7\bz)^3$.

(II.3) $H=[\widetilde{F1}_7]\cong C_7$ with $Clos(H)=
[\widetilde{F1}_7,\widetilde{F1}_3]\cong C_7\rtimes C_3$ above
and $\rk N_H=18$, $(N_H)^\ast/N_H\cong (\bz/7\bz)^3$.

Cases below give all $H$ with 1-elements orbits $\{1\}$ and $\{2\}$.

(I.1) $H=[\varphi,\,\widetilde{F1}_4,\,\widetilde{F1}_3]=[\varphi]
\times [\widetilde{F1}_4,\,\widetilde{F_1}_3]
\cong C_2\times \SSS_4$ with $\rk N_H=18$,
$(N_H)^\ast/N_H\cong (\bz/12\bz)^2\times (\bz/2\bz)^2$ and
$$
\det(K((N_H)_2)\equiv \det \left(
\begin{array}{cccc}
-75192 & 85308 &63780  &-36\\
85308  &-97356 &-72420 &48 \\
63780  &-72420 &-54112 &34 \\
-36    & 48    &34     &-4
\end{array}
\right)=
$$
$$
2^6\cdot 3^4\cdot 23^2\cdot239\equiv \pm 12^2\cdot 2^2\mod (\bz_2^\ast)^2.
$$
Thus, $N_H$ satisfies Theorem \ref{th:primembb3} and $H$ is a KahK3 subgroup.
All cases (1.2) -- (II.11$^\prime$) below give its subgroups.

(I.2)
$H=[\varphi,\,\widetilde{F1}_3,\,\widetilde{F1}_4\widetilde{F1}_3
(\widetilde{F1}_4)^{-1}]=
[\varphi]\times [\widetilde{F1}_3,\,\widetilde{F1}_4\widetilde{F1}_3
(\widetilde{F1}_4)^{-1}]
\cong C_2\times \AAA_4$ with $Clos(H)=[\varphi,\,\widetilde{F1}_4,\,
\widetilde{F1}_3]\cong C_2\times \SSS_4$
above and $\rk N_H=18$, $(N_H)^\ast/N_H\cong
(\bz/12\bz)^2\times (\bz/2\bz)^2$.

(I.3)
$H=[\varphi,\,\widetilde{F1}_2,\,\widetilde{F1}_4\widetilde{F1}_3
(\widetilde{F1}_4)^{-1}]=
[\varphi]\times [\widetilde{F1}_2,\,\widetilde{F1}_4\widetilde{F1}_3
(\widetilde{F1}_4)^{-1}]
\cong C_2\times \DDD_6\cong \DDD_{12}$ with
$\rk N_H=16$ and $(N_H)^\ast/N_H\cong (\bz/6\bz)^4$.

(I.4)
$H=[\varphi\widetilde{F1}_4\widetilde{F1}_3(\widetilde{F1}_4)^{-1}]
\cong C_6$ with $Clos(H)=[\varphi,\widetilde{F1}_2,\,
\widetilde{F1}_4\widetilde{F1}_3(\widetilde{F1}_4)^{-1}]
\cong C_2\times \DDD_6\cong \DDD_{12}$ above and
$\rk N_H=16$, $(N_H)^\ast/N_H\cong (\bz/6\bz)^4$.

(I.5)
$H=[\varphi,\,\widetilde{F1}_3\widetilde{F1}_4
(\widetilde{F1}_3)^{-1},\,\widetilde{F1}_2]=
[\varphi]\times [\widetilde{F1}_3\widetilde{F1}_4
(\widetilde{F1}_3)^{-1}, \widetilde{F1}_2]
\cong C_2\times \DDD_8$ with
$\rk N_H=16$ and $(N_H)^\ast/N_H\cong (\bz/4\bz)^4\times (\bz/2\bz)^2$.

(I.6)
$H=[\varphi,\,\widetilde{F1}_3(\widetilde{F1}_4)^2
(\widetilde{F1}_3)^{-1},\,\widetilde{F1}_2]=
[\varphi]\times [\widetilde{F1}_3(\widetilde{F1}_4)^2
(\widetilde{F1}_3)^{-1},\, \widetilde{F1}_2]
\cong (C_2)^3$ with
$\rk N_H=14$ and $(N_H)^\ast/N_H\cong (\bz/4\bz)^2\times (\bz/2\bz)^6$.
\footnote{These calculations show that for a symplectic group
$G=(C_2)^3$ on a
K\"ahlerian K3 surface, the group
$S_{(G)}^\ast/S_{(G)}=S_{(2,2,2)}^\ast /S_{(2,2,2)}\cong
(\bz/4\bz)^2\times (\bz/2\bz)^6$.
We must correct our calculation of this group in
\cite[Prop. 10.1]{Nik0}.}

(I.7)
$H=[\varphi,\,\widetilde{F1}_3(\widetilde{F1}_4)^2(\widetilde{F1}_3)^{-1},\,
\widetilde{F1}_3\widetilde{F1}_4(\widetilde{F1}_3)^{-1}\widetilde{F1}_2]=$

$[\varphi]\times [\widetilde{F1}_3(\widetilde{F1}_4)^2(\widetilde{F1}_3)^{-1},\,
\widetilde{F1}_3\widetilde{F1}_4(\widetilde{F1}_3)^{-1}\widetilde{F1}_2]
\cong (C_2)^3$ with
$\rk N_H=14$ and $(N_H)^\ast/N_H\cong (\bz/4\bz)^2\times (\bz/2\bz)^6$.

(I.8)
$H=[\varphi,\,\widetilde{F1}_3\widetilde{F1}_4(\widetilde{F1}_3)^{-1}]=
[\varphi]\times [\widetilde{F1}_3\widetilde{F1}_4(\widetilde{F1}_3)^{-1}]
\cong C_2\times C_4$ with $Clos(H)=
[\varphi,\,\widetilde{F1}_3\widetilde{F1}_4(\widetilde{F1}_3)^{-1},\,
\widetilde{F1}_2]
\cong C_2\times \DDD_8$ above and
$\rk N_H=16$, $(N_H)^\ast/N_H\cong (\bz/4\bz)^4\times (\bz/2\bz)^2$.

(I.9)
$H=[\varphi,\,\widetilde{F1}_2]=
[\varphi]\times [\widetilde{F1}_2]
\cong (C_2)^2$ with
$\rk N_H=12$ and $(N_H)^\ast/N_H\cong (\bz/4\bz)^2\times (\bz/2\bz)^6$.

(I.10)
$H=[\varphi] \cong C_2$ with
$\rk N_H=8$ and $(N_H)^\ast/N_H\cong (\bz/2\bz)^8$.

\medskip

(II.3) $H=[\widetilde{F1}_4,\,\widetilde{F1}_3]\cong \SSS_4$ with
$\rk N_H=17$ and $(N_H)^\ast/N_H\cong (\bz/12\bz)^2\times (\bz/4\bz)$.

(II.3$^\prime$) $H=[\varphi \widetilde{F1}_4,\,\widetilde{F1}_3]\cong \SSS_4$ with
$\rk N_H=17$ and $(N_H)^\ast/N_H\cong (\bz/12\bz)^2\times (\bz/4\bz)$.

(II.4)
$H=[\widetilde{F1}_3,\widetilde{F1}_4\widetilde{F1}_3
(\widetilde{F1}_4)^{-1}]\cong \AAA_4$
with $\rk N_H=16$ and $(N_H)^\ast/N_H\cong (\bz/12\bz)^2\times (\bz/2\bz)^2$.

(II.5)
$H=[\widetilde{F1}_2,\,\widetilde{F1}_4\widetilde{F1}_3(\widetilde{F1}_4)^{-1}]
\cong \DDD_6$ with
$\rk N_H=14$ and $(N_H)^\ast/N_H\cong (\bz/6\bz)^2\times (\bz/3\bz)^3$.

(II.5$^\prime$)
$H=[\varphi\widetilde{F1}_2,\,\widetilde{F1}_4
\widetilde{F1}_3(\widetilde{F1}_4)^{-1}]
\cong \DDD_6$ with
$\rk N_H=14$ and $(N_H)^\ast/N_H\cong (\bz/6\bz)^2\times (\bz/3\bz)^3$.

(II.6)
$H=[\widetilde{F1}_4\widetilde{F1}_3(\widetilde{F1}_4)^{-1}]
\cong C_3$ with
$\rk N_H=12$ and $(N_H)^\ast/N_H\cong (\bz/3\bz)^6$.

(II.7)
$H=[\widetilde{F1}_3\widetilde{F1}_4(\widetilde{F1}_3)^{-1},\,\widetilde{F1}_2]
\cong \DDD_8$ with
$\rk N_H=15$ and $(N_H)^\ast/N_H\cong (\bz/4\bz)^5$.

(II.7$^\prime$)
$H=[\varphi \widetilde{F1}_3\widetilde{F1}_4
(\widetilde{F1}_3)^{-1},\,\widetilde{F1}_2]
\cong \DDD_8$ with
$\rk N_H=15$ and $(N_H)^\ast/N_H\cong (\bz/4\bz)^5$.

(II.7$^{\prime\prime}$)
$H=[\widetilde{F1}_3\widetilde{F1}_4(\widetilde{F1}_3)^{-1},\,
\varphi\widetilde{F1}_2]
\cong \DDD_8$ with
$\rk N_H=15$ and $(N_H)^\ast/N_H\cong (\bz/4\bz)^5$.

(II.7$^{\prime\prime\prime}$)
$H=[\varphi\widetilde{F1}_3\widetilde{F1}_4(\widetilde{F1}_3)^{-1}, \,
\varphi\widetilde{F1}_2]
\cong \DDD_8$ with
$\rk N_H=15$ and $(N_H)^\ast/N_H\cong (\bz/4\bz)^5$.

(II.8)
$H=[\widetilde{F1}_3(\widetilde{F1}_4)^2(\widetilde{F1}_3)^{-1},\,
\widetilde{F1}_2]
\cong (C_2)^2$ with
$\rk N_H=12$ and \newline
$(N_H)^\ast/N_H\cong (\bz/4\bz)^2\times (\bz/2\bz)^6$
($H$ is a non-normal Klein subgroup of $\SSS_4$).

(II.8$^\prime$)
$H=[\varphi\widetilde{F1}_3(\widetilde{F1}_4)^2(\widetilde{F1}_3)^{-1},\,
\widetilde{F1}_2]
\cong (C_2)^2$ with
$\rk N_H=12$ and $(N_H)^\ast/N_H\cong (\bz/4\bz)^2\times (\bz/2\bz)^6$.

(II.8$^{\prime\prime}$)
$H=[\varphi \widetilde{F1}_3(\widetilde{F1}_4)^2(\widetilde{F1}_3)^{-1},\,
\varphi\widetilde{F1}_2]
\cong (C_2)^2$ with
$\rk N_H=12$ and $(N_H)^\ast/N_H\cong (\bz/4\bz)^2\times (\bz/2\bz)^6$.

(II.9)
$H=[\widetilde{F1}_3(\widetilde{F1}_4)^2(\widetilde{F1}_3)^{-1},\,
\widetilde{F1}_3\widetilde{F1}_4(\widetilde{F1}_3)^{-1}\widetilde{F1}_2]
\cong (C_2)^2$ with
$\rk N_H=12$ and $(N_H)^\ast/N_H\cong (\bz/4\bz)^2\times (\bz/2\bz)^6$
($H$ is the normal Klein subgroup of $\SSS_4$).

(II.9$^\prime$)
$H=[\varphi \widetilde{F1}_3(\widetilde{F1}_4)^2(\widetilde{F1}_3)^{-1},\,
\widetilde{F1}_3\widetilde{F1}_4(\widetilde{F1}_3)^{-1}\widetilde{F1}_2]
\cong (C_2)^2$ with
$\rk N_H=12$ and $(N_H)^\ast/N_H\cong (\bz/4\bz)^2\times (\bz/2\bz)^6$.

(II.10)
$H=[\widetilde{F1}_4]\cong C_4$ with
$\rk N_H=14$ and $(N_H)^\ast/N_H\cong (\bz/4\bz)^4\times (\bz/2\bz)^2$.

(II.10$^\prime$)
$H=[\varphi \widetilde{F1}_4]\cong C_4$ with
$\rk N_H=14$ and $(N_H)^\ast/N_H\cong (\bz/4\bz)^4\times (\bz/2\bz)^2$.

(II.11)
$H=[\widetilde{F1}_2]\cong C_2$ with
$\rk N_H=8$ and $(N_H)^\ast/N_H\cong(\bz/2\bz)^8$.

(II.11$^\prime$)
$H=[\varphi \widetilde{F1}_2]\cong C_2$ with
$\rk N_H=8$ and $(N_H)^\ast/N_H\cong(\bz/2\bz)^8$.

\medskip

Cases below give all $H$ which have 1-element orbit $\{1\}$, no
other 1-element orbits, and  4-elements
orbit $\{2,3,4,6\}$ which gives a plane.

(III.1) $H=[\varphi,\, \widetilde{F2}_4,\,\widetilde{F2}_3]=
[\varphi]\times [\widetilde{F2}_4,\,\widetilde{F2}_3]\cong C_2\times \SSS_4$
with $\rk N_H=18$, $(N_H)^\ast/N_H\cong (\bz/12\bz)^2\times
(\bz/2\bz)^2$ and
$$
\det(K((N_H)_2)\equiv \det \left(
\begin{array}{cccc}
-14460 & -12168 &-6276  &-48\\
-12168  &-11724 &-6072 &-36 \\
6276  &-6072 &-3148 & -18 \\
-48    & -36    & -18     & -4
\end{array}
\right)=
$$
$$
2^6\cdot 3^2\cdot 41\cdot 9767 \equiv \pm 12^2\cdot 2^2\mod (\bz_2^\ast)^2.
$$
Thus, $N_H$ satisfies Theorem \ref{th:primembb3} and $H$ is a KahK3 subgroup.
All cases (III.2) -- (IV.2$^\prime$) below give its subgroups.

(III.2) $H=[\varphi,\, (\widetilde{F2}_4)^2,\,\widetilde{F2}_3]=
[\varphi]\times [(\widetilde{F2}_4)^2,\,\widetilde{F2}_3]\cong
C_2\times \AAA_4$
with \newline
$Clos(H)=[\varphi,\, \widetilde{F2}_4,\,\widetilde{F2}_3]\cong
C_2\times\SSS_4$
above and $\rk N_H=18$, $(N_H)^\ast/N_H\cong (\bz/12\bz)^2\times (\bz/2\bz)^2$.

(IV.1) $H=[\widetilde{F2}_4,\,\widetilde{F2}_3]\cong \SSS_4$
with $\rk N_H=17$, $(N_H)^\ast/N_H\cong (\bz/12\bz)^2\times \bz/4\bz$.

(IV.1$^\prime$) $H=[\varphi\widetilde{F2}_4,\,\widetilde{F2}_3]\cong \SSS_4$
with $\rk N_H=17$, $(N_H)^\ast/N_H\cong (\bz/12\bz)^2\times \bz/4\bz$.

(IV.2) $H=[(\widetilde{F2}_4)^2,\,\widetilde{F2}_3]\cong \AAA_4$
with $\rk N_H=16$, $(N_H)^\ast/N_H\cong (\bz/12\bz)^2\times (\bz/2\bz)^2$.

(IV.2$^\prime$) $H=[\varphi (\widetilde{F2}_4)^2,\,\widetilde{F2}_3]\cong \AAA_4$
with $\rk N_H=16$, $(N_H)^\ast/N_H\cong (\bz/12\bz)^2\times (\bz/2\bz)^2$.

\medskip

Let $X$ be marked by a primitive sublattice
$S\subset N=N_{21}=N(8A_3)$. Then $S$
must satisfy Theorem \ref{th:primembb3} and
$\Gamma(P(S))\subset \Gamma(P(N_{21}))=8\aaa_3$.
Any such $S$ gives marking of some $X$ and $P(X)\cap S=P(S)$.

If $N_H\subset S$ where $H$ has type (I.1), (III.1) ($H\cong C_2\times \SSS_4$), or
(II.1) ($H\cong L_2(7)\cong GL(3,\bff_2)$)
then $\Aut(X,S)_0=H$.
Otherwise, if only $N_H\subset S$ where $H$ has type
(I.3) ($H\cong C_2\times \DDD_6\cong \DDD_{12}$),
(I.5) ($H\cong C_2\times \DDD_8$), or (II.2) ($H\cong C_7\rtimes C_3$),
then $\Aut(X,S)_0=H$.
Otherwise, if only $N_H\subset S$ where
$H$ has type
(I.6), (I.7) ($H\cong (C_2)^3$),
(II.3), (II.3$^\prime$),
(IV.1) or (IV.1$^\prime$) ($H\cong \SSS_4$)
then $\Aut(X,S)_0=H$.
Otherwise, if only
$N_H\subset S$ where
$H$ has type
(I.9) ($H\cong (C_2)^2$), (II.4), (IV.2), (IV.2$^\prime$) ($H\cong \AAA_4$),
(II.7), (II.7$^\prime$), (II.7$^{\prime\prime}$), or
(II.7$^{\prime\prime\prime}$) ($H\cong \DDD_8$)
then $\Aut(X,S)_0=H$.
Otherwise, if only
$N_H\subset S$ where
$H$ has type
(I.10) ($H\cong C_2$), (II.5), or (II.5$^\prime$) ($H\cong \DDD_6$),
then $\Aut(X,S)_0=H$.
Otherwise, if only
$N_H\subset S$ where
$H$ has type
(II.6) ($H\cong C_3$), (II.8), (II.8$^\prime$), (II.8$^{\prime\prime}$), (II.9),
(II.9$^\prime$) ($H\cong (C_2)^2$), (II.10), or (II.10$^\prime$) ($H\cong C_4$)
then $\Aut(X,S)_0=H$.
Otherwise, if only
$N_H\subset S$ where $H$ has type
(II.11), or (II,11$^\prime$) ($H\cong C_2$),
then $\Aut(X,S)_0=H$.
Otherwise, $\Aut(X,S)_0$ is trivial.

Let $H=[\widetilde{F2}_4,\,\widetilde{F2}_3]\cong \SSS_4$
has type (IV.1). Let
$S=[(A_3)_2=[\alpha_{12},\alpha_{22},
\alpha_{32}],\, N_H]_{pr}\subset N_{21}$.
We have $\rk S=19$ and $S^\ast/S\cong \bz/12\bz\times \bz/3\bz$.
Thus, $S$ satisfies Theorem \ref{th:primembb3} and
gives marking of some $X$.
We have  $H\subset \Aut(X,S)_0$ where $H\cong \SSS_4$, and
$(\aaa_3)_2(\aaa_3)_3(\aaa_3)_4(\aaa_3)_6=\Gamma(P(X)\cap S)=P(S)$.
By classification  of Niemeier lattices and our calculations
above, $X$ can be marked by the Niemeier lattice $N_{21}=N(8A_3)$
only for such $S\subset S_X$.

\medskip

Below we consider cases 22 and 23 when $\Aut(X,S)_0$ can be very large. 
We use Mukai's classification of abstract finite symplectic automorphism 
groups of K3 surfaces and its amplification by Xiao \cite{Xiao}. 
We follow Hashimoto \cite{Hash} in numbering them by $n=1, 2, \dots 81$
(see cases 22 and 23 below). For fixed $n$ and the corresponding abstract 
group $H$, we follow to Hashimoto \cite{Hash} in using GAP Program \cite{GAP} 
for finding of conjugacy classes of $H$ in
$A22=A(N_{22})$ and $A23=A(N_{23})$. In particular,
the GAP invariant $i$ of these groups (found by Hashimoto) is very useful.
Then the abstract finite group $H$ is given by the GAP command
H:=SmallGroup($|H|$,$i$).  Its conjugacy classes in $A22$ and $A23$ are
given by the GAP command IsomorphicSubgroups(A22,H) and 
IsomorphicSubgroups(A23,H) respectively
when we identify $A22$ and $A23$ with the corresponding subgroups 
of the permutation group 
${\frak S}_{24}$. For each of the conjugacy classes of $H$, 
we calculate (using algorithms and 
programs above) invariants of Theorem
\ref{th:primembb3} to find out  if $N_H$ has a primitive embedding 
into $L_{K3}$ and it 
is a KahK3 conjugacy class. We give $\rk N_H$, $(N_H)^\ast/N_H$, and
some additional invariants, if necessary, which show that
the conjugacy class is a KahK3 conjugacy class according to
Theorem \ref{th:primembb3}. All other conjugacy classes of $H$ do not satisfy
Theorem \ref{th:primembb3}, and they don't give KahK3 conjugacy classes.
Surprisingly, these invariants are the same for all
KahK3 conjugacy classses for the fixed $n$ as our calculations show
(we must say that our calculations agree to
calculations by Hashimoto in \cite{Hash}). We give them for each $n$.
We numerate these conjugacy classes as $H_{n,m}$, where
$m\in \bn$ (and by $H_{41,1,1}$ and $H_{41,1,2}$ for
$n=41$ and $A(N_{23})$). We must say that some of our calculations for $A(N_{23})$
repeat calculations by Mukai in Appendix to \cite{Kon}.

\medskip

{\bf Case 22.} For the Niemeier lattice $N_{22}$, we have
$$
N=N_{22}=N(12A_2)=[12A_2,\ [2(11211122212)]]=[12A_2,\
$$
$$
-\varepsilon_{1}+\varepsilon_{2}+\varepsilon_{3}-\varepsilon_{4}+
\varepsilon_{5}+\varepsilon_{6}+\varepsilon_{7}-\varepsilon_{8}
-\varepsilon_{9}-\varepsilon_{10}+\varepsilon_{11}-\varepsilon_{12},
$$
$$
-\varepsilon_{1}-\varepsilon_{2}+\varepsilon_{3}+\varepsilon_{4}-
\varepsilon_{5}+\varepsilon_{6}+\varepsilon_{7}+\varepsilon_{8}
-\varepsilon_{9}-\varepsilon_{10}-\varepsilon_{11}+\varepsilon_{12},
$$
$$
-\varepsilon_{1}+\varepsilon_{2}-\varepsilon_{3}+\varepsilon_{4}+
\varepsilon_{5}-\varepsilon_{6}+\varepsilon_{7}+\varepsilon_{8}
+\varepsilon_{9}-\varepsilon_{10}-\varepsilon_{11}-\varepsilon_{12},
$$
$$
-\varepsilon_{1}-\varepsilon_{2}+\varepsilon_{3}-\varepsilon_{4}+
\varepsilon_{5}+\varepsilon_{6}-\varepsilon_{7}+\varepsilon_{8}
+\varepsilon_{9}+\varepsilon_{10}-\varepsilon_{11}-\varepsilon_{12},
$$
$$
-\varepsilon_{1}-\varepsilon_{2}-\varepsilon_{3}+\varepsilon_{4}-
\varepsilon_{5}+\varepsilon_{6}+\varepsilon_{7}-\varepsilon_{8}
+\varepsilon_{9}+\varepsilon_{10}+\varepsilon_{11}-\varepsilon_{12},
$$
$$
-\varepsilon_{1}-\varepsilon_{2}-\varepsilon_{3}-\varepsilon_{4}+
\varepsilon_{5}-\varepsilon_{6}+\varepsilon_{7}+\varepsilon_{8}
-\varepsilon_{9}+\varepsilon_{10}+\varepsilon_{11}+\varepsilon_{12},
$$
$$
-\varepsilon_{1}+\varepsilon_{2}-\varepsilon_{3}-\varepsilon_{4}-
\varepsilon_{5}+\varepsilon_{6}-\varepsilon_{7}+\varepsilon_{8}
+\varepsilon_{9}-\varepsilon_{10}+\varepsilon_{11}+\varepsilon_{12},
$$
$$
-\varepsilon_{1}+\varepsilon_{2}+\varepsilon_{3}-\varepsilon_{4}-\varepsilon_{5}-
\varepsilon_{6}+\varepsilon_{7}-\varepsilon_{8}
+\varepsilon_{9}+\varepsilon_{10}-\varepsilon_{11}+\varepsilon_{12},
$$
$$
-\varepsilon_{1}+\varepsilon_{2}+\varepsilon_{3}+\varepsilon_{4}-
\varepsilon_{5}-\varepsilon_{6}-\varepsilon_{7}+\varepsilon_{8}
-\varepsilon_{9}+\varepsilon_{10}+\varepsilon_{11}-\varepsilon_{12},
$$
$$
-\varepsilon_{1}-\varepsilon_{2}+\varepsilon_{3}+\varepsilon_{4}+
\varepsilon_{5}-\varepsilon_{6}-\varepsilon_{7}-\varepsilon_{8}
+\varepsilon_{9}-\varepsilon_{10}+\varepsilon_{11}+\varepsilon_{12},
$$
$$
-\varepsilon_{1}+\varepsilon_{2}-\varepsilon_{3}+\varepsilon_{4}+
\varepsilon_{5}+\varepsilon_{6}-\varepsilon_{7}-\varepsilon_{8}
-\varepsilon_{9}+\varepsilon_{10}-\varepsilon_{11}+\varepsilon_{12}]\,
$$
(see \cite[Ch. 16]{CS}) where $\varepsilon_{k}=\varepsilon_{1,k}$, 
$k=1,2,\dots, 24$. Equivalently,
$$
N=N_{22}=N(12A_2)=[12A_2,\
\varepsilon_{1}+\varepsilon_{7}-\varepsilon_{9}+\varepsilon_{10}-
\varepsilon_{11}-\varepsilon_{12},\
$$
$$
\varepsilon_{2}+\varepsilon_{7}-\varepsilon_{8}-\varepsilon_{9}-
\varepsilon_{10}+\varepsilon_{11},\ 
\varepsilon_{3}+\varepsilon_{8}-\varepsilon_{9}-\varepsilon_{10}-
\varepsilon_{11}+\varepsilon_{12},\
\varepsilon_{4}-\varepsilon_{7}+\varepsilon_{8}-\varepsilon_{9}+
\varepsilon_{11}-\varepsilon_{12},\
$$
$$
\varepsilon_{5}+\varepsilon_{7}+\varepsilon_{8}+\varepsilon_{10}+
\varepsilon_{11}+\varepsilon_{12},\
\varepsilon_{6}-\varepsilon_{7}-\varepsilon_{8}-\varepsilon_{9}+
\varepsilon_{10}+\varepsilon_{12}]\
$$
for the reduced basis of the cord group.
The group $A=A(N_{22})$ consists of the cyclic group
$[\varphi_0]$ of order $2$ where
$$
\varphi_0=(\alpha_{1,1}\alpha_{2,1})
(\alpha_{1,2}\alpha_{2,2})\cdots (\alpha_{1,12}\alpha_{2,12})
$$
gives non-trivial
involutions on all $12$ components $12\aaa_{2}$
and $A/[\varphi_0]=M_{12}$ is the Mathieu
group $M_{12}$ on $12$ components $12\aaa_{2}$.
We have:
$$
M_{12}=[\varphi_1=(1)(2,3,4,5,6,7,8,9,10,11,12),\
$$
$$
\varphi_2=(1)(2)(3)(10)(4,8,12,9)(5,11,6,7),\ \
\varphi_3=(1,2)(3,12)(4,7)(5,9)(6,10)(8,11)].
$$
where $1,\dots, 12$ numerate components of $12\aaa_2$. Possible lifts
of the generators $\varphi_1,\varphi_2,\varphi_3$ of $M_{12}$
to elements of $A(N_{22})$ are respectively
$$
\widetilde{\varphi}_1=
(\alpha_{1,1})(\alpha_{2,1})(\alpha_{1,2}\alpha_{1,3}\dots
\alpha_{1,11}\alpha_{1,12})
(\alpha_{2,2}\alpha_{2,3}\dots\alpha_{1,11}\alpha_{1,12}),
$$
$$
\widetilde{\varphi}_2=(\alpha_{1,1})(\alpha_{2,1})(\alpha_{1,2}\alpha_{2,2})
(\alpha_{1,3}\alpha_{2,3})(\alpha_{1,10})(\alpha_{2,10})
(\alpha_{1,4}\alpha_{1,8}\alpha_{1,12}\alpha_{1,9})
$$
$$
(\alpha_{2,4}\alpha_{2,8}\alpha_{2,12}\alpha_{2,9})
(\alpha_{1,5}\alpha_{2,11}\alpha_{1,6}\alpha_{2,7})
(\alpha_{2,5}\alpha_{1,11}\alpha_{2,6}\alpha_{1,7}),
$$
$$
\widetilde{\varphi}_3=(\alpha_{1,1}\alpha_{1,2}\alpha_{2,1}\alpha_{2,2})
(\alpha_{1,3}\alpha_{2,12}\alpha_{2,3}\alpha_{1,12})
(\alpha_{1,4}\alpha_{1,7}\alpha_{2,4}\alpha_{2,7})
$$
$$
(\alpha_{1,5}\alpha_{2,9}\alpha_{2,5}\alpha_{1,9})
(\alpha_{1,6}\alpha_{2,10}\alpha_{2,6}\alpha_{1,10})
(\alpha_{1,8}\alpha_{1,11}\alpha_{2,8}\alpha_{2,11}).
$$
Thus, $A(M_{22})=[\varphi_0,\widetilde{\varphi}_{1},\widetilde{\varphi}_2,
\widetilde{\varphi}_3]$.

Using GAP Progam \cite{GAP}, we obtain the following classification.

\newpage

\centerline {\bf Classification of KahK3 conjugacy classes for $A(N_{22})$.}

\vskip1cm

{\bf n=79,} $H\cong {\frak A}_{6}$ ($|H|=360$, $i=118$):
$\rk N_H=19$ and $(N_H)^\ast/N_H\cong \bz/60\bz\times \bz/3\bz$.
$$
H_{79,1}=
$$
$$
[(\alpha_{1,1}\alpha_{1,3}\alpha_{1,7}\alpha_{1,8}\alpha_{1,11})
(\alpha_{2,1}\alpha_{2,3}\alpha_{2,7}\alpha_{2,8}\alpha_{2,11})
(\alpha_{1,4}\alpha_{1,12}\alpha_{1,5}\alpha_{1,6}\alpha_{1,10})
(\alpha_{2,4}\alpha_{2,12}\alpha_{2,5}\alpha_{2,6}\alpha_{2,10}),\
$$
$$
(\alpha_{1,1}\alpha_{1,3}\alpha_{2,11}\alpha_{2,10}\alpha_{2,7})
(\alpha_{2,1}\alpha_{2,3}\alpha_{1,11}\alpha_{1,10}\alpha_{1,7})
(\alpha_{1,4}\alpha_{2,5}\alpha_{2,12}\alpha_{1,6}\alpha_{2,8})
(\alpha_{2,4}\alpha_{1,5}\alpha_{1,12}\alpha_{2,6}\alpha_{1,8})].
$$
with orbits (here and in what follows we show orbits with more than one elements only)
$
\{\alpha_{1,1},\alpha_{1,3},\alpha_{1,7},\alpha_{2,11},\alpha_{1,8},
\alpha_{2,1},\alpha_{2,10},\alpha_{1,11},\alpha_{2,4},
\alpha_{2,3},\alpha_{2,7},\alpha_{1,10},\alpha_{2,12},
\alpha_{1,5},\alpha_{2,8},\alpha_{1,4},
\newline
\alpha_{2,5},\alpha_{1,6},\alpha_{1,12},\alpha_{2,6}\}
$.

\medskip

{\bf n=70,} $H\cong {\frak S}_5$ ($|H|=120$, $i=34$):
$\rk N_H=19$ and $(N_H)^\ast/N_H\cong \bz/60\bz \times \bz/5\bz$.
$$
H_{70,1}=
$$
$$
[(\alpha_{1,2}\alpha_{2,3}\alpha_{1,4}\alpha_{1,5}\alpha_{2,11})
(\alpha_{2,2}\alpha_{1,3}\alpha_{2,4}\alpha_{2,5}\alpha_{1,11})
(\alpha_{1,6}\alpha_{2,8}\alpha_{2,7}\alpha_{1,10}\alpha_{1,12})
(\alpha_{2,6}\alpha_{1,8}\alpha_{1,7}\alpha_{2,10}\alpha_{2,12}),
$$
$$
(\alpha_{1,3}\alpha_{2,4})(\alpha_{2,3}\alpha_{1,4})
(\alpha_{1,6}\alpha_{2,12})(\alpha_{2,6}\alpha_{1,12})
(\alpha_{1,7}\alpha_{2,8})(\alpha_{2,7}\alpha_{1,8})
(\alpha_{1,9}\alpha_{1,10})(\alpha_{2,9}\alpha_{2,10})]
$$
with orbits
$
\{\alpha_{1,2},\alpha_{2,3},\alpha_{1,4},\alpha_{1,5},\alpha_{2,11}\},
\{\alpha_{2,2},\alpha_{1,3},\alpha_{2,4},\alpha_{2,5},\alpha_{1,11}\},
\newline
\{\alpha_{1,6},\alpha_{2,8},\alpha_{2,12},\alpha_{2,7},\alpha_{1,7},\alpha_{2,6},
\alpha_{1,10},\alpha_{1,8},\alpha_{2,10},\alpha_{1,12},\alpha_{1,9},\alpha_{2,9}\}
$.

\medskip

{\bf n=63,} $H\cong M_9$ ($|H|=72$, $i=41$):
$\rk N_H=19$ and
$(N_H)^\ast/N_H\cong \bz/18\bz \times \bz/6\bz\times \bz/2\bz$.
$$
H_{63,1}=
$$
$$
[(\alpha_{1,3}\alpha_{2,3})
(\alpha_{1,4}\alpha_{2,9}\alpha_{1,5}\alpha_{2,12})
(\alpha_{2,4}\alpha_{1,9}\alpha_{2,5}\alpha_{1,12})
(\alpha_{1,6}\alpha_{1,7}\alpha_{1,8}\alpha_{1,10})
(\alpha_{2,6}\alpha_{2,7}\alpha_{2,8}\alpha_{2,10})
(\alpha_{1,11}\alpha_{2,11}),
$$
$$
(\alpha_{1,1}\alpha_{2,1})
(\alpha_{1,3}\alpha_{2,10}\alpha_{2,4}\alpha_{2,9})
(\alpha_{2,3}\alpha_{1,10}\alpha_{1,4}\alpha_{1,9})
(\alpha_{1,6}\alpha_{1,8}\alpha_{2,12}\alpha_{2,7})
(\alpha_{2,6}\alpha_{2,8}\alpha_{1,12}\alpha_{1,7})
(\alpha_{1,11}\alpha_{2,11})]
$$
with orbits
$
\{\alpha_{1,1},\alpha_{2,1}\},
\{\alpha_{1,3},\alpha_{2,3},\alpha_{2,10},\alpha_{1,10},\alpha_{2,6},
\alpha_{2,4},\alpha_{1,6},\alpha_{1,4},\alpha_{2,7},\alpha_{2,8},\alpha_{1,9},\alpha_{2,9},
\newline
\alpha_{1,7},\alpha_{1,8},\alpha_{1,12},\alpha_{2,5},\alpha_{1,5},\alpha_{2,12}\},
\{\alpha_{1,11},\alpha_{2,11}\}
$;
$$
H_{63,2}=
$$
$$
[(\alpha_{1,2}\alpha_{2,2})
(\alpha_{1,3}\alpha_{2,4}\alpha_{1,8}\alpha_{2,10})
(\alpha_{2,3}\alpha_{1,4}\alpha_{2,8}\alpha_{1,10})
(\alpha_{1,5}\alpha_{1,9}\alpha_{1,12}\alpha_{1,7})
(\alpha_{2,5}\alpha_{2,9}\alpha_{2,12}\alpha_{2,7})
(\alpha_{1,11}\alpha_{2,11}),
$$
$$
(\alpha_{1,1}\alpha_{2,1})
(\alpha_{1,3}\alpha_{2,10}\alpha_{2,4}\alpha_{2,9})
(\alpha_{2,3}\alpha_{1,10}\alpha_{1,4}\alpha_{1,9})
(\alpha_{1,6}\alpha_{1,8}\alpha_{2,12}\alpha_{2,7})
(\alpha_{2,6}\alpha_{2,8}\alpha_{1,12}\alpha_{1,7})
(\alpha_{1,11}\alpha_{2,11})]
$$
with orbits
$
\{\alpha_{1,1},\alpha_{2,1}\},
\{\alpha_{1,2},\alpha_{2,2}\},
\{\alpha_{1,3},\alpha_{2,4},\alpha_{2,10},\alpha_{1,8},\alpha_{2,9},
\alpha_{2,12},\alpha_{2,7},\alpha_{2,5},\alpha_{1,6}\},
\newline
\{\alpha_{2,3},\alpha_{1,4},\alpha_{1,10},\alpha_{2,8},\alpha_{1,9},
\alpha_{1,12},\alpha_{1,7},\alpha_{1,5},\alpha_{2,6}\},
\{\alpha_{1,11},\alpha_{2,11}\}
$.

\medskip

{\bf n=62,} $H\cong N_{72}$ ($|H|=72$, $i=40$):
$\rk N_H=19$,
$(N_H)^\ast/N_H\cong \bz/36\bz \times (\bz/3\bz)^2$ and
$\det(K((q_{N_H})_3))\equiv - 2^2\cdot 3^4\mod (\bz_3^\ast)^2$.
$$
H_{62,1}=
[(\alpha_{1,3}\alpha_{2,12})(\alpha_{2,3}\alpha_{1,12})
(\alpha_{1,4}\alpha_{2,5})(\alpha_{2,4}\alpha_{1,5})
(\alpha_{1,8}\alpha_{1,10})(\alpha_{2,8}\alpha_{2,10})
(\alpha_{1,9}\alpha_{2,11})(\alpha_{2,9}\alpha_{1,11}),
$$
$$
(\alpha_{1,1}\alpha_{1,2}\alpha_{1,3}\alpha_{1,5}\alpha_{2,4}\alpha_{2,11})
(\alpha_{2,1}\alpha_{2,2}\alpha_{2,3}\alpha_{2,5}\alpha_{1,4}\alpha_{1,11})
(\alpha_{1,7}\alpha_{2,12}\alpha_{1,9})
$$
$$
(\alpha_{2,7}\alpha_{1,12}\alpha_{2,9})
(\alpha_{1,8}\alpha_{2,10})(\alpha_{2,8}\alpha_{1,10})]
$$
with orbits
$
\{\alpha_{1,1},\alpha_{1,2},\alpha_{1,3},\alpha_{2,12},\alpha_{1,5},
\alpha_{1,9},\alpha_{2,4},\alpha_{2,11},\alpha_{1,7}\},
\{\alpha_{2,1},\alpha_{2,2},\alpha_{2,3},\alpha_{1,12},\alpha_{2,5},
\newline
\alpha_{2,9},\alpha_{1,4},\alpha_{1,11},\alpha_{2,7}\},
\{\alpha_{1,8},\alpha_{1,10},\alpha_{2,10},\alpha_{2,8}\}
$.

\medskip

{\bf n=55,} $H\cong {\frak A}_5$ ($|H|=60$, $i=5$):
$\rk N_H=18$ and $(N_H)^\ast/N_H \cong\bz/30\bz\times \bz/10\bz$.
$$
H_{55,1}=
$$
$$
[(\alpha_{1,2}\alpha_{2,3}\alpha_{2,11}\alpha_{1,4}\alpha_{1,5})
(\alpha_{2,2}\alpha_{1,3}\alpha_{1,11}\alpha_{2,4}\alpha_{2,5})
(\alpha_{1,6}\alpha_{2,7}\alpha_{2,9}\alpha_{1,12}\alpha_{2,8})
(\alpha_{2,6}\alpha_{1,7}\alpha_{1,9}\alpha_{2,12}\alpha_{1,8}),
$$
$$
(\alpha_{1,1}\alpha_{1,3}\alpha_{2,4})
(\alpha_{2,1}\alpha_{2,3}\alpha_{1,4})
(\alpha_{1,2}\alpha_{1,5}\alpha_{2,11})
(\alpha_{2,2}\alpha_{2,5}\alpha_{1,11})
(\alpha_{1,7}\alpha_{1,9}\alpha_{2,12})
(\alpha_{2,7}\alpha_{2,9}\alpha_{1,12})]
$$
with orbits
$
\{\alpha_{1,1},\alpha_{1,3},\alpha_{1,11},\alpha_{2,4},\alpha_{2,2},\alpha_{2,5}\},
\{\alpha_{2,1},\alpha_{2,3},\alpha_{2,11},\alpha_{1,4},\alpha_{1,2},\alpha_{1,5}\},
\newline
\{\alpha_{1,6},\alpha_{2,7},\alpha_{2,9},\alpha_{1,12},\alpha_{2,8}\},
\{\alpha_{2,6},\alpha_{1,7},\alpha_{1,9},\alpha_{2,12},\alpha_{1,8}\}
$;
$$
H_{55,2}=
$$
$$
[(\alpha_{1,1}\alpha_{1,2}\alpha_{2,4}\alpha_{2,7}\alpha_{1,5})
(\alpha_{2,1}\alpha_{2,2}\alpha_{1,4}\alpha_{1,7}\alpha_{2,5})
(\alpha_{1,3}\alpha_{2,9}\alpha_{1,12}\alpha_{2,11}\alpha_{2,10})
(\alpha_{2,3}\alpha_{1,9}\alpha_{2,12}\alpha_{1,11}\alpha_{1,10}),
$$
$$
(\alpha_{1,1}\alpha_{1,3}\alpha_{2,4})
(\alpha_{2,1}\alpha_{2,3}\alpha_{1,4})
(\alpha_{1,2}\alpha_{1,5}\alpha_{2,11})
(\alpha_{2,2}\alpha_{2,5}\alpha_{1,11})
(\alpha_{1,7}\alpha_{1,9}\alpha_{2,12})
(\alpha_{2,7}\alpha_{2,9}\alpha_{1,12})]
$$
with orbits
$
\{\alpha_{1,1},\alpha_{1,2},\alpha_{1,3},\alpha_{2,4},\alpha_{1,5},
\alpha_{2,9},\alpha_{2,7},\alpha_{2,11},\alpha_{1,12},\alpha_{2,10}\},
\newline
\{\alpha_{2,1},\alpha_{2,2},\alpha_{2,3},\alpha_{1,4},\alpha_{2,5},
\alpha_{1,9},\alpha_{1,7},\alpha_{1,11},\alpha_{2,12},\alpha_{1,10}\}
$.

\medskip

{\bf n=54,} $H\cong T_{48}$ ($|H|=48$, $i=29$):
$\rk N_H=19$ and $(N_H)^\ast/N_H\cong
\bz/24\bz\times \bz/8\bz\times \bz/2\bz$.
$$
H_{54,1}=[
(\alpha_{1,2}\alpha_{1,11})(\alpha_{2,2}\alpha_{2,11})
(\alpha_{1,3}\alpha_{1,6})(\alpha_{2,3}\alpha_{2,6})
(\alpha_{1,4}\alpha_{1,12})(\alpha_{2,4}\alpha_{2,12})
(\alpha_{1,9}\alpha_{1,10})(\alpha_{2,9}\alpha_{2,10}),
$$
$$
(\alpha_{1,1}\alpha_{2,11}\alpha_{2,1}\alpha_{1,11})
(\alpha_{1,2}\alpha_{2,2})
(\alpha_{1,3}\alpha_{1,6}\alpha_{2,10}\alpha_{1,8}
\alpha_{2,4}\alpha_{2,12}\alpha_{2,9}\alpha_{2,7})
$$
$$
(\alpha_{2,3}\alpha_{2,6}\alpha_{1,10}\alpha_{2,8}
\alpha_{1,4}\alpha_{1,12}\alpha_{1,9}\alpha_{1,7})]
$$
with orbits
$
\{\alpha_{1,1},\alpha_{2,11},\alpha_{2,2},\alpha_{2,1},\alpha_{1,2},\alpha_{1,11}\},
\{\alpha_{1,3},\alpha_{1,6},\alpha_{2,10},\alpha_{2,9},
\alpha_{1,8},\alpha_{2,7},\alpha_{2,4},\alpha_{2,12}\}
\newline
\{\alpha_{2,3},\alpha_{2,6},\alpha_{1,10},\alpha_{1,9},
\alpha_{2,8},\alpha_{1,7},\alpha_{1,4},\alpha_{1,12}\}
$.

\medskip

{\bf n=48,} $H\cong {\frak S}_{3,3}$ ($|H|=36$, $i=10$):
$\rk N_H=18$, $(N_H)^\ast/N_H \cong
\bz/18\bz\times
\bz/6\bz\times (\bz/3\bz)^2$ and
$\det(K((q_{N_H})_3))\equiv -2^2\cdot 3^5\mod (\bz_3^\ast)^2$.
$$
H_{48,1}=
(\alpha_{1,2}\alpha_{1,7})(\alpha_{2,2}\alpha_{2,7})
(\alpha_{1,3}\alpha_{2,4})(\alpha_{2,3}\alpha_{1,4})
(\alpha_{1,5}\alpha_{2,12})(\alpha_{2,5}\alpha_{1,12})
(\alpha_{1,9}\alpha_{2,11})(\alpha_{2,9}\alpha_{1,11}),
$$
$$
(\alpha_{1,1}\alpha_{1,2}\alpha_{1,3}\alpha_{1,5}\alpha_{2,4}\alpha_{2,11})
(\alpha_{2,1}\alpha_{2,2}\alpha_{2,3}\alpha_{2,5}\alpha_{1,4}\alpha_{1,11})
(\alpha_{1,7}\alpha_{2,12}\alpha_{1,9})
(\alpha_{2,7}\alpha_{1,12}\alpha_{2,9})
$$
$$
(\alpha_{1,8}\alpha_{2,10})
(\alpha_{2,8}\alpha_{1,10})]
$$
with orbits
$
\{\alpha_{1,1},\alpha_{1,2},\alpha_{1,7},\alpha_{1,3},\alpha_{2,12},
\alpha_{2,4},\alpha_{1,5},\alpha_{1,9},\alpha_{2,11}\},
\newline
\{\alpha_{2,1},\alpha_{2,2},\alpha_{2,7},\alpha_{2,3},\alpha_{1,12},
\alpha_{1,4},\alpha_{2,5},\alpha_{2,9},\alpha_{1,11}\},
\{\alpha_{1,8},\alpha_{2,10}\},\{\alpha_{2,8},\alpha_{1,10}\}
$;
$$
H_{48,2}=[
(\alpha_{1,3}\alpha_{2,4})(\alpha_{2,3}\alpha_{1,4})
(\alpha_{1,5}\alpha_{2,11})(\alpha_{2,5}\alpha_{1,11})
(\alpha_{1,6}\alpha_{1,10})(\alpha_{2,6}\alpha_{2,10})
(\alpha_{1,9}\alpha_{2,12})(\alpha_{2,9}\alpha_{1,12}),
$$
$$
(\alpha_{1,1}\alpha_{1,2}\alpha_{1,3}\alpha_{1,5}\alpha_{2,4}\alpha_{2,11})
(\alpha_{2,1}\alpha_{2,2}\alpha_{2,3}\alpha_{2,5}\alpha_{1,4}\alpha_{1,11})
(\alpha_{1,7}\alpha_{2,12}\alpha_{1,9})
(\alpha_{2,7}\alpha_{1,12}\alpha_{2,9})
$$
$$
(\alpha_{1,8}\alpha_{2,10})
(\alpha_{2,8}\alpha_{1,10})]
$$
with orbits
$
\{\alpha_{1,1},\alpha_{1,2},\alpha_{1,3},\alpha_{2,4},\alpha_{1,5},\alpha_{2,11}\},
\{\alpha_{2,1},\alpha_{2,2},\alpha_{2,3},\alpha_{1,4},\alpha_{2,5},\alpha_{1,11}\},
\{\alpha_{1,6},\alpha_{1,10},\alpha_{2,8}\},
\newline
\{\alpha_{2,6},\alpha_{2,10},\alpha_{1,8}\},
\{\alpha_{1,7},\alpha_{2,12},\alpha_{1,9}\},
\{\alpha_{2,7},\alpha_{1,12},\alpha_{2,9}\}
$.

\medskip

{\bf n=46,} $H\cong 3^2 C_4$ ($|H|=36$, $i=9$):
$\rk N_H=18$ and $(N_H)^\ast/N_H\cong \bz/18\bz\times \bz/6\bz\times \bz/3\bz$.
$$
H_{46,1}=
$$
$$
[(\alpha_{1,2}\alpha_{1,3}\alpha_{1,7}\alpha_{2,4})
(\alpha_{2,2}\alpha_{2,3}\alpha_{2,7}\alpha_{1,4})
(\alpha_{1,5}\alpha_{1,9}\alpha_{2,12}\alpha_{2,11})
(\alpha_{2,5}\alpha_{2,9}\alpha_{1,12}\alpha_{1,11})
(\alpha_{1,6}\alpha_{2,6})(\alpha_{1,10}\alpha_{2,10}),
$$
$$
(\alpha_{1,1}\alpha_{1,3}\alpha_{2,4})
(\alpha_{2,1}\alpha_{2,3}\alpha_{1,4})
(\alpha_{1,2}\alpha_{1,5}\alpha_{2,11})
(\alpha_{2,2}\alpha_{2,5}\alpha_{1,11})
(\alpha_{1,7}\alpha_{1,9}\alpha_{2,12})
(\alpha_{2,7}\alpha_{2,9}\alpha_{1,12})]
$$
with orbits
$
\{\alpha_{1,1},\alpha_{1,3},\alpha_{1,7},\alpha_{2,4},
\alpha_{1,9},\alpha_{1,2},\alpha_{2,12},\alpha_{1,5},\alpha_{2,11}\},
\{\alpha_{2,1},\alpha_{2,3},\alpha_{2,7},\alpha_{1,4},\alpha_{2,9},
\newline
\alpha_{2,2},\alpha_{1,12},\alpha_{2,5},\alpha_{1,11}\},
\{\alpha_{1,6},\alpha_{2,6}\},\{\alpha_{1,10},\alpha_{2,10}\}
$;
$$
H_{46,2}=
$$
$$
[(\alpha_{1,1}\alpha_{2,1})
(\alpha_{1,2}\alpha_{2,3}\alpha_{1,7}\alpha_{1,4})
(\alpha_{2,2}\alpha_{1,3}\alpha_{2,7}\alpha_{2,4})
(\alpha_{1,5}\alpha_{2,9}\alpha_{2,12}\alpha_{1,11})
(\alpha_{2,5}\alpha_{1,9}\alpha_{1,12}\alpha_{1,11})
(\alpha_{1,8}\alpha_{2,8}),
$$
$$
(\alpha_{1,1}\alpha_{1,3}\alpha_{2,4})
(\alpha_{2,1}\alpha_{2,3}\alpha_{1,4})
(\alpha_{1,2}\alpha_{1,5}\alpha_{2,11})
(\alpha_{2,2}\alpha_{2,5}\alpha_{1,11})
(\alpha_{1,7}\alpha_{1,9}\alpha_{2,12})
(\alpha_{2,7}\alpha_{2,9}\alpha_{1,12})]
$$
with orbits
$
\{\alpha_{1,1},\alpha_{2,1},\alpha_{1,3},\alpha_{2,3},
\alpha_{2,7},\alpha_{2,4},\alpha_{1,7},\alpha_{1,4},\alpha_{2,9},
\alpha_{2,2},\alpha_{1,9},\alpha_{1,2},\alpha_{2,12},\alpha_{1,12},
\alpha_{2,5},\alpha_{1,5},
\newline
\alpha_{1,11},\alpha_{2,11}\},
\{\alpha_{1,8},\alpha_{2,8}\}
$.

\medskip

{\bf n=38,} $H\cong T_{24}$ ($|H|=24$, $i=3$):
$$
H_{38,1}=[
(\alpha_{1,1}\alpha_{1,2}\alpha_{2,11})
(\alpha_{2,1}\alpha_{2,2}\alpha_{1,11})
(\alpha_{1,3}\alpha_{2,7}\alpha_{2,10})
(\alpha_{2,3}\alpha_{1,7}\alpha_{1,10})
(\alpha_{1,4}\alpha_{2,8}\alpha_{1,9})
(\alpha_{2,4}\alpha_{1,8}\alpha_{2,9}),
$$
$$
(\alpha_{1,2}\alpha_{2,2})
(\alpha_{1,3}\alpha_{1,6}\alpha_{2,4}\alpha_{2,12})
(\alpha_{2,3}\alpha_{2,6}\alpha_{1,4}\alpha_{1,12})
(\alpha_{1,7}\alpha_{1,9}\alpha_{2,8}\alpha_{1,10})
(\alpha_{2,7}\alpha_{2,9}\alpha_{1,8}\alpha_{2,10})
(\alpha_{1,11}\alpha_{2,11})]
$$
with $Clos(H_{38,1})=H_{54,1}$.

\medskip

{\bf n=34,} $H\cong {\frak S}_4$ ($|H|=24$, $i=12$):
$\rk N_H=17$ and $(N_H)^\ast/N_H\cong (\bz/12\bz)^2\times \bz/4\bz$.
$$
H_{34,1}=[
(\alpha_{1,2}\alpha_{2,5})(\alpha_{2,2}\alpha_{1,5})
(\alpha_{1,4}\alpha_{2,8})(\alpha_{2,4}\alpha_{1,8})
(\alpha_{1,7}\alpha_{2,9})(\alpha_{2,7}\alpha_{1,9})
(\alpha_{1,11}\alpha_{1,12})(\alpha_{2,11}\alpha_{2,12}),
$$
$$
(\alpha_{1,1}\alpha_{1,3}\alpha_{2,4})
(\alpha_{2,1}\alpha_{2,3}\alpha_{1,4})
(\alpha_{1,2}\alpha_{1,5}\alpha_{2,11})
(\alpha_{2,2}\alpha_{2,5}\alpha_{1,11})
(\alpha_{1,7}\alpha_{1,9}\alpha_{2,12})
(\alpha_{2,7}\alpha_{2,9}\alpha_{1,12})]
$$
with orbits
$
\{\alpha_{1,1},\alpha_{1,3},\alpha_{2,4},\alpha_{1,8}\},
\{\alpha_{2,1},\alpha_{2,3},\alpha_{1,4},\alpha_{2,8}\},
\newline
\{\alpha_{1,2},\alpha_{2,5},\alpha_{1,5},\alpha_{1,11},\alpha_{2,2},\alpha_{2,11},
\alpha_{1,12},\alpha_{2,12},\alpha_{2,7},\alpha_{1,7},\alpha_{1,9},\alpha_{2,9}\}
$.

\medskip

{\bf n=32,} $H\cong Hol(C_5)$ ($|H|=20$, $i=3$):
$\rk N_H=18$ and $(N_H)^\ast/N_H\cong (\bz/10\bz)^2\times \bz/5\bz$.
$$
H_{32,1}=[
(\alpha_{1,4}\alpha_{1,9})(\alpha_{2,4}\alpha_{2,9})
(\alpha_{1,5}\alpha_{2,10})(\alpha_{2,5}\alpha_{1,10})
(\alpha_{1,7}\alpha_{2,12})(\alpha_{2,7}\alpha_{1,12})
(\alpha_{1,8}\alpha_{2,11})(\alpha_{2,8}\alpha_{1,11})
$$
$$
(\alpha_{1,1}\alpha_{2,1})
(\alpha_{1,3}\alpha_{2,10}\alpha_{2,4}\alpha_{2,9})
(\alpha_{2,3}\alpha_{1,10}\alpha_{1,4}\alpha_{1,9})
(\alpha_{1,6}\alpha_{1,8}\alpha_{2,12}\alpha_{2,7})
(\alpha_{2,6}\alpha_{2,8}\alpha_{1,12}\alpha_{1,7})
(\alpha_{1,11}\alpha_{2,11})]
$$
with orbits
$
\{\alpha_{1,1},\alpha_{2,1}\},
\{\alpha_{1,3},\alpha_{2,10},\alpha_{1,5},\alpha_{2,4},\alpha_{2,9}\},
\{\alpha_{2,3},\alpha_{1,10},\alpha_{2,5},\alpha_{1,4},\alpha_{1,9}\},
\newline
\{\alpha_{1,6},\alpha_{1,8},\alpha_{2,11},\alpha_{2,12},\alpha_{1,11},
\alpha_{1,7},\alpha_{2,7},\alpha_{2,8},\alpha_{2,6},\alpha_{1,12}\}
$.

\medskip

{\bf n=31,} $H\cong C_3\times D_6$, ($|H|=18$, $i=3$):
$$
H_{31,1}=[
(\alpha_{1,1}\alpha_{1,3}\alpha_{2,4})
(\alpha_{2,1}\alpha_{2,3}\alpha_{1,4})
(\alpha_{1,2}\alpha_{1,5}\alpha_{2,11})
(\alpha_{2,2}\alpha_{2,5}\alpha_{1,11})
(\alpha_{1,7}\alpha_{1,9}\alpha_{2,12})
(\alpha_{2,7}\alpha_{2,9}\alpha_{1,12}),
$$
$$
(\alpha_{1,1}\alpha_{1,2}\alpha_{2,12}\alpha_{2,4}\alpha_{1,5}\alpha_{1,9})
(\alpha_{2,1}\alpha_{2,2}\alpha_{1,12}\alpha_{1,4}\alpha_{2,5}\alpha_{2,9})
(\alpha_{1,3}\alpha_{2,11}\alpha_{1,7})
(\alpha_{2,3}\alpha_{1,11}\alpha_{2,7})
$$
$$
(\alpha_{1,8}\alpha_{2,10})
(\alpha_{2,8}\alpha_{1,10})]
$$
with $Clos(H_{31,1})=H_{48,1}$ above;
$$
H_{31,2}=[
(\alpha_{1,1}\alpha_{1,3}\alpha_{2,4})
(\alpha_{2,1}\alpha_{2,3}\alpha_{1,4})
(\alpha_{1,2}\alpha_{1,5}\alpha_{2,11})
(\alpha_{2,2}\alpha_{2,5}\alpha_{1,11})
(\alpha_{1,7}\alpha_{1,9}\alpha_{2,12})
(\alpha_{2,7}\alpha_{2,9}\alpha_{1,12}),
$$
$$
(\alpha_{1,1}\alpha_{1,2}\alpha_{1,3}\alpha_{2,11}\alpha_{2,4}\alpha_{1,5})
(\alpha_{2,1}\alpha_{2,2}\alpha_{2,3}\alpha_{1,11}\alpha_{1,4}\alpha_{2,5})
(\alpha_{1,6}\alpha_{2,8}\alpha_{1,10})
(\alpha_{2,6}\alpha_{1,8}\alpha_{2,10})
$$
$$
(\alpha_{1,7}\alpha_{2,12})
(\alpha_{2,7}\alpha_{1,12})]
$$
with $Clos(H_{31,2})=H_{48,2}$ above.

\medskip

{\bf n=30,} $H\cong {\frak A}_{3,3}$ ($|H|=18$, $i=4$):
$\rk N_H=16$ and $(N_H)^\ast/N_H\cong \bz/9\bz\times (\bz/3\bz)^4$.
$$
H_{30,1}=[
(\alpha_{1,2}\alpha_{1,7})(\alpha_{2,2}\alpha_{2,7})
(\alpha_{1,3}\alpha_{2,4})(\alpha_{2,3}\alpha_{1,4})
(\alpha_{1,5}\alpha_{2,12})(\alpha_{2,5}\alpha_{1,12})
(\alpha_{1,9}\alpha_{2,11})(\alpha_{2,9}\alpha_{1,11}),
$$
$$
(\alpha_{1,1}\alpha_{1,2}\alpha_{1,7})(\alpha_{2,1}\alpha_{2,2}\alpha_{2,7})
(\alpha_{1,3}\alpha_{1,5}\alpha_{1,9})(\alpha_{2,3}\alpha_{2,5}\alpha_{2,9})
(\alpha_{1,4}\alpha_{1,11}\alpha_{1,12})(\alpha_{2,4}\alpha_{2,11}\alpha_{2,12}),
$$
$$
(\alpha_{1,1}\alpha_{1,3}\alpha_{2,4})(\alpha_{2,1}\alpha_{2,3}\alpha_{1,4})
(\alpha_{1,2}\alpha_{1,5}\alpha_{2,11})(\alpha_{2,2}\alpha_{2,5}\alpha_{1,11})
(\alpha_{1,7}\alpha_{1,9}\alpha_{2,12})(\alpha_{2,7}\alpha_{2,9}\alpha_{1,12})]
$$
with orbits
$
\{\alpha_{1,1},\alpha_{1,2},\alpha_{1,3},\alpha_{1,7},\alpha_{1,5},
\alpha_{2,4},\alpha_{1,9},\alpha_{2,12},\alpha_{2,11}\},
\{\alpha_{2,1},\alpha_{2,2},\alpha_{2,3},\alpha_{2,7},\alpha_{2,5},
\alpha_{1,4},\alpha_{2,9},
\newline
\alpha_{1,12},\alpha_{1,11}\}
$;
$$
H_{30,2}=[
(\alpha_{1,3}\alpha_{2,4})(\alpha_{2,3}\alpha_{1,4})
(\alpha_{1,5}\alpha_{2,11})(\alpha_{2,5}\alpha_{1,11})
(\alpha_{1,6}\alpha_{1,10})(\alpha_{2,6}\alpha_{2,10})
(\alpha_{1,9}\alpha_{2,12})(\alpha_{2,9}\alpha_{1,12}),
$$
$$
(\alpha_{1,2}\alpha_{1,5}\alpha_{2,11})(\alpha_{2,2}\alpha_{2,5}\alpha_{1,11})
(\alpha_{1,6}\alpha_{1,10}\alpha_{2,8})(\alpha_{2,6}\alpha_{2,10}\alpha_{1,8})
(\alpha_{1,7}\alpha_{2,12}\alpha_{1,9})(\alpha_{2,7}\alpha_{1,12}\alpha_{2,9}),
$$
$$
(\alpha_{1,1}\alpha_{1,3}\alpha_{2,4})(\alpha_{2,1}\alpha_{2,3}\alpha_{1,4})
(\alpha_{1,2}\alpha_{1,5}\alpha_{2,11})(\alpha_{2,2}\alpha_{2,5}\alpha_{1,11})
(\alpha_{1,7}\alpha_{1,9}\alpha_{2,12})(\alpha_{2,7}\alpha_{2,9}\alpha_{1,12})]
$$
with orbits
$
\{\alpha_{1,1},\alpha_{1,3},\alpha_{2,4}\},\{\alpha_{2,1},\alpha_{2,3},\alpha_{1,4}\},
\{\alpha_{1,2},\alpha_{1,5},\alpha_{2,11}\},\{\alpha_{2,2},\alpha_{2,5},\alpha_{1,11}\},
\newline 
\{\alpha_{1,6},\alpha_{1,10},\alpha_{2,8}\},
\{\alpha_{2,6},\alpha_{2,10},\alpha_{1,8}\},
\{\alpha_{1,7},\alpha_{2,12},\alpha_{1,9}\},
\{\alpha_{14 2,7},\alpha_{1,12},\alpha_{2,9}\}
$.

\medskip

{\bf n=26,} $H\cong SD_{16}$ ($|H|=16$, $i=8$):
$\rk N_H=18$ and
$(N_H)^\ast/N_H\cong (\bz/8\bz)^2\times \bz/4\bz\times \bz/2\bz$.
$$
H_{26,1}=[
(\alpha_{1,2}\alpha_{2,2})(\alpha_{1,4}\alpha_{2,4})
(\alpha_{1,5}\alpha_{2,6}\alpha_{2,11}\alpha_{2,9})
(\alpha_{2,5}\alpha_{1,6}\alpha_{1,11}\alpha_{1,9})
(\alpha_{1,7}\alpha_{2,10}\alpha_{2,8}\alpha_{1,12})
(\alpha_{2,7}\alpha_{1,10}
$$
$$
\alpha_{1,8}\alpha_{2,12}),
(\alpha_{1,3}\alpha_{2,4})(\alpha_{2,3}\alpha_{1,4})
(\alpha_{1,6}\alpha_{2,12})(\alpha_{2,6}\alpha_{1,12})
(\alpha_{1,7}\alpha_{2,8})(\alpha_{2,7}\alpha_{1,8})
(\alpha_{1,9}\alpha_{1,10})(\alpha_{2,9}\alpha_{2,10})]
$$
with orbits
$
\{\alpha_{1,2},\alpha_{2,2}\},
\{\alpha_{1,3},\alpha_{2,4},\alpha_{1,4},\alpha_{2,3}\},
\{\alpha_{1,5},\alpha_{2,6},\alpha_{2,11},\alpha_{1,12},
\alpha_{2,9},\alpha_{1,7},\alpha_{2,10},\alpha_{2,8}\},
\newline
\{\alpha_{2,5},\alpha_{1,6},\alpha_{1,11},\alpha_{2,12},
\alpha_{1,9},\alpha_{2,7},\alpha_{1,10},\alpha_{1,8}\}$.

\medskip

{\bf n=18,} $H\cong D_{12}$ ($|H|=12$, $i=4$):
$\rk N_H=16$ and $(N_H)^\ast/N_H\cong (\bz/6\bz)^4$.
$$
H_{18,1}=[
(\alpha_{1,2}\alpha_{1,3})(\alpha_{2,2}\alpha_{2,3})
(\alpha_{1,5}\alpha_{1,12})(\alpha_{2,5}\alpha_{2,12})
(\alpha_{1,6}\alpha_{1,9})(\alpha_{2,6}\alpha_{2,9})
(\alpha_{1,10}\alpha_{1,11})(\alpha_{2,10}\alpha_{2,11}),
$$
$$
(\alpha_{1,3}\alpha_{2,4})(\alpha_{2,3}\alpha_{1,4})
(\alpha_{1,6}\alpha_{2,12})(\alpha_{2,6}\alpha_{1,12})
(\alpha_{1,7}\alpha_{2,8})(\alpha_{2,7}\alpha_{1,8})
(\alpha_{1,9}\alpha_{1,10})(\alpha_{2,9}\alpha_{2,10})]
$$
with orbits
$
\{\alpha_{1,2},\alpha_{1,3},\alpha_{2,4}\},
\{\alpha_{2,2},\alpha_{2,3},\alpha_{1,4}\},
\{\alpha_{1,5},\alpha_{1,12},\alpha_{2,6},\alpha_{2,9},\alpha_{2,10},\alpha_{2,11}\},
\newline
\{\alpha_{2,5},\alpha_{2,12},\alpha_{1,6},\alpha_{1,9},\alpha_{1,10},\alpha_{1,11}\},
\{\alpha_{1,7},\alpha_{2,8}\},
\{\alpha_{2,7},\alpha_{1,8}\}
$.

\medskip

{\bf n=17,} $H={\frak A}_4$ ($|H|=12$, $i=3$):
$\rk N_H=16$ and $(N_H)^\ast/N_H\cong (\bz/12\bz)^2\times (\bz/2\bz)^2$.
$$
H_{17,1}=[
(\alpha_{1,4}\alpha_{2,7}\alpha_{1,8})(\alpha_{2,4}\alpha_{1,7}\alpha_{2,8})
(\alpha_{1,5}\alpha_{1,6}\alpha_{2,10})(\alpha_{2,5}\alpha_{2,6}\alpha_{1,10})
(\alpha_{1,9}\alpha_{2,11}\alpha_{1,12})(\alpha_{2,9}\alpha_{1,11}\alpha_{2,12}),
$$
$$
(\alpha_{1,3}\alpha_{2,4})(\alpha_{2,3}\alpha_{1,4})
(\alpha_{1,6}\alpha_{2,12})(\alpha_{2,6}\alpha_{1,12})
(\alpha_{1,7}\alpha_{2,8})(\alpha_{2,7}\alpha_{1,8})
(\alpha_{1,9}\alpha_{1,10})(\alpha_{2,9}\alpha_{2,10})]
$$
with orbits
$
\{\alpha_{1,3},\alpha_{2,4},\alpha_{1,7},\alpha_{2,8}\},
\{\alpha_{2,3},\alpha_{1,4},\alpha_{2,7},\alpha_{1,8}\},
\{\alpha_{1,5},\alpha_{1,6},\alpha_{2,10},\alpha_{2,12},\alpha_{2,9},\alpha_{1,11}\},
\newline
\{\alpha_{2,5},\alpha_{2,6},\alpha_{1,10},\alpha_{1,12},\alpha_{1,9},\alpha_{2,11}\}
$.

\medskip

{\bf n=16,} $H\cong D_{10}$ ($|H|=10$, $i=1$):
$\rk N_H=16$ and $(N_H)^\ast/N_H\cong (\bz/5\bz)^4$.
$$
H_{16,1}=
[(\alpha_{1,4}\alpha_{1,6})(\alpha_{2,4}\alpha_{2,6})
(\alpha_{1,5}\alpha_{2,7})(\alpha_{2,5}\alpha_{1,7})
(\alpha_{1,8}\alpha_{2,10})(\alpha_{2,8}\alpha_{1,10})
(\alpha_{1,11}\alpha_{2,12})(\alpha_{2,11}\alpha_{1,12}),
$$
$$
(\alpha_{1,3}\alpha_{2,4})(\alpha_{2,3}\alpha_{1,4})
(\alpha_{1,6}\alpha_{2,12})(\alpha_{2,6}\alpha_{1,12})
(\alpha_{1,7}\alpha_{2,8})(\alpha_{2,7}\alpha_{1,8})
(\alpha_{1,9}\alpha_{1,10})(\alpha_{2,9}\alpha_{2,10})]
$$
with orbits
$
\{\alpha_{1,3},\alpha_{2,4},\alpha_{2,6},\alpha_{1,12},\alpha_{2,11} \},
\{ \alpha_{2,3},\alpha_{1,4},\alpha_{1,6},\alpha_{2,12},\alpha_{1,11} \},
\newline 
\{\alpha_{1,5},\alpha_{2,7},\alpha_{1,8},\alpha_{2,10},\alpha_{2,9}\},
\{\alpha_{2,5},\alpha_{1,7},\alpha_{2,8},\alpha_{1,10},\alpha_{1,9}\}
$.

\medskip

{\bf n=15,} $H\cong C_3^2$ ($|H|=9$, $i=2$):
$$
H_{15,1}=[
(\alpha_{1,1}\alpha_{1,2}\alpha_{1,7})(\alpha_{2,1}\alpha_{2,2}\alpha_{2,7})
(\alpha_{1,3}\alpha_{1,5}\alpha_{1,9})(\alpha_{2,3}\alpha_{2,5}\alpha_{2,9})
(\alpha_{1,4}\alpha_{1,11}\alpha_{1,12})(\alpha_{2,4}\alpha_{2,11}\alpha_{2,12}),
$$
$$
(\alpha_{1,1}\alpha_{1,3}\alpha_{2,4})(\alpha_{2,1}\alpha_{2,3}\alpha_{1,4})
(\alpha_{1,2}\alpha_{1,5}\alpha_{2,11})(\alpha_{2,2}\alpha_{2,5}\alpha_{1,11})
(\alpha_{1,7}\alpha_{1,9}\alpha_{2,12})(\alpha_{2,7}\alpha_{2,9}\alpha_{1,12})]
$$
with $Clos(H_{15,1})=H_{30,1}$ above;
$$
H_{15,2}=[
(\alpha_{1,2}\alpha_{1,5}\alpha_{2,11})(\alpha_{2,2}\alpha_{2,5}\alpha_{1,11})
(\alpha_{1,6}\alpha_{1,10}\alpha_{2,8})(\alpha_{2,6}\alpha_{2,10}\alpha_{1,8})
(\alpha_{1,7}\alpha_{2,12}\alpha_{1,9})(\alpha_{2,7}\alpha_{1,12}\alpha_{2,9}),
$$
$$
(\alpha_{1,1}\alpha_{1,3}\alpha_{2,4})(\alpha_{2,1}\alpha_{2,3}\alpha_{1,4})
(\alpha_{1,2}\alpha_{1,5}\alpha_{2,11})(\alpha_{2,2}\alpha_{2,5}\alpha_{1,11})
(\alpha_{1,7}\alpha_{1,9}\alpha_{2,12})(\alpha_{2,7}\alpha_{2,9}\alpha_{1,12})]
$$
with $Clos(H_{15,2})=H_{30,2}$ above.

\medskip

{\bf n=14,} $H\cong C_8$ ($|H|=8$, $i=1$):
$$
H_{14,1}=
$$
$$
[(\alpha_{1,2}\alpha_{2,2})
(\alpha_{1,3}\alpha_{1,4}\alpha_{2,3}\alpha_{2,4})
(\alpha_{1,5}\alpha_{2,6}\alpha_{1,7}\alpha_{1,12}
\alpha_{2,11}\alpha_{2,9}\alpha_{2,8}\alpha_{2,10})
$$
$$
(\alpha_{2,5}\alpha_{1,6}\alpha_{2,7}\alpha_{2,12}
\alpha_{1,11}\alpha_{1,9}\alpha_{1,8}\alpha_{1,10})]
$$
with $Clos(H_{14,1})=H_{26,1}$ above.

\medskip

{\bf n=12,} $H\cong Q_8$ ($|H|=8$, $i=4$):
$\rk N_H=17$, $(N_H)^\ast/N_H \cong
(\bz/8\bz)^2\times (\bz/2\bz)^3$, and
$K((q_{N_H})_2)\cong q_\theta^{(2)}(2)\oplus q^\prime$.
$$
H_{12,1}=
$$
$$
[(\alpha_{1,1}\alpha_{1,4}\alpha_{1,10}\alpha_{1,7})
(\alpha_{2,1}\alpha_{2,4}\alpha_{2,10}\alpha_{2,7})
(\alpha_{1,2}\alpha_{2,6}\alpha_{1,9}\alpha_{2,12})
(\alpha_{2,2}\alpha_{1,6}\alpha_{2,9}\alpha_{1,12})
(\alpha_{1,8}\alpha_{2,8})(\alpha_{1,11}\alpha_{2,11}),
$$
$$
(\alpha_{1,1}\alpha_{2,2}\alpha_{1,10}\alpha_{2,9})
(\alpha_{2,1}\alpha_{1,2}\alpha_{2,10}\alpha_{1,9})
(\alpha_{1,4}\alpha_{1,12}\alpha_{1,7}\alpha_{1,6})
(\alpha_{2,4}\alpha_{2,12}\alpha_{2,7}\alpha_{2,6})
(\alpha_{1,5}\alpha_{2,5})(\alpha_{1,11}\alpha_{2,11})]
$$
with orbits
$
\{\alpha_{1,1},\alpha_{1,4},\alpha_{2,2},\alpha_{1,10},
\alpha_{1,12},\alpha_{1,6},\alpha_{1,7},\alpha_{2,9}\},
\newline 
\{\alpha_{2,1},\alpha_{2,4},\alpha_{1,2},\alpha_{2,10},
\alpha_{2,12},\alpha_{2,6},\alpha_{2,7},\alpha_{1,9}\},
\{\alpha_{1,5},\alpha_{2,5}\},\{\alpha_{1,8},\alpha_{2,8}\},
\{\alpha_{1,11},\alpha_{2,11}\}$.

\medskip

{\bf n=10,} $H\cong D_8$ ($|H|=8$, $i=3$):
$\rk N_H=15$ and $(N_H)^\ast/N_H \cong (\bz/4\bz)^5$.
$$
H_{10,1}=[
(\alpha_{1,4}\alpha_{2,7})(\alpha_{2,4}\alpha_{1,7})
(\alpha_{1,5}\alpha_{1,12})(\alpha_{2,5}\alpha_{2,12})
(\alpha_{1,6}\alpha_{2,11})(\alpha_{2,6}\alpha_{1,11})
(\alpha_{1,9}\alpha_{2,10})(\alpha_{2,9}\alpha_{1,10}),
$$
$$
(\alpha_{1,3}\alpha_{2,4})(\alpha_{2,3}\alpha_{1,4})
(\alpha_{1,6}\alpha_{2,12})(\alpha_{2,6}\alpha_{1,12})
(\alpha_{1,7}\alpha_{2,8})(\alpha_{2,7}\alpha_{1,8})
(\alpha_{1,9}\alpha_{1,10})(\alpha_{2,9}\alpha_{2,10})]
$$
with orbits
$
\{\alpha_{1,3},\alpha_{2,4},\alpha_{1,7},\alpha_{2,8}\},
\{\alpha_{2,3},\alpha_{1,4},\alpha_{2,7},\alpha_{1,8}\},
\{\alpha_{1,5},\alpha_{1,12},\alpha_{2,6},\alpha_{1,11}\},
\{\alpha_{2,5},\alpha_{2,12},
\newline
\alpha_{1,6},\alpha_{2,11}\},
\{\alpha_{1,9},\alpha_{2,10},\alpha_{1,10},\alpha_{2,9}\}
$.

\medskip

{\bf n=7,} $H\cong C_6$, ($|H|=6$, $i=2$):
$$
H_{7,1}=
[(\alpha_{1,2}\alpha_{1,3}\alpha_{2,4})
(\alpha_{2,2}\alpha_{2,3}\alpha_{1,4})
(\alpha_{1,5}\alpha_{1,12}\alpha_{2,9}\alpha_{2,11}\alpha_{2,10}\alpha_{2,6})
(\alpha_{2,5}\alpha_{2,12}\alpha_{1,9}\alpha_{1,11}\alpha_{1,10}\alpha_{1,6})
$$
$$
(\alpha_{1,7}\alpha_{2,8})(\alpha_{2,7}\alpha_{1,8})]
$$
with $Clos(H_{7,1})=H_{18,1}$ above.

\medskip

{\bf n=6,} $H\cong D_6$ ($|H|=6$, $i=1$):
$\rk N_H=14$, $(N_H)^\ast/N_H \cong (\bz/6\bz)^2\times (\bz/3\bz)^3$.
$$
H_{6,1}=[
(\alpha_{1,3}\alpha_{2,4})(\alpha_{2,3}\alpha_{1,4})
(\alpha_{1,5}\alpha_{2,11})(\alpha_{2,5}\alpha_{1,11})
(\alpha_{1,6}\alpha_{1,10})(\alpha_{2,6}\alpha_{2,10})
(\alpha_{1,9}\alpha_{2,12})(\alpha_{2,9}\alpha_{1,12}),
$$
$$
(\alpha_{1,1}\alpha_{1,3}\alpha_{2,4})(\alpha_{2,1}\alpha_{2,3}\alpha_{1,4})
(\alpha_{1,2}\alpha_{1,5}\alpha_{2,11})(\alpha_{2,2}\alpha_{2,5}\alpha_{1,11})
(\alpha_{1,7}\alpha_{1,9}\alpha_{2,12})(\alpha_{2,7}\alpha_{2,9}\alpha_{1,12})]
$$
with orbits
$
\{\alpha_{1,1},\alpha_{1,3},\alpha_{2,4}\},\{\alpha_{2,1},\alpha_{2,3},\alpha_{1,4}\},
\{\alpha_{1,2},\alpha_{1,5},\alpha_{2,11}\},\{\alpha_{2,2},\alpha_{2,5},\alpha_{1,11}\},
\{\alpha_{1,6},\alpha_{1,10}\},
\newline
\{\alpha_{2,6},\alpha_{2,10}\},
\{\alpha_{1,7},\alpha_{1,9},\alpha_{2,12}\},\{\alpha_{2,7},\alpha_{2,9},\alpha_{1,12}\}
$;
$$
H_{6,2}=[
(\alpha_{1,2}\alpha_{1,7})(\alpha_{2,2}\alpha_{2,7})
(\alpha_{1,3}\alpha_{2,4})(\alpha_{2,3}\alpha_{1,4})
(\alpha_{1,5}\alpha_{2,12})(\alpha_{2,5}\alpha_{1,12})
(\alpha_{1,9}\alpha_{2,11})(\alpha_{2,9}\alpha_{1,11}),
$$
$$
(\alpha_{1,1}\alpha_{1,3}\alpha_{2,4})(\alpha_{2,1}\alpha_{2,3}\alpha_{1,4})
(\alpha_{1,2}\alpha_{1,5}\alpha_{2,11})(\alpha_{2,2}\alpha_{2,5}\alpha_{1,11})
(\alpha_{1,7}\alpha_{1,9}\alpha_{2,12})(\alpha_{2,7}\alpha_{2,9}\alpha_{1,12})]
$$
with orbits
$
\{\alpha_{1,1},\alpha_{1,3},\alpha_{2,4}\},
\{\alpha_{2,1},\alpha_{2,3},\alpha_{1,4}\},
\{\alpha_{1,2},\alpha_{1,7},\alpha_{1,5},\alpha_{1,9},\alpha_{2,12},\alpha_{2,11}\},
\newline
\{\alpha_{2,2},\alpha_{2,7},\alpha_{2,5},\alpha_{2,9},\alpha_{1,12},\alpha_{1,11}\}
$.

\medskip

{\bf n=5,} $H\cong C_5$ ($|H|=5$, $i=1$):
$$
H_{5,1}=
$$
$$
[(\alpha_{1,3}\alpha_{2,4}\alpha_{1,12}\alpha_{2,11}\alpha_{2,6})
(\alpha_{2,3}\alpha_{1,4}\alpha_{2,12}\alpha_{1,11}\alpha_{1,6})
(\alpha_{1,5}\alpha_{1,8}\alpha_{2,9}\alpha_{2,10}\alpha_{2,7})
(\alpha_{2,5}\alpha_{2,8}\alpha_{1,9}\alpha_{1,10}\alpha_{1,7})]
$$
with $Clos(H_{5,1})=H_{16,1}$ above.

\medskip

{\bf n=4,} $H\cong C_4$ ($|H|=4$, $i=1$):
$\rk N_H=14$ and $(N_H)^\ast/N_H\cong (\bz/4\bz)^4\times (\bz/2\bz)^2$.
$$
H_{4,1}=
$$
$$
[(\alpha_{1,1}\alpha_{2,1})(\alpha_{1,3}\alpha_{2,10}\alpha_{2,4}\alpha_{2,9})
(\alpha_{2,3}\alpha_{1,10}\alpha_{1,4}\alpha_{1,9})
(\alpha_{1,6}\alpha_{1,8}\alpha_{2,12}\alpha_{2,7})
(\alpha_{2,6}\alpha_{2,8}\alpha_{1,12}\alpha_{1,7})
(\alpha_{1,11}\alpha_{2,11})]
$$
with orbits
$
\{\alpha_{1,1},\alpha_{2,1}\}, \{ \alpha_{1,3}, \alpha_{2,10},\alpha_{2,4},\alpha_{2,9}\},
\{\alpha_{2,3},\alpha_{1,10},\alpha_{1,4},\alpha_{1,9}\},
\{\alpha_{1,6},\alpha_{1,8},\alpha_{2,12},\alpha_{2,7}\},
\newline
 \{\alpha_{2,6},\alpha_{2,8},\alpha_{1,12},\alpha_{1,7}\},\{\alpha_{1,11},\alpha_{2,11}\}
$.

\medskip

{\bf n=3,} $H\cong C_2^2$ ($|H|=4$, $i=2$):
$\rk N_H=12$ and $(N_H)^\ast/N_H\cong (\bz/4\bz)^2\times (\bz/2\bz)^6$.
$$
H_{3,1}=
[(\alpha_{1,5}\alpha_{2,11})(\alpha_{2,5}\alpha_{1,11})(\alpha_{1,6}\alpha_{1,9})
(\alpha_{2,6}\alpha_{2,9})(\alpha_{1,7}\alpha_{2,8})(\alpha_{2,7}\alpha_{1,8})
(\alpha_{1,10}\alpha_{2,12})(\alpha_{2,10}\alpha_{1,12}),
$$
$$
(\alpha_{1,3}\alpha_{2,4})(\alpha_{2,3}\alpha_{1,4})
(\alpha_{1,6}\alpha_{2,12})(\alpha_{2,6}\alpha_{1,12})(\alpha_{1,7}\alpha_{2,8})
(\alpha_{2,7}\alpha_{1,8})
(\alpha_{1,9}\alpha_{1,10})(\alpha_{2,9}\alpha_{2,10})]
$$
with orbits
$
\{\alpha_{1,3}, \alpha_{2,4}\}, \{ \alpha_{2,3}, \alpha_{1,4} \},
\{ \alpha_{1,5}, \alpha_{2,11}\},
\{\alpha_{2,5}, \alpha_{1,11}\},  
\{ \alpha_{1,6}, \alpha_{1,9}, \alpha_{2,12}, \alpha_{1,10} \},
\newline 
\{\alpha_{2,6}, \alpha_{2,9}, \alpha_{1,12}, \alpha_{2,10} \},
\{ \alpha_{1,7}, \alpha_{2,8} \}, \{ \alpha_{2,7}, \alpha_{1,8}\}$.

\medskip

{\bf n=2,} $H\cong C_3$ ($|H|=3$, $i=1$):
$\rk N_H=12$ and $(N_H)^\ast/N_H\cong (\bz/3\bz)^6$.
$$
H_{2,1}=
[(\alpha_{1,1}\alpha_{1,3}\alpha_{2,4})
(\alpha_{2,1}\alpha_{2,3}\alpha_{1,4})
(\alpha_{1,2}\alpha_{1,5}\alpha_{2,11})
(\alpha_{2,2}\alpha_{2,5}\alpha_{1,11})
(\alpha_{1,7}\alpha_{1,9}\alpha_{2,12})
(\alpha_{2,7}\alpha_{2,9}\alpha_{1,12})]
$$
with orbits
$
\{ \alpha_{1,1}, \alpha_{1,3}, \alpha_{2,4} \},
\{ \alpha_{2,1}, \alpha_{2,3}, \alpha_{1,4} \},
\{ \alpha_{1,2}, \alpha_{1,5}, \alpha_{2,11} \},
\{ \alpha_{2,2}, \alpha_{2,5}, \alpha_{1,11} \},
\newline 
\{\alpha_{1,7}, \alpha_{1,9}, \alpha_{2,12} \},
\{\alpha_{2,7}, \alpha_{2,9}, \alpha_{1,12}\}
$.

\medskip

{\bf n=1,} $H\cong C_2$ ($|H|=2$, $i=1$):
$\rk N_H=8$ and $(N_H)^\ast/N_H\cong (\bz/2\bz)^8$.
$$
H_{1,1}=
[(\alpha_{1,3}\alpha_{2,4})(\alpha_{2,3}\alpha_{1,4})
(\alpha_{1,6}\alpha_{2,12})(\alpha_{2,6}\alpha_{1,12})
(\alpha_{1,7}\alpha_{2,8})(\alpha_{2,7}\alpha_{1,8})
(\alpha_{1,9}\alpha_{1,10})(\alpha_{2,9}\alpha_{2,10})]
$$
with orbits
$
\{\alpha_{1,3},\alpha_{2,4} \},\{ \alpha_{2,3},\alpha_{1,4}\},
\{\alpha_{1,6},\alpha_{2,12}\},\{\alpha_{2,6},\alpha_{1,12}\},
\{\alpha_{1,7},\alpha_{2,8}\},\{\alpha_{2,7},\alpha_{1,8}\},
\newline
\{\alpha_{1,9},\alpha_{1,10}\},\{\alpha_{2,9},\alpha_{2,10}\}
$

\medskip

Let $X$ be marked by a primitive sublattice
$S\subset N=N_{22}=N(12A_2)$. Then $S$
must satisfy Theorem \ref{th:primembb3} and
$\Gamma(P(S))\subset \Gamma(P(N_{22}))=12\aaa_2$.
Any such $S$ gives marking of some $X$ and $P(X)\cap S=P(S)$.

If $N_H\subset S$ where $H$ has the type $n=79$, $70$, $63$, $62$ or $54$ ($\rk N_H=19$),
then $\Aut(X,S)_0=H$.
Otherwise, if only $N_H\subset S$ where
$H$ has the type $n=55$, $48$, $46$, $32$ or $26$ ($\rk N_H=18$), then
$\Aut(X,S)_0=H$.
Otherwise, if only $N_H\subset S$ where $H$ has the type $n=34$ or $12$
($\rk N_H=17$), then $\Aut(X,S)_0=H$.
Otherwise, if only $N_H\subset S$ where $H$ has the type $n=30$, $18$, $17$ or $16$
($\rk N_H=16$), then $\Aut(X,S)_0=H$.
Otherwise, if only $N_H\subset S$ where $H$ has the type $n=10$
($\rk N_H=15$), then $\Aut(X,S)_0=H$.
Otherwise, if only $N_H\subset S$ where $H$ has the type $n=6$ or $4$
($\rk N_H=14$), then $\Aut(X,S)_0=H$.
Otherwise, if only $N_H\subset S$ where $H$ has the type $n=3$ or $2$
($\rk N_H=12$), then $\Aut(X,S)_0=H$.
Otherwise, if only $N_H\subset S$ where $H$ has the type $n=1$
($\rk N_H=8$), then $\Aut(X,S)_0=H\cong C_2$.
Otherwise, $\Aut(X,S)_0$ is trivial.

Let us assume that
a K3 surface $X$ has $18$ non-singular rational
curves which define the graph $9\aaa_2$. In \cite{Barth}, Barth
described the primitive sublattice $\overline{I}\subset S_X$
generated by classes
$$
\alpha_{11}, \alpha_{12},\,\dots ,\ \alpha_{19},\alpha_{29}
$$
of these curves. In particular, $\rk \overline{I}=18$
and  $\overline{I}^\ast/\overline{I}\cong (\bz/3\bz)^3$.

Let us consider marking $S\subset N_i$ of $X$ by $S$
which contains the primitive sublattice $\overline{I}\subset S$.
It was showed in \cite{Barth} that
$$
P(X)\cap S=\{\alpha_{11}, \alpha_{12},\,\dots ,\
\alpha_{19},\alpha_{29}\}\ .
$$
Thus, $9\aaa_2$ should be a subgraph of the Dynkin graph of
$N_i$, and the corresponding sublattice $\overline{I}\subset N_i$ is
a primitive sublattice. By classification of Niemeier
lattices, this is possible only for $N(12A_2)$. Thus, $X$
can be marked by the Niemeier lattice $N_{22}=N(12A_2)$ only.

\newpage

{\bf Case 23.} For the Niemeier lattice $N_{23}$, we have
$$
N=N_{23}=N(24A_1)=[24A_1,\ [1(00000101001100110101111)]]=
[24A_1,\
$$
$$
\epsilon_1+
\epsilon_7+\epsilon_9+\epsilon_{12}+\epsilon_{13}+\epsilon_{16}+\epsilon_{17}+
\epsilon_{19}+\epsilon_{21}+\epsilon_{22}+\epsilon_{23}+\epsilon_{24},\ $$
$$
\epsilon_1+
\epsilon_2+\epsilon_8+\epsilon_{10}+\epsilon_{13}+\epsilon_{14}+\epsilon_{17}+
\epsilon_{18}+\epsilon_{20}+\epsilon_{22}+\epsilon_{23}+\epsilon_{24},\
$$
$$
\epsilon_1+
\epsilon_2+\epsilon_3+\epsilon_9+\epsilon_{11}+\epsilon_{14}+\epsilon_{15}+
\epsilon_{18}+\epsilon_{19}+\epsilon_{21}+\epsilon_{23}+\epsilon_{24},\ $$
$$
\epsilon_1+
\epsilon_2+\epsilon_3+\epsilon_4+\epsilon_{10}+\epsilon_{12}+\epsilon_{15}+
\epsilon_{16}+\epsilon_{19}+\epsilon_{20}+\epsilon_{22}+\epsilon_{24},\ $$
$$
\epsilon_1+
\epsilon_2+\epsilon_3+\epsilon_4+\epsilon_5+\epsilon_{11}+\epsilon_{13}+
\epsilon_{16}+\epsilon_{17}+\epsilon_{20}+\epsilon_{21}+\epsilon_{23},\
$$
$$
\epsilon_1+
\epsilon_3+\epsilon_4+\epsilon_5+\epsilon_6+\epsilon_{12}+\epsilon_{14}+
\epsilon_{17}+\epsilon_{18}+\epsilon_{21}+\epsilon_{22}+\epsilon_{24},\
$$
$$
\epsilon_1+
\epsilon_2+\epsilon_4+\epsilon_5+\epsilon_6+\epsilon_7+\epsilon_{13}+
\epsilon_{15}+\epsilon_{18}+\epsilon_{19}+\epsilon_{22}+\epsilon_{23},\
$$
$$
\epsilon_1+
\epsilon_3+\epsilon_5+\epsilon_6+\epsilon_7+\epsilon_8+\epsilon_{14}+
\epsilon_{16}+\epsilon_{19}+\epsilon_{20}+\epsilon_{23}+\epsilon_{24},\
$$
$$
\epsilon_1+
\epsilon_2+\epsilon_4+\epsilon_6+\epsilon_7+\epsilon_8+\epsilon_9+
\epsilon_{15}+\epsilon_{17}+\epsilon_{20}+\epsilon_{21}+\epsilon_{24},\
$$
$$
\epsilon_1+
\epsilon_2+\epsilon_3+\epsilon_5+\epsilon_7+\epsilon_8+\epsilon_9+
\epsilon_{10}+\epsilon_{16}+\epsilon_{18}+\epsilon_{21}+\epsilon_{22},\
$$
$$
\epsilon_1+
\epsilon_3+\epsilon_4+\epsilon_6+\epsilon_8+\epsilon_9+\epsilon_{10}+
\epsilon_{11}+\epsilon_{17}+\epsilon_{19}+\epsilon_{22}+\epsilon_{23},\
$$
$$
\epsilon_1+
\epsilon_4+\epsilon_5+\epsilon_7+\epsilon_9+\epsilon_{10}+\epsilon_{11}+
\epsilon_{12}+\epsilon_{18}+\epsilon_{20}+\epsilon_{23}+\epsilon_{24},\
$$
$$
\epsilon_1+
\epsilon_2+\epsilon_5+\epsilon_6+\epsilon_8+\epsilon_{10}+\epsilon_{11}+
\epsilon_{12}+\epsilon_{13}+\epsilon_{19}+\epsilon_{21}+\epsilon_{24},\
$$
$$
\epsilon_1+
\epsilon_2+\epsilon_3+\epsilon_6+\epsilon_7+\epsilon_{9}+\epsilon_{11}+
\epsilon_{12}+\epsilon_{13}+\epsilon_{14}+\epsilon_{20}+\epsilon_{22},\
$$
$$
\epsilon_1+
\epsilon_3+\epsilon_4+\epsilon_7+\epsilon_8+\epsilon_{10}+\epsilon_{12}+
\epsilon_{13}+\epsilon_{14}+\epsilon_{15}+\epsilon_{21}+\epsilon_{23},\
$$
$$
\epsilon_1+
\epsilon_4+\epsilon_5+\epsilon_8+\epsilon_9+\epsilon_{11}+\epsilon_{13}+
\epsilon_{14}+\epsilon_{15}+\epsilon_{16}+\epsilon_{22}+\epsilon_{24},\
$$
$$
\epsilon_1+
\epsilon_2+\epsilon_5+\epsilon_6+\epsilon_9+\epsilon_{10}+\epsilon_{12}+
\epsilon_{14}+\epsilon_{15}+\epsilon_{16}+\epsilon_{17}+\epsilon_{23},\
$$
$$
\epsilon_1+
\epsilon_3+\epsilon_6+\epsilon_7+\epsilon_{10}+\epsilon_{11}+\epsilon_{13}+
\epsilon_{15}+\epsilon_{16}+\epsilon_{17}+\epsilon_{18}+\epsilon_{24},\
$$
$$
\epsilon_1+
\epsilon_2+\epsilon_4+\epsilon_7+\epsilon_8+\epsilon_{11}+\epsilon_{12}+
\epsilon_{14}+\epsilon_{16}+\epsilon_{17}+\epsilon_{18}+\epsilon_{19},\
$$
$$
\epsilon_1+
\epsilon_3+\epsilon_5+\epsilon_8+\epsilon_9+\epsilon_{12}+\epsilon_{13}+
\epsilon_{15}+\epsilon_{17}+\epsilon_{18}+\epsilon_{19}+\epsilon_{20},\
$$
$$
\epsilon_1+
\epsilon_4+\epsilon_6+\epsilon_9+\epsilon_{10}+\epsilon_{13}+\epsilon_{14}+
\epsilon_{16}+\epsilon_{18}+\epsilon_{19}+\epsilon_{20}+\epsilon_{21},\
$$
$$
\epsilon_1+
\epsilon_5+\epsilon_7+\epsilon_{10}+\epsilon_{11}+\epsilon_{14}+\epsilon_{15}+
\epsilon_{17}+\epsilon_{19}+\epsilon_{20}+\epsilon_{21}+\epsilon_{22},\
$$
$$
\epsilon_1+
\epsilon_6+\epsilon_8+\epsilon_{11}+\epsilon_{12}+\epsilon_{15}+\epsilon_{16}+
\epsilon_{18}+\epsilon_{20}+\epsilon_{21}+\epsilon_{22}+\epsilon_{23},\
$$
$$
\epsilon_1+
\epsilon_7+\epsilon_9+\epsilon_{12}+\epsilon_{13}+\epsilon_{16}+\epsilon_{17}+
\epsilon_{19}+\epsilon_{21}+\epsilon_{22}+\epsilon_{23}+\epsilon_{24}]\
$$
where we denote $\alpha_i=\alpha_{1i}$ and $\epsilon_i=\epsilon_{1i}$ for
$i=1,2,\dots,24$. The group $A(N_{23})$ is the Mathieu group $M_{24}$ generated by
$$
\varphi_1=(\alpha_1)(\alpha_2\alpha_3\dots\alpha_{23}\alpha_{24}),
$$
$$
\varphi_2=(\alpha_{4}\alpha_{18}\alpha_{11} \alpha_8 \alpha_{10})
(\alpha_{5}\alpha_{14}\alpha_{15} \alpha_{20} \alpha_{6})
(\alpha_{9}\alpha_{19}\alpha_{12} \alpha_{13} \alpha_{24})
(\alpha_{16}\alpha_{21}\alpha_{23} \alpha_{22} \alpha_{17}),
$$
$$
\varphi_3=
(\alpha_{1}\alpha_{2})(\alpha_{3}\alpha_{24})
(\alpha_{4}\alpha_{13})(\alpha_{5}\alpha_{17})
(\alpha_{6}\alpha_{19})(\alpha_{7}\alpha_{11})
(\alpha_{8}\alpha_{21})(\alpha_{9}\alpha_{15})
(\alpha_{10}\alpha_{22})(\alpha_{12}\alpha_{18})
$$
$$
(\alpha_{14}\alpha_{23})(\alpha_{16}\alpha_{20}). \hskip10cm
$$
(see \cite[Ch. 16]{CS}).

Using GAP Progam \cite{GAP}, we obtain the following classification.

\newpage

\centerline {\bf Classification of KahK3 conjugacy classes for $A(N_{23})$.}

\vskip1cm

{\bf n=81,} $H\cong M_{20}$ ($|H|=960$, $i=11357$):
$\rk N_H=19$, $(N_H)^\ast/N_H\cong \bz/40\bz\times
(\bz/2\bz)^2$ and
$\det(K((q_{N_H})_2))\equiv
\pm 2^5\cdot 5 \mod (\bz_2^\ast)^2$.
$$
H_{81,1}=[(\alpha_1\alpha_{19}\alpha_{6})(\alpha_{3}\alpha_{24}\alpha_{16})
(\alpha_{4}\alpha_{20}\alpha_{9})(\alpha_{7}\alpha_{10}\alpha_{8})
(\alpha_{11}\alpha_{23}\alpha_{13})(\alpha_{14}\alpha_{21}\alpha_{15}),\ \
$$
$$
(\alpha_{1}\alpha_{11}\alpha_{23}\alpha_{8})(\alpha_{4}\alpha_{6}\alpha_{13}\alpha_{20})
(\alpha_{5}\alpha_{16})(\alpha_{7}\alpha_{17}\alpha_{9}\alpha_{14})
(\alpha_{10}\alpha_{21}\alpha_{19}\alpha_{15})(\alpha_{12}\alpha_{24})]
$$
with orbits (here and in what follows we show orbits with more than one elements only)
$
\{\alpha_1,\alpha_{19}, \alpha_{11}, \alpha_{6},\alpha_{15}, \alpha_{23},
\alpha_{13}, \alpha_{14}, \alpha_{10},
\alpha_8, \alpha_{20}, \alpha_{21}, \alpha_7,\alpha_{9},\alpha_4,\alpha_{17}\},\
\{\alpha_{3},\alpha_{24},\alpha_{16},\alpha_{12},\alpha_{5}\}$;
$$
H_{81,2}=[
(\alpha_{1}\alpha_{16}\alpha_{9})
(\alpha_{4}\alpha_{10}\alpha_{12})
(\alpha_{5}\alpha_{13}\alpha_{19})
(\alpha_{6}\alpha_{20}\alpha_{17})
(\alpha_{7}\alpha_{23}\alpha_{24})
(\alpha_{8}\alpha_{21}\alpha_{11}),\
$$
$$
(\alpha_{1}\alpha_{11}\alpha_{23}\alpha_{8})
(\alpha_{4}\alpha_{6}\alpha_{13}\alpha_{20})
(\alpha_{5}\alpha_{16})
(\alpha_{7}\alpha_{17}\alpha_{9}\alpha_{14})
(\alpha_{10}\alpha_{21}\alpha_{19}\alpha_{15})
(\alpha_{12}\alpha_{24})]
$$
with orbits
$
\{\alpha_{1},\alpha_{16},\alpha_{11},\alpha_{9},\alpha_{5},\alpha_{8},\alpha_{23},
\alpha_{14},\alpha_{13},\alpha_{21},\alpha_{24},\alpha_{7},\alpha_{19},
\alpha_{20},\alpha_{12},\alpha_{17},\alpha_{15},\alpha_{4},\alpha_{6},\alpha_{10} \}$.

\medskip

{\bf n=80,} $H\cong F_{384}$ ($|H|=384$, $i=18135$):
$\rk N_H=19$, $(N_H)^\ast/N_H\cong (\bz/8\bz)^2\times
\bz/4\bz$ and
$\det(K((q_{N_H})_2))\equiv \pm 2^8 \mod (\bz_2^\ast)^2$.
$$
H_{80,1}=
[(\alpha_{1}\alpha_{8}\alpha_{15})
(\alpha_{4}\alpha_{20}\alpha_{11})
(\alpha_{6}\alpha_{7}\alpha_{21})
(\alpha_{9}\alpha_{13}\alpha_{23})
(\alpha_{14}\alpha_{17}\alpha_{19})
(\alpha_{18}\alpha_{24}\alpha_{22}),
$$
$$
(\alpha_{1}\alpha_{11}\alpha_{23}\alpha_{8})
(\alpha_{4}\alpha_{6}\alpha_{13}\alpha_{20})
(\alpha_{5}\alpha_{16})
(\alpha_{7}\alpha_{17}\alpha_{9}\alpha_{14})
(\alpha_{10}\alpha_{21}\alpha_{19}\alpha_{15})
(\alpha_{12}\alpha_{24})]
$$
with orbits
$
\{\alpha_{1},\alpha_{8},\alpha_{11},\alpha_{15},\alpha_{4},\alpha_{23},
\alpha_{10},\alpha_{20},\alpha_{6},\alpha_{9},\alpha_{21},
\alpha_{7},\alpha_{13},\alpha_{14},\alpha_{19},\alpha_{17}\},\{\alpha_{5},\alpha_{16}\},
\newline
\{\alpha_{12},\alpha_{24},\alpha_{22},\alpha_{18}\}$.

\medskip

{\bf n=79,} $H\cong {\frak A}_{6}$ ($|H|=360$, $i=118$):
$\rk N_H=19$ and $(N_H)^\ast/N_H\cong \bz/60\bz\times \bz/3\bz$.
$$
H_{79,1}=
[(\alpha_{2}\alpha_{22}\alpha_{15}\alpha_{8}\alpha_{23})
(\alpha_{3}\alpha_{18}\alpha_{12}\alpha_{7}\alpha_{4})
(\alpha_{5}\alpha_{21}\alpha_{19}\alpha_{20}\alpha_{11})
(\alpha_{9}\alpha_{10}\alpha_{13}\alpha_{14}\alpha_{24}),\
$$
$$
(\alpha_{2}\alpha_{17}\alpha_{15}\alpha_{8}\alpha_{23})
(\alpha_{3}\alpha_{14}\alpha_{10}\alpha_{13}\alpha_{9})
(\alpha_{4}\alpha_{24}\alpha_{18}\alpha_{7}\alpha_{12})
(\alpha_{5}\alpha_{11}\alpha_{20}\alpha_{16}\alpha_{21})]
$$
with orbits
$
\{\alpha_{2},\alpha_{22},\alpha_{17},\alpha_{15},\alpha_{8},\alpha_{23}\},
\{\alpha_{3},\alpha_{18},\alpha_{14},\alpha_{12},\alpha_{7},\alpha_{24},
\alpha_{10},\alpha_{4},\alpha_{9},\alpha_{13}\},
\{\alpha_{5},\alpha_{21},\alpha_{11},
\newline
\alpha_{19},\alpha_{20},\alpha_{16}\}$;
$$
H_{79,2}=
[(\alpha_{2}\alpha_{8}\alpha_{18}\alpha_{20}\alpha_{10})
(\alpha_{3}\alpha_{12}\alpha_{11}\alpha_{22}\alpha_{14})
(\alpha_{4}\alpha_{15}\alpha_{24}\alpha_{19}\alpha_{9})
(\alpha_{5}\alpha_{23}\alpha_{17}\alpha_{7}\alpha_{13}),\
$$
$$
(\alpha_{2}\alpha_{13})(\alpha_{3}\alpha_{22})
(\alpha_{4}\alpha_{9})(\alpha_{7}\alpha_{18})
(\alpha_{8}\alpha_{14})(\alpha_{11}\alpha_{20})
(\alpha_{12}\alpha_{23})(\alpha_{16}\alpha_{19})]
$$
with orbits
$
\{\alpha_{2},\alpha_{8},\alpha_{13},\alpha_{18},\alpha_{14},\alpha_{5},
\alpha_{20},\alpha_{7},\alpha_{3},\alpha_{23},\alpha_{10},\alpha_{11},
\alpha_{12},\alpha_{22},\alpha_{17}\},
\{\alpha_{4},\alpha_{15},\alpha_{9},
\newline
\alpha_{24},\alpha_{19},\alpha_{16}\}.
$

\medskip

{\bf n=78,} $H\cong {\frak A}_{4,4}$ ($|H|=288$, $i=1026$):
$\rk N_H=19$, $(N_H)^\ast/N_H\cong \bz/24\bz\times
\bz/6\bz\times \bz/2\bz$ and
$
\det(K((q_{N_{H}})_2)
\equiv \pm 2^5\cdot 3^2 \mod (\bz_2^\ast)^2.
$
$$
H_{78,1}=
[(\alpha_{1}\alpha_{19}\alpha_{8})
(\alpha_{3}\alpha_{22}\alpha_{15})
(\alpha_{5}\alpha_{23}\alpha_{20})
(\alpha_{6}\alpha_{21}\alpha_{9})
(\alpha_{7}\alpha_{11}\alpha_{16})
(\alpha_{12}\alpha_{18}\alpha_{14}),\ \
$$
$$
(\alpha_{1}\alpha_{6}\alpha_{23}\alpha_{8})
(\alpha_{4}\alpha_{11}\alpha_{16}\alpha_{7})
(\alpha_{5}\alpha_{21}\alpha_{14}\alpha_{12})
(\alpha_{9}\alpha_{19}\alpha_{20}\alpha_{18})
(\alpha_{10}\alpha_{13})(\alpha_{15}\alpha_{22}),\ \
$$
$$
(\alpha_{2}\alpha_{13})(\alpha_{3}\alpha_{22})
(\alpha_{4}\alpha_{9})(\alpha_{7}\alpha_{18})
(\alpha_{8}\alpha_{14})(\alpha_{11}\alpha_{20})
(\alpha_{12}\alpha_{23})(\alpha_{16}\alpha_{19})]
$$
with orbits
$
\{\alpha_{1},\alpha_{19},\alpha_{6},\alpha_{8},\alpha_{20},\alpha_{16},
\alpha_{21},\alpha_{23},\alpha_{14},\alpha_{5},\alpha_{18},\alpha_{11},
\alpha_{7},\alpha_{9},\alpha_{12},\alpha_{4}\},
\{\alpha_{2},\alpha_{13},\alpha_{10}\}, \newline
\{\alpha_{3},\alpha_{22},\alpha_{15}\}$;
$$
H_{78,2}=[
(\alpha_{4}\alpha_{11}\alpha_{12})
(\alpha_{5}\alpha_{18}\alpha_{7})
(\alpha_{6}\alpha_{23}\alpha_{8})
(\alpha_{9}\alpha_{14}\alpha_{16})
(\alpha_{10}\alpha_{24}\alpha_{13})
(\alpha_{19}\alpha_{20}\alpha_{21}),\
$$
$$
(\alpha_{1}\alpha_{18}\alpha_{7}\alpha_{5})
(\alpha_{4}\alpha_{20}\alpha_{21}\alpha_{8})
(\alpha_{6}\alpha_{16}\alpha_{9}\alpha_{12})
(\alpha_{10}\alpha_{24})
(\alpha_{11}\alpha_{14}\alpha_{23}\alpha_{19})
(\alpha_{17}\alpha_{22}),\
$$
$$
(\alpha_{2}\alpha_{13})(\alpha_{3}\alpha_{22})
(\alpha_{4}\alpha_{9})(\alpha_{7}\alpha_{18})
(\alpha_{8}\alpha_{14})(\alpha_{11}\alpha_{20})
(\alpha_{12}\alpha_{23})(\alpha_{16}\alpha_{19})]
$$
with orbits
$
\{\alpha_{1},\alpha_{18},\alpha_{7},\alpha_{5}\},
\{\alpha_{2},\alpha_{13},\alpha_{10},\alpha_{24}\},
\{\alpha_{3},\alpha_{22},\alpha_{17}\},
\{\alpha_{4},\alpha_{11},\alpha_{20},\alpha_{9},\alpha_{12},
\alpha_{14},\alpha_{21},\alpha_{6},
\newline
\alpha_{23},\alpha_{16},\alpha_{8},\alpha_{19}\}$.

\medskip

{\bf n=77,} $H\cong {T}_{192}$ ($|H|=192$, $i=1493$):
$\rk N_H=19$, $(N_H)^\ast/N_H\cong \bz/12\bz\times
(\bz/4\bz)^2$ and
$\det(K((q_{N_H})_2))\equiv
\pm 2^6\cdot 3 \mod (\bz_2^\ast)^2$.
$$
H_{77,1}=[
(\alpha_{1}\alpha_{15}\alpha_{8})
(\alpha_{3}\alpha_{24}\alpha_{17})
(\alpha_{4}\alpha_{20}\alpha_{16})
(\alpha_{5}\alpha_{13}\alpha_{6})
(\alpha_{10}\alpha_{21}\alpha_{23})
(\alpha_{12}\alpha_{14}\alpha_{22}),\
$$
$$
(\alpha_{1}\alpha_{11}\alpha_{23}\alpha_{8})
(\alpha_{4}\alpha_{6}\alpha_{13}\alpha_{20})
(\alpha_{5}\alpha_{16})
(\alpha_{7}\alpha_{17}\alpha_{9}\alpha_{14})
(\alpha_{10}\alpha_{21}\alpha_{19}\alpha_{15})
(\alpha_{12}\alpha_{24})]
$$
with orbits
$
\{\alpha_{1},\alpha_{15},\alpha_{11},\alpha_{8},\alpha_{10},
\alpha_{23},\alpha_{21},\alpha_{19}\},
\{\alpha_{3},\alpha_{24},\alpha_{17},\alpha_{12},\alpha_{9},
\alpha_{14},\alpha_{22},\alpha_{7}\},
\{\alpha_{4},\alpha_{20},\alpha_{6},
\newline
\alpha_{16},\alpha_{5},\alpha_{13}\}.
$

\medskip

{\bf n=76,} $H\cong {H}_{192}$ ($|H|=192$, $i=955$):
$\rk N_H=19$, $(N_H)^\ast/N_H\cong \bz/24\bz\times
(\bz/4\bz)^2$ and
$\det(K((q_{N_H})_2)\equiv \pm 2^7\cdot 3 \mod (\bz_2^\ast)^2$.
$$
H_{76,1}=[
(\alpha_{1}\alpha_{17}\alpha_{19}\alpha_{5})
(\alpha_{4}\alpha_{24}\alpha_{21}\alpha_{9})
(\alpha_{7}\alpha_{23}\alpha_{20}\alpha_{22})
(\alpha_{8}\alpha_{16}\alpha_{14}\alpha_{15})
(\alpha_{10}\alpha_{13})(\alpha_{12}\alpha_{18}),\
$$
$$
(\alpha_{2}\alpha_{13})(\alpha_{3}\alpha_{22})
(\alpha_{4}\alpha_{9})(\alpha_{7}\alpha_{18})
(\alpha_{8}\alpha_{14})(\alpha_{11}\alpha_{20})
(\alpha_{12}\alpha_{23})(\alpha_{16}\alpha_{19})]
$$
with orbits
$
\{\alpha_{1},\alpha_{17},\alpha_{19},\alpha_{5},\alpha_{16},
\alpha_{14},\alpha_{15},\alpha_{8}\},
\{\alpha_{2},\alpha_{13},\alpha_{10}\},
\{\alpha_{3},\alpha_{22},\alpha_{7},\alpha_{23},\alpha_{18},
\alpha_{20},\alpha_{12},
\newline
\alpha_{11}\},
\{\alpha_{4},\alpha_{24},\alpha_{9},\alpha_{21}\}$;
$$
H_{76,2}=[
(\alpha_{1}\alpha_{18}\alpha_{9}\alpha_{21})
(\alpha_{4}\alpha_{12}\alpha_{5}\alpha_{19})
(\alpha_{6}\alpha_{23}\alpha_{7}\alpha_{20})
(\alpha_{8}\alpha_{14}\alpha_{11}\alpha_{16})
(\alpha_{10}\alpha_{13})(\alpha_{17}\alpha_{24}),\
$$
$$
(\alpha_{2}\alpha_{13})(\alpha_{3}\alpha_{22})
(\alpha_{4}\alpha_{9})(\alpha_{7}\alpha_{18})
(\alpha_{8}\alpha_{14})(\alpha_{11}\alpha_{20})
(\alpha_{12}\alpha_{23})(\alpha_{16}\alpha_{19})]
$$
with orbits
$
\{\alpha_{1},\alpha_{18},\alpha_{9},\alpha_{7},\alpha_{21},
\alpha_{4},\alpha_{20},\alpha_{12},
\alpha_{6},\alpha_{11},\alpha_{5},\alpha_{23},\alpha_{16},
\alpha_{19},\alpha_{8},\alpha_{14}\},
\{\alpha_{2},\alpha_{13},\alpha_{10}\},
\newline
\{\alpha_{3},\alpha_{22}\},
\{\alpha_{17},\alpha_{24}\}$;
$$
H_{76,3}=[
(\alpha_{1}\alpha_{19}\alpha_{6}\alpha_{5})
(\alpha_{3}\alpha_{8}\alpha_{15}\alpha_{7})
(\alpha_{4}\alpha_{14})
(\alpha_{9}\alpha_{22}\alpha_{23}\alpha_{10})
(\alpha_{11}\alpha_{21}\alpha_{20}\alpha_{16})
(\alpha_{12}\alpha_{18}),
$$
$$
(\alpha_{2}\alpha_{13})(\alpha_{3}\alpha_{22})
(\alpha_{4}\alpha_{9})(\alpha_{7}\alpha_{18})
(\alpha_{8}\alpha_{14})(\alpha_{11}\alpha_{20})
(\alpha_{12}\alpha_{23})(\alpha_{16}\alpha_{19})]
$$
with orbits
$
\{\alpha_{1},\alpha_{19},\alpha_{6},\alpha_{16},\alpha_{5},
\alpha_{11},\alpha_{21},\alpha_{20}\},
\{\alpha_{2},\alpha_{13}\},
\{\alpha_{3},\alpha_{8},\alpha_{22},\alpha_{15},\alpha_{14},
\alpha_{23},\alpha_{7},\alpha_{4},\alpha_{10},
\newline
\alpha_{12},\alpha_{18},\alpha_{9}\}.
$

\medskip

{\bf n=75,} $H\cong 4^2{\frak A}_4$ ($|H|=192$, $i=1023$):
$\rk N_H=18$, $(N_H)^\ast/N_H\cong (\bz/8\bz)^2\times
(\bz/2\bz)^2$ and
$\det(K((q_{N_H})_2) \equiv \pm 2^8 \mod (\bz_2^\ast)^2$.
$$
H_{75,1}=[
(\alpha_{3}\alpha_{16})(\alpha_{4}\alpha_{5})
(\alpha_{6}\alpha_{21})(\alpha_{10}\alpha_{20})
(\alpha_{11}\alpha_{12})(\alpha_{13}\alpha_{17})
(\alpha_{14}\alpha_{22})(\alpha_{23}\alpha_{24}),\
$$
$$
(\alpha_{1}\alpha_{9}\alpha_{7})
(\alpha_{3}\alpha_{21}\alpha_{22})
(\alpha_{4}\alpha_{14}\alpha_{24})
(\alpha_{5}\alpha_{10}\alpha_{16})
(\alpha_{6}\alpha_{20}\alpha_{23})
(\alpha_{12}\alpha_{17}\alpha_{13}),\
$$
$$
(\alpha_{1}\alpha_{6}\alpha_{22})
(\alpha_{3}\alpha_{24}\alpha_{16})
(\alpha_{4}\alpha_{20}\alpha_{9})
(\alpha_{5}\alpha_{10}\alpha_{7})
(\alpha_{11}\alpha_{13}\alpha_{12})
(\alpha_{14}\alpha_{18}\alpha_{21})]
$$
with orbits
$
\{\alpha_{1},\alpha_{9},\alpha_{6},\alpha_{7},\alpha_{4},
\alpha_{21},\alpha_{20},
\alpha_{22},\alpha_{5},\alpha_{14},\alpha_{10},\alpha_{23},
\alpha_{3},\alpha_{24},\alpha_{18},\alpha_{16}\},
\{\alpha_{11},\alpha_{12},\alpha_{13},
\newline
\alpha_{17}\}$.

\medskip

{\bf n=74,} $H\cong L_2(7)$ ($|H|=168$, $i=42$):
$\rk N_H=19$ and $(N_H)^\ast/N_H \cong \bz/28\bz \times \bz/7\bz$.
$$
H_{74,1}=[
(\alpha_{6}\alpha_{8})(\alpha_{9}\alpha_{24})
(\alpha_{10}\alpha_{22})(\alpha_{11}\alpha_{14})
(\alpha_{12}\alpha_{13})(\alpha_{15}\alpha_{21})
(\alpha_{17}\alpha_{20})(\alpha_{18}\alpha_{23}),\
$$
$$
(\alpha_{1}\alpha_{6}\alpha_{22})
(\alpha_{3}\alpha_{24}\alpha_{16})
(\alpha_{4}\alpha_{20}\alpha_{9})
(\alpha_{5}\alpha_{10}\alpha_{7})
(\alpha_{11}\alpha_{13}\alpha_{12})
(\alpha_{14}\alpha_{18}\alpha_{21})]
$$
with orbits
$
\{\alpha_{1},\alpha_{6},\alpha_{8},\alpha_{22},
\alpha_{10},\alpha_{7},\alpha_{5}\},
\{\alpha_{3},\alpha_{24},\alpha_{9},\alpha_{16},
\alpha_{4},\alpha_{20},\alpha_{17}\},
\{\alpha_{11},\alpha_{14},\alpha_{13},
\newline
\alpha_{18},\alpha_{12},\alpha_{23},\alpha_{21},\alpha_{15}\};
$
$$
H_{74,2}=[
(\alpha_{5}\alpha_{11})(\alpha_{6}\alpha_{10})
(\alpha_{7}\alpha_{17})(\alpha_{12}\alpha_{19})
(\alpha_{13}\alpha_{24})(\alpha_{14}\alpha_{15})
(\alpha_{16}\alpha_{22})(\alpha_{18}\alpha_{20}),\
$$
$$
(\alpha_{1}\alpha_{6}\alpha_{22})
(\alpha_{3}\alpha_{24}\alpha_{16})
(\alpha_{4}\alpha_{20}\alpha_{9})
(\alpha_{5}\alpha_{10}\alpha_{7})
(\alpha_{11}\alpha_{13}\alpha_{12})
(\alpha_{14}\alpha_{18}\alpha_{21})]
$$
with orbits
$
\{\alpha_{1},\alpha_{6},\alpha_{10},\alpha_{22},\alpha_{7},\alpha_{16},\alpha_{17},
\alpha_{5},\alpha_{3},\alpha_{11},\alpha_{24},\alpha_{13},\alpha_{12},\alpha_{19}\},
\{\alpha_{4},\alpha_{20},\alpha_{18},\alpha_{9},
\newline
\alpha_{21},\alpha_{14},\alpha_{15}\}.
$

\medskip

{\bf n=73,} $H\cong 2^4D_{10}$ ($|H|=160$, $i=234$):
$$
H_{73,1}=[
(\alpha_{1}\alpha_{4}\alpha_{20}\alpha_{9}\alpha_{19})
(\alpha_{3}\alpha_{5}\alpha_{24}\alpha_{16}\alpha_{12})
(\alpha_{7}\alpha_{21}\alpha_{17}\alpha_{15}\alpha_{13})
(\alpha_{8}\alpha_{11}\alpha_{23}\alpha_{14}\alpha_{10}),
$$
$$
(\alpha_{4}\alpha_{13})(\alpha_{5}\alpha_{12})
(\alpha_{6}\alpha_{17})(\alpha_{7}\alpha_{9})
(\alpha_{8}\alpha_{15})(\alpha_{11}\alpha_{21})
(\alpha_{14}\alpha_{20})(\alpha_{16}\alpha_{24})]
$$
with $Clos(H_{73,1})=H_{81,1}$ above;
$$
H_{73,2}=[
(\alpha_{1}\alpha_{6}\alpha_{4}\alpha_{24}\alpha_{15})
(\alpha_{5}\alpha_{8}\alpha_{19}\alpha_{17}\alpha_{9})
(\alpha_{7}\alpha_{12}\alpha_{11}\alpha_{23}\alpha_{20})
(\alpha_{10}\alpha_{14}\alpha_{13}\alpha_{16}\alpha_{21}),
$$
$$
(\alpha_{1}\alpha_{9}\alpha_{23}\alpha_{13})
(\alpha_{4}\alpha_{10}\alpha_{7}\alpha_{19})
(\alpha_{5}\alpha_{15}\alpha_{24}\alpha_{8})
(\alpha_{6}\alpha_{14})
(\alpha_{11}\alpha_{12}\alpha_{21}\alpha_{16})
(\alpha_{17}\alpha_{20})]
$$
with $Clos(H_{73,2})=H_{81,2}$ above.

\medskip

{\bf n=72,} $H\cong {\frak A}_4^2$ ($|H|=144$, $i=184$):
$$
H_{72,1}=[
(\alpha_{1}\alpha_{4}\alpha_{23}\alpha_{7}\alpha_{21}\alpha_{11})
(\alpha_{2}\alpha_{10}\alpha_{13})
(\alpha_{3}\alpha_{15}\alpha_{22})
(\alpha_{5}\alpha_{9}\alpha_{14}\alpha_{18}\alpha_{6}\alpha_{19})
(\alpha_{8}\alpha_{20})(\alpha_{12}\alpha_{16}),
$$
$$
(\alpha_{1}\alpha_{4}\alpha_{5}\alpha_{21}\alpha_{7}\alpha_{6})
(\alpha_{2}\alpha_{10}\alpha_{13})
(\alpha_{3}\alpha_{22}\alpha_{15})
(\alpha_{8}\alpha_{23}\alpha_{16}\alpha_{14}\alpha_{12}\alpha_{11})
(\alpha_{9}\alpha_{18})(\alpha_{19}\alpha_{20})]
$$
with $Clos(H_{72,1})=H_{78,1}$ above;
$$
H_{72,2}=[
(\alpha_{2}\alpha_{10})
(\alpha_{3}\alpha_{17}\alpha_{22})
(\alpha_{4}\alpha_{8}\alpha_{11}\alpha_{21}\alpha_{12}\alpha_{14})
(\alpha_{5}\alpha_{18}\alpha_{7})
(\alpha_{6}\alpha_{20}\alpha_{19}\alpha_{9}\alpha_{16}\alpha_{23})
(\alpha_{13}\alpha_{24}),
$$
$$
(\alpha_{1}\alpha_{5})
(\alpha_{3}\alpha_{17}\alpha_{22})
(\alpha_{4}\alpha_{19}\alpha_{12}\alpha_{9}\alpha_{11}\alpha_{8})
(\alpha_{6}\alpha_{23}\alpha_{20}\alpha_{21}\alpha_{14}\alpha_{16})
(\alpha_{7}\alpha_{18})
(\alpha_{10}\alpha_{13}\alpha_{24})]
$$
with $Clos(H_{72,2})=H_{78,2}$ above.

\medskip

{\bf n=71,} $H\cong F_{128}$ ($|H|=128$, $i=931$):
$$
H_{71,1}=[
(\alpha_{6}\alpha_{15})(\alpha_{8}\alpha_{21})
(\alpha_{9}\alpha_{23})(\alpha_{10}\alpha_{13})
(\alpha_{11}\alpha_{17})(\alpha_{12}\alpha_{22})
(\alpha_{14}\alpha_{20})(\alpha_{18}\alpha_{24}),
$$
$$
(\alpha_{4}\alpha_{7})(\alpha_{5}\alpha_{16})
(\alpha_{6}\alpha_{14})(\alpha_{8}\alpha_{11})
(\alpha_{9}\alpha_{13})(\alpha_{12}\alpha_{24})
(\alpha_{15}\alpha_{21})(\alpha_{17}\alpha_{20}),
$$
$$
(\alpha_{1}\alpha_{6}\alpha_{13}\alpha_{14}\alpha_{19}\alpha_{17}\alpha_{9}\alpha_{20})
(\alpha_{4}\alpha_{11}\alpha_{23}\alpha_{21}\alpha_{7}\alpha_{15}\alpha_{10}\alpha_{8})
(\alpha_{5}\alpha_{16})(\alpha_{12}\alpha_{22}\alpha_{24}\alpha_{18})]
$$
with $Clos(H_{71,1})=H_{80,1}$ above.

\medskip

{\bf n=70,} $H\cong {\frak S}_5$ ($|H|=120$, $i=34$):
$\rk N_H=19$ and
$(N_H)^\ast/N_H\cong \bz/60\bz \times
\bz/5\bz$.
$$
H_{70,1}=[
(\alpha_{5}\alpha_{11})(\alpha_{7}\alpha_{12})
(\alpha_{8}\alpha_{13})(\alpha_{9}\alpha_{20})
(\alpha_{10}\alpha_{23})(\alpha_{14}\alpha_{21})
(\alpha_{16}\alpha_{17})(\alpha_{19}\alpha_{22}),
$$
$$
(\alpha_{2}\alpha_{17}\alpha_{15}\alpha_{8}\alpha_{23})
(\alpha_{3}\alpha_{14}\alpha_{10}\alpha_{13}\alpha_{9})
(\alpha_{4}\alpha_{24}\alpha_{18}\alpha_{7}\alpha_{12})
(\alpha_{5}\alpha_{11}\alpha_{20}\alpha_{16}\alpha_{21})]
$$
with orbits
$
\{\alpha_{2},\alpha_{17},\alpha_{16},\alpha_{15},\alpha_{21},
\alpha_{8},\alpha_{14},\alpha_{5},\alpha_{13},\alpha_{23},
\alpha_{10},\alpha_{11},\alpha_{9},\alpha_{20},\alpha_{3}\},
\{\alpha_{4},\alpha_{24},\alpha_{18},\alpha_{7},
\newline
\alpha_{12}\},
\{\alpha_{19},\alpha_{22}\}.
$
$$
H_{70,2}=[
(\alpha_{3}\alpha_{9})(\alpha_{4}\alpha_{8})
(\alpha_{6}\alpha_{19})(\alpha_{7}\alpha_{24})
(\alpha_{10}\alpha_{13})(\alpha_{11}\alpha_{20})
(\alpha_{12}\alpha_{17})(\alpha_{14}\alpha_{22}),
$$
$$
(\alpha_{2}\alpha_{17}\alpha_{15}\alpha_{8}\alpha_{23})
(\alpha_{3}\alpha_{14}\alpha_{10}\alpha_{13}\alpha_{9})
(\alpha_{4}\alpha_{24}\alpha_{18}\alpha_{7}\alpha_{12})
(\alpha_{5}\alpha_{11}\alpha_{20}\alpha_{16}\alpha_{21})]
$$
with orbits
$
\{\alpha_{2},\alpha_{17},\alpha_{12},\alpha_{15},\alpha_{4},
\alpha_{8},\alpha_{24},\alpha_{23},\alpha_{7},\alpha_{18}\},
\{\alpha_{3},\alpha_{9},\alpha_{14},\alpha_{22},\alpha_{10},\alpha_{13}\},
\{\alpha_{5},\alpha_{11},\alpha_{20},
\newline
\alpha_{16},\alpha_{21}\},\
\{\alpha_{6},\alpha_{19}\}.
$

\medskip

{\bf n=69,} $H\cong (Q_8 * Q_8)\rtimes C_3$ ($|H|=96$, $i=204$):
$$
H_{69,1}=[
(\alpha_{3}\alpha_{14}\alpha_{7})
(\alpha_{4}\alpha_{20}\alpha_{5})
(\alpha_{6}\alpha_{16}\alpha_{13})
(\alpha_{8}\alpha_{23}\alpha_{11})
(\alpha_{9}\alpha_{22}\alpha_{17})
(\alpha_{10}\alpha_{15}\alpha_{19}),
$$
$$
(\alpha_{1}\alpha_{11}\alpha_{15}\alpha_{21}\alpha_{19}\alpha_{23})
(\alpha_{3}\alpha_{12}\alpha_{14}\alpha_{22}\alpha_{24}\alpha_{17})
(\alpha_{4}\alpha_{20}\alpha_{5})
(\alpha_{6}\alpha_{16}\alpha_{13})
(\alpha_{7}\alpha_{9})(\alpha_{8}\alpha_{10}),
$$
$$
(\alpha_{1}\alpha_{8}\alpha_{11}\alpha_{21}\alpha_{10}\alpha_{19})
(\alpha_{3}\alpha_{7}\alpha_{12}\alpha_{22}\alpha_{9}\alpha_{24})
(\alpha_{4}\alpha_{6}\alpha_{5})(\alpha_{13}\alpha_{20}\alpha_{16})
(\alpha_{14}\alpha_{17})(\alpha_{15}\alpha_{23})]
$$
with $Clos(H_{69,1})=H_{77,1}$ above.

\medskip

{\bf n=68,} $H\cong 2^3D_{12}$, ($|H|=96$, $i=195$):
$$
H_{68,1}=[
(\alpha_{1}\alpha_{5}\alpha_{7}\alpha_{18})
(\alpha_{3}\alpha_{22})
(\alpha_{4}\alpha_{19}\alpha_{21}\alpha_{14})
(\alpha_{6}\alpha_{11}\alpha_{9}\alpha_{23})
(\alpha_{8}\alpha_{16}\alpha_{20}\alpha_{12})
(\alpha_{10}\alpha_{13}),\
$$
$$
(\alpha_{1}\alpha_{4}\alpha_{5}\alpha_{21}\alpha_{7}\alpha_{6})
(\alpha_{2}\alpha_{10}\alpha_{13})
(\alpha_{3}\alpha_{22}\alpha_{15})
(\alpha_{8}\alpha_{23}\alpha_{16}\alpha_{14}\alpha_{12}\alpha_{11})
(\alpha_{9}\alpha_{18})
(\alpha_{19}\alpha_{20})]
$$
with $Clos(H_{68,1})=H_{78,1}$ above;
$$
H_{68,2}=[
(\alpha_{2}\alpha_{10}\alpha_{24}\alpha_{13})
(\alpha_{4}\alpha_{21}\alpha_{6}\alpha_{9})
(\alpha_{5}\alpha_{7})
(\alpha_{8}\alpha_{19}\alpha_{20}\alpha_{11})
(\alpha_{12}\alpha_{14}\alpha_{16}\alpha_{23})
(\alpha_{17}\alpha_{22}),
$$
$$
(\alpha_{1}\alpha_{5})
(\alpha_{3}\alpha_{17}\alpha_{22})
(\alpha_{4}\alpha_{19}\alpha_{12}\alpha_{9}\alpha_{11}\alpha_{8})
(\alpha_{6}\alpha_{23}\alpha_{20}\alpha_{21}\alpha_{14}\alpha_{16})
(\alpha_{7}\alpha_{18})
(\alpha_{10}\alpha_{13}\alpha_{24})]
$$
with $Clos(H_{68,2})=H_{78,2}$ above.

\medskip

{\bf n=67,} $H\cong 4^2D_6$, ($|H|=96$, $i=64$):
$$
H_{67,1}=[
(\alpha_{4}\alpha_{7}\alpha_{19})
(\alpha_{6}\alpha_{8}\alpha_{9})
(\alpha_{10}\alpha_{11}\alpha_{21})
(\alpha_{12}\alpha_{18}\alpha_{22})
(\alpha_{13}\alpha_{15}\alpha_{20})
(\alpha_{14}\alpha_{23}\alpha_{17}),
$$
$$
(\alpha_{1}\alpha_{6}\alpha_{13}\alpha_{14}\alpha_{19}\alpha_{17}\alpha_{9}\alpha_{20})
(\alpha_{4}\alpha_{11}\alpha_{23}\alpha_{21}\alpha_{7}\alpha_{15}\alpha_{10}\alpha_{8})
(\alpha_{5}\alpha_{16})(\alpha_{12}\alpha_{22}\alpha_{24}\alpha_{18})]
$$
with $Clos(H_{67,1})=H_{80,1}$ above.

\medskip

{\bf n=66,} $H\cong 2^4 C_6$ ($|H|=96$, $i=70$):
$$
H_{66,1}=[
(\alpha_{1}\alpha_{5}\alpha_{15}\alpha_{8}\alpha_{16}\alpha_{14})
(\alpha_{2}\alpha_{13}\alpha_{10})
(\alpha_{3}\alpha_{7}\alpha_{11}\alpha_{23}\alpha_{12}\alpha_{22})
(\alpha_{4}\alpha_{24}\alpha_{21})
(\alpha_{17}\alpha_{19})(\alpha_{18}\alpha_{20}),
$$
$$
(\alpha_{2}\alpha_{10}\alpha_{13})(\alpha_{5}\alpha_{17}\alpha_{8})
(\alpha_{7}\alpha_{23}\alpha_{22})(\alpha_{9}\alpha_{24}\alpha_{21})
(\alpha_{11}\alpha_{18}\alpha_{12})(\alpha_{15}\alpha_{19}\alpha_{16})]
$$
with $Clos(H_{66,1})=H_{76,1}$ above;
$$
H_{66,2}=[
(\alpha_{1}\alpha_{4}\alpha_{5}\alpha_{16}\alpha_{18}\alpha_{11})
(\alpha_{2}\alpha_{10}\alpha_{13})(\alpha_{3}\alpha_{22})
(\alpha_{6}\alpha_{8}\alpha_{20}\alpha_{19}\alpha_{23}\alpha_{21})
(\alpha_{9}\alpha_{14}\alpha_{12})(\alpha_{17}\alpha_{24}),
$$
$$
(\alpha_{2}\alpha_{10}\alpha_{13})(\alpha_{3}\alpha_{22})
(\alpha_{4}\alpha_{9}\alpha_{20}\alpha_{8}\alpha_{14}\alpha_{11})
(\alpha_{5}\alpha_{23}\alpha_{7}\alpha_{21}\alpha_{18}\alpha_{12})
(\alpha_{6}\alpha_{19}\alpha_{16})(\alpha_{17}\alpha_{24})]
$$
with $Clos(H_{66,2})=H_{76,2}$ above;
$$
H_{66,3}=[
(\alpha_{3}\alpha_{8}\alpha_{18})(\alpha_{4}\alpha_{22}\alpha_{23})
(\alpha_{5}\alpha_{19}\alpha_{21})(\alpha_{6}\alpha_{20}\alpha_{11})
(\alpha_{7}\alpha_{12}\alpha_{15})(\alpha_{9}\alpha_{14}\alpha_{10}),
$$
$$
(\alpha_{1}\alpha_{5}\alpha_{20}\alpha_{21}\alpha_{6}\alpha_{16})
(\alpha_{2}\alpha_{13})
(\alpha_{3}\alpha_{4}\alpha_{7}\alpha_{22}\alpha_{18}\alpha_{9})
(\alpha_{8}\alpha_{15}\alpha_{12})(\alpha_{10}\alpha_{14}\alpha_{23})
(\alpha_{11}\alpha_{19})]
$$
with $Clos(H_{66,3})=H_{76,3}$ above.

\medskip

{\bf n=65,} $H\cong2^4D_6$ ($|H|=96$, $i=227$):
$\rk N_H=18$,
$(N_H)^\ast/N_H\cong \bz/24\bz \times
\bz/4\bz\times (\bz/2\bz)^2$ and
$\det(K((q_{N_H})_2)\equiv \pm 2^7\cdot 3 \mod (\bz_2^\ast)^2$.
$$
H_{65,1}=[
(\alpha_{1}\alpha_{6}\alpha_{21})(\alpha_{4}\alpha_{17}\alpha_{14})
(\alpha_{7}\alpha_{15}\alpha_{8})(\alpha_{9}\alpha_{10}\alpha_{23})
(\alpha_{11}\alpha_{20}\alpha_{19})(\alpha_{12}\alpha_{18}\alpha_{24}),
$$
$$
(\alpha_{1}\alpha_{11}\alpha_{23}\alpha_{8})
(\alpha_{4}\alpha_{6}\alpha_{13}\alpha_{20})
(\alpha_{5}\alpha_{16})
(\alpha_{7}\alpha_{17}\alpha_{9}\alpha_{14})
(\alpha_{10}\alpha_{21}\alpha_{19}\alpha_{15})
(\alpha_{12}\alpha_{24})]
$$
with orbits
$
\{\alpha_{1},\alpha_{19},\alpha_{6},\alpha_{7},\alpha_{4},
\alpha_{17},\alpha_{20},\alpha_{11},\alpha_{14},
\alpha_{21},\alpha_{15},\alpha_{23},
\alpha_{8},\alpha_{10},\alpha_{9},\alpha_{13}\},
\{\alpha_{5},\alpha_{16}\}, \newline
\{\alpha_{12},\alpha_{24},\alpha_{18}\}$;
$$
H_{65,2}=[
(\alpha_{1}\alpha_{10}\alpha_{19})(\alpha_{4}\alpha_{17}\alpha_{24})
(\alpha_{5}\alpha_{7}\alpha_{6})(\alpha_{9}\alpha_{20}\alpha_{16}),
(\alpha_{11}\alpha_{15}\alpha_{21})(\alpha_{12}\alpha_{13}\alpha_{14}),
$$
$$
(\alpha_{1}\alpha_{11}\alpha_{23}\alpha_{8})
(\alpha_{4}\alpha_{6}\alpha_{13}\alpha_{20})
(\alpha_{5}\alpha_{16})
(\alpha_{7}\alpha_{17}\alpha_{9}\alpha_{14})
(\alpha_{10}\alpha_{21}\alpha_{19}\alpha_{15})
(\alpha_{12}\alpha_{24})];
$$
with orbits
$
\{\alpha_{1},\alpha_{11},\alpha_{10},\alpha_{23},\alpha_{19},
\alpha_{15},\alpha_{8},\alpha_{21}\},
\{\alpha_{4},\alpha_{17},\alpha_{7},\alpha_{9},\alpha_{24},\alpha_{6},
\alpha_{20},\alpha_{13},\alpha_{5},\alpha_{16},
\newline
\alpha_{14},\alpha_{12}\}$.
$$
H_{65,3}=[
(\alpha_{1}\alpha_{7}\alpha_{17})(\alpha_{3}\alpha_{22}\alpha_{5})
(\alpha_{4}\alpha_{23}\alpha_{20})(\alpha_{6}\alpha_{14}\alpha_{8})
(\alpha_{9}\alpha_{13}\alpha_{11})(\alpha_{10}\alpha_{19}\alpha_{21}),
$$
$$
(\alpha_{1}\alpha_{11}\alpha_{23}\alpha_{8})
(\alpha_{4}\alpha_{6}\alpha_{13}\alpha_{20})
(\alpha_{5}\alpha_{16})
(\alpha_{7}\alpha_{17}\alpha_{9}\alpha_{14})
(\alpha_{10}\alpha_{21}\alpha_{19}\alpha_{15})
(\alpha_{12}\alpha_{24})];
$$
with orbits
$
\{\alpha_{1},\alpha_{7},\alpha_{23},\alpha_{8},\alpha_{11},\alpha_{17},\alpha_{20},\alpha_{9},
\alpha_{6},\alpha_{4},\alpha_{14},\alpha_{13}\},
\{\alpha_{3},\alpha_{22},\alpha_{16},\alpha_{5}\},
\{\alpha_{10},\alpha_{19},
\newline
\alpha_{15},\alpha_{21}\},
\{\alpha_{12},\alpha_{24}\}
$.
$$
H_{65,4}=[
(\alpha_{1}\alpha_{21}\alpha_{23})(\alpha_{3}\alpha_{22}\alpha_{12})
(\alpha_{4}\alpha_{20}\alpha_{13})(\alpha_{5}\alpha_{18}\alpha_{16})
(\alpha_{8}\alpha_{11}\alpha_{10})(\alpha_{9}\alpha_{14}\alpha_{17}),
$$
$$
(\alpha_{1}\alpha_{11}\alpha_{23}\alpha_{8})
(\alpha_{4}\alpha_{6}\alpha_{13}\alpha_{20})
(\alpha_{5}\alpha_{16})
(\alpha_{7}\alpha_{17}\alpha_{9}\alpha_{14})
(\alpha_{10}\alpha_{21}\alpha_{19}\alpha_{15})
(\alpha_{12}\alpha_{24})]
$$
with orbits
$
\{\alpha_{1},\alpha_{8},\alpha_{21},\alpha_{23},\alpha_{15},
\alpha_{11},\alpha_{10},\alpha_{19}\},
\{\alpha_{3},\alpha_{22},\alpha_{24},\alpha_{12}\},
\{\alpha_{4},\alpha_{13},\alpha_{20},\alpha_{6}\},
\{\alpha_{5},\alpha_{16},
\newline
\alpha_{18}\},\{\alpha_{7},\alpha_{17},\alpha_{14},\alpha_{9},\}
$.

\medskip

{\bf n=64,} $H\cong 2^4 C_5$ ($|H|=80$, $i=49$):
$$
H_{64,1}=[
(\alpha_{3}\alpha_{5}\alpha_{16}\alpha_{12}\alpha_{24})
(\alpha_{4}\alpha_{11}\alpha_{15}\alpha_{9}\alpha_{23})
(\alpha_{6}\alpha_{20}\alpha_{21}\alpha_{10}\alpha_{8})
(\alpha_{7}\alpha_{13}\alpha_{17}\alpha_{19}\alpha_{14}),
$$
$$
(\alpha_{1}\alpha_{13})(\alpha_{4}\alpha_{23})
(\alpha_{6}\alpha_{21})(\alpha_{7}\alpha_{10})
(\alpha_{8}\alpha_{17})(\alpha_{9}\alpha_{19})
(\alpha_{11}\alpha_{14})(\alpha_{15}\alpha_{20})]
$$
with $Clos(H_{64,1})=H_{81,1}$ above;
$$
H_{64,2}=[
(\alpha_{1}\alpha_{4}\alpha_{5}\alpha_{14}\alpha_{8})
(\alpha_{6}\alpha_{21}\alpha_{19}\alpha_{9}\alpha_{16})
(\alpha_{7}\alpha_{12}\alpha_{20}\alpha_{11}\alpha_{23})
(\alpha_{10}\alpha_{13}\alpha_{24}\alpha_{17}\alpha_{15}),
$$
$$
(\alpha_{4}\alpha_{13})(\alpha_{5}\alpha_{12})
(\alpha_{6}\alpha_{17})(\alpha_{7}\alpha_{9})
(\alpha_{8}\alpha_{15})(\alpha_{11}\alpha_{21})
(\alpha_{14}\alpha_{20})(\alpha_{16}\alpha_{24})]
$$
with $Clos(H_{64,2})=H_{81,2}$ above.

\medskip

{\bf n=63,} $H\cong M_9$ ($|H|=72$, $i=41$):
$\rk N_H=19$ and $(N_H)^\ast/N_H\cong \bz/18\bz \times
\bz/6\bz\times \bz/2\bz$.
$$
H_{63,1}=[
(\alpha_{1}\alpha_{9}\alpha_{17}\alpha_{14})
(\alpha_{4}\alpha_{19})
(\alpha_{5}\alpha_{15}\alpha_{6}\alpha_{21})
(\alpha_{7}\alpha_{11}\alpha_{23}\alpha_{22})
(\alpha_{10}\alpha_{13}\alpha_{24}\alpha_{16})
(\alpha_{12}\alpha_{20}),
$$
$$
(\alpha_{1}\alpha_{11}\alpha_{23}\alpha_{8})
(\alpha_{4}\alpha_{6}\alpha_{13}\alpha_{20})
(\alpha_{5}\alpha_{16})
(\alpha_{7}\alpha_{17}\alpha_{9}\alpha_{14})
(\alpha_{10}\alpha_{21}\alpha_{19}\alpha_{15})
(\alpha_{12}\alpha_{24})]
$$
with orbits
$
\{\alpha_{1},\alpha_{23},\alpha_{9},\alpha_{17},\alpha_{22},
\alpha_{7},\alpha_{11},\alpha_{14},\alpha_{8}\},
\{\alpha_{4},\alpha_{20},\alpha_{19},\alpha_{16},\alpha_{12},
\alpha_{6},\alpha_{24},\alpha_{10},\alpha_{13},
\newline
\alpha_{15},\alpha_{21},\alpha_{5}\}$.

\medskip

{\bf n=62,} $H\cong N_{72}$ ($|H|=72$, $i=40$):
$\rk N_H=19$,
$(N_H)^\ast/N_H\cong \bz/36\bz \times
(\bz/3\bz)^2$ and
$\det(K((q_{N_H})_3))\equiv - 2^2\cdot 3^4\mod (\bz_3^\ast)^2$.
$$
H_{62,1}=[
(\alpha_{1}\alpha_{4})(\alpha_{2}\alpha_{15})
(\alpha_{7}\alpha_{21})(\alpha_{8}\alpha_{20})
(\alpha_{10}\alpha_{22})(\alpha_{11}\alpha_{16})
(\alpha_{12}\alpha_{23})(\alpha_{14}\alpha_{19}),
$$
$$
(\alpha_{1}\alpha_{19})
(\alpha_{2}\alpha_{18}\alpha_{3}\alpha_{23}\alpha_{10}\alpha_{22})
(\alpha_{4}\alpha_{20}\alpha_{14})(\alpha_{5}\alpha_{12}\alpha_{15})
(\alpha_{6}\alpha_{16}\alpha_{9}\alpha_{24}\alpha_{17}\alpha_{11})
(\alpha_{7}\alpha_{21})]
$$
with orbits
$
\{\alpha_{1},\alpha_{4},\alpha_{19},\alpha_{8},\alpha_{20},\alpha_{14}\},
\{\alpha_{2},\alpha_{15},\alpha_{10},\alpha_{18},
\alpha_{12},\alpha_{22},\alpha_{23},\alpha_{5},\alpha_{3}\},
\{\alpha_{6},\alpha_{11},\alpha_{9},\alpha_{16},
\newline
\alpha_{17},\alpha_{24}\},
\{\alpha_{7},\alpha_{21}\}.
$

\medskip

{\bf n=61,} $H\cong {\frak A}_{4,3}$ ($|H|=72$, $i=43$):
$\rk N_H=18$ and
$(N_H)^\ast/N_H\cong (\bz/12\bz)^2 \times \bz/3\bz$.
$$
H_{61,1}=[
(\alpha_{1}\alpha_{10}\alpha_{23})(\alpha_{2}\alpha_{4}\alpha_{11})
(\alpha_{3}\alpha_{21}\alpha_{20})(\alpha_{8}\alpha_{9}\alpha_{17})
(\alpha_{13}\alpha_{22}\alpha_{14})(\alpha_{16}\alpha_{19}\alpha_{24}),
$$
$$
(\alpha_{2}\alpha_{14})(\alpha_{3}\alpha_{17})(\alpha_{4}\alpha_{8})(\alpha_{7}\alpha_{15})
(\alpha_{9}\alpha_{13})(\alpha_{10}\alpha_{12})(\alpha_{11}\alpha_{21})(\alpha_{16}\alpha_{19}),
$$
$$
(\alpha_{2}\alpha_{13})(\alpha_{3}\alpha_{22})(\alpha_{4}\alpha_{9})(\alpha_{7}\alpha_{18})
(\alpha_{8}\alpha_{14})(\alpha_{11}\alpha_{20})(\alpha_{12}\alpha_{23})(\alpha_{16}\alpha_{19})]
$$
with orbits
$
\{\alpha_{1},\alpha_{10},\alpha_{23},\alpha_{12}\},
\{\alpha_{2},\alpha_{4},\alpha_{21},\alpha_{13},\alpha_{11}\alpha_{8},
\alpha_{20},\alpha_{17},\alpha_{22}\alpha_{14},\alpha_{9},\alpha_{3}\},
\{\alpha_{7},\alpha_{18},\alpha_{15}\}, \newline
\{\alpha_{16},\alpha_{19},\alpha_{24}\}.
$

\medskip

{\bf n=60,} $H\cong \Gamma_{26}a_2$ ($|H|=64$, $i=136$):
$$
H_{60,1}=[
(\alpha_{4}\alpha_{7})(\alpha_{5}\alpha_{16})
(\alpha_{6}\alpha_{8})(\alpha_{10}\alpha_{23})
(\alpha_{11}\alpha_{14})(\alpha_{15}\alpha_{20})
(\alpha_{17}\alpha_{21})(\alpha_{18}\alpha_{22}),
$$
$$
(\alpha_{1}\alpha_{6}\alpha_{23}\alpha_{8}\alpha_{19}\alpha_{17}\alpha_{10}\alpha_{21})
(\alpha_{4}\alpha_{11}\alpha_{13}\alpha_{20}\alpha_{7}\alpha_{15}\alpha_{9}\alpha_{14})
(\alpha_{5}\alpha_{16})
(\alpha_{12}\alpha_{18}\alpha_{24}\alpha_{22}),
$$
$$
(\alpha_{1}\alpha_{6}\alpha_{13}\alpha_{14}\alpha_{19}\alpha_{17}\alpha_{9}\alpha_{20})
(\alpha_{4}\alpha_{11}\alpha_{23}\alpha_{21}\alpha_{7}\alpha_{15}\alpha_{10}\alpha_{8})
(\alpha_{5}\alpha_{16})
(\alpha_{12}\alpha_{22}\alpha_{24}\alpha_{18})]
$$
with $Clos(H_{60,1})=H_{80,1}$ above.

\medskip

{\bf n=59,} $H\cong \Gamma_{23}a_2$ ($|H|=64$, $i=35$):
$$
H_{59,1}=
(\alpha_{4}\alpha_{19})(\alpha_{5}\alpha_{16})
(\alpha_{6}\alpha_{13}\alpha_{17}\alpha_{23})
(\alpha_{8}\alpha_{14}\alpha_{20}\alpha_{21})
(\alpha_{9}\alpha_{11}\alpha_{10}\alpha_{15})
(\alpha_{12}\alpha_{22}\alpha_{18}\alpha_{24}),
$$
$$
(\alpha_{1}\alpha_{6}\alpha_{7}\alpha_{11})
(\alpha_{4}\alpha_{15}\alpha_{19}\alpha_{17})
(\alpha_{8}\alpha_{10}\alpha_{20}\alpha_{13})
(\alpha_{9}\alpha_{21}\alpha_{23}\alpha_{14})
(\alpha_{12}\alpha_{24})(\alpha_{18}\alpha_{22})]
$$
has $Clos(H_{59,1})=H_{80,1}$ above.

\medskip

{\bf n=58,} $H\cong \Gamma_{22}a_1$ ($|H|=64$, $i=32$):
$$
H_{58,1,1}=[
(\alpha_{1}\alpha_{6}\alpha_{7}\alpha_{11})
(\alpha_{4}\alpha_{15}\alpha_{19}\alpha_{17})
(\alpha_{8}\alpha_{10}\alpha_{20}\alpha_{13})
(\alpha_{9}\alpha_{21}\alpha_{23}\alpha_{14})
(\alpha_{12}\alpha_{24})(\alpha_{18}\alpha_{22}),
$$
$$
(\alpha_{1}\alpha_{6}\alpha_{13}\alpha_{14}\alpha_{19}\alpha_{17}\alpha_{9}\alpha_{20})
(\alpha_{4}\alpha_{11}\alpha_{23}\alpha_{21}\alpha_{7}\alpha_{15}\alpha_{10}\alpha_{8})
(\alpha_{5}\alpha_{16})
(\alpha_{12}\alpha_{22}\alpha_{24}\alpha_{18})]
$$
has $Clos(H_{58,1,1})=H_{80,1}$ above;
$$
H_{58,1,2}=[
(\alpha_{1}\alpha_{9}\alpha_{7}\alpha_{23})
(\alpha_{4}\alpha_{10}\alpha_{19}\alpha_{13})
(\alpha_{6}\alpha_{14}\alpha_{11}\alpha_{21})
(\alpha_{8}\alpha_{17}\alpha_{20}\alpha_{15})
(\alpha_{12}\alpha_{22})(\alpha_{18}\alpha_{24}),
$$
$$
(\alpha_{1}\alpha_{6}\alpha_{13}\alpha_{14}\alpha_{19}\alpha_{17}\alpha_{9}\alpha_{20})
(\alpha_{4}\alpha_{11}\alpha_{23}\alpha_{21}\alpha_{7}\alpha_{15}\alpha_{10}\alpha_{8})
(\alpha_{5}\alpha_{16})
(\alpha_{12}\alpha_{22}\alpha_{24}\alpha_{18})]
$$
has $Clos(H_{58,1,2})=H_{80,1}$ above.

\medskip

{\bf n=57,} $H\cong \Gamma_{13}a_1$ ($|H|=64$, $i=242$):
$$
H_{57,1}=[
(\alpha_{1}\alpha_{3}\alpha_{7}\alpha_{23})
(\alpha_{4}\alpha_{20}\alpha_{6}\alpha_{10})
(\alpha_{5}\alpha_{14}\alpha_{21}\alpha_{22})
(\alpha_{9}\alpha_{24}\alpha_{18}\alpha_{16})
(\alpha_{11}\alpha_{13})(\alpha_{12}\alpha_{17}),
$$
$$
(\alpha_{1}\alpha_{10}\alpha_{7}\alpha_{20})
(\alpha_{3}\alpha_{6}\alpha_{23}\alpha_{4})
(\alpha_{5}\alpha_{24}\alpha_{21}\alpha_{16})
(\alpha_{9}\alpha_{22}\alpha_{18}\alpha_{14})
(\alpha_{11}\alpha_{12})(\alpha_{13}\alpha_{17}),
$$
$$
(\alpha_{3}\alpha_{24})(\alpha_{4}\alpha_{6})
(\alpha_{5}\alpha_{21})(\alpha_{10}\alpha_{14})
(\alpha_{11}\alpha_{17})(\alpha_{12}\alpha_{13})
(\alpha_{16}\alpha_{23})(\alpha_{20}\alpha_{22}),
$$
$$
(\alpha_{3}\alpha_{16})(\alpha_{4}\alpha_{5})
(\alpha_{6}\alpha_{21})(\alpha_{10}\alpha_{20})
(\alpha_{11}\alpha_{12})(\alpha_{13}\alpha_{17})
(\alpha_{14}\alpha_{22})(\alpha_{23}\alpha_{24})]
$$
with $Clos(H_{57,1})=H_{75,1}$ above.

\medskip

{\bf n=56,} $H\cong \Gamma_{25}a_1$ ($|H|=64$, $i=138$):
$\rk N_H=18$, $(N_H)^\ast/N_H\cong
\bz/8\bz\times (\bz/4\bz)^3$ and
$\det(K((q_{N_H})_2))\equiv \pm 2^9\mod (\bz_2^\ast)^9$.
$$
H_{56,1}=[
(\alpha_{5}\alpha_{23})(\alpha_{6}\alpha_{10})(\alpha_{7}\alpha_{9})
(\alpha_{8}\alpha_{16})(\alpha_{13}\alpha_{18})(\alpha_{14}\alpha_{24})
(\alpha_{15}\alpha_{20})(\alpha_{19}\alpha_{21}),
$$
$$
(\alpha_{1}\alpha_{14})(\alpha_{2}\alpha_{23})(\alpha_{3}\alpha_{5})
(\alpha_{6}\alpha_{18})(\alpha_{7},\alpha_{8})(\alpha_{9}\alpha_{13})
(\alpha_{10}\alpha_{16})(\alpha_{17}\alpha_{24}),
$$
$$
(\alpha_{1}\alpha_{14})(\alpha_{2}\alpha_{24})(\alpha_{3}\alpha_{5})
(\alpha_{6}\alpha_{10})(\alpha_{12}\alpha_{22})(\alpha_{16}\alpha_{18})
(\alpha_{17}\alpha_{23})(\alpha_{19}\alpha_{20})]
$$
with orbits
$
 \{\alpha_{1}, \alpha_{14}, \alpha_{24}, \alpha_{17}, \alpha_{2}, 
\alpha_{23}, \alpha_{5}, \alpha_{3} \}, \{ \alpha_{6}, \alpha_{10}, \alpha_{18},
\alpha_{16}, \alpha_{13}, \alpha_{8}, \alpha_{9}, \alpha_{7} \},
\{ \alpha_{12}, \alpha_{22} \},
\newline
\{ \alpha_{15}, \alpha_{20}, \alpha_{19}, \alpha_{21}\}$;

$$
H_{56,2}=[
(\alpha_{2}\alpha_{3})(\alpha_{5}\alpha_{6})(\alpha_{7}\alpha_{18})
(\alpha_{8}\alpha_{23})(\alpha_{10}\alpha_{20})(\alpha_{11}\alpha_{17})
(\alpha_{15}\alpha_{16})(\alpha_{19}\alpha_{24}),
$$
$$
(\alpha_{1}\alpha_{14})(\alpha_{2}\alpha_{23})(\alpha_{3}\alpha_{5})
(\alpha_{6}\alpha_{18})(\alpha_{7}\alpha_{8})(\alpha_{9}\alpha_{13})
(\alpha_{10}\alpha_{16})(\alpha_{17}\alpha_{24}),
$$
$$
(\alpha_{1}\alpha_{14})(\alpha_{2}\alpha_{24})(\alpha_{3}\alpha_{5})
(\alpha_{6}\alpha_{10})(\alpha_{12}\alpha_{22})(\alpha_{16}\alpha_{18})
(\alpha_{17}\alpha_{23})(\alpha_{19}\alpha_{20})]
$$
with orbits
$
 \{\alpha_{1}, \alpha_{14}\}, \{\alpha_{2}, \alpha_{3}, \alpha_{23}, 
\alpha_{24}, \alpha_{5},\alpha_{8}, \alpha_{17}, \alpha_{19}, \alpha_{6},
\alpha_{7}, \alpha_{11}, \alpha_{20}, \alpha_{18}, 
\alpha_{10},\alpha_{16},\alpha_{15}\},
\newline
\{\alpha_{9},\alpha_{13}\},
\{\alpha_{12}, \alpha_{22} \}$.

\medskip

{\bf n=55,} $H\cong {\frak A}_5$ ($|H|=60$, $i=5$):
$\rk N_H=18$ and $(N_H)^\ast/N_H\cong \bz/30\bz\times \bz/10\bz$.
$$
H_{55,1}=[
(\alpha_{2}\alpha_{13}\alpha_{18}\alpha_{7}\alpha_{5})
(\alpha_{3}\alpha_{22}\alpha_{12}\alpha_{11}\alpha_{19})
(\alpha_{4}\alpha_{6}\alpha_{15}\alpha_{9}\alpha_{14})
(\alpha_{16}\alpha_{21}\alpha_{20}\alpha_{17}\alpha_{23}),
$$
$$
(\alpha_{2}\alpha_{13})(\alpha_{3}\alpha_{22})
(\alpha_{4}\alpha_{9})(\alpha_{7}\alpha_{18})
(\alpha_{8}\alpha_{14})(\alpha_{11}\alpha_{20})
(\alpha_{12}\alpha_{23})(\alpha_{16}\alpha_{19})]
$$
with orbits
$
\{\alpha_{2},\alpha_{13},\alpha_{18},\alpha_{7},\alpha_{5}\},
\{\alpha_{3},\alpha_{22},\alpha_{12},\alpha_{11},\alpha_{23},
\alpha_{19},\alpha_{20},\alpha_{16},\alpha_{17},\alpha_{21}\},
\{\alpha_{4},\alpha_{6},\alpha_{9},
\newline
\alpha_{15},\alpha_{14},\alpha_{8}\}$;
$$
H_{55,2}=[
(\alpha_{2}\alpha_{8}\alpha_{7}\alpha_{17}\alpha_{11})
(\alpha_{3}\alpha_{14}\alpha_{10}\alpha_{13}\alpha_{12})
(\alpha_{4}\alpha_{19}\alpha_{16}\alpha_{15}\alpha_{9})
(\alpha_{6}\alpha_{23}\alpha_{20}\alpha_{18}\alpha_{22}),
$$
$$
(\alpha_{2}\alpha_{13})(\alpha_{3}\alpha_{22})
(\alpha_{4}\alpha_{9})(\alpha_{7}\alpha_{18})
(\alpha_{8}\alpha_{14})(\alpha_{11}\alpha_{20})
(\alpha_{12}\alpha_{23})(\alpha_{16}\alpha_{19})]
$$
with orbits
$
\{\alpha_{2},\alpha_{8},\alpha_{13},\alpha_{7},\alpha_{14},
\alpha_{12},\alpha_{17},\alpha_{18},\alpha_{10},\alpha_{3},
\alpha_{23},\alpha_{11},\alpha_{22},\alpha_{20},\alpha_{6}\},
\{\alpha_{4},\alpha_{19},\alpha_{9},
\newline
\alpha_{16},\alpha_{15}\}$.

\medskip

{\bf n=54,} $H\cong T_{48}$ ($|H|=48$, $i=29$):
$\rk N_H=19$ and $(N_H)^\ast/N_H\cong
\bz/24\bz\times \bz/8\bz\times \bz/2\bz$.
$$
H_{54,1}=[
(\alpha_{1}\alpha_{13}\alpha_{22})
(\alpha_{3}\alpha_{10}\alpha_{7})
(\alpha_{4}\alpha_{23}\alpha_{24})
(\alpha_{5}\alpha_{9}\alpha_{19})
(\alpha_{6}\alpha_{8}\alpha_{14})
(\alpha_{11}\alpha_{17}\alpha_{21}),
$$
$$
(\alpha_{2}\alpha_{13})(\alpha_{3}\alpha_{22})
(\alpha_{4}\alpha_{9})(\alpha_{7}\alpha_{18})
(\alpha_{8}\alpha_{14})(\alpha_{11}\alpha_{20})
(\alpha_{12}\alpha_{23})(\alpha_{16}\alpha_{19})]
$$
with orbits
$
\{\alpha_{1},\alpha_{2},\alpha_{7},\alpha_{22},
\alpha_{10},\alpha_{3},\alpha_{13},\alpha_{18}\},
\{\alpha_{4},\alpha_{12},\alpha_{16},\alpha_{19},\alpha_{24},
\alpha_{9},\alpha_{5},\alpha_{23}\},\{\alpha_{6},\alpha_{8},\alpha_{14}\},
\newline
\{\alpha_{11},\alpha_{21},\alpha_{20},\alpha_{17}\}$.

\medskip

{\bf n=53,} $H\cong 2^2Q_{12}$ ($|H|=48$, $i=30$):
$$
H_{53,1}=[
(\alpha_{2}\alpha_{10}\alpha_{13})
(\alpha_{3}\alpha_{15}\alpha_{22})
(\alpha_{4}\alpha_{16}\alpha_{11})
(\alpha_{6}\alpha_{8}\alpha_{14})
(\alpha_{9}\alpha_{20}\alpha_{19})
(\alpha_{12}\alpha_{23}\alpha_{21}),
$$
$$
(\alpha_{1}\alpha_{4}\alpha_{14}\alpha_{20})
(\alpha_{5}\alpha_{9}\alpha_{23}\alpha_{16})
(\alpha_{6}\alpha_{19}\alpha_{12}\alpha_{7})
(\alpha_{8}\alpha_{18}\alpha_{21}\alpha_{11})
(\alpha_{10}\alpha_{13})(\alpha_{15}\alpha_{22})]
$$
has $Clos(H_{53,1})=H_{78,1}$ above;
$$
H_{53,2}=[
(\alpha_{1}\alpha_{5}\alpha_{7})
(\alpha_{2}\alpha_{13})
(\alpha_{3}\alpha_{17}\alpha_{22})
(\alpha_{4}\alpha_{16}\alpha_{23}\alpha_{9}\alpha_{8}\alpha_{11})
(\alpha_{6}\alpha_{12}\alpha_{14}\alpha_{21}\alpha_{20}\alpha_{19})
(\alpha_{10}\alpha_{24}),
$$
$$
(\alpha_{2}\alpha_{10}\alpha_{13}\alpha_{24})
(\alpha_{4}\alpha_{8}\alpha_{9}\alpha_{16})
(\alpha_{5}\alpha_{18})
(\alpha_{6}\alpha_{12}\alpha_{21}\alpha_{20})
(\alpha_{11}\alpha_{14}\alpha_{23}\alpha_{19})
(\alpha_{17}\alpha_{22})]
$$
has $Clos(H_{53,2})=H_{78,2}$ above.

\medskip

{\bf n=52,} $H\cong 2^2(C_2\times C_6)$ ($|H|=48$, $i=49$):
$$
H_{52,1}=[
(\alpha_{1}\alpha_{8}\alpha_{21}\alpha_{5}\alpha_{12}\alpha_{6})
(\alpha_{2}\alpha_{10}\alpha_{13})(\alpha_{3}\alpha_{15}\alpha_{22})
(\alpha_{4}\alpha_{18}\alpha_{16}\alpha_{9}\alpha_{7}\alpha_{20})
(\alpha_{11}\alpha_{19})(\alpha_{14}\alpha_{23}),
$$
$$
(\alpha_{1}\alpha_{4})(\alpha_{5}\alpha_{9})
(\alpha_{6}\alpha_{18})(\alpha_{7}\alpha_{21})
(\alpha_{8}\alpha_{19})(\alpha_{11}\alpha_{12})
(\alpha_{14}\alpha_{20})(\alpha_{16}\alpha_{23})]
$$
has $Clos(H_{52,1})=H_{78,1}$ above;
$$
H_{52,2}=[
(\alpha_{2}\alpha_{13})
(\alpha_{3}\alpha_{22}\alpha_{17})
(\alpha_{4}\alpha_{23}\alpha_{12}\alpha_{9}\alpha_{11}\alpha_{20})
(\alpha_{5}\alpha_{7}\alpha_{18})
(\alpha_{6}\alpha_{14}\alpha_{16}\alpha_{21}\alpha_{19}\alpha_{8})
(\alpha_{10}\alpha_{24}),
$$
$$
(\alpha_{1}\alpha_{5})(\alpha_{2}\alpha_{10})
(\alpha_{4}\alpha_{6})(\alpha_{7}\alpha_{18})
(\alpha_{9}\alpha_{21})(\alpha_{11}\alpha_{23})
(\alpha_{13}\alpha_{24})(\alpha_{14}\alpha_{19})]
$$
has $Clos(H_{52,2})=H_{78,2}$ above.

\medskip

{\bf n=51,} $H\cong C_2\times {\frak S}_4$ ($|H|=48$, $i=48$):
$\rk N_H=18$, $(N_H)^\ast/N_H\cong
(\bz/12\bz)^2\times (\bz/2\bz)^2$ and
$\det(K((q_{N_H})_2))\equiv \pm 2^6\mod (\bz_2^\ast)^2$.
$$
H_{51,1}=[
(\alpha_{1}\alpha_{6}\alpha_{23}\alpha_{8})
(\alpha_{4}\alpha_{11}\alpha_{16}\alpha_{7})
(\alpha_{5}\alpha_{21}\alpha_{14}\alpha_{12})
(\alpha_{9}\alpha_{19}\alpha_{20}\alpha_{18})
(\alpha_{10}\alpha_{13})(\alpha_{15}\alpha_{22}),
$$
$$
(\alpha_{2}\alpha_{13})(\alpha_{3}\alpha_{22})
(\alpha_{4}\alpha_{9})(\alpha_{7}\alpha_{18})
(\alpha_{8}\alpha_{14})(\alpha_{11}\alpha_{20})
(\alpha_{12}\alpha_{23})(\alpha_{16}\alpha_{19})]
$$
with orbits
$
\{\alpha_{1},\alpha_{6},\alpha_{5},\alpha_{21},
\alpha_{23},\alpha_{12},\alpha_{8},\alpha_{14}\},
\{\alpha_{2},\alpha_{13},\alpha_{10}\},
\{\alpha_{3},\alpha_{22},\alpha_{15}\},
\{\alpha_{4},\alpha_{7},\alpha_{9},\alpha_{16},
\newline
\alpha_{11},\alpha_{18},\alpha_{20},\alpha_{19}\}$;
$$
H_{51,2}=[
(\alpha_{1}\alpha_{18}\alpha_{7}\alpha_{5})
(\alpha_{4}\alpha_{20}\alpha_{21}\alpha_{8})
(\alpha_{6}\alpha_{16}\alpha_{9}\alpha_{12})
(\alpha_{10}\alpha_{24})
(\alpha_{11}\alpha_{14}\alpha_{23}\alpha_{19})
(\alpha_{17}\alpha_{22}),
$$
$$
(\alpha_{2}\alpha_{13})(\alpha_{3}\alpha_{22})
(\alpha_{4}\alpha_{9})(\alpha_{7}\alpha_{18})
(\alpha_{8}\alpha_{14})(\alpha_{11}\alpha_{20})
(\alpha_{12}\alpha_{23})(\alpha_{16}\alpha_{19})]
$$
with orbits
$
\{\alpha_{1},\alpha_{5},\alpha_{7},\alpha_{18}\},
\{\alpha_{2},\alpha_{13}\},
\{\alpha_{3},\alpha_{22},\alpha_{17}\},
\{\alpha_{4},\alpha_{9},\alpha_{23},\alpha_{21},\alpha_{6},
\alpha_{11},\alpha_{16},\alpha_{8},\alpha_{19},
\newline
\alpha_{14},\alpha_{12},\alpha_{20}\},
\{\alpha_{10},\alpha_{24}\}$;
$$
H_{51,3}=[
(\alpha_{1}\alpha_{15}\alpha_{18}\alpha_{20})
(\alpha_{3}\alpha_{6}\alpha_{23}\alpha_{17})
(\alpha_{4}\alpha_{19})
(\alpha_{7}\alpha_{21}\alpha_{24}\alpha_{11})
(\alpha_{8}\alpha_{9}\alpha_{14}\alpha_{16})
(\alpha_{12}\alpha_{22}),
$$
$$
(\alpha_{2}\alpha_{13})(\alpha_{3}\alpha_{22})
(\alpha_{4}\alpha_{9})(\alpha_{7}\alpha_{18})
(\alpha_{8}\alpha_{14})(\alpha_{11}\alpha_{20})
(\alpha_{12}\alpha_{23})(\alpha_{16}\alpha_{19})]
$$
with orbits
$
\{\alpha_{1},\alpha_{15},\alpha_{21},\alpha_{24},
\alpha_{18},\alpha_{7},\alpha_{20},\alpha_{11}\},
\{\alpha_{2},\alpha_{13}\},
\{\alpha_{3},\alpha_{12},\alpha_{6},\alpha_{23},\alpha_{22},\alpha_{17}\},
\{\alpha_{4},\alpha_{9},
\newline
\alpha_{19},\alpha_{16},\alpha_{14},\alpha_{8}\}$;
$$
H_{51,4}=[
(\alpha_{1}\alpha_{17}\alpha_{16}\alpha_{19})
(\alpha_{4}\alpha_{21}\alpha_{13}\alpha_{10})
(\alpha_{5}\alpha_{15}\alpha_{8}\alpha_{14})
(\alpha_{6}\alpha_{24})
(\alpha_{7}\alpha_{20}\alpha_{18}\alpha_{12})
(\alpha_{11}\alpha_{23}),
$$
$$
(\alpha_{2}\alpha_{13})(\alpha_{3}\alpha_{22})
(\alpha_{4}\alpha_{9})(\alpha_{7}\alpha_{18})
(\alpha_{8}\alpha_{14})(\alpha_{11}\alpha_{20})
(\alpha_{12}\alpha_{23})(\alpha_{16}\alpha_{19})]
$$
with orbits
$
\{\alpha_{1},\alpha_{17},\alpha_{16},\alpha_{19}\},
\{\alpha_{2},\alpha_{13},\alpha_{9},\alpha_{4},\alpha_{21},\alpha_{10}\},
\{\alpha_{3},\alpha_{22}\},
\{\alpha_{5},\alpha_{15},\alpha_{8},\alpha_{14}\},
\{\alpha_{6},\alpha_{24}\},
\newline
\{\alpha_{7},\alpha_{18},\alpha_{23},\alpha_{11},\alpha_{12},\alpha_{20}\}$.

\medskip

{\bf n=50,} $H\cong 4^2C_3$ ($|H|=48$, $i=3$):
$$
H_{50,1}=[
(\alpha_{3}\alpha_{4}\alpha_{22})(\alpha_{5}\alpha_{20}\alpha_{23})
(\alpha_{6}\alpha_{10}\alpha_{16})(\alpha_{7}\alpha_{9}\alpha_{18})
(\alpha_{11}\alpha_{17}\alpha_{12})(\alpha_{14}\alpha_{24}\alpha_{21}),
$$
$$
(\alpha_{1}\alpha_{3}\alpha_{7}\alpha_{23})
(\alpha_{4}\alpha_{20}\alpha_{6}\alpha_{10})
(\alpha_{5}\alpha_{14}\alpha_{21}\alpha_{22})
(\alpha_{9}\alpha_{24}\alpha_{18}\alpha_{16})
(\alpha_{11}\alpha_{13})(\alpha_{12}\alpha_{17})]
$$
has $Clos(H_{50,1})=H_{75,1}$ above.

\medskip

{\bf n=49,} $H\cong 2^4 C_3$ ($|H|=48$, $i=50$):
$\rk N_H=17$, $(N_H)^\ast/N_H \cong
\bz/24\bz\times (\bz/2\bz)^4$ and
$\det(K((q_{N_H})_2))\equiv \pm 2^7\cdot 3\mod (\bz_2^\ast)^2$.
$$
H_{49,1}=[
(\alpha_{1}\alpha_{22}\alpha_{19})
(\alpha_{3}\alpha_{16}\alpha_{17})
(\alpha_{4}\alpha_{20}\alpha_{9})
(\alpha_{7}\alpha_{10}\alpha_{8})
(\alpha_{12}\alpha_{13}\alpha_{23})
(\alpha_{14}\alpha_{18}\alpha_{21}),
$$
$$
(\alpha_{2}\alpha_{12})(\alpha_{3}\alpha_{8})
(\alpha_{4}\alpha_{20})(\alpha_{7}\alpha_{16})
(\alpha_{9}\alpha_{11})(\alpha_{13}\alpha_{23})
(\alpha_{14}\alpha_{22})(\alpha_{18}\alpha_{19}),
$$
$$
(\alpha_{2}\alpha_{13})(\alpha_{3}\alpha_{22})
(\alpha_{4}\alpha_{9})(\alpha_{7}\alpha_{18})
(\alpha_{8}\alpha_{14})(\alpha_{11}\alpha_{20})
(\alpha_{2}\alpha_{23})(\alpha_{16}\alpha_{19})]
$$
with orbits
$
\{\alpha_{1},\alpha_{22},\alpha_{21},\alpha_{10},\alpha_{19},\alpha_{14},
\alpha_{8},\alpha_{17},\alpha_{18},\alpha_{7},\alpha_{3},\alpha_{16}\},
\{\alpha_{2},\alpha_{12},\alpha_{13},\alpha_{23}\}, \newline
\{\alpha_{4},\alpha_{20},\alpha_{11},\alpha_{9}\}$;
$$
H_{49,2}=[
(\alpha_{1}\alpha_{21}\alpha_{6})
(\alpha_{2}\alpha_{14}\alpha_{4})
(\alpha_{3}\alpha_{8}\alpha_{7})
(\alpha_{9}\alpha_{22}\alpha_{23})
(\alpha_{11}\alpha_{19}\alpha_{20})
(\alpha_{12}\alpha_{18}\alpha_{13}),
$$
$$
(\alpha_{2}\alpha_{12})(\alpha_{3}\alpha_{8})
(\alpha_{4}\alpha_{20})(\alpha_{7}\alpha_{16})
(\alpha_{9}\alpha_{11})(\alpha_{13}\alpha_{23})
(\alpha_{14}\alpha_{22})(\alpha_{18}\alpha_{19}),
$$
$$
(\alpha_{2}\alpha_{13})(\alpha_{3}\alpha_{22})
(\alpha_{4}\alpha_{9})(\alpha_{7}\alpha_{18})
(\alpha_{8}\alpha_{14})(\alpha_{11}\alpha_{20})
(\alpha_{2}\alpha_{23})(\alpha_{16}\alpha_{19})]
$$
with orbits
$
\{\alpha_{1},\alpha_{21},\alpha_{6}\},
\{\alpha_{2},\alpha_{14},\alpha_{12},\alpha_{11},\alpha_{23},
\alpha_{19},\alpha_{4},\alpha_{22},\alpha_{18}, \alpha_{3},
\alpha_{9},\alpha_{13},\alpha_{7},
\newline
\alpha_{20},\alpha_{8},\alpha_{16}\}$;
$$
H_{49,3}=[
(\alpha_{1}\alpha_{6}\alpha_{21})
(\alpha_{3}\alpha_{13}\alpha_{22})
(\alpha_{4}\alpha_{18}\alpha_{7})
(\alpha_{8}\alpha_{23}\alpha_{14})
(\alpha_{10}\alpha_{24}\alpha_{17})
(\alpha_{16}\alpha_{20}\alpha_{19}),
$$
$$
(\alpha_{2}\alpha_{22})(\alpha_{3}\alpha_{13})
(\alpha_{4}\alpha_{18})(\alpha_{7}\alpha_{9})
(\alpha_{10}\alpha_{24})(\alpha_{11}\alpha_{20})
(\alpha_{15}\alpha_{17})(\alpha_{16}\alpha_{19}),
$$
$$
(\alpha_{2}\alpha_{13})(\alpha_{3}\alpha_{22})
(\alpha_{4}\alpha_{9})(\alpha_{7}\alpha_{18})
(\alpha_{8}\alpha_{14})(\alpha_{11}\alpha_{20})
(\alpha_{2}\alpha_{23})(\alpha_{16}\alpha_{19})]
$$
with orbits
$
\{\alpha_{1},\alpha_{6},\alpha_{21}\},
\{\alpha_{2},\alpha_{22},\alpha_{3},\alpha_{13}\},
\{\alpha_{4},\alpha_{18},\alpha_{9},\alpha_{7}\},
\{\alpha_{8},\alpha_{23},\alpha_{14},\alpha_{12}\},
\{\alpha_{10},\alpha_{24},
\newline
\alpha_{15},\alpha_{17}\},
\{\alpha_{11},\alpha_{20},\alpha_{19},\alpha_{16}\}$.

\medskip

{\bf n=48,} $H\cong {\frak S}_{3,3}$ ($|H|=36$, $i=10$):
$\rk N_H=18$, $(N_H)^\ast/N_H \cong
\bz/18\bz\times
\bz/6\bz\times (\bz/3\bz)^2$ and
$\det(K((q_{N_H})_3))\equiv -2^2\cdot 3^5\mod (\bz_3^\ast)^2$.
$$
H_{48,1}=[
(\alpha_{1}\alpha_{19})
(\alpha_{2}\alpha_{22}\alpha_{10}\alpha_{18}\alpha_{3}\alpha_{23})
(\alpha_{4}\alpha_{11}\alpha_{9})
(\alpha_{5}\alpha_{12})
(\alpha_{6}\alpha_{14}\alpha_{24}\alpha_{17}\alpha_{20}\alpha_{16})
(\alpha_{7}\alpha_{13}\alpha_{21}),
$$
$$
(\alpha_{1}\alpha_{19})
(\alpha_{2}\alpha_{18}\alpha_{3}\alpha_{23}\alpha_{10}\alpha_{22})
(\alpha_{4}\alpha_{20}\alpha_{14})
(\alpha_{5}\alpha_{12}\alpha_{15})
(\alpha_{6}\alpha_{16}\alpha_{9}\alpha_{24}\alpha_{17}\alpha_{11})
(\alpha_{7}\alpha_{21})]
$$
with orbits
$
\{\alpha_{1},\alpha_{19}\},
\{\alpha_{2},\alpha_{18},\alpha_{23},\alpha_{3},\alpha_{10},\alpha_{22}\},
\{\alpha_{4},\alpha_{11},\alpha_{20},\alpha_{9},\alpha_{24},\alpha_{14},\alpha_{6},
\alpha_{16},\alpha_{17}\},
\newline
\{\alpha_{5},\alpha_{12},\alpha_{15}\},
\{\alpha_{7},\alpha_{21},\alpha_{13}\}$.

\medskip

{\bf n=47,} $H\cong C_3\times {\frak A}_4$ ($|H|=36$, $i=11$):
$$
H_{47,1}=[
(\alpha_{2}\alpha_{8}\alpha_{9})(\alpha_{3}\alpha_{17}\alpha_{22})
(\alpha_{4}\alpha_{14}\alpha_{13})(\alpha_{7}\alpha_{15}\alpha_{18})
(\alpha_{10}\alpha_{23}\alpha_{12})(\alpha_{11}\alpha_{21}\alpha_{20}),
$$
$$
(\alpha_{1}\alpha_{10})
(\alpha_{2}\alpha_{9}\alpha_{4}\alpha_{13}\alpha_{11}\alpha_{20})
(\alpha_{3}\alpha_{8}\alpha_{17}\alpha_{14}\alpha_{22}\alpha_{21})
(\alpha_{7}\alpha_{15}\alpha_{18})
(\alpha_{12}\alpha_{23})
(\alpha_{16}\alpha_{19}\alpha_{24})]
$$
has $Clos(H_{47,1})=H_{61,1}$ above.

\medskip

{\bf n=46,} $H\cong 3^2 C_4$ ($|H|=36$, $i=9$):
$\rk N_H=18$ and $(N_H)^\ast/N_H\cong
\bz/18\bz\times \bz/6\bz\times \bz/3\bz$.
$$
H_{46,1}=[
(\alpha_{1}\alpha_{8}\alpha_{9}\alpha_{6})
(\alpha_{3}\alpha_{21}\alpha_{22}\alpha_{19})
(\alpha_{4}\alpha_{14})(\alpha_{5}\alpha_{16})
(\alpha_{7}\alpha_{20}\alpha_{11}\alpha_{23})
(\alpha_{12}\alpha_{17}\alpha_{18}\alpha_{13})
$$
$$
(\alpha_{2}\alpha_{13})(\alpha_{3}\alpha_{22})
(\alpha_{4}\alpha_{9})(\alpha_{7}\alpha_{18})
(\alpha_{8}\alpha_{14})(\alpha_{11}\alpha_{20})
(\alpha_{12}\alpha_{23})(\alpha_{16}\alpha_{19})]
$$
with orbits
$
\{\alpha_{1},\alpha_{8},\alpha_{9},\alpha_{6},\alpha_{4},\alpha_{14}\},
\{\alpha_{2},\alpha_{20},\alpha_{18},\alpha_{11},\alpha_{23},\alpha_{17},
\alpha_{13},\alpha_{12},\alpha_{7}\},
\{\alpha_{3},\alpha_{21},\alpha_{22},
\newline
\alpha_{5},\alpha_{19},\alpha_{16}\}.
$

\medskip

{\bf n=45,} $H\cong \Gamma_{6}a_2$ ($|H|=32$, $i=44$):
$$
H_{45,1}=[
(\alpha_{4}\alpha_{7})(\alpha_{5}\alpha_{16})
(\alpha_{6}\alpha_{14})(\alpha_{8}\alpha_{11})
(\alpha_{9}\alpha_{13})(\alpha_{12}\alpha_{24})
(\alpha_{15}\alpha_{21})(\alpha_{17}\alpha_{20}),
$$
$$
(\alpha_{1}\alpha_{8}\alpha_{13}\alpha_{11}\alpha_{19}\alpha_{21}\alpha_{9}\alpha_{15})
(\alpha_{4}\alpha_{20}\alpha_{23}\alpha_{6}\alpha_{7}\alpha_{14}\alpha_{10}\alpha_{17})
(\alpha_{5}\alpha_{16})
(\alpha_{12}\alpha_{18}\alpha_{24}\alpha_{22}),
$$
$$
(\alpha_{1}\alpha_{6}\alpha_{13}\alpha_{14}\alpha_{19}\alpha_{17}\alpha_{9}\alpha_{20})
(\alpha_{4}\alpha_{11}\alpha_{23}\alpha_{21}\alpha_{7}\alpha_{15}\alpha_{10}\alpha_{8})
(\alpha_{5}\alpha_{16})
(\alpha_{12}\alpha_{22}\alpha_{24}\alpha_{18})]
$$
has $Clos(H_{45,1})=H_{80,1}$ above.

\medskip

{\bf n=44,} $H\cong \Gamma_{3}e$  ($|H|=32$, $i=11$):
$$
H_{44,1}=[
(\alpha_{1}\alpha_{8}\alpha_{15}\alpha_{23}\alpha_{4}\alpha_{14}\alpha_{6}\alpha_{13})
(\alpha_{5}\alpha_{16})
(\alpha_{7}\alpha_{21}\alpha_{17}\alpha_{10}\alpha_{19}\alpha_{20}\alpha_{11}\alpha_{9})
(\alpha_{12}\alpha_{24}\alpha_{22}\alpha_{18}),
$$
$$
(\alpha_{5}\alpha_{16})(\alpha_{7}\alpha_{19})
(\alpha_{8}\alpha_{9})(\alpha_{10}\alpha_{14})
(\alpha_{11}\alpha_{17})(\alpha_{12}\alpha_{22})
(\alpha_{13}\alpha_{20})(\alpha_{21}\alpha_{23})]
$$
has $Clos(H_{44,1})=H_{80,1}$ above.

\medskip

{\bf n=43,} $H\cong \Gamma_{7}a_2$ ($|H|=32$, $i=7$):
$$
H_{43,1}=[
(\alpha_{1}\alpha_{6}\alpha_{23}\alpha_{8}\alpha_{19}\alpha_{17}\alpha_{10}\alpha_{21})
(\alpha_{4}\alpha_{11}\alpha_{13}\alpha_{20}\alpha_{7}\alpha_{15}\alpha_{9}\alpha_{14})
(\alpha_{5}\alpha_{16})
(\alpha_{12}\alpha_{18}\alpha_{24}\alpha_{22}),
$$
$$
(\alpha_{1}\alpha_{6}\alpha_{13}\alpha_{14}\alpha_{19}\alpha_{17}\alpha_{9}\alpha_{20})
(\alpha_{4}\alpha_{11}\alpha_{23}\alpha_{21}\alpha_{7}\alpha_{15}\alpha_{10}\alpha_{8})
(\alpha_{5}\alpha_{16})
(\alpha_{12}\alpha_{22}\alpha_{24}\alpha_{18})]
$$
has $Clos(H_{43,1})=H_{80,1}$ above.

\medskip

{\bf n=42,} $H\cong \Gamma_4c_2$ {$|H|=32$, $i=31$):
$$
H_{42,1}=[
(\alpha_{3}\alpha_{24})(\alpha_{4}\alpha_{6})(\alpha_{5}\alpha_{21})
(\alpha_{10}\alpha_{14})(\alpha_{11}\alpha_{17})(\alpha_{12}\alpha_{13})
(\alpha_{16}\alpha_{23})(\alpha_{20}\alpha_{22}),
$$
$$
(\alpha_{1}\alpha_{4}\alpha_{9}\alpha_{5})
(\alpha_{3}\alpha_{20}\alpha_{24}\alpha_{14})
(\alpha_{6}\alpha_{18}\alpha_{21}\alpha_{7})
(\alpha_{10}\alpha_{16}\alpha_{22}\alpha_{23})
(\alpha_{11}\alpha_{12})(\alpha_{13}\alpha_{17}),
$$
$$
(\alpha_{1}\alpha_{3}\alpha_{7}\alpha_{23})
(\alpha_{4}\alpha_{20}\alpha_{6}\alpha_{10})
(\alpha_{5}\alpha_{14}\alpha_{21}\alpha_{22})
(\alpha_{9}\alpha_{24}\alpha_{18}\alpha_{16})
(\alpha_{11}\alpha_{13})(\alpha_{12}\alpha_{17})]
$$
has $Clos(H_{42,1})=H_{75,1}$ above.

\medskip

{\bf n=41,} $H\cong \Gamma_7 a_1$, ($|H|=32$, $i=6$):
$$
H_{41,1,1}=
[(\alpha_{2}\alpha_{17})(\alpha_{5}\alpha_{23}\alpha_{14}\alpha_{24})
(\alpha_{6}\alpha_{8}\alpha_{9}\alpha_{18})(\alpha_{7}\alpha_{16}\alpha_{10}\alpha_{13})
(\alpha_{12}\alpha_{22})(\alpha_{15}\alpha_{20}\alpha_{21}\alpha_{19}),
$$
$$
(\alpha_{1}\alpha_{5})(\alpha_{2}\alpha_{23})(\alpha_{3}\alpha_{14})
(\alpha_{7}\alpha_{9})(\alpha_{8}\alpha_{13})(\alpha_{12}\alpha_{22})
(\alpha_{17}\alpha_{24})(\alpha_{19}\alpha_{20}) ]
$$
with $Clos(H_{41,1,1})=H_{56,1}$ above;
$$
H_{41,1,2}=
[ (\alpha_{2}\alpha_{17})(\alpha_{5}\alpha_{23}\alpha_{14}\alpha_{24})
(\alpha_{6}\alpha_{8}\alpha_{9}\alpha_{18})(\alpha_{7}\alpha_{16}\alpha_{10}\alpha_{13})
(\alpha_{12}\alpha_{22})(\alpha_{15}\alpha_{20}\alpha_{21}\alpha_{19}),
$$
$$
(\alpha_{1}\alpha_{5})(\alpha_{2}\alpha_{24})(\alpha_{3}\alpha_{14})
(\alpha_{6}\alpha_{16})(\alpha_{7}\alpha_{13})(\alpha_{8}\alpha_{9})
(\alpha_{10}\alpha_{18})(\alpha_{17}\alpha_{23}) ]
$$
with $Clos(H_{41,1,2})=H_{56,1}$ above;
$$
H_{41,2}=
[ (\alpha_{2}\alpha_{3}\alpha_{17}\alpha_{11})
(\alpha_{5}\alpha_{16}\alpha_{15}\alpha_{6})
(\alpha_{7}\alpha_{10}\alpha_{19}\alpha_{23})
(\alpha_{8}\alpha_{24}\alpha_{20}\alpha_{18})
(\alpha_{9}\alpha_{13})(\alpha_{12}\alpha_{22}),
$$
$$
(\alpha_{1}\alpha_{14})(\alpha_{3}\alpha_{7})(\alpha_{5}\alpha_{8})
(\alpha_{9}\alpha_{13})(\alpha_{10}\alpha_{24})
(\alpha_{11}\alpha_{20})(\alpha_{15}\alpha_{19})(\alpha_{16}\alpha_{17}) ]
$$
with $Clos(H_{41,2})=H_{56,2}$ above.

\medskip

{\bf n=40,} $H\cong Q_8*Q_8$ ($|H|=32$, $i=49$):
$\rk N_H=17$, $(N_H)^\ast/N_H\cong
(\bz/4\bz)^5$ and
$\det(K((q_{N_H})_2))\equiv \pm 2^{10}\mod (\bz_2^\ast)^2$.
$$
H_{40,1}=
[(\alpha_{1}\alpha_{16})(\alpha_{2}\alpha_{18})
(\alpha_{4}\alpha_{9})(\alpha_{5}\alpha_{23})
(\alpha_{10}\alpha_{12})(\alpha_{14}\alpha_{20})
(\alpha_{15}\alpha_{21})(\alpha_{19}\alpha_{24}),
$$
$$
(\alpha_{3}\alpha_{5})(\alpha_{6}\alpha_{12})
(\alpha_{8}\alpha_{13})(\alpha_{10}\alpha_{22})
(\alpha_{15}\alpha_{21})(\alpha_{16}\alpha_{19})
(\alpha_{17}\alpha_{23})(\alpha_{18}\alpha_{20}),
$$
$$
(\alpha_{1}\alpha_{16})(\alpha_{2}\alpha_{20})
(\alpha_{3}\alpha_{6})(\alpha_{5}\alpha_{10})
(\alpha_{12}\alpha_{23})(\alpha_{14}\alpha_{18})
(\alpha_{17}\alpha_{22})(\alpha_{19}\alpha_{24}),
$$
$$
(\alpha_{1}\alpha_{14})(\alpha_{2}\alpha_{24})
(\alpha_{3}\alpha_{5})(\alpha_{6}\alpha_{10})
(\alpha_{12}\alpha_{22})(\alpha_{16}\alpha_{18})
(\alpha_{17}\alpha_{23})(\alpha_{19}\alpha_{20})]
$$
with orbits
$
\{\alpha_{1},\alpha_{16},\alpha_{14},\alpha_{19},\alpha_{18},\alpha_{20},
\alpha_{24},\alpha_{2}\},
\{\alpha_{3},\alpha_{5},\alpha_{6},\alpha_{23},\alpha_{10},\alpha_{12},
\alpha_{17},\alpha_{22}\},
\{\alpha_{4},\alpha_{9}\},
\newline
\{\alpha_{8},\alpha_{13}\},\{\alpha_{15},\alpha_{21}\}$.

\medskip

{\bf n=39,} $H\cong 2^4C_2$ ($|H|=32$, $i=27$):
$\rk N_H=17$, $(N_H)^\ast/N_H\cong
\bz/8\bz\times (\bz/4\bz)^2\times (\bz/2\bz)^2$ and
$\det(K((q_{N_H})_2))\equiv \pm 2^9\mod (\bz_2^\ast)^2$.
$$
H_{39,1}=
(\alpha_{5}\alpha_{10})(\alpha_{7}\alpha_{13})
(\alpha_{8}\alpha_{12})(\alpha_{9}\alpha_{22})
(\alpha_{11}\alpha_{23})(\alpha_{14}\alpha_{16})
(\alpha_{17}\alpha_{21})(\alpha_{19}\alpha_{20}),
$$
$$
(\alpha_{2}\alpha_{12})(\alpha_{3}\alpha_{8})
(\alpha_{4}\alpha_{20})(\alpha_{7}\alpha_{16})
(\alpha_{9}\alpha_{11})(\alpha_{13}\alpha_{23})
(\alpha_{14}\alpha_{22})(\alpha_{18}\alpha_{19}),
$$
$$
(\alpha_{2}\alpha_{13})(\alpha_{3}\alpha_{22})
(\alpha_{4}\alpha_{9})(\alpha_{7}\alpha_{18})
(\alpha_{8}\alpha_{14})(\alpha_{11}\alpha_{20})
(\alpha_{12}\alpha_{23})(\alpha_{16}\alpha_{19})]
$$
with orbits
$
\{\alpha_{2},\alpha_{12},\alpha_{13},\alpha_{3},\alpha_{18},\alpha_{8},
\alpha_{23},\alpha_{19},\alpha_{7},\alpha_{22},\alpha_{4},
\alpha_{14},\alpha_{20},\alpha_{11},\alpha_{16},\alpha_{9}\},
\{\alpha_{5},\alpha_{10}\}, \newline
\{\alpha_{17},\alpha_{21}\}$;
$$
H_{39,2}=
(\alpha_{5}\alpha_{18})(\alpha_{6}\alpha_{7})
(\alpha_{10}\alpha_{17})(\alpha_{11}\alpha_{20})
(\alpha_{12}\alpha_{13})(\alpha_{14}\alpha_{22})
(\alpha_{15}\alpha_{16})(\alpha_{19}\alpha_{24}),
$$
$$
(\alpha_{2}\alpha_{12})(\alpha_{3}\alpha_{8})
(\alpha_{4}\alpha_{20})(\alpha_{7}\alpha_{16})
(\alpha_{9}\alpha_{11})(\alpha_{13}\alpha_{23})
(\alpha_{14}\alpha_{22})(\alpha_{18}\alpha_{19}),
$$
$$
(\alpha_{2}\alpha_{13})(\alpha_{3}\alpha_{22})
(\alpha_{4}\alpha_{9})(\alpha_{7}\alpha_{18})
(\alpha_{8}\alpha_{14})(\alpha_{11}\alpha_{20})
(\alpha_{12}\alpha_{23})(\alpha_{16}\alpha_{19})]
$$
with orbits
$
\{\alpha_{2},\alpha_{12},\alpha_{13},\alpha_{23}\},
\{\alpha_{3},\alpha_{8},\alpha_{22},\alpha_{14}\},
\{\alpha_{4},\alpha_{20},\alpha_{9},\alpha_{11}\},
\{\alpha_{5},\alpha_{18},\alpha_{24},\alpha_{6},\alpha_{19},
\newline
\alpha_{7},\alpha_{15},\alpha_{16}\},\{\alpha_{10},\alpha_{17}\}$;
$$
H_{39,3}=
(\alpha_{3}\alpha_{4})(\alpha_{5}\alpha_{18})
(\alpha_{6}\alpha_{19})(\alpha_{7}\alpha_{24})
(\alpha_{8}\alpha_{9})(\alpha_{11}\alpha_{22})
(\alpha_{14}\alpha_{20})(\alpha_{15}\alpha_{16}),
$$
$$
(\alpha_{2}\alpha_{12})(\alpha_{3}\alpha_{8})
(\alpha_{4}\alpha_{20})(\alpha_{7}\alpha_{16})
(\alpha_{9}\alpha_{11})(\alpha_{13}\alpha_{23})
(\alpha_{14}\alpha_{22})(\alpha_{18}\alpha_{19}),
$$
$$
(\alpha_{2}\alpha_{13})(\alpha_{3}\alpha_{22})
(\alpha_{4}\alpha_{9})(\alpha_{7}\alpha_{18})
(\alpha_{8}\alpha_{14})(\alpha_{11}\alpha_{20})
(\alpha_{12}\alpha_{23})(\alpha_{16}\alpha_{19})]
$$
with orbits
$
\{\alpha_{2},\alpha_{12},\alpha_{13},\alpha_{23}\},\{\alpha_{3},\alpha_{4},
\alpha_{8},\alpha_{22},\alpha_{14},\alpha_{20},\alpha_{9},\alpha_{11}\},
\{\alpha_{5},\alpha_{18},\alpha_{6},\alpha_{24},
\alpha_{19},\alpha_{7}, \newline
\alpha_{15},\alpha_{16}\}$.

\medskip

{\bf n=38,} $H\cong T_{24}$ ($|H|=24$, $i=3$):
$$
H_{38,1}=[
(\alpha_{2}\alpha_{7}\alpha_{22})
(\alpha_{3}\alpha_{18}\alpha_{13})
(\alpha_{5}\alpha_{12}\alpha_{23})
(\alpha_{6}\alpha_{14}\alpha_{8})
(\alpha_{11}\alpha_{17}\alpha_{20})
(\alpha_{16}\alpha_{19}\alpha_{24}),
$$
$$
(\alpha_{1}\alpha_{2}\alpha_{10}\alpha_{18})
(\alpha_{3}\alpha_{13}\alpha_{22}\alpha_{7})
(\alpha_{4}\alpha_{16}\alpha_{9}\alpha_{12})
(\alpha_{5}\alpha_{23}\alpha_{24}\alpha_{19})
(\alpha_{11}\alpha_{17})(\alpha_{20}\alpha_{21})]
$$
has $Clos(H_{38,1})=H_{54,1}$ above.

\medskip

{\bf n=37,} $H\cong T_{24}$, ($|H|=24$, $i=3$):
$$
H_{37,1}=[
(\alpha_{4}\alpha_{16}\alpha_{20})
(\alpha_{5}\alpha_{6}\alpha_{13})
(\alpha_{7}\alpha_{12}\alpha_{14})
(\alpha_{8}\alpha_{11}\alpha_{15})
(\alpha_{9}\alpha_{24}\alpha_{17})
(\alpha_{10}\alpha_{19}\alpha_{23}),
$$
$$
(\alpha_{1}\alpha_{8}\alpha_{21}\alpha_{10})
(\alpha_{3}\alpha_{7}\alpha_{22}\alpha_{9})
(\alpha_{4}\alpha_{13})(\alpha_{6}\alpha_{20})
(\alpha_{11}\alpha_{23}\alpha_{19}\alpha_{15})
(\alpha_{12}\alpha_{17}\alpha_{24}\alpha_{14})]
$$
has $Clos(H_{37,1})=H_{77,1}$ above.

\medskip

{\bf n=36,} $H\cong C_3\rtimes D_8$ ($|H|=24$, $i=8$):
$$
H_{36,1}=[
(\alpha_{3}\alpha_{22})(\alpha_{4}\alpha_{11})
(\alpha_{7}\alpha_{15})(\alpha_{8}\alpha_{9})
(\alpha_{10}\alpha_{23})(\alpha_{13}\alpha_{21})
(\alpha_{14}\alpha_{20})(\alpha_{16}\alpha_{24}),
$$
$$
(\alpha_{1}\alpha_{10})
(\alpha_{2}\alpha_{9}\alpha_{4}\alpha_{13}\alpha_{11}\alpha_{20})
(\alpha_{3}\alpha_{8}\alpha_{17}\alpha_{14}\alpha_{22}\alpha_{21})
(\alpha_{7}\alpha_{15}\alpha_{18})
(\alpha_{12}\alpha_{23})
(\alpha_{16}\alpha_{19}\alpha_{24})]
$$
has $Clos(H_{36,1})=H_{61,1}$ above.

\medskip

{\bf n=35,} $H\cong C_2\times {\frak A}_4$, ($|H|=24$, $i=13$):
$$
H_{35,1}=[
(\alpha_{1}\alpha_{12})(\alpha_{4}\alpha_{11})
(\alpha_{5}\alpha_{8})(\alpha_{6}\alpha_{14})
(\alpha_{7}\alpha_{16})(\alpha_{9}\alpha_{19})
(\alpha_{18}\alpha_{20})(\alpha_{21}\alpha_{23}),
$$
$$
(\alpha_{1}\alpha_{5})
(\alpha_{2}\alpha_{10}\alpha_{13})
(\alpha_{3}\alpha_{15}\alpha_{22})
(\alpha_{4}\alpha_{20}\alpha_{11}\alpha_{9}\alpha_{16}\alpha_{19})
(\alpha_{6}\alpha_{12}\alpha_{14}\alpha_{21}\alpha_{8}\alpha_{23})
(\alpha_{7}\alpha_{18})]
$$
has $Clos(H_{35,1})=H_{51,1}$ above;
$$
H_{35,2}=[
(\alpha_{1}\alpha_{5})(\alpha_{4}\alpha_{9})
(\alpha_{6}\alpha_{21})(\alpha_{7}\alpha_{18})
(\alpha_{8}\alpha_{12})(\alpha_{11}\alpha_{19})
(\alpha_{14}\alpha_{23})(\alpha_{16}\alpha_{20}),
$$
$$
(\alpha_{2}\alpha_{13})
(\alpha_{3}\alpha_{17}\alpha_{22})
(\alpha_{4}\alpha_{20}\alpha_{11}\alpha_{9}\alpha_{12}\alpha_{23})
(\alpha_{5}\alpha_{18}\alpha_{7})
(\alpha_{6}\alpha_{8}\alpha_{19}\alpha_{21}\alpha_{16}\alpha_{14})
(\alpha_{10}\alpha_{24})]
$$
has $Clos(H_{35,2})=H_{51,2}$ above;
$$
H_{35,3}=[
(\alpha_{1}\alpha_{7})(\alpha_{4}\alpha_{19})
(\alpha_{6}\alpha_{17})(\alpha_{8}\alpha_{14})
(\alpha_{11}\alpha_{15})(\alpha_{12}\alpha_{22})
(\alpha_{18}\alpha_{24})(\alpha_{20}\alpha_{21}),
$$
$$
(\alpha_{1}\alpha_{11}\alpha_{24}\alpha_{21}\alpha_{18}\alpha_{15})
(\alpha_{2}\alpha_{13})
(\alpha_{3}\alpha_{17}\alpha_{12})
(\alpha_{4}\alpha_{16}\alpha_{8},\alpha_{19}\alpha_{9}\alpha_{14})
(\alpha_{6}\alpha_{22}\alpha_{23})(\alpha_{7}\alpha_{20})]
$$
has $Clos(H_{35,3})=H_{51,3}$ above;
$$
H_{35,4}=[
(\alpha_{1}\alpha_{16}\alpha_{17})
(\alpha_{2}\alpha_{4}\alpha_{21})
(\alpha_{5}\alpha_{8}\alpha_{15})
(\alpha_{7}\alpha_{11}\alpha_{20})
(\alpha_{9}\alpha_{13}\alpha_{10})
(\alpha_{12}\alpha_{18}\alpha_{23}),
$$
$$
(\alpha_{2}\alpha_{4}\alpha_{21}\alpha_{9}\alpha_{13}\alpha_{10})
(\alpha_{3}\alpha_{22})(\alpha_{6}\alpha_{24})
(\alpha_{7}\alpha_{23}\alpha_{12}\alpha_{18}\alpha_{11}\alpha_{20})
(\alpha_{8}\alpha_{14}\alpha_{15})(\alpha_{16}\alpha_{19}\alpha_{17})]
$$
has $Clos(H_{35,4})=H_{51,4}$ above.

\medskip

{\bf n=34}, $H\cong {\frak S}_4$ ($|H|=24$, $i=12$):
$\rk N_H=17$ and $(N_H)^\ast/N_H\cong
(\bz/12\bz)^2\times \bz/4\bz$.
$$
H_{34,1}=[
(\alpha_{1}\alpha_{18}\alpha_{15}\alpha_{22})
(\alpha_{2}\alpha_{4}\alpha_{20}\alpha_{11})
(\alpha_{3}\alpha_{7})
(\alpha_{8}\alpha_{10}\alpha_{16}\alpha_{24})
(\alpha_{9}\alpha_{12}\alpha_{23}\alpha_{13})
(\alpha_{14}\alpha_{19}),
$$
$$
(\alpha_{2}\alpha_{13})(\alpha_{3}\alpha_{22})
(\alpha_{4}\alpha_{9})(\alpha_{7}\alpha_{18})
(\alpha_{8}\alpha_{14})(\alpha_{11}\alpha_{20})
(\alpha_{12}\alpha_{23})(\alpha_{16}\alpha_{19})]
$$
with orbits
$
\{\alpha_{1},\alpha_{3},\alpha_{22},\alpha_{15},\alpha_{7},\alpha_{18}\},
\{\alpha_{2},\alpha_{13},\alpha_{20},\alpha_{9},
\alpha_{12},\alpha_{4},\alpha_{23},\alpha_{11}\},
\{\alpha_{8},\alpha_{14},\alpha_{16},\alpha_{10},
\newline
\alpha_{19},\alpha_{24}\}$;
$$
H_{34,2}=[
(\alpha_{1}\alpha_{16}\alpha_{19}\alpha_{15})
(\alpha_{3}\alpha_{5}\alpha_{12}\alpha_{14})
(\alpha_{4}\alpha_{9})
(\alpha_{7}\alpha_{20}\alpha_{23}\alpha_{22})
(\alpha_{8}\alpha_{11}\alpha_{17}\alpha_{18})
(\alpha_{10}\alpha_{13}),
$$
$$
(\alpha_{2}\alpha_{13})(\alpha_{3}\alpha_{22})
(\alpha_{4}\alpha_{9})(\alpha_{7}\alpha_{18})
(\alpha_{8}\alpha_{14})(\alpha_{11}\alpha_{20})
(\alpha_{12}\alpha_{23})(\alpha_{16}\alpha_{19})]
$$
with orbits
$
\{\alpha_{1},\alpha_{16},\alpha_{19},\alpha_{15}\},
\{\alpha_{2},\alpha_{10},\alpha_{13}\},
\{\alpha_{3},\alpha_{22},\alpha_{12},\alpha_{18},\alpha_{17},
\alpha_{20},\alpha_{7},\alpha_{11},\alpha_{23},\alpha_{8},
\newline
\alpha_{5},\alpha_{14}\},\{\alpha_{4},\alpha_{9}\}$;
$$
H_{34,3}=[
(\alpha_{1}\alpha_{6}\alpha_{18}\alpha_{7})
(\alpha_{3}\alpha_{24}\alpha_{20}\alpha_{17})
(\alpha_{4}\alpha_{9})
(\alpha_{8}\alpha_{13}\alpha_{14}\alpha_{19})
(\alpha_{10}\alpha_{15}\alpha_{12}\alpha_{23})
(\alpha_{11}\alpha_{22}),
$$
$$
(\alpha_{2}\alpha_{13})(\alpha_{3}\alpha_{22})
(\alpha_{4}\alpha_{9})(\alpha_{7}\alpha_{18})
(\alpha_{8}\alpha_{14})(\alpha_{11}\alpha_{20})
(\alpha_{12}\alpha_{23})(\alpha_{16}\alpha_{19})]
$$
with orbits
$
\{\alpha_{1},\alpha_{18},\alpha_{6},\alpha_{7}\},
\{\alpha_{2},\alpha_{8},\alpha_{14},\alpha_{16},\alpha_{13},\alpha_{19}\},
\{\alpha_{3},\alpha_{22},\alpha_{20},\alpha_{24},\alpha_{11},\alpha_{17}\},
\newline
\{\alpha_{4},\alpha_{9}\},
\{\alpha_{10},\alpha_{12},\alpha_{15},\alpha_{23}\}$;
$$
H_{34,4}=[
(\alpha_{1}\alpha_{6}\alpha_{16}\alpha_{19})
(\alpha_{4}\alpha_{14}\alpha_{23}\alpha_{9})
(\alpha_{5}\alpha_{11}\alpha_{20}\alpha_{21})
(\alpha_{7}\alpha_{8}\alpha_{12}\alpha_{18})
(\alpha_{13}\alpha_{15})(\alpha_{17}\alpha_{22}),
$$
$$
(\alpha_{2}\alpha_{13})(\alpha_{3}\alpha_{22})
(\alpha_{4}\alpha_{9})(\alpha_{7}\alpha_{18})
(\alpha_{8}\alpha_{14})(\alpha_{11}\alpha_{20})
(\alpha_{12}\alpha_{23})(\alpha_{16}\alpha_{19})]
$$
with orbits
$
\{\alpha_{1},\alpha_{16},\alpha_{6},\alpha_{19}\},
\{\alpha_{2},\alpha_{15},\alpha_{13}\},
\{\alpha_{3},\alpha_{17},\alpha_{22}\},
\{\alpha_{4},\alpha_{7},\alpha_{18},\alpha_{23},
\alpha_{9},\alpha_{12},\alpha_{8},\alpha_{14}\},
\newline
\{\alpha_{5},\alpha_{11},\alpha_{20},\alpha_{21}\}$.

\medskip

{\bf n=33,} $H\cong C_7\rtimes C_3$ ($|H|=21$, $i=1$):
$\rk N_H=18$ and $(N_H)^\ast/N_H\cong (\bz/7\bz)^3$.
$$
H_{33,1}=[
(\alpha_{1}\alpha_{6}\alpha_{5})(\alpha_{3}\alpha_{20}\alpha_{17})
(\alpha_{7}\alpha_{19}\alpha_{22})(\alpha_{9}\alpha_{24}\alpha_{16})
(\alpha_{12}\alpha_{21}\alpha_{14})(\alpha_{13}\alpha_{15}\alpha_{18}),
$$
$$
(\alpha_{1}\alpha_{7}\alpha_{6}\alpha_{22}\alpha_{19}\alpha_{10}\alpha_{5})
(\alpha_{3}\alpha_{17}\alpha_{4}\alpha_{9}\alpha_{24}\alpha_{20}\alpha_{16})
(\alpha_{11}\alpha_{12}\alpha_{21}\alpha_{13}\alpha_{14}\alpha_{18}\alpha_{15})]
$$
with orbits
$
\{\alpha_{1},\alpha_{6},\alpha_{7},\alpha_{5},\alpha_{22},\alpha_{19},\alpha_{10}\},
\{\alpha_{3},\alpha_{20},\alpha_{17},\alpha_{16},\alpha_{4},\alpha_{9},\alpha_{24}\},
\{\alpha_{11},\alpha_{12},\alpha_{21},\alpha_{14},\alpha_{13},
\newline
\alpha_{18},\alpha_{15}\}$.

\medskip

{\bf n=32,} $H\cong Hol(C_5)$ ($|H|=20$, $i=3$):
$\rk N_H=18$ and
$(N_H)^\ast/N_H \cong (\bz/10\bz)^2 \times \bz/5\bz$.
$$
H_{32,1}=[
(\alpha_{2}\alpha_{21}\alpha_{17}\alpha_{11})
(\alpha_{3}\alpha_{13}\alpha_{9}\alpha_{14})
(\alpha_{4}\alpha_{18}\alpha_{24}\alpha_{12})
(\alpha_{5}\alpha_{8})
(\alpha_{15}\alpha_{16}\alpha_{23}\alpha_{20})
(\alpha_{19}\alpha_{22}),
$$
$$
(\alpha_{2}\alpha_{17}\alpha_{15}\alpha_{8}\alpha_{23})
(\alpha_{3}\alpha_{14}\alpha_{10}\alpha_{13}\alpha_{9})
(\alpha_{4}\alpha_{24}\alpha_{18}\alpha_{7}\alpha_{12})
(\alpha_{5}\alpha_{11}\alpha_{20}\alpha_{16}\alpha_{21})]
$$
with orbits
$
\{\alpha_{2},\alpha_{21},\alpha_{17},\alpha_{11},\alpha_{5},
\alpha_{15},\alpha_{20},\alpha_{8},\alpha_{16},\alpha_{23}\},
\{\alpha_{3},\alpha_{13},\alpha_{9},\alpha_{14},\alpha_{10}\},
\{\alpha_{4},\alpha_{18},\alpha_{24},
\newline
\alpha_{12},\alpha_{7}\},
\{\alpha_{19},\alpha_{22}\}$.

\medskip

{\bf n=31,} $H\cong C_3\times D_6$ ($|H|=18$, $i=3$):
$$
H_{31,1}=[
(\alpha_{2}\alpha_{3}\alpha_{10})
(\alpha_{4}\alpha_{11}\alpha_{9})
(\alpha_{6}\alpha_{20}\alpha_{24})
(\alpha_{7}\alpha_{13}\alpha_{21})
(\alpha_{14}\alpha_{16}\alpha_{17})
(\alpha_{18}\alpha_{22}\alpha_{23}),
$$
$$
(\alpha_{1}\alpha_{19})
(\alpha_{2}\alpha_{18}\alpha_{10}\alpha_{22}\alpha_{3}\alpha_{23})
(\alpha_{4}\alpha_{17}\alpha_{20}\alpha_{9}\alpha_{14}\alpha_{6})
(\alpha_{5}\alpha_{15}\alpha_{12})
(\alpha_{7}\alpha_{13})
(\alpha_{11}\alpha_{16}\alpha_{24})]
$$
has $Clos(H_{31,1})=H_{48,1}$.

\medskip

{\bf n=30,} $H\cong {\frak A}_{3,3}$ ($|H|=18$, $i=4$):
$\rk N_H=16$ and
$(N_H)^\ast/N_H\cong \bz/9\bz\times (\bz/3\bz)^4$.
$$
H_{30,1}=[
(\alpha_{3}\alpha_{21})(\alpha_{4}\alpha_{9})
(\alpha_{6}\alpha_{22})(\alpha_{7}\alpha_{10})
(\alpha_{11}\alpha_{12})(\alpha_{14}\alpha_{16})
(\alpha_{15}\alpha_{17})(\alpha_{18}\alpha_{24}),
$$
$$
(\alpha_{1}\alpha_{24}\alpha_{18})(\alpha_{3}\alpha_{14}\alpha_{22})
(\alpha_{4}\alpha_{20}\alpha_{9})(\alpha_{5}\alpha_{7}\alpha_{10})
(\alpha_{6}\alpha_{16}\alpha_{21})(\alpha_{15}\alpha_{19}\alpha_{17}),
$$
$$
(\alpha_{1}\alpha_{6}\alpha_{22})(\alpha_{3}\alpha_{24}\alpha_{16})
(\alpha_{4}\alpha_{20}\alpha_{9})(\alpha_{5}\alpha_{10}\alpha_{7})
(\alpha_{11}\alpha_{13}\alpha_{12})(\alpha_{14}\alpha_{18}\alpha_{21})]
$$
with orbits
$
\{\alpha_{1},\alpha_{24},\alpha_{21},\alpha_{18},
\alpha_{6},\alpha_{3},\alpha_{16},\alpha_{22},\alpha_{14}\},
\{\alpha_{4},\alpha_{9},\alpha_{20}\},
\{\alpha_{5},\alpha_{7},\alpha_{10},\}
\newline
\{\alpha_{11},\alpha_{12},\alpha_{13}\},
\{\alpha_{15},\alpha_{17},\alpha_{19}\}$.

\medskip

{\bf n=29,} $H\cong Q_{16}$ ($|H|=16$, $i=9$):
$$
H_{29,1}=[
(\alpha_{1}\alpha_{4}\alpha_{19}\alpha_{7})
(\alpha_{5}\alpha_{16})
(\alpha_{6}\alpha_{8}\alpha_{17}\alpha_{21})
(\alpha_{9}\alpha_{23}\alpha_{13}\alpha_{10})
(\alpha_{11}\alpha_{14}\alpha_{15}\alpha_{20})
(\alpha_{12}\alpha_{24}),
$$
$$
(\alpha_{1}\alpha_{6}\alpha_{13}\alpha_{14}\alpha_{19}
\alpha_{17}\alpha_{9}\alpha_{20})
(\alpha_{4}\alpha_{11}\alpha_{23}\alpha_{21}\alpha_{7}
\alpha_{15}\alpha_{10}\alpha_{8})
(\alpha_{5}\alpha_{16})
(\alpha_{12}\alpha_{22}\alpha_{24}\alpha_{18})]
$$
has $Clos(H_{29,1})=H_{80,1}$ above.

\medskip

{\bf n=28,} $H\cong \Gamma_2 d$ ($|H|=16$, $i=6$):
$$
H_{28,1}=[
(\alpha_{1}\alpha_{6}\alpha_{13}\alpha_{14}
\alpha_{19}\alpha_{17}\alpha_{9}\alpha_{20})
(\alpha_{4}\alpha_{11}\alpha_{23}\alpha_{21}\alpha_{7}
\alpha_{15}\alpha_{10}\alpha_{8})
(\alpha_{5}\alpha_{16})
(\alpha_{12}\alpha_{22}\alpha_{24}\alpha_{18}),
$$
$$
(\alpha_{1}\alpha_{4})(\alpha_{6}\alpha_{15})
(\alpha_{7}\alpha_{19})(\alpha_{8}\alpha_{14})
(\alpha_{9}\alpha_{10})(\alpha_{11}\alpha_{17})
(\alpha_{13}\alpha_{23})(\alpha_{20}\alpha_{21})]
$$
has $Clos(H_{28,1})=H_{80,1}$ above.

\medskip

{\bf n=27,} $H\cong C_2\times Q_8$ ($|H|=16$, $i=12$):
$$
H_{27,1}=
[(\alpha_{1}\alpha_{4}\alpha_{9}\alpha_{5})
(\alpha_{3}\alpha_{20}\alpha_{24}\alpha_{14})
(\alpha_{6}\alpha_{18}\alpha_{21}\alpha_{7})
(\alpha_{10}\alpha_{16}\alpha_{22}\alpha_{23})
(\alpha_{11}\alpha_{12})(\alpha_{13}\alpha_{17}),
$$
$$
(\alpha_{1}\alpha_{3}\alpha_{9}\alpha_{24})
(\alpha_{4}\alpha_{14}\alpha_{5}\alpha_{20})
(\alpha_{6}\alpha_{22}\alpha_{21}\alpha_{10})
(\alpha_{7}\alpha_{23}\alpha_{18}\alpha_{16})
(\alpha_{11}\alpha_{17})(\alpha_{12}\alpha_{13}),
$$
$$
(\alpha_{1}\alpha_{7})(\alpha_{3}\alpha_{23})
(\alpha_{4}\alpha_{6})(\alpha_{5}\alpha_{21})
(\alpha_{9}\alpha_{18})(\alpha_{10}\alpha_{20})
(\alpha_{14}\alpha_{22})(\alpha_{16}\alpha_{24})]
$$
with $Clos(H_{27,1})=H_{75,1}$ above.

\medskip

{\bf n=26,} $H\cong SD_{16}$ ($|H|=16$, $i=8$):
$\rk N_H=18$ and
$(N_H)^\ast/N_H\cong (\bz/8\bz)^2\times
\bz/4\bz\times \bz/2\bz$.
$$
H_{26,1}=[
(\alpha_{1}\alpha_{12}\alpha_{15}\alpha_{11})
(\alpha_{3}\alpha_{5}\alpha_{18}\alpha_{17})
(\alpha_{4}\alpha_{10})(\alpha_{6}\alpha_{9})
(\alpha_{7}\alpha_{14}\alpha_{22}\alpha_{8})
(\alpha_{16}\alpha_{20}\alpha_{19}\alpha_{23}),
$$
$$
(\alpha_{2}\alpha_{13})(\alpha_{3}\alpha_{22})
(\alpha_{4}\alpha_{9})(\alpha_{7}\alpha_{18})
(\alpha_{8}\alpha_{14})(\alpha_{11}\alpha_{20})
(\alpha_{12}\alpha_{23})(\alpha_{16}\alpha_{19})]
$$
with orbits
$
\{\alpha_{1},\alpha_{12},\alpha_{19},\alpha_{15},
\alpha_{23},\alpha_{20},\alpha_{11},\alpha_{16}\},
\{\alpha_{2},\alpha_{13}\},
\{\alpha_{3},\alpha_{5},\alpha_{22},\alpha_{7},
\alpha_{18},\alpha_{8},\alpha_{17},\alpha_{14}\},
\newline
\{\alpha_{4},\alpha_{10},\alpha_{9},\alpha_{6}\}$.

\medskip

{\bf n=25,} $H\cong C_4^2$ ($|H|=16$, $i=2$):
$$
H_{25,1}=[
(\alpha_{1}\alpha_{4}\alpha_{9}\alpha_{5})
(\alpha_{3}\alpha_{20}\alpha_{24}\alpha_{14})
(\alpha_{6}\alpha_{18}\alpha_{21}\alpha_{7})
(\alpha_{10}\alpha_{16}\alpha_{22}\alpha_{23})
(\alpha_{11}\alpha_{12})(\alpha_{13}\alpha_{17}),
$$
$$
(\alpha_{1}\alpha_{3}\alpha_{7}\alpha_{23})
(\alpha_{4}\alpha_{20}\alpha_{6}\alpha_{10})
(\alpha_{5}\alpha_{14}\alpha_{21}\alpha_{22})
(\alpha_{9}\alpha_{24}\alpha_{18}\alpha_{16})
(\alpha_{11}\alpha_{13})(\alpha_{12}\alpha_{17})]
$$
with $Clos(H_{25,1})=H_{75,1}$ above.
\footnote{These calculations show that for a symplectic group
$G=(C_4)^2$ on a
K\"ahlerian K3 surface, the group
$S_{(G)}^\ast/S_{(G)}=S_{(4,4)}^\ast /S_{(4,4)}\cong
(\bz/8\bz)^2\times (\bz/2\bz)^2$.
We must correct our calculation of this group in
\cite[Prop. 10.1]{Nik0}.}

\medskip

{\bf n=24,} $H\cong Q_8 * C_4$ ($|H|=16$, $i=13$):
$$
H_{24,1}=[
(\alpha_{3}\alpha_{5})(\alpha_{6}\alpha_{12})
(\alpha_{8}\alpha_{13})(\alpha_{10}\alpha_{22})
(\alpha_{15}\alpha_{21})(\alpha_{16}\alpha_{19})
(\alpha_{17}\alpha_{23})(\alpha_{18}\alpha_{20}),
$$
$$
(\alpha_{1}\alpha_{16})(\alpha_{2}\alpha_{18})
(\alpha_{4}\alpha_{9})(\alpha_{5}\alpha_{23})
(\alpha_{10}\alpha_{12})(\alpha_{14}\alpha_{20})
(\alpha_{15}\alpha_{21})(\alpha_{19}\alpha_{24}),
$$
$$
(\alpha_{1}\alpha_{2}\alpha_{24}\alpha_{14})
(\alpha_{3}\alpha_{6}\alpha_{17}\alpha_{22})
(\alpha_{4}\alpha_{9})
(\alpha_{5}\alpha_{12}\alpha_{23}\alpha_{10})
(\alpha_{8}\alpha_{13})
(\alpha_{16}\alpha_{18}\alpha_{19}\alpha_{20})
$$
with $Clos(H_{24,1})=H_{40,1}$ above.

\medskip

{\bf n=23,} $H\cong \Gamma_2 c_1$ ($|H|=16$, $i=3$):
$$
H_{23,1}=[
(\alpha_{2}\alpha_{7}\alpha_{18}\alpha_{13})
(\alpha_{3}\alpha_{9}\alpha_{4}\alpha_{22})
(\alpha_{5}\alpha_{10})
(\alpha_{8}\alpha_{16}\alpha_{20}\alpha_{23})
(\alpha_{11}\alpha_{19}\alpha_{14}\alpha_{12})
(\alpha_{17}\alpha_{21}),
$$
$$
(\alpha_{2}\alpha_{8}\alpha_{3}\alpha_{12})
(\alpha_{4}\alpha_{19}\alpha_{18}\alpha_{20})
(\alpha_{5}\alpha_{10})
(\alpha_{7}\alpha_{14}\alpha_{9}\alpha_{23})
(\alpha_{11}\alpha_{22}\alpha_{16}\alpha_{13})
(\alpha_{17}\alpha_{21})]
$$
with $Clos(H_{23,1})=H_{39,1}$ above;
$$
H_{23,2}=[
(\alpha_{3}\alpha_{14}\alpha_{8}\alpha_{22})
(\alpha_{4}\alpha_{20}\alpha_{9}\alpha_{11})
(\alpha_{5}\alpha_{7}\alpha_{15}\alpha_{19})
(\alpha_{6}\alpha_{18}\alpha_{24}\alpha_{16})
(\alpha_{10}\alpha_{17})
(\alpha_{12}\alpha_{13}),
$$
$$
(\alpha_{2}\alpha_{12})(\alpha_{4}\alpha_{11})
(\alpha_{5}\alpha_{15})(\alpha_{6}\alpha_{24})
(\alpha_{7}\alpha_{18})(\alpha_{9}\alpha_{20})
(\alpha_{13}\alpha_{23})(\alpha_{16}\alpha_{19})]
$$
with $Clos(H_{23,2})=H_{39,2}$ above;
$$
H_{23,3}=[
(\alpha_{2}\alpha_{12})
(\alpha_{3}\alpha_{4}\alpha_{22}\alpha_{11})
(\alpha_{5}\alpha_{16}\alpha_{6}\alpha_{7})
(\alpha_{8}\alpha_{9}\alpha_{14}\alpha_{20})
(\alpha_{13}\alpha_{23})
(\alpha_{15}\alpha_{18}\alpha_{24}\alpha_{19}),
$$
$$
(\alpha_{2}\alpha_{13})(\alpha_{4}\alpha_{20})
(\alpha_{5}\alpha_{6})(\alpha_{7}\alpha_{19})
(\alpha_{9}\alpha_{11})(\alpha_{12}\alpha_{23})
(\alpha_{15}\alpha_{24})(\alpha_{16}\alpha_{18})]
$$
with $Clos(H_{23,3})=H_{39,3}$ above.

\medskip

{\bf n=22,} $H\cong C_2\times D_8$ ($|H|=16$, $i=11$):
$\rk N_H=16$, $(N_H)^\ast/N_H\cong
(\bz/4\bz)^4\times (\bz/2\bz)^2$, and
$\det(K(q_{N_H})_2)\equiv \pm 2^{10}\mod (\bz_2^\ast)^2$.
$$
H_{22,1}=[
(\alpha_{4}\alpha_{7})(\alpha_{8}\alpha_{12})
(\alpha_{9}\alpha_{18})(\alpha_{10}\alpha_{15})
(\alpha_{11}\alpha_{20})(\alpha_{14}\alpha_{23})
(\alpha_{16}\alpha_{19})(\alpha_{17}\alpha_{24}),
$$
$$
(\alpha_{2}\alpha_{12})(\alpha_{3}\alpha_{8})
(\alpha_{4}\alpha_{20})(\alpha_{7}\alpha_{16})
(\alpha_{9}\alpha_{11})(\alpha_{13}\alpha_{23})
(\alpha_{14}\alpha_{22})(\alpha_{18}\alpha_{19}),
$$
$$
(\alpha_{2}\alpha_{13})(\alpha_{3}\alpha_{22})
(\alpha_{4}\alpha_{9})(\alpha_{7}\alpha_{18})
(\alpha_{8}\alpha_{14})(\alpha_{11}\alpha_{20})
(\alpha_{12}\alpha_{23})(\alpha_{16}\alpha_{19})]
$$
with orbits
$
\{\alpha_{2},\alpha_{12},\alpha_{13},\alpha_{3},\alpha_{8},
\alpha_{23},\alpha_{22},\alpha_{14}\},
\{\alpha_{4},\alpha_{7},\alpha_{20},\alpha_{9},\alpha_{18},
\alpha_{16},\alpha_{11},\alpha_{19}\},
\newline
\{\alpha_{10},\alpha_{15}\},\{\alpha_{17},\alpha_{24}\}$;
$$
H_{22,2}=[
(\alpha_{3}\alpha_{4})(\alpha_{6}\alpha_{15})
(\alpha_{8}\alpha_{11})(\alpha_{9}\alpha_{22})
(\alpha_{12}\alpha_{23})(\alpha_{14}\alpha_{20})
(\alpha_{16}\alpha_{19})(\alpha_{17}\alpha_{21}),
$$
$$
(\alpha_{2}\alpha_{22})(\alpha_{3}\alpha_{13})
(\alpha_{4}\alpha_{18})(\alpha_{7}\alpha_{9})
(\alpha_{10}\alpha_{24})(\alpha_{11}\alpha_{20})
(\alpha_{15}\alpha_{17})(\alpha_{16}\alpha_{19}),
$$
$$
(\alpha_{2}\alpha_{13})(\alpha_{3}\alpha_{22})
(\alpha_{4}\alpha_{9})(\alpha_{7}\alpha_{18})
(\alpha_{8}\alpha_{14})(\alpha_{11}\alpha_{20})
(\alpha_{12}\alpha_{23})(\alpha_{16}\alpha_{19})]
$$
with orbits
$
\{\alpha_{2},\alpha_{22},\alpha_{13},\alpha_{7},\alpha_{9},
\alpha_{3},\alpha_{18},\alpha_{4}\},
\{\alpha_{6},\alpha_{15},\alpha_{21},\alpha_{17}\}, 
\{\alpha_{8},\alpha_{11},\alpha_{14},\alpha_{20}\},
\{\alpha_{10},\alpha_{24}\}, \newline
\{\alpha_{12},\alpha_{23}\}, \{\alpha_{16},\alpha_{19}\}$;
$$
H_{22,3}=[
(\alpha_{1}\alpha_{10})(\alpha_{3}\alpha_{22})
(\alpha_{4}\alpha_{9})(\alpha_{5}\alpha_{24})
(\alpha_{6}\alpha_{15})(\alpha_{8}\alpha_{14})
(\alpha_{11}\alpha_{20})(\alpha_{17}\alpha_{21}),
$$
$$
(\alpha_{2}\alpha_{22})(\alpha_{3}\alpha_{13})
(\alpha_{4}\alpha_{18})(\alpha_{7}\alpha_{9})
(\alpha_{10}\alpha_{24})(\alpha_{11}\alpha_{20})
(\alpha_{15}\alpha_{17})(\alpha_{16}\alpha_{19}),
$$
$$
(\alpha_{2}\alpha_{13})(\alpha_{3}\alpha_{22})
(\alpha_{4}\alpha_{9})(\alpha_{7}\alpha_{18})
(\alpha_{8}\alpha_{14})(\alpha_{11}\alpha_{20})
(\alpha_{12}\alpha_{23})(\alpha_{16}\alpha_{19})]
$$
with orbits
$
\{\alpha_{1},\alpha_{10},\alpha_{5},\alpha_{24}\},
\{\alpha_{2},\alpha_{22},\alpha_{13},\alpha_{3}\},
\{\alpha_{4},\alpha_{9},\alpha_{18},\alpha_{7}\},
\{\alpha_{6},\alpha_{15},\alpha_{21},\alpha_{17}\},
\{\alpha_{8},\alpha_{14}\}, \newline
\{\alpha_{11},\alpha_{20}\},
\{\alpha_{12},\alpha_{23}\},\{\alpha_{16},\alpha_{19}\}$.

\medskip

{\bf n=21,} $H\cong C_2^4$ ($|H|=16$, $i=14$):
$\rk N_H=15$, $(N_H)^\ast/N_H \cong
\bz/8\bz\times (\bz/2\bz)^6$, and
$\det(K(q_{N_H})_2)\equiv \pm 2^{9}\mod (\bz_2^\ast)^2$.
\footnote{These calculations show that for a symplectic group
$G=(C_2)^4$ on a
K\"ahlerian K3 surface, the group
$S_{(G)}^\ast/S_{(G)}=S_{(2,2,2,2)}^\ast /S_{(2,2,2,2)}\cong
\bz/8\bz\times (\bz/2\bz)^6$.
We must correct our calculation of this group in
\cite[Prop. 10.1]{Nik0}.}
$$
H_{21,1}=
[(\alpha_{2}\alpha_{20})(\alpha_{3}\alpha_{10})
(\alpha_{5}\alpha_{6})(\alpha_{8}\alpha_{11})
(\alpha_{9}\alpha_{21})(\alpha_{12}\alpha_{22})
(\alpha_{17}\alpha_{23})(\alpha_{19}\alpha_{24}),
$$
$$
(\alpha_{2}\alpha_{19})(\alpha_{3}\alpha_{5})
(\alpha_{6}\alpha_{10})(\alpha_{8}\alpha_{9})
(\alpha_{11}\alpha_{21})(\alpha_{12}\alpha_{23})
(\alpha_{17}\alpha_{22})(\alpha_{20}\alpha_{24}),
$$
$$
(\alpha_{1}\alpha_{16})(\alpha_{2}\alpha_{20})
(\alpha_{3}\alpha_{6})(\alpha_{5}\alpha_{10})
(\alpha_{12}\alpha_{23})(\alpha_{14}\alpha_{18})
(\alpha_{17}\alpha_{22})(\alpha_{19}\alpha_{24}),
$$
$$
(\alpha_{1}\alpha_{14})(\alpha_{2}\alpha_{24})
(\alpha_{3}\alpha_{5})(\alpha_{6}\alpha_{10})
(\alpha_{12}\alpha_{22})(\alpha_{16}\alpha_{18})
(\alpha_{17}\alpha_{23})(\alpha_{19}\alpha_{20})]
$$
with orbits {
$
\{\alpha_{1},\alpha_{16},\alpha_{14},\alpha_{18}\},
\{\alpha_{2},\alpha_{20},\alpha_{19},\alpha_{24}\},
\{\alpha_{3},\alpha_{10},\alpha_{5},\alpha_{6}\}, 
\{\alpha_{8},\alpha_{11},\alpha_{9},\alpha_{21}\},
\newline
\{\alpha_{12},\alpha_{22},\alpha_{23},\alpha_{17}\}
$;

$$
H_{21,2}=
[(\alpha_{1}\alpha_{3})(\alpha_{2}\alpha_{23})
(\alpha_{5}\alpha_{14})(\alpha_{6}\alpha_{16})
(\alpha_{10}\alpha_{18})(\alpha_{12}\alpha_{20})
(\alpha_{17}\alpha_{24})(\alpha_{19}\alpha_{22}),
$$
$$
(\alpha_{1}\alpha_{2})(\alpha_{3}\alpha_{23})
(\alpha_{5}\alpha_{17})(\alpha_{6}\alpha_{12})
(\alpha_{10}\alpha_{22})(\alpha_{14}\alpha_{24})
(\alpha_{16}\alpha_{20})(\alpha_{18}\alpha_{19}),
$$
$$
(\alpha_{1}\alpha_{16})(\alpha_{2}\alpha_{20})
(\alpha_{3}\alpha_{6})(\alpha_{5}\alpha_{10})
(\alpha_{12}\alpha_{23})(\alpha_{14}\alpha_{18})
(\alpha_{17}\alpha_{22})(\alpha_{19}\alpha_{24}),
$$
$$
(\alpha_{1}\alpha_{14})(\alpha_{2}\alpha_{24})
(\alpha_{3}\alpha_{5})(\alpha_{6}\alpha_{10})
(\alpha_{12}\alpha_{22})(\alpha_{16}\alpha_{18})
(\alpha_{17}\alpha_{23})(\alpha_{19}\alpha_{20})]
$$
with orbits
$
\{\alpha_{1},\alpha_{3},\alpha_{2},\alpha_{16},\alpha_{14},
\alpha_{23},\alpha_{6},\alpha_{5},
\alpha_{20},\alpha_{24},\alpha_{18},\alpha_{12},\alpha_{17},
\alpha_{10},\alpha_{19},\alpha_{22}\}
$.

\medskip

{\bf n=20,} $H\cong Q_{12}$ ($|H|=12$, $i=1$):
$$
H_{20,1}=[
(\alpha_{1}\alpha_{10}\alpha_{23}\alpha_{12})
(\alpha_{2}\alpha_{3}\alpha_{21}\alpha_{9})
(\alpha_{4}\alpha_{22}\alpha_{8}\alpha_{20})
(\alpha_{7}\alpha_{15})
(\alpha_{11}\alpha_{17}\alpha_{14}\alpha_{13})
(\alpha_{16}\alpha_{19}),
$$
$$
(\alpha_{2}\alpha_{4}\alpha_{11})(\alpha_{3}\alpha_{17}\alpha_{22})
(\alpha_{7}\alpha_{18}\alpha_{15})(\alpha_{8}\alpha_{14}\alpha_{21})
(\alpha_{9}\alpha_{13}\alpha_{20})(\alpha_{16}\alpha_{24}\alpha_{19})]
$$
has $Clos(H_{20,1})=H_{61,1}$ above.

\medskip

{\bf n=19,} $H\cong C_2\times C_6$ ($|H|=12$, $i=5$):
$$
H_{19,1}=[
(\alpha_{1}\alpha_{23})(\alpha_{2}\alpha_{21})
(\alpha_{3}\alpha_{9})(\alpha_{4}\alpha_{8})
(\alpha_{10}\alpha_{12})(\alpha_{11}\alpha_{14})
(\alpha_{13}\alpha_{17})(\alpha_{20}\alpha_{22}),
$$
$$
(\alpha_{1}\alpha_{10})
(\alpha_{2}\alpha_{9}\alpha_{4}\alpha_{13}\alpha_{11}\alpha_{20})
(\alpha_{3}\alpha_{8}\alpha_{17}\alpha_{14}\alpha_{22}\alpha_{21})
(\alpha_{7}\alpha_{15}\alpha_{18})
(\alpha_{12}\alpha_{23})(\alpha_{16}\alpha_{19}\alpha_{24})]
$$
has $Clos(H_{19,1})=H_{61,1}$ above.

\medskip

{\bf n=18,} $H\cong D_{12}$ ($|H|=12$, $i=4$):
$\rk N_H=16$ and $(N_H)^\ast/N_H\cong (\bz/6\bz)^4$.
$$
H_{18,1}=[
(\alpha_{2}\alpha_{17})(\alpha_{3}\alpha_{10})
(\alpha_{4}\alpha_{21})(\alpha_{5}\alpha_{18})
(\alpha_{8}\alpha_{20})(\alpha_{11}\alpha_{23})
(\alpha_{13}\alpha_{22})(\alpha_{15}\alpha_{24}),
$$
$$
(\alpha_{2}\alpha_{13})(\alpha_{3}\alpha_{22})
(\alpha_{4}\alpha_{9})(\alpha_{7}\alpha_{18})
(\alpha_{8}\alpha_{14})(\alpha_{11}\alpha_{20})
(\alpha_{12}\alpha_{23})(\alpha_{16}\alpha_{19})]
$$
with orbits
$
\{\alpha_{2},\alpha_{13},\alpha_{3},\alpha_{22},\alpha_{10},\alpha_{17}\},
\{\alpha_{4},\alpha_{9},\alpha_{21}\},
\{\alpha_{5},\alpha_{18},\alpha_{7}\},
\{\alpha_{8},\alpha_{14},\alpha_{23},\alpha_{12},
\alpha_{11},\alpha_{20}\},
\newline
\{\alpha_{15},\alpha_{24}\},\{\alpha_{16},\alpha_{19},\}$.

\medskip

{\bf n=17,} $H\cong {\frak A}_4$ ($|H|=12$, $i=3$):
$\rk N_H=16$ and $(N_H)^\ast/N_H\cong (\bz/12\bz)^2\times (\bz/2\bz)^2$.
$$
H_{17,1}=[
(\alpha_{1}\alpha_{6}\alpha_{10})(\alpha_{2}\alpha_{13}\alpha_{19})
(\alpha_{3}\alpha_{18}\alpha_{14})(\alpha_{4}\alpha_{20}\alpha_{7})
(\alpha_{8}\alpha_{11}\alpha_{12})(\alpha_{9}\alpha_{22}\alpha_{23}),
$$
$$
(\alpha_{2}\alpha_{13})(\alpha_{3}\alpha_{22})(\alpha_{4}\alpha_{9})
(\alpha_{7}\alpha_{18})(\alpha_{8}\alpha_{14})(\alpha_{11}\alpha_{20})
(\alpha_{12}\alpha_{23})(\alpha_{16}\alpha_{19})]
$$
with orbits
$
\{\alpha_{1},\alpha_{6},\alpha_{10},\},
\{\alpha_{2},\alpha_{13},\alpha_{16},\alpha_{19},\},
\{\alpha_{3},\alpha_{18},\alpha_{22},\alpha_{11},\alpha_{14},\alpha_{7},
\alpha_{23},\alpha_{20},\alpha_{12},\alpha_{8},
\newline
\alpha_{4},\alpha_{9}\}$;
$$
H_{17,2}=[
(\alpha_{1}\alpha_{22}\alpha_{20})(\alpha_{3}\alpha_{11}\alpha_{15})
(\alpha_{4}\alpha_{9}\alpha_{19})(\alpha_{7}\alpha_{21}\alpha_{12})
(\alpha_{8}\alpha_{14}\alpha_{13})(\alpha_{18}\alpha_{24}\alpha_{23}),
$$
$$
(\alpha_{2}\alpha_{13})(\alpha_{3}\alpha_{22})
(\alpha_{4}\alpha_{9})(\alpha_{7}\alpha_{18})
(\alpha_{8}\alpha_{14})(\alpha_{11}\alpha_{20})
(\alpha_{12}\alpha_{23})(\alpha_{16}\alpha_{19})]
$$
with orbits
$
\{\alpha_{1},\alpha_{22},\alpha_{15},\alpha_{20},\alpha_{3},
\alpha_{11}\},\{\alpha_{2},\alpha_{13},\alpha_{8},\alpha_{14}\},
\{\alpha_{4},\alpha_{9},\alpha_{16},\alpha_{19}\},
\{\alpha_{7},\alpha_{21},\alpha_{18},\alpha_{12},
\newline
\alpha_{24},\alpha_{23}\}$;
$$
H_{17,3}=[
(\alpha_{1}\alpha_{5}\alpha_{17})(\alpha_{3}\alpha_{7}\alpha_{18})
(\alpha_{4}\alpha_{9}\alpha_{13})
(\alpha_{6}\alpha_{24}\alpha_{21})(\alpha_{8}\alpha_{19}\alpha_{14})
(\alpha_{11}\alpha_{12}\alpha_{23}),
$$
$$
(\alpha_{2}\alpha_{13})(\alpha_{3}\alpha_{22})
(\alpha_{4}\alpha_{9})(\alpha_{7}\alpha_{18})
(\alpha_{8}\alpha_{14})(\alpha_{11}\alpha_{20})
(\alpha_{12}\alpha_{23})(\alpha_{16}\alpha_{19})]
$$
with orbits
$
\{\alpha_{1},\alpha_{5},\alpha_{17}\},
\{\alpha_{2},\alpha_{13},\alpha_{4},\alpha_{9}\},
\{\alpha_{3},\alpha_{7},\alpha_{22},\alpha_{18}\},
\{\alpha_{6},\alpha_{24},\alpha_{21}\},
\{\alpha_{8},\alpha_{19},\alpha_{14},\alpha_{16}\},\newline
\{\alpha_{11},\alpha_{12},\alpha_{20},\alpha_{23}\}$.

\medskip

{\bf n=16,} $H\cong D_{10}$ ($|H|=10$, $i=1$):
$\rk N_H=16$ and $(N_H)^\ast/N_H\cong (\bz/5\bz)^4$.
$$
H_{16,1}=[
(\alpha_{3}\alpha_{13})(\alpha_{5}\alpha_{21})
(\alpha_{7}\alpha_{18})(\alpha_{8}\alpha_{15})
(\alpha_{10}\alpha_{14})(\alpha_{11}\alpha_{16})
(\alpha_{12}\alpha_{24})(\alpha_{17}\alpha_{23}),
$$
$$
(\alpha_{2}\alpha_{17}\alpha_{15}\alpha_{8}\alpha_{23})
(\alpha_{3}\alpha_{14}\alpha_{10}\alpha_{13}\alpha_{9})
(\alpha_{4}\alpha_{24}\alpha_{18}\alpha_{7}\alpha_{12})
(\alpha_{5}\alpha_{11}\alpha_{20}\alpha_{16}\alpha_{21})]
$$
with orbits
$
\{\alpha_{2},\alpha_{17},\alpha_{23},\alpha_{15},\alpha_{8}\},
\{\alpha_{3},\alpha_{13},\alpha_{14},\alpha_{9},\alpha_{10}\},
\{\alpha_{4},\alpha_{24},\alpha_{12},\alpha_{18},\alpha_{7}\},
\{\alpha_{5},\alpha_{21},\alpha_{11},
\newline
\alpha_{16},\alpha_{20}\}$.

\medskip

{\bf n=15,} $H\cong C_3^2$ ($|H|=9$, $i=2$):
$$
H_{15,1}=[
(\alpha_{1}\alpha_{3}\alpha_{21})(\alpha_{5}\alpha_{10}\alpha_{7})
(\alpha_{6}\alpha_{24}\alpha_{14})
(\alpha_{11}\alpha_{12}\alpha_{13})(\alpha_{15}\alpha_{19}\alpha_{17})
(\alpha_{16}\alpha_{18}\alpha_{22}),
$$
$$
(\alpha_{1}\alpha_{6}\alpha_{22})(\alpha_{3}\alpha_{24}\alpha_{16})
(\alpha_{4}\alpha_{20}\alpha_{9})(\alpha_{5}\alpha_{10}\alpha_{7})
(\alpha_{11}\alpha_{13}\alpha_{12})(\alpha_{14}\alpha_{18}\alpha_{21})]
$$
has $Clos(H_{15,1})=H_{30,1}$.

\medskip

{\bf n=14,} $H\cong C_8$ ($|H|=8$, $i=1$):
$$
H_{14,1}=[
(\alpha_{1}\alpha_{12}\alpha_{16}\alpha_{23}
\alpha_{15}\alpha_{11}\alpha_{19}\alpha_{20})
(\alpha_{2}\alpha_{13})
(\alpha_{3}\alpha_{8}\alpha_{22}\alpha_{5}
\alpha_{18}\alpha_{14}\alpha_{7}\alpha_{17})
(\alpha_{4}\alpha_{6}\alpha_{9}\alpha_{10})]
$$
has $Clos(H_{14,1})=H_{26,1}$ above.

\medskip

{\bf n=13,} $H\cong Q_8$ ($|H|=8$, $i=4$):
$$
H_{13,1}=[
(\alpha_{1}\alpha_{2}\alpha_{24}\alpha_{14})
(\alpha_{3}\alpha_{6}\alpha_{17}\alpha_{22})
(\alpha_{4}\alpha_{9})
(\alpha_{5}\alpha_{12}\alpha_{23}\alpha_{10})
(\alpha_{8}\alpha_{13})
(\alpha_{16}\alpha_{18}\alpha_{19}\alpha_{20}),
$$
$$
(\alpha_{1}\alpha_{16}\alpha_{24}\alpha_{19})
(\alpha_{2}\alpha_{20}\alpha_{14}\alpha_{18})
(\alpha_{3}\alpha_{10}\alpha_{17}\alpha_{12})
(\alpha_{5}\alpha_{6}\alpha_{23}\alpha_{22})
(\alpha_{8}\alpha_{13})
(\alpha_{15}\alpha_{21})]
$$
with $Clos(H_{13,1})=H_{40,1}$ above.

\medskip

{\bf n=12,} $H\cong Q_8$ ($|H|=8$, $i=4$):
$\rk N_H=17$, $(N_H)^\ast/N_H \cong
(\bz/8\bz)^2\times (\bz/2\bz)^3$, and
$K((q_{N_H})_2)\cong q_\theta^{(2)}(2)\oplus q^\prime$.
$$
H_{12,1}=[
(\alpha_{1}\alpha_{9}\alpha_{23}\alpha_{7})
(\alpha_{4}\alpha_{19}\alpha_{13}\alpha_{10})
(\alpha_{5}\alpha_{12})
(\alpha_{6}\alpha_{21}\alpha_{20}\alpha_{15})
(\alpha_{8}\alpha_{14}\alpha_{11}\alpha_{17})
(\alpha_{6}\alpha_{24}),
$$
$$
(\alpha_{1}\alpha_{11}\alpha_{23}\alpha_{8})
(\alpha_{4}\alpha_{6}\alpha_{13}\alpha_{20})
(\alpha_{5}\alpha_{16})
(\alpha_{7}\alpha_{17}\alpha_{9}\alpha_{14})
(\alpha_{10}\alpha_{21}\alpha_{19}\alpha_{15})
(\alpha_{12}\alpha_{24})]
$$
with orbits
$
\{\alpha_{1},\alpha_{9},\alpha_{11},\alpha_{23},
\alpha_{14},\alpha_{7},\alpha_{17},\alpha_{8}\},
\{\alpha_{4},\alpha_{19},\alpha_{6},\alpha_{13},
\alpha_{15},\alpha_{10},\alpha_{21},\alpha_{20}\},
\{\alpha_{5},\alpha_{12},\alpha_{16},
\newline
\alpha_{24}\}$.

\medskip

{\bf n=11,} $H\cong C_2\times C_4$ ($|H|=8$, $i=2$):
$$
H_{11,1}=[
(\alpha_{2}\alpha_{8}\alpha_{3}\alpha_{12})
(\alpha_{4}\alpha_{11}\alpha_{18}\alpha_{16})
(\alpha_{7}\alpha_{19}\alpha_{9}\alpha_{20})
(\alpha_{10}\alpha_{15})
(\alpha_{13}\alpha_{14}\alpha_{22}\alpha_{23})
(\alpha_{17}\alpha_{24}),
$$
$$
(\alpha_{2}\alpha_{13})(\alpha_{3}\alpha_{22})
(\alpha_{4}\alpha_{9})(\alpha_{7}\alpha_{18})
(\alpha_{8}\alpha_{14})(\alpha_{11}\alpha_{20})
(\alpha_{12}\alpha_{23})(\alpha_{16}\alpha_{19})]
$$
has $Clos(H_{11,1})=H_{22,1}$ above;
$$
H_{11,2}=[
(\alpha_{2}\alpha_{3}\alpha_{7}\alpha_{4})
(\alpha_{6}\alpha_{17}\alpha_{21}\alpha_{15})
(\alpha_{8}\alpha_{11}\alpha_{14}\alpha_{20})
(\alpha_{9}\alpha_{13}\alpha_{22}\alpha_{18})
(\alpha_{10}\alpha_{24})(\alpha_{16}\alpha_{19}),
$$
$$
(\alpha_{2}\alpha_{13})(\alpha_{3}\alpha_{22})
(\alpha_{4}\alpha_{9})(\alpha_{7}\alpha_{18})
(\alpha_{8}\alpha_{14})(\alpha_{11}\alpha_{20})
(\alpha_{12}\alpha_{23})(\alpha_{16}\alpha_{19})]
$$
has $Clos(H_{11,2})=H_{22,2}$ above;
$$
H_{11,3}=[
(\alpha_{1}\alpha_{10}\alpha_{5}\alpha_{24})
(\alpha_{2}\alpha_{3}\alpha_{13}\alpha_{22})
(\alpha_{4}\alpha_{18}\alpha_{9}\alpha_{7})
(\alpha_{6}\alpha_{15}\alpha_{21}\alpha_{17})
(\alpha_{8}\alpha_{14})(\alpha_{16}\alpha_{19}),
$$
$$
(\alpha_{2}\alpha_{13})(\alpha_{3}\alpha_{22})
(\alpha_{4}\alpha_{9})(\alpha_{7}\alpha_{18})
(\alpha_{8}\alpha_{14})(\alpha_{11}\alpha_{20})
(\alpha_{12}\alpha_{23})(\alpha_{16}\alpha_{19})]
$$
has $Clos(H_{11,3})=H_{22,3}$ above.
\footnote{These calculations show that for a symplectic group
$G=C_2\times C_4$ on a
K\"ahlerian K3 surface, the group
$S_{(G)}^\ast/S_{(G)}=S_{(2,4)}^\ast /S_{(2,4)}\cong
(\bz/4\bz)^4\times (\bz/2\bz)^2$.
We must correct our calculation of this group in
\cite[Prop. 10.1]{Nik0}.}

\medskip

{\bf n=10,} $H\cong D_8$ ($|H|=8$, $i=3$):
$\rk N_H=15$ and $(N_H)^\ast/N_H\cong
(\bz/4\bz)^5$.
$$
H_{10,1}\cong [
(\alpha_{2}\alpha_{16})(\alpha_{3}\alpha_{12})
(\alpha_{4}\alpha_{13})(\alpha_{7}\alpha_{11})
(\alpha_{8}\alpha_{9})(\alpha_{10}\alpha_{17})
(\alpha_{14}\alpha_{19})(\alpha_{15}\alpha_{21}),
$$
$$
(\alpha_{2}\alpha_{13})(\alpha_{3}\alpha_{22})
(\alpha_{4}\alpha_{9})(\alpha_{7}\alpha_{18})
(\alpha_{8}\alpha_{14})(\alpha_{11}\alpha_{20})
(\alpha_{12}\alpha_{23})(\alpha_{16}\alpha_{19})]
$$
with orbits
$
\{\alpha_{2},\alpha_{16},\alpha_{13},\alpha_{8},
\alpha_{19},\alpha_{9},\alpha_{4},\alpha_{14}\},
\{\alpha_{3},\alpha_{12},\alpha_{22},\alpha_{23}\},
\{\alpha_{7},\alpha_{11},\alpha_{18},\alpha_{20}\}, \newline
\{\alpha_{10},\alpha_{17}\},
\{\alpha_{15},\alpha_{21}\}$;
$$
H_{10,2}=[
(\alpha_{2}\alpha_{13})(\alpha_{3}\alpha_{24})
(\alpha_{4}\alpha_{8})(\alpha_{5}\alpha_{11})
(\alpha_{10}\alpha_{15})(\alpha_{17}\alpha_{22})
(\alpha_{18}\alpha_{23})(\alpha_{20}\alpha_{21}),
$$
$$
(\alpha_{2}\alpha_{13})(\alpha_{3}\alpha_{22})
(\alpha_{4}\alpha_{9})(\alpha_{7}\alpha_{18})
(\alpha_{8}\alpha_{14})(\alpha_{11}\alpha_{20})
(\alpha_{12}\alpha_{23})(\alpha_{16}\alpha_{19})]
$$
with orbits
$
\{\alpha_{2},\alpha_{13}\},
\{\alpha_{3},\alpha_{24},\alpha_{22},\alpha_{17}\},
\{\alpha_{4},\alpha_{8},\alpha_{9},\alpha_{14}\},
\{\alpha_{5},\alpha_{11},\alpha_{21},\alpha_{20}\},
\{\alpha_{7},\alpha_{18}, \newline \alpha_{12},\alpha_{23}\},
\{\alpha_{10},\alpha_{15}\},\{\alpha_{16},\alpha_{19}\}$.

\medskip

{\bf n=9,} $H\cong C_2^3$ ($|H|=8$, $i=5$):
$\rk N_H=14$, $(N_H)^\ast/N_H \cong
(\bz/4\bz)^2\times (\bz/2\bz)^6$ and
$\det(K(q_{N_H})_2)\equiv \pm 2^{10}\mod (\bz_2^\ast)^2$.
$$
H_{9,1}=[
(\alpha_{3}\alpha_{8})(\alpha_{4}\alpha_{9})
(\alpha_{5}\alpha_{15})(\alpha_{6}\alpha_{24})
(\alpha_{7}\alpha_{19})(\alpha_{11}\alpha_{20})
(\alpha_{14}\alpha_{22})(\alpha_{16}\alpha_{18}),\
$$
$$
(\alpha_{2}\alpha_{12})(\alpha_{3}\alpha_{8})
(\alpha_{4}\alpha_{20})(\alpha_{7}\alpha_{16})
(\alpha_{9}\alpha_{11})(\alpha_{13}\alpha_{23})
(\alpha_{14}\alpha_{22})(\alpha_{18}\alpha_{19}),\
$$
$$
(\alpha_{2}\alpha_{13})(\alpha_{3}\alpha_{22})
(\alpha_{4}\alpha_{9})(\alpha_{7}\alpha_{18})
(\alpha_{8}\alpha_{14})(\alpha_{11}\alpha_{20})
(\alpha_{12}\alpha_{23})(\alpha_{16}\alpha_{19})]
$$
with orbits
$
\{\alpha_{2},\alpha_{12},\alpha_{13},\alpha_{23}\},
\{\alpha_{3},\alpha_{8},\alpha_{22},\alpha_{14}\},
\{\alpha_{4},\alpha_{9},\alpha_{20},\alpha_{11}\},
\{\alpha_{5},\alpha_{15}\},
\{\alpha_{6},\alpha_{24}\}, \newline
\{\alpha_{7},\alpha_{19},\alpha_{16},\alpha_{18}\}$;
$$
H_{9,2}=[
(\alpha_{2}\alpha_{3})(\alpha_{4}\alpha_{18})(\alpha_{7}\alpha_{9})
(\alpha_{8}\alpha_{12})(\alpha_{11}\alpha_{16})(\alpha_{13}\alpha_{22})
(\alpha_{14}\alpha_{23})(\alpha_{19}\alpha_{20}),
$$
$$
(\alpha_{2}\alpha_{12})(\alpha_{3}\alpha_{8})(\alpha_{4}\alpha_{20})
(\alpha_{7}\alpha_{16})(\alpha_{9}\alpha_{11})(\alpha_{13}\alpha_{23})
(\alpha_{14}\alpha_{22})(\alpha_{18}\alpha_{19}),
$$
$$
(\alpha_{2}\alpha_{13})(\alpha_{3}\alpha_{22})(\alpha_{4}\alpha_{9})
(\alpha_{7}\alpha_{18})(\alpha_{8}\alpha_{14})(\alpha_{11}\alpha_{20})
(\alpha_{12}\alpha_{23})(\alpha_{16}\alpha_{19})]
$$
with orbits
$
\{\alpha_{2},\alpha_{3},\alpha_{12},\alpha_{13},
\alpha_{8},\alpha_{22},\alpha_{23},\alpha_{14}\},
\{\alpha_{4},\alpha_{18},\alpha_{20},\alpha_{9},
\alpha_{19},\alpha_{7},\alpha_{11}\alpha_{16}\}$;
$$
H_{9,3}=[
(\alpha_{2}\alpha_{4})(\alpha_{3}\alpha_{7})(\alpha_{6}\alpha_{21})
(\alpha_{9}\alpha_{13})(\alpha_{10}\alpha_{24})(\alpha_{11}\alpha_{20})
(\alpha_{12}\alpha_{23})(\alpha_{18}\alpha_{22}),
$$
$$
(\alpha_{2}\alpha_{22})(\alpha_{3}\alpha_{13})(\alpha_{4}\alpha_{18})
(\alpha_{7}\alpha_{9})(\alpha_{10}\alpha_{24})(\alpha_{11}\alpha_{20})
(\alpha_{15}\alpha_{17})(\alpha_{16}\alpha_{19}),
$$
$$
(\alpha_{2}\alpha_{13})(\alpha_{3}\alpha_{22})(\alpha_{4}\alpha_{9})
(\alpha_{7}\alpha_{18})(\alpha_{8}\alpha_{14})(\alpha_{11}\alpha_{20})
(\alpha_{12}\alpha_{23})(\alpha_{16}\alpha_{19})]
$$
with orbits
$
\{\alpha_{2},\alpha_{4},\alpha_{22},\alpha_{13},
\alpha_{18},\alpha_{9},\alpha_{3},\alpha_{7}\},
\{\alpha_{6},\alpha_{21}\},\{\alpha_{8},\alpha_{14}\},
\{\alpha_{10},\alpha_{24}\},\{\alpha_{11},\alpha_{20}\},
\{\alpha_{12},
\newline
\alpha_{23}\},
\{\alpha_{15},\alpha_{17}\}, \{\alpha_{16},\alpha_{19}\}$;
$$
H_{9,4}=[
(\alpha_{1}\alpha_{5})(\alpha_{6}\alpha_{21})(\alpha_{8}\alpha_{14})
(\alpha_{10}\alpha_{24})(\alpha_{11}\alpha_{20})(\alpha_{12}\alpha_{23})
(\alpha_{15}\alpha_{17})(\alpha_{16}\alpha_{19}),
$$
$$
(\alpha_{2}\alpha_{22})(\alpha_{3}\alpha_{13})(\alpha_{4}\alpha_{18})
(\alpha_{7}\alpha_{9})(\alpha_{10}\alpha_{24})(\alpha_{11}\alpha_{20})
(\alpha_{15}\alpha_{17})(\alpha_{16}\alpha_{19}),
$$
$$
(\alpha_{2}\alpha_{13})(\alpha_{3}\alpha_{22})(\alpha_{4}\alpha_{9})
(\alpha_{7}\alpha_{18})(\alpha_{8}\alpha_{14})(\alpha_{11}\alpha_{20})
(\alpha_{12}\alpha_{23})(\alpha_{16}\alpha_{19})]
$$
with orbits
$
\{\alpha_{1},\alpha_{5}\},
\{\alpha_{2},\alpha_{22},\alpha_{13},\alpha_{3}\},
\{\alpha_{4},\alpha_{18},\alpha_{9},\alpha_{7}\},
\{\alpha_{6},\alpha_{21}\},\{\alpha_{8},\alpha_{14}\},
\{\alpha_{10},\alpha_{24}\},
\{\alpha_{11},
\newline
\alpha_{20}\},\{\alpha_{12},\alpha_{23}\},
\{\alpha_{15},\alpha_{17}\},\{\alpha_{16},\alpha_{19}\}$.

\medskip

{\bf n=8,} $H\cong C_7$ ($|H|=7$, $i=1$):
$$
H_{8,1}=[
(\alpha_{1}\alpha_{5}\alpha_{10}\alpha_{19}\alpha_{22}\alpha_{6}\alpha_{7})
(\alpha_{3}\alpha_{16}\alpha_{20}\alpha_{24}\alpha_{9}\alpha_{4}\alpha_{17})
(\alpha_{11}\alpha_{15}\alpha_{18}\alpha_{14}\alpha_{13}\alpha_{21}\alpha_{12})]
$$
has $Clos(H_{8,1})=H_{33,1}$ above.

\medskip

{\bf n=7,} $H\cong C_6$ ($|H|=6$, $i=2$):
$$
H_{7,1}=[
(\alpha_{2}\alpha_{17}\alpha_{13}\alpha_{3}\alpha_{10}\alpha_{22})
(\alpha_{4}\alpha_{21}\alpha_{9})
(\alpha_{5}\alpha_{7}\alpha_{18})
(\alpha_{8}\alpha_{11}\alpha_{12}\alpha_{23}\alpha_{20}\alpha_{14})
(\alpha_{15}\alpha_{24})(\alpha_{16}\alpha_{19})]
$$
has $Clos(H_{7,1})=H_{18,1}$ above.

\medskip

{\bf n=6,} $H\cong D_6$ ($|H|=6$, $i=1$):
$\rk N_H=14$ and $(N_H)^\ast/N_H\cong (\bz/6\bz)^2\times (\bz/3\bz)^5$.
$$
H_{6,1}=[
(\alpha_{2}\alpha_{14})(\alpha_{3}\alpha_{17})(\alpha_{4}\alpha_{8})
(\alpha_{7}\alpha_{15})(\alpha_{9}\alpha_{13})(\alpha_{10}\alpha_{12})
(\alpha_{11}\alpha_{21})(\alpha_{16}\alpha_{19}),
$$
$$
(\alpha_{2}\alpha_{13})(\alpha_{3}\alpha_{22})(\alpha_{4}\alpha_{9})
(\alpha_{7}\alpha_{18})(\alpha_{8}\alpha_{14})(\alpha_{11}\alpha_{20})
(\alpha_{12}\alpha_{23})(\alpha_{16}\alpha_{19})]
$$
with orbits
$
\{\alpha_{2},\alpha_{14},\alpha_{9},\alpha_{4},\alpha_{13},\alpha_{8}\},
\{\alpha_{3},\alpha_{17},\alpha_{22}\},\{\alpha_{7},\alpha_{15},\alpha_{18}\},
\{\alpha_{10},\alpha_{12},\alpha_{23}\},
\{\alpha_{11},\alpha_{21},
\newline
\alpha_{20}\},\{\alpha_{16},\alpha_{19}\}$.

\medskip

{\bf n=5,} $H\cong C_5$ ($|H|=5$, $i=1$):
$$
H_{5,1}=[
(\alpha_{2}\alpha_{8}\alpha_{17}\alpha_{23}\alpha_{15})
(\alpha_{3}\alpha_{13}\alpha_{14}\alpha_{9}\alpha_{10})
(\alpha_{4}\alpha_{7}\alpha_{24}\alpha_{12}\alpha_{18})
(\alpha_{5}\alpha_{16}\alpha_{11}\alpha_{21}\alpha_{20})]
$$
has $Clos(H_{5,1})=H_{16,1}$ above.

\medskip

{\bf n=4,} $H\cong C_4$ ($|H|=4$, $i=1$):
$\rk N_H=14$ and $(N_H)^\ast/N_H \cong
(\bz/4\bz)^4\times (\bz/2\bz)^2$.
$$
H_{4,1}=[
(\alpha_{2}\alpha_{22}\alpha_{23}\alpha_{6})
(\alpha_{3}\alpha_{5}\alpha_{9}\alpha_{16})
(\alpha_{7}\alpha_{10}\alpha_{20}\alpha_{13})
(\alpha_{8}\alpha_{15})
(\alpha_{11}\alpha_{19}\alpha_{24}\alpha_{14})
(\alpha_{12}\alpha_{21})]
$$
with orbits
$
\{\alpha_{2},\alpha_{22},\alpha_{23},\alpha_{6}\},
\{\alpha_{3},\alpha_{5},\alpha_{9},\alpha_{16}\},
\{\alpha_{7},\alpha_{10},\alpha_{20},\alpha_{13}\},
\{\alpha_{8},\alpha_{15}\},
\{\alpha_{11},\alpha_{19},\alpha_{24},
\newline
\alpha_{14}\},
\{\alpha_{12},\alpha_{21}\}$.

\medskip

{\bf n=3,} $H\cong C_2^2$ ($|H|=4$, $i=2$):
$\rk N_H=12$ and $(N_H)^\ast/N_H\cong (\bz/4\bz)^2\times (\bz/2\bz)^6$.
$$
H_{3,1}=[
(\alpha_{2}\alpha_{12})(\alpha_{3}\alpha_{8})(\alpha_{4}\alpha_{20})
(\alpha_{7}\alpha_{16})(\alpha_{9}\alpha_{11})(\alpha_{13}\alpha_{23})
(\alpha_{14}\alpha_{22})(\alpha_{18}\alpha_{19}),
$$
$$
(\alpha_{2}\alpha_{13})(\alpha_{3}\alpha_{22})(\alpha_{4}\alpha_{9})
(\alpha_{7}\alpha_{18})(\alpha_{8}\alpha_{14})(\alpha_{11}\alpha_{20})
(\alpha_{12}\alpha_{23})(\alpha_{16}\alpha_{19})]
$$
with orbits
$
\{\alpha_{2},\alpha_{12},\alpha_{13},\alpha_{23}\},
\{\alpha_{3},\alpha_{8},\alpha_{22},\alpha_{14}\},
\{\alpha_{4},\alpha_{20},\alpha_{9},\alpha_{11}\},
\{\alpha_{7},\alpha_{16},\alpha_{18},\alpha_{19}\}$;
$$
H_{3,2}=[
(\alpha_{2}\alpha_{22})(\alpha_{3}\alpha_{13})(\alpha_{4}\alpha_{18})
(\alpha_{7}\alpha_{9})(\alpha_{10}\alpha_{24})(\alpha_{11}\alpha_{20})
(\alpha_{15}\alpha_{17})(\alpha_{16}\alpha_{19}),
$$
$$
(\alpha_{2}\alpha_{13})(\alpha_{3}\alpha_{22})(\alpha_{4}\alpha_{9})
(\alpha_{7}\alpha_{18})(\alpha_{8}\alpha_{14})(\alpha_{11}\alpha_{20})
(\alpha_{12}\alpha_{23})(\alpha_{16}\alpha_{19})]
$$
with orbits
$
\{\alpha_{2},\alpha_{22},\alpha_{13},\alpha_{3}\},
\{\alpha_{4},\alpha_{18},\alpha_{9},\alpha_{7}\},
\{\alpha_{8},\alpha_{14}\},\{\alpha_{10},\alpha_{24}\},
\{\alpha_{11},\alpha_{20}\},\{\alpha_{12},\alpha_{23}\},
\newline
\{\alpha_{15},\alpha_{17}\},\{\alpha_{16},\alpha_{19}\}$;
$$
H_{3,3}=[
(\alpha_{1}\alpha_{24})(\alpha_{3}\alpha_{22})(\alpha_{4}\alpha_{9})
(\alpha_{5}\alpha_{10})(\alpha_{6}\alpha_{17})(\alpha_{12}\alpha_{23})
(\alpha_{15}\alpha_{21})(\alpha_{16}\alpha_{19}),
$$
$$
(\alpha_{2}\alpha_{13})(\alpha_{3}\alpha_{22})
(\alpha_{4}\alpha_{9})(\alpha_{7}\alpha_{18})
(\alpha_{8}\alpha_{14})(\alpha_{11}\alpha_{20})
(\alpha_{12}\alpha_{23})(\alpha_{16}\alpha_{19})]
$$
with orbits
$
\{\alpha_{1},\alpha_{24}\},\{\alpha_{2},\alpha_{13}\},
\{\alpha_{3},\alpha_{22}\},\{\alpha_{4},\alpha_{9}\},
\{\alpha_{5},\alpha_{10}\},\{\alpha_{6},\alpha_{17}\},
\{\alpha_{7},\alpha_{18}\},\{\alpha_{8},\alpha_{14}\},
\newline
\{\alpha_{11},\alpha_{20}\},
\{\alpha_{12},\alpha_{23}\},
\{\alpha_{15},\alpha_{21}\},\{\alpha_{16},\alpha_{19}\}$.

\medskip

{\bf n=2,} $H\cong C_3$ ($|H|=3$, $i=1$):
$\rk N_H=12$ and $(N_H)^\ast/N_H\cong (\bz/3\bz)^6$.
$$
H_{2,1}=[
(\alpha_{1}\alpha_{13}\alpha_{7})(\alpha_{2}\alpha_{19}\alpha_{22})
(\alpha_{5}\alpha_{8}\alpha_{9})(\alpha_{6}\alpha_{10}\alpha_{11})
(\alpha_{12}\alpha_{23}\alpha_{20})(\alpha_{15}\alpha_{18}\alpha_{16})]
$$
with orbits
$
\{\alpha_{1},\alpha_{13},\alpha_{7}\},\{\alpha_{2},\alpha_{19},\alpha_{22}\},
\{\alpha_{5},\alpha_{8},\alpha_{9}\},\{(\alpha_{6},\alpha_{10},\alpha_{11}\},
\{\alpha_{12},\alpha_{23},\alpha_{20}\},\{\alpha_{15},\alpha_{18},
\newline
\alpha_{16}\}$.

\medskip

{\bf n=1,} $H\cong C_2$ ($|H|=2$, $i=1$):
$\rk N_H=8$ and $(N_H)^\ast/N_H\cong (\bz/2\bz)^8$.
$$
H_{1,1}=[
(\alpha_{2}\alpha_{23})(\alpha_{3}\alpha_{9})(\alpha_{5}\alpha_{16})
(\alpha_{6}\alpha_{22})(\alpha_{7}\alpha_{20})(\alpha_{10}\alpha_{13})
(\alpha_{11}\alpha_{24})(\alpha_{14}\alpha_{19})]
$$
with orbits
$\{\alpha_{2},\alpha_{23}\},\{\alpha_{3},\alpha_{9}\},\{\alpha_{5},\alpha_{16}\},
\{\alpha_{6},\alpha_{22}\},\{\alpha_{7},\alpha_{20}\},\{\alpha_{10},\alpha_{13}\},
\{\alpha_{11},\alpha_{24}\},\{\alpha_{14},\alpha_{19}\}$.


\medskip

Let $X$ be marked by a primitive sublattice
$S\subset N=N_{23}=N(24A_1)$. Then $S$
must satisfy Theorem \ref{th:primembb3} and
$\Gamma(P(S))\subset \Gamma(P(N_{23}))=24\aaa_1$.
Any such $S$ gives marking of some $X$ and $P(X)\cap S=P(S)$.

If $N_H\subset S$ where $H$ has the type
$n=81$, $80$, $79$, $78$, $77$, $76$, $74$, $70$, $63$, $62$ or $54$
($\rk N_H=19$), then $\Aut(X,S)_0=H$.
Otherwise, if only $N_H\subset S$ where $H$ has the type
$n=75$, $65$, $61$, $56$, $55$, $51$, $48$, $46$, $33$, $32$ or $26$
($\rk N_H=18$), then$\Aut(X,S)_0=H$.
Otherwise, if only $N_H\subset S$ where $H$ has the type
$n=49$, $40$, $39$,  $34$ or $12$
($\rk N_H=17$), then $\Aut(X,S)_0=H$.
Otherwise, if only $N_H\subset S$ where $H$ has the type
$n=30$, $22$, $18$, $17$ or $16$
($\rk N_H=16$), then $\Aut(X,S)_0=H$.
Otherwise, if only $N_H\subset S$ where $H$ has the type
$n=21$ or $10$
($\rk N_H=15$), then $\Aut(X,S)_0=H$.
Otherwise, if only $N_H\subset S$ where $H$ has the type
$n=9$, $6$ or $4$
($\rk N_H=14$), then $\Aut(X,S)_0=H$.
Otherwise, if only $N_H\subset S$ where $H$ has the type
$n=3$ or $2$
($\rk N_H=12$), then $\Aut(X,S)_0=H$.
Otherwise, if only $N_H\subset S$ where $H$ has the type $n=1$
($\rk N_H=8$), then $\Aut(X,S)_0=H\cong C_2$.
Otherwise, $\Aut(X,S)_0$ is trivial.


\medskip

Let us assume that
a K3 surface $X$ has $16$ non-intersecting
non-singular rational
curves. It is known \cite{Nik-1} that classes
$$
\alpha_1,\ \alpha_2,\ \dots, \alpha_{16}
$$
of these curves generate a primitive
sublattice $\Pi\subset S_X$
which was described in \cite{PS} and \cite{Nik-1}, and $X$ is
a Kummer surface. In particular, $\rk \Pi=16$ and
$\Pi^\ast/\Pi\cong (\bz/2\bz)^6$.

Let us consider marking $S\subset N_i$ of $X$ by $S$
which contains the primitive sublattice $\Pi\subset S$.
It is known \cite{Nik-1} that
$$
P(X)\cap S=\{\alpha_1,\dots,\alpha_{16}\}.
$$
Thus, $16\aaa_1$ should be a subgraph of the Dynkin graph of
$N_i$, and the corresponding sublattice $\Pi\subset N_i$ is
a primitive sublattice. By classification of Niemeier
lattices, this is possible only for $N(24A_1)$. Thus, $X$
can be marked by the Niemeier lattice $N_{23}=N(24A_1)$ only.
For this case, $\Aut(X,S)_0$ is a subgroup of
$$
Kum =\{\phi \in A(N(24A_1))\ |\ \phi(\Pi)=\Pi\}.
$$
In \cite{TW}, in particular, some of these subgroups
were considered.

\medskip

Finally, our considerations give the following result.

\begin{theorem} For each of Niemeier lattices $N_i$,
$i=1,\,2,\,3$, $5$---$9$, $11$---$23$,
there exists a K\"ahlerian K3 surface $X$ such that $X$ can be marked
by the Niemeier lattice $N_i$ only.
\label{onlymarkings}
\end{theorem}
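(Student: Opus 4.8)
The plan is to prove the theorem one index at a time, reducing each case to the explicit computation already carried out for the corresponding Niemeier lattice in Section \ref{sec:examples}. For each $i\in\{1,2,3,5,\dots,9,11,\dots,23\}$, Case $i$ of Section \ref{sec:examples} exhibits a concrete even negative definite sublattice $S\subset N_i$ together with its Dynkin diagram $\Gamma(P(S))$ and its invariants $\rk S$ and $A_S=S^\ast/S$. I would take this same $S$ as the Picard lattice of the surface to be constructed, so that the argument splits into an \emph{existence} step, producing a K\"ahlerian K3 surface $X$ with a marking $S\subset N_i$, and a \emph{rigidity} step, showing that no other Niemeier lattice can mark $X$.

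For existence, I would first verify that the chosen $S$ satisfies the conditions of Theorem \ref{th:primembb3}; this is exactly the check recorded in each Case $i$ (that $\rk S\le 19$, $\rk S+l(A_S)\le 22$, and the odd $p$-adic and $2$-adic congruences hold). By Theorem \ref{th:primembb3} there is then a primitive embedding $S\subset L_{K3}$, and I would invoke Remark \ref{rem:Niemmarkconst}(c1): choosing $H^{2,0}\subset (S)^\perp_{L_{K3}}\otimes\bc$ generic enough that $H^{1,1}_\bz=S$ yields a marked K\"ahlerian K3 surface $X$ with negative definite Picard lattice $S_X=S$ and marking $\tau:S_X=S\subset N_i$. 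By Theorem \ref{th:P(S)X}, for this $X$ one has $P(X)=P(N_i)\cap S=P(S)$.

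The rigidity step is where the list of admissible indices is forced. Suppose $X$ also admits a marking $\tau':S_X\subset N_j$ by some Niemeier lattice $N_j$. Since $S_X=S$ is negative definite it is the full maximal negative definite sublattice, and by Theorem \ref{th:P(S)X} the embedding $\tau'$ is primitive with $\tau'(P(S))=S\cap P(N_j)\subset P(N_j)$; in particular $\Gamma(P(S))$ must occur as a subdiagram of the Dynkin diagram $\Gamma(P(N_j))$ of the root system of $N_j$. I would then compare $\Gamma(P(S))$, as computed in Case $i$, against the root systems of all $24$ Niemeier lattices listed in Section \ref{sec:sublattNiem}. The sublattices $S$ were selected precisely so that their diagrams act as ``fingerprints'': for instance $\ddd_n$ with $17\le n\le 19$ (Case $1$) embeds only in $\ddd_{24}$; $\ddd_n\oplus\eee_8$ with $8\le n\le 11$ (Case $2$) requires both an $E_8$ component and a disjoint $D$-component of rank $\ge 8$, which occurs only for $N_2$; $2\eee_8$ (Case $3$) requires two disjoint $E_8$ subdiagrams, available only in $3E_8$; and so on through the remaining cases. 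Since $A$- and $D$-type diagrams contain no $E$-type subdiagrams, and since the rank and component structure of each Niemeier root system is fixed by Niemeier's classification, in each listed case the only $j$ for which $\Gamma(P(S))$ embeds as an admissible subdiagram with $S$ primitive is $j=i$; hence $X$ can be marked by $N_i$ only.

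The main obstacle will be the rigidity step in those cases where $\Gamma(P(S))$ alone does not already single out $N_i$. When several Niemeier root systems formally contain $\Gamma(P(S))$ as a subdiagram, I would supplement the diagram comparison with the finer invariants computed in Section \ref{sec:examples}: the discriminant form $q_S$, the numbers $\rk S$ and $l(A_S)$ (constrained by $\rk S+l(A_S)\le 22$ through Theorem \ref{th:primembb3}), the symplectic automorphism group $A(S)=\Aut(X,S)_0$, and the coinvariant (Leech type) sublattices $\La(S)=N_H$ forced to lie inside $S$. Eliminating a spurious candidate $N_j$ then amounts to checking that no primitive embedding $S\subset N_j$ can simultaneously realize the prescribed diagram and these invariants --- exactly the content of the Program $0$ verifications of Case $i$. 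It is precisely because no such fingerprint sublattice has been exhibited for $N_4$, $N_{10}$, $N_{24}$ --- the natural candidates for $N_4$ and $N_{10}$ being also markable by $N_1$, while the Leech lattice $N_{24}$ carries no roots at all --- that these three indices are omitted from the statement, though they are conjectured to share the property.
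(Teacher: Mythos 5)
Your proposal matches the paper's own argument: the theorem is proved exactly by the case-by-case constructions of Section \ref{sec:examples}, where for each listed $N_i$ a concrete primitive sublattice $S\subset N_i$ satisfying Theorem \ref{th:primembb3} is exhibited, a K3 surface with that marking is produced via Remark \ref{rem:Niemmarkconst}, and uniqueness of the marking component is read off from the classification of Niemeier root systems together with the finer invariants ($A(S)$, the coinvariant sublattices, the glue structure of $S$) when the Dynkin diagram alone does not suffice. Both the existence and rigidity steps, and the reason $N_4$, $N_{10}$, $N_{24}$ are omitted, are as in the paper.
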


We believe that the same result is valid for the remaining
Niemeier lattices
$N_4$, $N_{10}$. By Kond\=o's
trick from \cite{Kon} which we mentioned in Sec.
\ref{sec:KalerNiemmark},
any K\"ahlerian K3 surface which is marked by the
Leech lattice $N_{24}$ can be also marked by one of Niemeier lattices $N_i$,
$i=1$ --- $23$.
On the other hand, it is natural to mark K3 surfaces with empty set
$P(X)\cap S$ by the Leech lattice $N_{24}$.
{\it All Niemeier lattices are important for marking
of K\"ahlerian K3 surfaces.}

\section{Appendix: Programs}
\label{sec:programs}
Here we give programs using GP/PARI Calculator, Version 2.2.13.

\medskip

Program 0: niemeier$\backslash$general1.txt

\medskip

\noindent
$\backslash\backslash$for Niemeier lattice Niem given by \hfill

\noindent
$\backslash\backslash$the 24$\backslash$times 24 symmetric matrix r
in root basis \hfill

\noindent
$\backslash\backslash$represented by basic of the size 24 vectors
(0,0,...,0,1,0,...,0)$\tilde{ }$\hfill

\noindent
$\backslash\backslash$and its rational cording matrix \hfill

\noindent
$\backslash\backslash$cord of size (\ .\ $\backslash$times\ 24)\hfill

\noindent
$\backslash\backslash$and its sublattice SUBL given by\hfill

\noindent
$\backslash\backslash$ (24$\backslash$times\ .\ ) rational matrix
SUBL \hfill

\noindent
$\backslash\backslash$it calculates basis of its primitive
sublattice\hfill

\noindent
$\backslash\backslash$SUBLpr (SUBL$\backslash$otimes Q)$\backslash$cap
Niem as matrix SUBLpr\hfill

\noindent
$\backslash\backslash$and calculates invariants DSUBLpr of\hfill

\noindent
$\backslash\backslash$SUBLpr$\backslash$subset SUBLpr* \hfill

\noindent
$\backslash\backslash$and calculates the matrix rSUBLpr of SUBLpr in
this\hfill

\noindent
$\backslash\backslash$elementary divisors (Smith) basis SUBLpr\hfill

\noindent
a=matrix(24,24+matsize(cord)[1]);\hfill

\noindent
for(i=1,24,a[i,i]=1);for(i=1,matsize(cord)[1],a[,24+i]=cord[i,]~$\widetilde{ }$
\ );\hfill

\noindent
L=a;N=SUBL;\hfill

\noindent
$\backslash$r niemeier$\backslash$latt4.txt;\hfill

\noindent
SUBLpr1=Npr;R=r;B=SUBLpr1;\hfill

\noindent
$\backslash$r niemeier$\backslash$latt2.txt;\hfill

\noindent
SUBLpr=BB;DSUBLpr=D;rSUBLpr=G;

\vskip0.5cm

Program 1: niemeier$\backslash$latt1.txt

\medskip

\noindent
$\backslash\backslash$for a non-degenerate lattice\hfill

\noindent
$\backslash\backslash$L given by a symmetric integer matrix l\hfill

\noindent
$\backslash\backslash$in some generators\hfill

\noindent
$\backslash\backslash$calculates the elementary divisors
(Smith) basis of L\hfill

\noindent
$\backslash\backslash$as a matrix b and

\noindent
$\backslash\backslash$calculates the matrix
ll=b$\widetilde{\ }$*l*b\hfill

\noindent
$\backslash\backslash$of L in the bases b\hfill

\noindent
$\backslash\backslash$calculates invariants d
of L$\backslash$subset L*\hfill

\noindent
ww=matsnf(l,1);uu=ww[1];vv=ww[2];dd=ww[3];\hfill

\noindent
nn=matsize(l)[1];nnn=nn;for(i=1,nn,if(dd[i,i]==0,nnn=nnn-1));\hfill

\noindent
b=matrix(nn,nnn,X,Y,vv[X,Y+nn-nnn]);\hfill

\noindent
ll=b$\widetilde{\ }$*l*b;\hfill

\noindent
d=vector(nnn,X,dd[X+nn-nnn,X+nn-nnn]);\hfill

\noindent
kill(ww);kill(uu);kill(vv);kill(dd);kill(nn);kill(nnn);\hfill

\vskip0.5cm

Program 2: niemeier$\backslash$latt2.txt

\medskip

\noindent
$\backslash\backslash$for a non-degenerate lattice L\hfill

\noindent
$\backslash\backslash$given by an integer quadratic n x n matrix R\hfill

\noindent
$\backslash\backslash$and generators B (n x m) matrix with
rational coefficients\hfill

\noindent
$\backslash\backslash$calculates invariants D of
L$\backslash$subset L* of this lattice\hfill

\noindent
$\backslash\backslash$the elementary divisors (Smith) basis BB of this
lattice, (n x mm) matrix,\hfill

\noindent
$\backslash\backslash$and matrix G=BB$\widetilde{\ }$*R*BB of L
in this basis\hfill

\noindent
l=B$\widetilde{\ }$*R*B;\hfill

\noindent
$\backslash$r niemeier$\backslash$latt1.txt;\hfill

\noindent
BB=B*b;G=BB$\widetilde{\ }$*R*BB;D=d;\hfill

\vskip0.5cm

Program 3: niemeier$\backslash$latt3.txt

\medskip

\noindent
$\backslash\backslash$for a module M\hfill

\noindent
$\backslash\backslash$given by rational columns\hfill

\noindent
$\backslash\backslash$of matrix M, it finds its basis\hfill

\noindent
$\backslash\backslash$as a matrix MM\hfill

\noindent
$\backslash\backslash$and finds matrix VV such that MM=M*VV\hfill

\noindent
gg=gcd(M);M1=M/gg;

\noindent
ww=matsnf(M1,1);uu=ww[1];vv=ww[2];dd=ww[3];\hfill

\noindent
mm=matsize(dd)[1];nn=matsize(dd)[2];\hfill

\noindent
nnn=nn;for(i=1,nn,if(dd[,i]==0,nnn=nnn-1));\hfill

\noindent
VV=matrix(nn,nnn);\hfill

\noindent
nnnn=0;for(i=1,nn,if(dd[,i]==0,,nnnn=nnnn+1;VV[,nnnn]=vv[,i]));\hfill

\noindent
M2=M1*VV;MM=M2*gg;\hfill

\noindent
kill(gg);kill(M1);kill(ww);kill(uu);kill(vv);kill(dd);kill(mm);\hfill

\noindent
kill(nn);kill(nnn);kill(nnnn);kill(M2);\hfill

\vskip0.5cm

Program 4: niemeier$\backslash$latt4.txt

\medskip

\noindent
$\backslash\backslash$For a module L given by\hfill

\noindent
$\backslash\backslash$a rational m x n matrix L\hfill

\noindent
$\backslash\backslash$such that L contains all basic columns\hfill

\noindent
$\backslash\backslash$(0,..,0,1,0,..0)$\widetilde{\ }$\hfill

\noindent
$\backslash\backslash$and its submodule N given by rational\hfill

\noindent
$\backslash\backslash$matrix N\hfill

\noindent
$\backslash\backslash$it finds basis of the primitive\hfill

\noindent
$\backslash\backslash$submodule
$N_{pr}$=(N$\backslash$otimes Q)$\backslash$cap L\hfill

\noindent
$\backslash\backslash$as the matrix Npr\hfill

\noindent
ggg=gcd(N);N1=N/ggg;\hfill

\noindent
M=L;\hfill

\noindent
$\backslash$r niemeier$\backslash$latt3.txt;\hfill

\noindent
L1=MM;kill(VV);\hfill

\noindent
N2=L1\^{ }-1*N1;\hfill

\noindent
ww=matsnf(N2,1);uu=ww[1];vv=ww[2];dd=ww[3];\hfill

\noindent
N3=N2*vv;mm=matsize(dd)[1];nn=matsize(dd)[2];\hfill

\noindent
nnn=nn;for(i=1,nn,if(dd[,i]==0,nnn=nnn-1));\hfill

\noindent
N4=matrix(mm,nnn);\hfill

\noindent
nnnn=0;\hfill

\noindent
for(i=1,nn,if(dd[,i]==0,,nnnn=nnnn+1;ddd=gcd(dd[,i]);$\backslash$

\noindent
N4[,nnnn]=N3[,i]/ddd));\hfill

\noindent
Npr=L1*N4;\hfill

\noindent
kill(ggg);kill(N1);kill(M);kill(L1);kill(MM);\hfill

\noindent
kill(N2);kill(ww);kill(uu);kill(vv);kill(dd);\hfill

\noindent
kill(N3);kill(mm);kill(nn);kill(nnn);kill(nnnn);\hfill

\noindent
kill(ddd);kill(N4);\hfill


V.V. Nikulin \par Deptm. of Pure Mathem. The University of
Liverpool, Liverpool\par L69 3BX, UK; \vskip1pt Steklov
Mathematical Institute,\par ul. Gubkina 8, Moscow 117966, GSP-1,
Russia

vnikulin@liv.ac.uk \, \ \ \  vvnikulin@list.ru

Personal page: http://vnikulin.com

\end{document}